\newtheorem{definition}{Definition}
\newtheorem{theorem}[definition]{Theorem}
\newtheorem{lemma}[definition]{Lemma}
\newtheorem{question}[definition]{Question}
\tikzstyle{hvertex}=[solid,thick,circle,inner sep=0.cm, minimum size=2mm, fill=white, draw=black]
\tikzstyle{lv}=[thick,circle,inner sep=0.cm, minimum size=1em, fill=white, draw=black]
\tikzstyle{v}=[thick,circle,inner sep=0.cm, minimum size=1.5mm, fill=white, draw=black]
\tikzstyle{smallvx}=[thick,circle,inner sep=0.cm, minimum size=1.5mm, fill=white, draw=black]
\tikzstyle{hedge}=[very thick]
\colorlet{hellgrau}{black!30!white}
\colorlet{dunkelgrau}{black!50!white}
\colorlet{hellblau}{blue!20!white}
\colorlet{hellrot}{red!40!white}
\tikzstyle{convcols}=[color=white,fill=hellblau]
\tikzstyle{labvx}=[solid,thick,circle,inner sep=0.3mm, minimum size=2mm, fill=white, draw=black]
\tikzstyle{ced}=[hedge,densely dashed]
\newcommand{\svx}{node[labvx]{\texttt s}}
\newcommand{\tvx}{node[labvx]{\texttt t}}
\newcommand{\xvx}{node[labvx]{\texttt x}}
\renewcommand{\phi}{\varphi}
\newcommand{\nestle}{\leq_\text{\rm nest}}
\newcommand{\gestle}{\geq_\text{\rm nest}}
\newcommand{\bigO}{\ensuremath{O}}
\newcommand{\comment}[1]{}
\newcommand{\N}{\mathbb N}
\newcommand{\R}{\mathbb R}
\newcommand{\cP}{\mathcal{P}}
\newcommand{\cF}{\mathcal{F}}
\newcommand{\cC}{\mathcal{C}}
\newcommand{\cE}{\mathcal{E}}
\newcommand{\cB}{\mathcal{B}}
\newcommand{\cW}{\mathcal{W}}
\renewcommand{\epsilon}{\varepsilon}
\newcommand{\emtext}[1]{\text{\em #1}}
\newcommand{\mymargin}[1]{%
  \marginpar{%
    \begin{minipage}{1.3\marginparwidth}\small%
      \begin{flushleft}%
        #1%
      \end{flushleft}%
   \end{minipage}%
  }%
}%
\newcommand{\EP}{Erd\H os and P\'osa}
\newcommand{\EPP}{Erd\H os-P\'osa~property}
\newcommand{\sm}{\setminus}
\newcounter{type}
\title{$K_4$-subdivisions have the edge-Erd\H os-P\'osa~property}
\author{Henning Bruhn \and Matthias Heinlein}
\date{}
\begin{document}
\maketitle
\begin{abstract}
We prove that every graph $G$ contains either $k$ edge-disjoint $K_4$-subdivisions
or a set $X$ of at most $\bigO(k^8  \log k)$ edges
such that $G-X$ does not contain any $K_4$-subdivision.
This shows that $K_4$-subdivisions have the edge-\EPP.
\end{abstract}

\section{Introduction}
In 1965, \EP\ proved a duality between packing and covering of cycles in graphs:

\begin{theorem}[\EP\ 1965, \cite{EP65}]\label{epthm}
For every graph $G$ and every integer $k$,
the graph $G$ either contains $k$ disjoint cycles
or a set $X\subseteq V(G)$ with $|X|=\bigO(k \log k)$
such that $X$ meets every cycle.
\end{theorem}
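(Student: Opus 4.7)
The plan is to proceed by induction on $k$, relying on the classical fact that every graph of minimum degree at least~$3$ contains a short cycle.

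\textbf{Step 1: Reduction to minimum degree at least $3$.} I would first observe that a vertex of degree at most~$1$ lies on no cycle, and that suppressing a vertex of degree~$2$ (deleting it and joining its two neighbours by an edge, allowing multi-edges and loops) preserves both the maximum number of pairwise vertex-disjoint cycles and the minimum size of a vertex hitting set for the cycles. Iterating these operations reduces the problem to a graph $G'$ of minimum degree at least~$3$, from which packings and covers lift back to $G$ without loss.

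\textbf{Step 2: Short cycle lemma.} A standard BFS argument shows that in any graph of minimum degree at least~$3$ on $n$ vertices there is a cycle of length at most $2\log_2 n + 1$: running BFS from any vertex, the layers grow at least geometrically until two branches of the BFS tree must meet, closing a short cycle.

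\textbf{Step 3: Inductive step.} The base case $k=1$ is immediate: a single vertex of a cycle is a hitting set. For the inductive step, let $C$ be a short cycle produced by Step~2 in the reduced graph, and apply the induction hypothesis to $G' - V(C)$. Either $G' - V(C)$ contains $k-1$ pairwise disjoint cycles, which together with $C$ yields $k$ disjoint cycles in $G$, or there is a hitting set $X$ of $G' - V(C)$ of size $f(k-1)$, in which case $X \cup V(C)$ is a hitting set of $G'$ of size $f(k-1) + |V(C)|$.

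\textbf{Main obstacle.} The delicate point is to close the recursion at $f(k) = \bigO(k \log k)$. This requires $|V(C)| = \bigO(\log k)$, which by Step~2 reduces to bounding $|V(G')|$ polynomially in $k$ whenever $G$ is a minimum counterexample (so that no edge may be deleted without creating a smaller counterexample, forcing every edge to lie on a cycle and thereby guaranteeing minimum degree at least~$3$ after the reduction). The plan is to argue this by a greedy peeling: iteratively extract a short cycle, remove its vertices, re-apply the reduction of Step~1, and repeat. One then has to bookkeep carefully that each peeled cycle contributes only $\bigO(\log k)$ vertices to the accumulating hitting set and that the residual graph continues to have minimum degree at least~$3$; this is where the $\bigO(k\log k)$ bound is genuinely tight and where the technical work lies.
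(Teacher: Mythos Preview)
The paper does not prove this statement: it is the classical 1965 theorem of Erd\H os and P\'osa, quoted with a citation to~\cite{EP65} as background for the paper's own results on $K_4$-subdivisions. So there is no proof in the paper to compare your attempt against; I will simply assess the proposal.

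Steps~1--3 are correct and standard, and you have correctly located the real difficulty in the ``Main obstacle'' paragraph. But your proposed resolution of that obstacle contains a genuine gap. You assert that in a minimal counterexample one can bound $|V(G')|$ polynomially in $k$, which would make every peeled cycle have length $O(\log k)$. This is false: take $G=K_{3,n}$. It has minimum degree~$3$, every edge lies on a cycle (so your minimality reduction does nothing), it has no two vertex-disjoint cycles (any cycle uses at least two of the three vertices on the small side), yet $|V(G)|=n+3$ is unbounded. So the quantity you propose to bound simply is not bounded in~$k$, and the peeling recursion $f(k)\le f(k-1)+2\log|V(G')|$ does not close.

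What is missing is a reduction from ``minimum degree at least~$3$'' to an honestly \emph{cubic} object, where your peeling argument does work. The classical proof takes a subgraph $H\subseteq G$ that is a subdivision of a cubic multigraph~$H_0$, chosen maximal (say, with the maximum possible number of branch vertices). In~$H_0$ your Steps~2--3 go through cleanly: removing a shortest cycle from a cubic multigraph and suppressing the resulting degree-$2$ vertices deletes only $O(|C|)$ vertices in total (this is exactly where cubicness is used, and exactly what fails for $K_{3,n}$), so the recursion gives $|V(H_0)|=O(k\log k)$ whenever $H_0$ has fewer than $k$ disjoint cycles. One then argues from the maximality of~$H$ that the set $U$ of branch vertices of~$H$ meets every cycle of~$G$ (a cycle avoiding~$U$ could be used to enlarge~$H$), so $U$ is the desired hitting set of size $|V(H_0)|=O(k\log k)$.
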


Other graph families admit similar dualities: 
a family $\cF$ of graphs 
has the \emph{\EPP}
if there is a \emph{bounding function} $f:\N\to\R$
such that every graph $G$ either contains $k$ disjoint subgraphs 
each isomorphic to an element of $\cF$,
or $G$ contains a \emph{hitting set} $X$ of at most $f(k)$ vertices
such that $G-X$ contains no subgraph isomorphic to an element of $\cF$.
Thus, Theorem \ref{epthm} states that the family of cycles has the \EPP\ with $f = \bigO(k \log k)$.

The most general result about the \EPP\ concerns the family of $H$-expansions,
for a fixed graph $H$, where a graph is an  \emph{$H$-expansion}
if it contains $H$ as a minor:

\begin{theorem}[Robertson, Seymour, \cite{RS86}] \label{metaThm}
The family of $H$-expansions has the \EPP\
if and only if $H$ is planar.
\end{theorem}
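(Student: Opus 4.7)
The plan is to prove the two directions of Theorem~\ref{metaThm} separately, with the ``if'' direction being by far the more substantial.

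For the ``only if'' direction (non-planar $H$ implies no \EPP), I would produce, for every candidate bounding function $f:\N\to\R$, a graph $G$ in which neither $k$ disjoint $H$-expansions fit nor a hitting set of size at most $f(k)$ exists. The standard recipe takes graphs of large girth embedded on a surface whose Euler genus matches or exceeds that of $H$: every $H$-expansion must then traverse topologically nontrivial loops, which in a high-girth host graph costs many vertices per expansion and simultaneously prevents small global hitting sets. Tuning the girth and order of the host against the packing number $k$ yields counterexamples that defeat any prescribed $f$.

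For the ``if'' direction (planar $H$ implies \EPP), the central tool is the Grid Minor Theorem of Robertson and Seymour: there exists a function $w$ such that $\mathrm{tw}(G)\geq w(r)$ forces $G$ to contain the $r\times r$ grid as a minor. Since $H$ is planar, pick $n=n(H)$ with $H$ a minor of the $n\times n$ grid, so the $(nk)\times(nk)$ grid contains $k$ pairwise disjoint $H$-minors. Hence we may assume a tree decomposition of $G$ of width less than $w(nk)$. I would then induct on $k$: if $G$ contains no $H$-expansion, take $X=\es$; otherwise pick one expansion $M$, use a balanced separator of the tree decomposition of size $\bigO(w(nk))$ to delete a small set $X_0$ that destroys $M$, and recurse on $G-X_0$ to obtain either $k-1$ further disjoint expansions (combined with $M$) or a hitting set of size $f(k-1)$ (combined with $X_0$). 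This closes the induction with $f(k)=\bigO(k\cdot w(nk))$.

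The main obstacle is the bounded tree-width step. One must choose $X_0$ so that the $k-1$ disjoint expansions produced by recursion in $G-X_0$ are automatically disjoint from $M$, and one must simultaneously argue that no ``spread out'' $H$-expansion can survive every separator deletion used along the recursion. This is precisely where the original proof of Robertson and Seymour leans on the full graph-minors structure machinery (including tangles and irrelevant-vertex arguments) rather than a naive tree-decomposition recursion. A more modern variant uses the grid minor directly together with linkage theorems, but under either strategy the careful coordination between the packing and covering sides inside a bounded-tree-width host is the crux of the argument.
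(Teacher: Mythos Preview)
The paper does not prove Theorem~\ref{metaThm}; it is stated as a background result with a citation to~\cite{RS86}, and no argument is given. So there is no proof in the paper for your attempt to be compared against.

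On your sketch itself: the ``if'' direction is essentially correct, and the worry in your final paragraph is overstated. You do not need tangles or irrelevant-vertex techniques here. Once $\mathrm{tw}(G)<w:=w(nk)$, take a rooted tree-decomposition $(T,(V_t)_{t\in T})$ of width below $w$ and choose a \emph{lowest} node $t$ such that the cone $G_t$ (the subgraph induced by the bags in the subtree at $t$) contains an $H$-model $M$. By minimality of $t$, no child cone contains an $H$-model, so every component of $G_t-V_t$ is $H$-model--free; since $V_t$ separates $G_t-V_t$ from the rest of $G$, every $H$-model of $G-V_t$ lies entirely outside $V(G_t)$ and is therefore vertex-disjoint from $M\subseteq G_t$. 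Hence if $G-V_t$ contained $k-1$ pairwise disjoint $H$-models, they would combine with $M$ to give $k$ disjoint $H$-models in $G$. So $G-V_t$ has at most $k-2$, and induction on $k$ with $X_0=V_t$ closes cleanly with $f(k)\le\sum_{i=2}^{k}\bigl(w(ni)+1\bigr)$. The hitting-set side raises no issue either: any $H$-model of $G$ either meets $X_0$ or lies in $G-X_0$ and is hit by the inductively obtained set. This is the argument in~\cite{RS86}; the only deep ingredient is the Grid Theorem.

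For the ``only if'' direction your idea is workable, though large girth is not the essential point. A cleaner construction is to take $H$, replace each edge by an $n\times n$ wall and each vertex by a comparable planar blob; the resulting graph $G_n$ still embeds on the minimum-genus surface for $H$, so by additivity of genus it cannot contain two disjoint $H$-models, while any hitting set must sever some thickened edge or blob and hence has size at least~$n$.
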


Note that the theorem includes Theorem~\ref{metaThm}, if $H$ is chosen to be a triangle.

What happens if we pack edge-disjoint cycles
rather than (vertex-)disjoint cycles? It turns out that there is an 
edge-version of Erd\H os and P\'osa's theorem. In order to generalise
in the same way as above, 
we say that a family $\mathcal F$ of graphs has the 
\emph{edge-Erd\H os-P\'osa property}
if there is  $f:\mathbb N\to\mathbb R$ 
such that for every integer $k$ and every graph $G$, 
there are either $k$ \emph{edge-disjoint} subgraphs in $G$ that lie in $\mathcal F$,
or there is an \emph{edge set} $F$ of size $|F|\leq f(k)$
such that $G-F$ does not contain any subgraph from $\mathcal F$.

Compared to the ordinary (vertex-)\EPP\
there are only a few classes known to have the edge-\EPP:
cycles have the edge-property (see, for instance, \cite[Exercise 9.5]{Die10}),
as do long cycles~\cite{BHJ17} and as do $\theta_r$-expansions
(see Raymond, Sau and Thilikos~\cite{RST13}), where $\theta_r$ is the multigraph consisting of $r$ parallel edges.

It is striking to note that these results are all special instances
of a hypothetical edge-version of Theorem~\ref{metaThm} -- we just
have to take $H$ as a triangle, a long cycle or as $\theta_r$.

\begin{question}\label{edgeVersion}
	Is there an edge-version of Theorem~\ref{metaThm}? That is, do $H$-expansions have the edge-\EPP\ if $H$ is a planar graph?
\end{question}


We answer the question for $H=K_4$, the smallest 
$2$-connected graph $H$ for which Question~\ref{edgeVersion} was open.
We prove that $K_4$-subdivisions do, indeed, have the edge-\EPP.
(As $K_4$ is a cubic graph,
it makes no difference whether we consider expansions or subdivisions.)


\begin{theorem}
\label{mainThm}
Either $G$ contains $k$ edge-disjoint $K_4$-subdivisions or 
there is a set $Y\subseteq E(G)$ of size $\bigO(k^8 \log k)$
such that $G-Y$ does not contain any $K_4$-subdivision.
\end{theorem}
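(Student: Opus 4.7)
The plan is to prove Theorem~\ref{mainThm} by induction on $k$. For $k=1$ the theorem is trivial: take $Y$ to be the edge set of any $K_4$-subdivision, or $Y=\emptyset$ if none exists. For the inductive step the aim is to find a single $K_4$-subdivision $H^*\subseteq G$ together with an edge set $Y_0\supseteq E(H^*)$ of size $O(k^7\log k)$ such that the induction hypothesis can be applied to $G-Y_0$ with parameter $k-1$: if the recursion returns $k-1$ edge-disjoint $K_4$-subdivisions we append $H^*$, and otherwise we combine $Y_0$ with the returned hitting set to form the desired cover.

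To find the pair $(H^*,Y_0)$ I would combine the vertex \EPP\ for $K_4$-expansions (Theorem~\ref{metaThm} applied with $H=K_4$) with the cycle \eEPP\ (Theorem~\ref{epthm}). Theorem~\ref{metaThm} gives either $k$ vertex-disjoint (hence edge-disjoint) $K_4$-subdivisions---in which case we are done---or a vertex hitting set $X\subseteq V(G)$ of size polynomial in~$k$. Every $K_4$-subdivision then passes through some $v\in X$, so the task reduces to upgrading $X$ to an edge hitting set of polynomially bounded size per induction step.

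For each $v\in X$ I would dichotomize on the local edge-connectivity of $v$. If only a bounded number of edge-disjoint paths leave $v$ along edges used by $K_4$-subdivisions, then deleting a small star at $v$ destroys every $K_4$-subdivision incident with $v$. Otherwise, many edge-disjoint cycles pass through $v$ (by applying Theorem~\ref{epthm} inside a suitable subgraph), and from such a bundle---combined with additional edge-disjoint paths leaving $v$---one assembles a new edge-disjoint $K_4$-subdivision to include in the packing. Iterating this dichotomy across $X$ should yield a per-step cost $O(k^7\log k)$, in which the $\log k$ factor is inherited from Theorem~\ref{epthm} and the polynomial factor aggregates the cost over all vertices of $X$ and all previously packed copies.

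The main obstacle, I expect, is the second branch of the dichotomy. Many edge-disjoint cycles through a common vertex $v$ do not automatically give a $K_4$-subdivision, since $K_4$ requires three \emph{far} branch vertices pairwise joined by further edge-disjoint paths. One must select, from the cycle bundle through $v$, three edge-disjoint ``leg pairs'' whose far endpoints are themselves pairwise linked by edge-disjoint paths avoiding everything chosen so far, and one must also distinguish those copies in which $v$ is a branch vertex from those in which it appears as an internal subdivision vertex, since the two roles interact very differently with edge deletion. Coordinating this selection while preserving edge-disjointness from the previously packed $K_4$-subdivisions---and controlling the blow-up so that the per-step cost sums to $O(k^8\log k)$ overall---is the technical heart of the argument.
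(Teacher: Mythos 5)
Your outer induction on $k$ breaks down at the very first step. You require a $K_4$-subdivision $H^*$ together with $Y_0\supseteq E(H^*)$ of size $O(k^7\log k)$, but $K_4$-subdivisions can be forced to be arbitrarily large compared with $k$: subdivide each edge of $K_4$ into a path of length $N$ and replace each of the six resulting paths by $k-1$ internally disjoint paths of length $N$ between the same pair of branch vertices. The only vertices of degree at least $3$ are the four original branch vertices, each of degree $3(k-1)$, so the graph contains at most $k-1$ edge-disjoint $K_4$-subdivisions; yet every $K_4$-subdivision has all four originals as branch vertices and hence at least $6N$ edges, with $N$ free to be far larger than $k^7\log k$. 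In such a graph no pair $(H^*,Y_0)$ with $E(H^*)\subseteq Y_0$ and $|Y_0|=O(k^7\log k)$ exists, so the recursion cannot start, even though a hitting set of size $k-1$ is available. Dropping the condition $E(H^*)\subseteq Y_0$ does not obviously help, since then the $k-1$ copies returned from $G-Y_0$ need not be edge-disjoint from $H^*$; making them so requires the kind of separation argument the paper actually carries out. Indeed the paper never deletes whole copies: its inductions are on the size of a vertex hitting set (Lemma~\ref{singlehitlem}), then on the block structure of $G-x$ (baseblocks and essential blocks, Lemmas~\ref{fewbaseblocks}--\ref{fewessblocks}), and finally on the number of $x$-ears in a nested ear decomposition of the series-parallel graph $G-X$.

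The second gap is the one you flag yourself, and it is not a technicality but the entire content of the problem: in your ``high connectivity'' branch, many edge-disjoint cycles through a vertex $v$ do not yield a $K_4$-subdivision at all, let alone one edge-disjoint from previously packed copies. The friendship graph (many triangles glued at $v$) has arbitrarily many edge-disjoint cycles through $v$ and contains no $K_4$-subdivision; more generally, $v$ attached to a series-parallel graph can carry large cycle bundles while every $K_4$-subdivision is obstructed. So the dichotomy does not close, and what is missing is precisely the structural work of Sections~\ref{section:neds}--\ref{smallS}: after reducing to a single hitting vertex $x$ whose neighbours have degree $2$ and with $G-N(x)-x$ $2$-connected, the paper uses that $G-x$ is series-parallel, fixes a good nested ear decomposition, and shows that many $x$-ears force $k$ edge-disjoint copies via explicit constructions (Lemmas~\ref{alleNebeneinander}, \ref{threeAreEnough}, \ref{lem:manyears} and the ladder/fan Lemma~\ref{ladderfanlem}), while few $x$-ears admit an induction on the $x$-ear number using the module/blueprint machinery of Section~\ref{moduleSection}. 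A minor point in the same direction: the $\log k$ in the final bound is inherited from the $O(k\log k)$ vertex hitting set for $K_4$-expansions fed through Lemma~\ref{singlehitlem}, not from the cycle theorem of Erd\H os and P\'osa.
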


Our initial motivation was to study a special case of Question~\ref{edgeVersion}
in order to develop techniques that could be helpful in 
a more general setting. A modest but useful insight in this respect
is spelled out
in Section~\ref{reductionOneVertex}, where we prove that 
one may normally assume that there is a (vertex) hitting set consisting
of a single vertex. 

Our original aim, however, failed. While working on Theorem~\ref{mainThm},
we discovered planar graphs
such that their expansions do not have the edge-\EPP.
In fact, even if $H$ is a ladder of large enough length or a subcubic 
tree of large enough pathwidth,
the family of $H$-expansions does not have the edge-property.
We work out these negative results in a subsequent paper~\cite{BHJ18trees}.

That the answer of Question~\ref{edgeVersion} is ``no'' makes it, in 
our eyes, even more interesting to study which $H$-expansions
have the edge-property. 

\bigskip

Not only certain $H$-expansions have the \EPP, but also other types
of graph classes. 
For example, cycle of even length have the \EPP\ (see~\cite{Tho88} and~\cite{DNL87})  and also the edge-property \cite{BHJ18paths},
while odd cycles have neither the vertex- nor the edge-property~\cite{DNL87}.
Thomassen~\cite{Tho88}, and later Wollan~\cite{Wol11} considered
cycles with more general modularity constraints: 
they proved  that cycles of length~$\equiv 0 \pmod{m}$ for all $m$, resp.\ 
cycles of length~$\not\equiv 0 \pmod{m}$ for odd $m$
have the vertex-\EPP. The edge-versions of these results are still open.

Stretching the definition a bit, we can interprete Menger's theorem 
as saying that $A$--$B$-paths have the \EPP, as well as the edge-property. 
The theorems of Gallai~\cite{Gal61} and of Mader~\cite{Mad78}
show that $A$-paths have the ordinary as well as the edge-property. 

Another type of cycle that has attracted a good amount of interest~\cite{KKM11,KK12, PW12}
are $A$-cycles, cycles that each contain at least one vertex from a fixed set $A$. 
The class of $A$-cycles has both versions of \EPP\ as well; see in particular~\cite{PW12}.
Huynh, Joos and Wollan~\cite{HJW16}
generalised the concept of an $A$-cycle by considering 
non-zero cycles in graphs whose edges are endowed with labels 
from two groups. They in particular characterise when these cycle have the \EPP.
Bruhn, Joos and Schaudt~\cite{BJS18} investigate labelled $H$-expansions.

An overview on Erd\H os-P\'osa properties can be found in the survey article  
of Raymond and Thilikos~\cite{RT16}, and also in~\cite{BHJ18paths}.

\section{Preliminaries}

All graphs in this paper are finite, simple, undirected graphs.
We use standard graph theory notation as used in Diestel's textbook \cite{Die10}.
We recall some definitions and concepts that we use often.

Let $P$ be a path with endvertices $u$ and $v$.
Then we say that $P$ is a $u$--$v$-path.
If $a,b\in V(P)$ are two vertices of $P$, 
the subpath of $P$ between $a$ and $b$ is denoted by $aPb$.
If $w$ is a neighbour of the endvertex $u$ and $w\notin V(P)$, then $wuP$ denotes the concatenation of the path $wu$
and the path $P$.

A path is \emph{trivial} if it contains no edge and a block is \emph{trivial} if it consists of a single edge.

Recall the definition of the vertex- and edge-\EPP\ given in the introduction.
The set $X$ that meets every $\cF$-graph in $G$ is called \emph{hitting set (for $\cF$-graphs in $G$)}.
If we want to specify if $X$ consists of vertices or edges, we write \emph{vertex hitting set} or \emph{edge hitting set}.

\subsection{Series-parallel graphs}
\label{seriesparallel}

There are several almost identical notions of what a series-parallel graph
is. Here, we use what some call \emph{two-terminal series-parallel}. 
If $G$ is a graph and $s,t$ are two (not necessarily distinct) vertices in $G$,
we say that $G$
is \emph{series-parallel with terminals $s,t$} if it 
\begin{itemize}
\item is equal to the vertex $s$ if $s=t$; or
\item is equal to the edge $st$ ( if $s\neq t$); or
\item if there is a series-parallel graph $G_1$ with terminals $s,r$ and 
a series-parallel graph $G_2$ with terminals $r,t$ such that 
$G=G_1\cup G_2$ and $G_1\cap G_2=\{r\}$; or
\item if there are series-parallel graphs $G_1,G_2$ both with terminals $s,t$  such that 
$G=G_1\cup G_2$ and $V(G_1\cap G_2)=\{s,t\}$.
\end{itemize}
In the third case, $G_1 \cup G_2$ is a  \emph{series decomposition} of $G$, and in the last
case $G_1\cup G_2$ is a  \emph{parallel decomposition} of $G$.
Note that the terminals of a series-parallel graph are distinct if it contains an edge.

By a \emph{diamond} we mean a graph consisting of three internally disjoint $a$--$b$-paths for two distinct vertices $a,b$.
Let $u_1,u_2\in V(G)$. We call a subgraph $D'\subseteq G$ 
a \emph{$u_1$--$u_2$-diamond} if it is the union of a diamond $D=P_1\cup P_2\cup P_3$ 
and disjoint paths $Q_1,Q_2$ such that $Q_i$ connects $u_i$ and the interior of $P_i$
and is disjoint from $P_j$ for $j\neq i$.

Here we give a basic lemma about series-parallel graphs and diamonds.
We need it in the proof of Lemma \ref{moduleHasBlueprint}.

\goodbreak
\begin{lemma}\label{diamonds} $\quad$
\begin{enumerate}[\rm (i)]
\item Let $G$ be a $2$-connected graph, and let $st$ be an edge of $G$. 
Then $G$ does not contain a $K_4$-subdivision if and only if 
 $G$ is series-parallel with terminals $s,t$.
\item A series-parallel graph with terminals $s,t$
does not contain an $s$--$t$-diamond.
\end{enumerate}
\end{lemma}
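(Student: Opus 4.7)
\medskip\noindent\textbf{Proof plan.} Both parts are proved by induction on the recursive definition of a series-parallel graph with terminals $s,t$.

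For (ii), the base cases ($G$ is a single vertex or a single edge) are immediate, as no $s$--$t$-diamond fits. Suppose $G=G_1\cup G_2$ is a series decomposition at the cut vertex $r$, with $G_1,G_2$ having terminals $s,r$ and $r,t$ respectively. If $D'=D\cup Q_1\cup Q_2$ is an $s$--$t$-diamond in $G$ with inner diamond $D=P_1\cup P_2\cup P_3$, then $D$ is $2$-connected and hence lies in one side, say $G_1$; the path $Q_2$ from $t\in V(G_2)$ to the interior of $P_2\subseteq G_1$ must pass through $r$, and the subpath $rQ_2x_2$ (where $x_2$ is the endpoint of $Q_2$ on $P_2$) together with $D$ and $Q_1$ gives an $s$--$r$-diamond in $G_1$, contradicting the inductive hypothesis. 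If instead $G=G_1\cup G_2$ is a parallel decomposition at $\{s,t\}$, observe that $D'-\{s,t\}$ is connected (since $D$ is connected and each $Q_i$ minus its endpoint $u_i$ remains attached to $D$); hence $D'-\{s,t\}$, and therefore also $D'$, lies in a single side, again contradicting induction.

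For (i), the direction ``series-parallel $\Rightarrow$ no $K_4$-subdivision'' follows the same inductive template. A $K_4$-subdivision $K$ is $2$-connected and hence lies in one side of any series decomposition. For a parallel decomposition at $\{s,t\}$, connectivity forces $K$ into a single side unless $\{s,t\}$ is a $2$-cut of $K$. A short case analysis on possible $2$-cuts of a $K_4$-subdivision (they correspond either to two branch vertices of $K$ or to two vertices on a common subdivision path) shows that the ``big'' side of such a cut, augmented with a virtual edge $st$, still contains a $K_4$-subdivision---obtained by routing the missing branch path of $K$ through the edge $st$---which contradicts induction applied to that side.

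For the harder reverse direction of (i), I would induct on $|V(G)|$, using the classical fact that every $3$-connected graph on at least four vertices contains a $K_4$-subdivision. If $|V(G)|\leq 3$, then $G$ is the edge $st$ or a triangle, both trivially series-parallel with terminals $s,t$. Otherwise, $G$ cannot be $3$-connected, so it has a $2$-vertex separator $\{u,v\}$. Splitting $G=G_1\cup G_2$ with $V(G_1\cap G_2)=\{u,v\}$ and setting $G_i':=G_i+uv$, each $G_i'$ is $2$-connected and still $K_4$-subdivision-free (any such subdivision in $G_i'$ would lift to one in $G$ by replacing the virtual edge $uv$ with a $u$--$v$-path in $G_{3-i}$, which exists by $2$-connectivity of $G$). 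By induction each $G_i'$ is series-parallel with terminals $u,v$, and we may then reassemble the two pieces into a series-parallel decomposition of $G$. The main obstacle is ensuring that the outer terminals are the given pair $s,t$ rather than the chosen separator $\{u,v\}$; I would arrange this by selecting $\{u,v\}$ so that $s,t$ lie on the same side and iterating the inductive decomposition---essentially an SPQR-tree-style reassembly---until $s,t$ appear as the outer terminals.
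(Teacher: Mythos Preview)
Your arguments are essentially correct, but the route is quite different from the paper's, and your reverse direction of (i) is sketchy exactly where the real difficulty lies.

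\textbf{Part (ii).} The paper does not induct on the series-parallel structure at all. Instead it derives (ii) from (i) in one line: given an $s$--$t$-diamond $D'$ in $G$, adjoin a fresh $s$--$t$-path $P$ internally disjoint from $G$; then $G\cup P$ is still series-parallel with terminals $s,t$ (parallel composition), but $D'\cup P$ is a $K_4$-subdivision (the four branch vertices are the two hubs of the diamond together with the two attachment points of $Q_1,Q_2$), contradicting (i). Your direct induction is fine, though in the parallel case you should note that $s,t$ can never be the hub vertices $a,b$ of the inner diamond (since $Q_i$ is disjoint from $P_j$ for $j\neq i$), which is what guarantees that $D'-\{s,t\}$ stays connected.

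\textbf{Part (i).} The paper simply quotes Eppstein's result that a $2$-connected series-parallel graph is series-parallel with respect to \emph{any} adjacent pair of terminals, and declares the equivalence with ``no $K_4$-subdivision'' elementary (this is Duffin's classical theorem). Your self-contained plan is sound in outline, but the step you flag as the ``main obstacle'' is precisely the content of Eppstein's lemma, and your ``SPQR-tree-style reassembly'' does not yet discharge it. What you need are two small lemmas: first, that replacing an edge $uv$ of a series-parallel graph with terminals $s,t$ by any series-parallel graph with terminals $u,v$ again yields a series-parallel graph with terminals $s,t$ (immediate from the decomposition tree: substitute at the leaf $uv$); second, that if $H$ is series-parallel with terminals $u,v$, contains the edge $uv$, and $H-uv$ is connected, then $H-uv$ is still series-parallel with terminals $u,v$ (the top node of the decomposition of $H$ must be parallel, so peel off the $uv$-leaf). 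With these in hand your induction goes through cleanly: since $st\in E(G)$, the terminals $s,t$ always lie on the same side $G_1$ of the $2$-cut $\{u,v\}$; induction gives $G_1+uv$ series-parallel with terminals $s,t$ and $G_2+uv$ series-parallel with terminals $u,v$; delete $uv$ from the latter and substitute it for the edge $uv$ in the former. Without making these two lemmas explicit, the reassembly step is a genuine gap.
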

\begin{proof}
For (i): Eppstein (Lemma~9 in \cite{Epp92}) proves that a $2$-connected graph that is series-parallel
with some terminals is also series-parallel if any two adjacent vertices are chosen as terminals. 
The rest of the statement is elementary.

For (ii): Suppose there is an $s$--$t$-diamond $D$ in a series-parallel graph $G$
with terminals $s,t$. We add an $s$--$t$-path $P$ that is internally disjoint from $G$, and observe that the resulting
graph $G'$ is still series-parallel (with terminal $s,t$) as $G'=G\cup P$ is a parallel decomposition. 
The graph $G'$, however, contains the $K_4$-subdivision  $D\cup P$, which is impossible, by~(i).
\end{proof}

\section{First reductions}
\label{sec:reductions}

Our aim in this article is to prove Theorem~\ref{mainThm}.
Let $G$ be a graph and assume that it does not contain $k$ edge-disjoint $K_4$-subdivisions.
Then it does not contain $k$ vertex-disjoint $K_4$-subdivisions either.
Thus, Theorem~\ref{metaThm}
yields a set $X\subseteq V(G)$ of size bounded in $k$
such that $G-X$  contains no $K_4$-subdivision.
Aboulker et al.~\cite{AFHJRS17} 
showed that  $|X|=\bigO(k\log k)$ is sufficient.
Our proof uses this vertex hitting set to construct 
an edge hitting set.

We  give a brief overview of the proof.
In Section~\ref{reductionOneVertex} we start by showing that we may 
assume to have a vertex hitting set that consists of a single vertex~$x$. 
In a second step, in Section~\ref{twoconnSection}, 
we reduce to the case that $G-x$ is $2$-connected.

This implies, in particular,  
that $G-x$ is series-parallel and has thus an ear decomposition 
that is \emph{nested}; see Section~\ref{section:neds}. 
If this nested ear decomposition contains many ears that contain a neighbour of $x$, 
then
we will find $k$ edge-disjoint $K_4$-subdivisions (Section~\ref{largeS}).
If, on the other hand, the number of such ears  is bounded (Section~\ref{smallS}),
we can do induction over that number. This case will take up most of the effort.

\subsection{Single vertex hitting set}
\label{reductionOneVertex}

As a first step we reduce to graphs with a vertex hitting set that consists of a single vertex.

\begin{lemma}\label{singlehitlem}
Let $\mathcal F$ be a class of graphs that has the vertex-Erd\H os-P\'osa property
with bounding function~$g$. Let $h:\mathbb N\to\mathbb R$ be a function such that 
for every $k$ and for every graph $G$ that has a vertex $x$ such that $G-x$ does not contain any subgraph of $\mathcal F$
it holds that: either $G$ contains $k$ edge-disjoint subgraphs from $\mathcal F$
or there is an edge set $F$ of size $|F|\leq h(k)$ that meets every subgraph from $\mathcal F$.
Then $\mathcal F$ has the edge-Erd\H os-P\'osa property with bounding function $f=gh$.
\end{lemma}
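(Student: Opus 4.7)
The plan is to first invoke the vertex-\EPP\ to obtain a vertex hitting set $X$ of size at most $g(k)$, and then induct on $|X|$, using the single-vertex hypothesis to ``replace'' the vertices of $X$ one by one with small edge sets.

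More precisely, I would prove by induction on $m$ the following auxiliary statement: \emph{if $G$ admits a vertex hitting set $X$ of size $m$ and contains no $k$ edge-disjoint $\mathcal F$-subgraphs, then $G$ has an edge hitting set of size at most $m\cdot h(k)$.} The base case $m=0$ is trivial. For the inductive step, I would pick any $x\in X$ and consider the graph $G' := G-(X\setminus\{x\})$. Since $G'-x = G-X$ contains no $\mathcal F$-subgraph, the hypothesis of the lemma applies to $G'$ with distinguished vertex~$x$: either $G'$ (and hence $G$) contains $k$ edge-disjoint $\mathcal F$-subgraphs, contradicting our assumption, or there is an edge set $F'\subseteq E(G')$ of size at most $h(k)$ that meets every $\mathcal F$-subgraph of $G'$.

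The key observation is now that $X\setminus\{x\}$ is a vertex hitting set of $G-F'$: any $\mathcal F$-subgraph $H\subseteq G-F'$ that avoids $X\setminus\{x\}$ would lie inside $G[V(G')]-F' = G'-F'$, contradicting the choice of $F'$. Since $G-F'$ still contains no $k$ edge-disjoint $\mathcal F$-subgraphs, the induction hypothesis applied to $G-F'$ with hitting set $X\setminus\{x\}$ of size $m-1$ yields an edge hitting set $F''$ of $G-F'$ with $|F''|\leq (m-1)h(k)$. Then $F'\cup F''$ is the desired edge hitting set of $G$, of total size at most $m\cdot h(k)$.

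Finally, I would combine this with the vertex-\EPP: given any $G$ without $k$ edge-disjoint $\mathcal F$-subgraphs, $G$ also has no $k$ vertex-disjoint $\mathcal F$-subgraphs, so a vertex hitting set of size $m\leq g(k)$ exists, and the auxiliary statement supplies an edge hitting set of size $\leq g(k)h(k) = f(k)$. There is no real obstacle here; the only subtlety to watch out for is the verification that $X\setminus\{x\}$ genuinely remains a vertex hitting set after deleting edges (rather than vertices), which is immediate because $F'$ lies entirely inside $G'$ and therefore any $\mathcal F$-subgraph of $G-F'$ disjoint from $X\setminus\{x\}$ is forced into $G'-F'$.
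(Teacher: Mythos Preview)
Your proof is correct and follows essentially the same approach as the paper: induction on the size of the vertex hitting set, converting one vertex into at most $h(k)$ edges per step. The only (inessential) difference is the order of the two operations within the inductive step --- the paper first applies induction to $G-x$ and then the single-vertex hypothesis to $G-F'$, whereas you first apply the single-vertex hypothesis to $G-(X\setminus\{x\})$ and then induction to $G-F'$.
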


\begin{proof}
We proceed by induction on the size of a vertex hitting set for $\mathcal F$. 
Let $G$ be a graph. For the edge-Erd\H os-P\'osa property we can clearly 
assume that $G$ does not contain any $k$ edge-disjoint (or vertex-disjoint) subgraphs 
from $\mathcal F$. 
Therefore, $G$ has a vertex hitting set 
for $\mathcal F$, and we choose such a set $X$ as small as possible. If $X$ is empty then the empty 
set is an edge hitting set for all subgraphs  in $\mathcal F$. 

If $X$ is not empty, pick $x\in X$
and observe that $G'=G-x$ has a smaller vertex hitting set (namely $X'=X\sm\{x\}$) than $G$. 
By induction, there is thus an edge set $F'$ of size $|F'|\leq |X'|\cdot h(k)$
such that $G'-F'$ is devoid of subgraphs from $\mathcal F$.
Consequently, every subgraph of $G''=G-F'$ that is contained in $\mathcal F$ must meet the vertex~$x$.
By assumption, there is thus an edge set $F''$ of size $|F''|\leq h(k)$ such that $G''-F''$ does 
have any subgraphs in $\mathcal F$. In total, we see that $F'\cup F''$ is an edge hitting set
for $\mathcal F$ in $G$ of a size of 
\[
|F'\cup F''|\leq h(k)+|X'|h(k)=|X|h(k)\leq g(k)h(k),
\] 
where the last inequality follows from the minimal choice of $X$ and the fact that $g$ is a bounding function for 
the vertex-Erd\H os-P\'osa property.
\end{proof}

\subsection{Reduction to the case that $G-x$ is a block}
\label{twoconnSection}

Having reduced to graphs $G$ with a single-vertex hitting set $\{x\}$, we further reduce 
to the case when $G-x$ is (essentially) $2$-connected. 
Independently of the results in this subsection, we prove 
in Sections~\ref{section:neds}--\ref{smallS} the following lemma:

\begin{lemma}
\label{lem:twoConn}
Let $x$ be a vertex in a graph $G$ such that every $K_4$-subdivision in $G$ meets $x$, and let
each  neighbour of $x$ in $G$ have degree~$2$. If $G-N(x)-x$ is $2$-connected, then 
$G$ either contains $k$ edge-disjoint $K_4$-subdivisions
or an edge  set of size $\bigO(k^4)$ that meets every $K_4$-subdivision.
\end{lemma}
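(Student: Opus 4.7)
The plan is to exploit the series-parallel structure of $G' := G - N(x) - x$. Since $G'$ is $2$-connected by hypothesis and contains no $K_4$-subdivision (every $K_4$-subdivision in $G$ meets $x$), Lemma~\ref{diamonds}(i) ensures that $G'$ is series-parallel with any chosen edge as terminals. A standard fact, to be made precise in Section~\ref{section:neds}, is that $2$-connected series-parallel graphs admit a \emph{nested} ear decomposition; I would fix such a decomposition of $G'$ and reason along it.

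Next I would translate the local structure at $x$ into attachment data on $G'$. Every $y \in N(x)$ has degree~$2$ in $G$, so $y$ has exactly one neighbour besides $x$, which is either another vertex of $N(x)$ (closing a triangle at $x$) or a vertex $z \in V(G')$ (yielding a length-$2$ path $x$--$y$--$z$ that attaches $x$ to $G'$). Since a branch vertex of a $K_4$-subdivision has degree~$3$, no vertex of $N(x)$ can serve as a branch vertex, so any $K_4$-subdivision in $G$ uses $x$ together with two or three such attachment paths into $G'$, completed through subpaths of $G'$. In this way, $K_4$-subdivisions are controlled by the attachment points in $V(G')$ and the series-parallel structure of $G'$.

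The argument then splits along the number $s$ of ears in the chosen nested decomposition that contain an attachment point of $x$. If $s$ is large enough relative to $k$, the attachment-bearing ears are plentiful enough to yield $k$ edge-disjoint $K_4$-subdivisions; each such subdivision is assembled from $x$, a single attachment-bearing ear, and a supporting diamond built from earlier ears of the decomposition (Lemma~\ref{diamonds}(ii) will be used here to argue that such diamonds exist in the correct places). This is the content of Section~\ref{largeS}. If, on the other hand, $s$ is bounded, I would induct on $s$: an attachment-bearing ear is peeled off in a controlled way, the inductive hypothesis is applied to a smaller instance still satisfying the hypotheses of the lemma, and the few edges that must be sacrificed are absorbed into the hitting set. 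This is the content of Section~\ref{smallS}.

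The main obstacle is the small-$s$ inductive step. One must peel off an ear without destroying the $2$-connectivity of the new $G - N(x) - x$ and without losing track of $K_4$-subdivisions whose branch paths wander in and out of the removed ear. Maintaining this invariant, together with the degree-$2$ condition on $N(x)$, requires a careful case analysis depending on how the peeled ear is attached to $x$. The $\bigO(k^4)$ bound should then follow from showing that each inductive step costs only $\bigO(k^3)$ edges and that the induction terminates after at most $\bigO(k)$ steps, with all intermediate bookkeeping polynomial in $k$.
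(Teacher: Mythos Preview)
Your high-level plan matches the paper: fix a nested ear decomposition of the series-parallel graph $G-N(x)-x$, measure the number $s$ of ears carrying an attachment of $x$ (the paper calls this the \emph{$x$-ear number} $\lambda$), and split into a large-$s$ case (many $K_4$-subdivisions) and a small-$s$ case (induction producing a hitting set of size $\bigO(\lambda k^3)=\bigO(k^4)$).

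Two points, however, need correction.

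\textbf{Large-$s$ case.} Your invocation of Lemma~\ref{diamonds}(ii) is backwards: that lemma says a series-parallel graph with terminals $s,t$ \emph{does not} contain an $s$--$t$-diamond, so it cannot be used to produce diamonds. The paper does not build each $K_4$-subdivision from ``a single attachment-bearing ear plus a diamond''; it groups $x$-ears in triples according to their nesting pattern (side-by-side, nested, or stacked) and assembles each $K_4$-subdivision from three $x$-ears together with pieces of their nest intervals and paths to $x$. The argument is a Ramsey-type dichotomy on the $\nestle$-poset of $x$-ears (Lemmas~\ref{alleNebeneinander}--\ref{lem:manyears}), not a diamond-finding argument.

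\textbf{Small-$s$ case.} The paper does \emph{not} ``peel off an attachment-bearing ear and apply the inductive hypothesis to a smaller instance still satisfying the hypotheses of the lemma''. Maintaining $2$-connectivity of $G-N(x)-x$ after removing an ear is exactly the obstacle you identify, and the paper sidesteps it rather than solving it. Instead it generalises: it works with \emph{parts} (subgraphs of $G-x$ with at most two terminals; these need not be $2$-connected themselves), introduces \emph{modules} (traces of $K_4$-subdivisions in a part) and a finite list of \emph{blueprints} classifying them, and proves inductively that every part with $x$-ear number $\lambda$ has a \emph{hit-or-miss set} of size $\bigO(\lambda k^3)$. The inductive step is a series/parallel decomposition of parts into subparts of strictly smaller $x$-ear number (Lemma~\ref{splitearlem}), together with the fact that hit-or-miss sets combine under such decompositions (Lemma~\ref{hitormisslem}). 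The hardest case (type~V) requires the well-connected ladder/fan machinery of Section~\ref{laddersfans}. Your sketch does not anticipate this apparatus, and the ``peel one ear'' approach as stated would not go through.
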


With the help of Lemma \ref{lem:twoConn} we show the main result of  this subsection:

\begin{lemma}\label{conredlem}
Let $G$ be a graph with a vertex $x$ such that $G-x$ does not contain any $K_4$-subdivision. Then 
for every $k$, the graph $G$ either
contains $k$ edge-disjoint $K_4$-subdivisions or there is an edge set $F$ of size $|F|=\bigO(k^7)$
that meets every $K_4$-subdivision. 
\end{lemma}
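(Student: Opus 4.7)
The plan is to reduce to the $2$-connected setting handled by Lemma~\ref{lem:twoConn}, at a multiplicative cost of $O(k^3)$ in the hitting-set bound. The first step is to subdivide every edge $xv$ of $G$, inserting a fresh degree-$2$ vertex $v'$; this replaces $G$ by $G'$ in which every neighbour of $x$ has degree $2$, and importantly $G' - x - N_{G'}(x) = G - x$. Edge-disjoint $K_4$-subdivisions and edge hitting sets transfer between $G$ and $G'$ up to a constant factor.

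Next, induct on the number of non-trivial blocks of $G - x$. If $G - x$ is $2$-connected, Lemma~\ref{lem:twoConn} applied to $G'$ yields the conclusion with bound $O(k^4) \subseteq O(k^7)$. Otherwise, let $B$ be a leaf block of $G - x$ with cut vertex $c$, and write $G_1 := G[V(B) \cup \{x\}]$ and $G_2 := G - (V(B) \setminus \{c\})$, so $G_1 \cap G_2 = \{x, c\}$. The key structural observation is that in any $K_4$-subdivision of $G$, the three paths joining the non-$x$ branch vertices form a theta graph, which is $2$-connected and hence lies in a single block of $G - x$, called its \emph{theta block}. Subdivisions therefore split into those with theta block $B$ and those with theta block elsewhere.

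To handle theta block $B$, augment $G_1$ by a virtual degree-$2$ vertex $y$ adjacent to $x$ and $c$, provided $G_2$ contains an $x$-$c$-path; this captures the possibility that one of the three $x$-paths of a subdivision transits through $G_2$ via the cut vertex $c$. The augmented $G_1$ still satisfies $G_1 - x - N(x) = B$, which is $2$-connected, so Lemma~\ref{lem:twoConn} yields either $k$ edge-disjoint $K_4$-subdivisions in it, which lift to $G$ by replacing the (at most one) virtual edge used in each subdivision with edge-disjoint real $x$-$c$-paths in $G_2$ (available when the $x$-$c$-edge-connectivity in $G_2$ is at least $k$), or an edge hitting set of size $O(k^4)$. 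When the $x$-$c$-connectivity in $G_2$ is less than $k$, add a minimum $x$-$c$-cut of $G_2$ (of size $<k$) to the hitting set, destroying every transit $K_4$-subdivision with theta block $B$, and apply Lemma~\ref{lem:twoConn} to the unaugmented $G_1$ for the rest. For subdivisions whose theta block is not $B$, recurse on a symmetrically augmented $G_2$; the induction decreases the number of non-trivial blocks.

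The principal obstacle is the bookkeeping: the naive recursion sums $O(k^4)$ over possibly many blocks, which is not bounded by any function of $k$ alone. To reach $O(k^7)$ I would have to show that the number of active iterations (those actually invoking the $O(k^4)$ branch of Lemma~\ref{lem:twoConn}) is at most $O(k^3)$, because otherwise $k$ edge-disjoint $K_4$-subdivisions can be assembled across the active blocks: thetas lying in distinct blocks are automatically edge-disjoint, so the only packing bottleneck is the set of edges at $x$ together with the transit $x$-$c$-paths, both of which are controlled by the Menger-type cut-or-connectivity dichotomy used above. Making this combinatorial selection precise—choosing pairwise disjoint triples of $x$-incident edges and faithfully translating virtual edges in hitting sets back to short real $x$-$c$-cuts—is the delicate part.
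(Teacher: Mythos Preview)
Your overall architecture is right: reduce to the situation of Lemma~\ref{lem:twoConn} and pay a multiplicative factor of $O(k^3)$ coming from a bound on the number of blocks that genuinely need the $O(k^4)$ treatment. The part you flag as ``the delicate part'' is, however, exactly where the substance of the proof lies, and your proposal does not supply it.

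The specific gap is the sentence ``the number of active iterations is at most $O(k^3)$, because otherwise $k$ edge-disjoint $K_4$-subdivisions can be assembled across the active blocks: thetas lying in distinct blocks are automatically edge-disjoint, so the only packing bottleneck is the set of edges at $x$ together with the transit $x$--$c$-paths.'' The thetas are indeed edge-disjoint, but the two or three paths from each theta to $x$ may thread through long chains of blocks of $G-x$, and the paths coming from different thetas can overlap on those chains in essentially arbitrary ways. A single Menger cut at one cut vertex $c$ does not control this: you would need a simultaneous routing argument for many thetas at once, and that is not available from the leaf-block recursion you describe.

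The paper resolves this with a two-level classification. A block $B$ is \emph{essential} if some $K_4$-subdivision has its unique cycle-block in $B$; the \emph{baseblock} of $B$ is the $\le$-smallest block $A$ such that some such $K_4$-subdivision lies entirely in $G_{\ge A}$. One first proves (Lemma~\ref{fewbaseblocks}) that there are fewer than $3k$ baseblocks, by showing that a $\le$-maximal baseblock can be peeled off while losing at most three baseblocks. Then, for a fixed baseblock $A$, one uses Menger's theorem and Mader's $S$-path theorem (Lemma~\ref{fewessblocks}) to produce an edge set of size $O(k^2)$ that makes all but $O(k^2)$ of the essential blocks with baseblock $A$ inessential; the chain structure among the surviving essential blocks (Lemmas~\ref{diamondchain1}--\ref{diamondchain2}) is needed here. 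Only after this are there $O(k^3)$ remaining essential blocks, each of which is handled by Lemma~\ref{lem:twoConn} via a gate-and-Menger construction (Lemma~\ref{claim:gb}). Your proposal would have to reproduce this entire layer, and the leaf-block induction as stated does not.
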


Once we have proved the two lemmas above, we have proved the main theorem:
\begin{proof}[Proof of Theorem~\ref{mainThm}]
Lemma \ref{conredlem} provides a function $h$ as in Lemma \ref{singlehitlem}
for the family $\cF$ of $K_4$-subdivisions.  
The function $g$ for $\cF$ is in $\bigO(k\log k)$ by \cite{AFHJRS17}.
Hence, the hitting set function in Theorem \ref{mainThm} is $k \log k \cdot \bigO(k^7)=\bigO(k^8 \log k)$.
\end{proof}

The remainder of this  section is devoted to the proof  of Lemma~\ref{conredlem}.
Before we start, let us denote the size of the hitting set $F$ in Lemma~\ref{conredlem}
by $f(k)$. Then the function $f$ we will find will satisfy $f(k)\geq k^7$, which implies 
that  
\begin{equation}\label{goodf}
f(k_1)+f(k_2)\le f(k_1+k_2-1)\emtext{ for all positive integers }k_1,k_2.
\end{equation}
We fix throughout the rest of the section a positive integer $k$, and 
a graph $G$ that has a vertex $x$ such that $G-x$ does not contain any $K_4$-subdivision.
We also assume that
\begin{equation}\label{nok}
\emtext{
$G$ does not contain $k$ edge-disjoint $K_4$-subdivisions.}
\end{equation}
Moreover, we may assume  that every edge $e$ of $G$ is contained in a $K_4$-subdivision;
any edge $e$ that is not contained  in one may simply be omitted
as a hitting set in $G-e$ is still a hitting set in $G$.

Suppose that $G$ contains several blocks. 
Every $K_4$-subdivision is contained in a single block.
Because every edge of $G$ is contained in a $K_4$-subdivision, every block contains a $K_4$-subdivision.
Hence, we can apply induction on every block of $G$.
By~\eqref{goodf}, 
the union of the hitting sets of each block has size smaller than $f(k)$.
Therefore, we may assume from now that
\begin{equation}\label{blab}
\emtext{$G$ is $2$-connected, and thus $G-x$ is connected.}
\end{equation}

Furthermore, we can assume that $G-x$ is not $2$-connected: 
otherwise, after subdividing the edges incident with~$x$, we can directly apply Lemma~\ref{lem:twoConn},
which then finishes the proof of Lemma~\ref{conredlem}.

In the rest of the section, whenever we speak of a \emph{block}, without specifying
of which graph, 
 we mean a 
block \emph{of $G-x$}.
Pick an arbitrary cutvertex $r^*$ of $G-x$, and use it to define a partial order on the blocks
by setting $B\geq B'$ for two blocks $B,B'$  if either $B=B'$ or if there is a $B$--$r^*$-path
in $G$ that passes through an edge of $B'$. (It is easy to verify that then every $B$--$r^*$-path
meets $E(B')$.) 

For a block $B$, 
we write $u_B$ for the unique cutvertex of $G-x$ that lies in $B$ and that 
separates $B$ from $r^*$. We, furthermore, denote for any block $B$  by $G_{\geq B}$
the subgraph of $G$ induced by $\bigcup_{B'\geq B}B'\cup\{x\}$,
where the union is over all blocks $B'$  with $B'\geq B$,
and where we exclude the edge $u_Bx$ from $G_{\geq B}$, should it exist. 
We also define $G_{\not\geq B}$ as the subgraph of $G$ induced by $\bigcup_{B'\not\geq B}B'\cup\{x\}$.
Note that if $u_Bx$ is an edge of $G$, then it lies in $G_{\not\geq B}$ but not in $G_{\geq B}$. 
Thus,  $G=G_{\geq B}\cup G_{\not\geq B}$ is an edge-disjoint union, and indeed  $G_{\geq B}$ and $G_{\not\geq B}$
meet precisely in $\{x,u_B\}$.

A block $B$ is \emph{essential} if there is a $K_4$-subdivision $K$ such that $K\cap B$
contains a cycle. Any such $K_4$-subdivision then \emph{makes $B$ essential}.
Note that every $K_4$-subdivision makes precisely one block essential.
For an essential block $B$ there is a (unique) $\leq$-largest block $A$ such that 
$G_{\geq A}$ contains a $K_4$-subdivision that makes $B$ essential. 
That block $A$ is the \emph{baseblock of $B$}, or simply a \emph{baseblock} if 
there is a block for which it is the baseblock. Clearly, $A\leq B$, and 
note that $A=B$ may happen. 

Here is a short overview which steps we take to prove Lemma \ref{conredlem}.
Lemmas \ref{esslem}, \ref{esspathlem} and \ref{bblocklem} provide some basic properties of essential blocks and baseblocks.
Using them, Lemma~\ref{fewbaseblocks} shows that we cannot have $3k$ baseblocks in $G$
because otherwise we would find $k$ edge-disjoint $K_4$-subdivisions.
Then, our aim is to bound the number of essential blocks that belong to the same baseblock.
Lemmas~\ref{diamondchain1},~\ref{moreesslem} and~\ref{diamondchain2}
prove that almost all essential blocks that have the same baseblock have a certain simple structure.
Then, Lemma \ref{fewessblocks} uses this structure to find a small set of edges
that makes almost all such essential blocks \emph{inessential} 
(the set meets all $K_4$-subdivisions that make the block essential).
As now, there are only few baseblocks and only few essential blocks per baseblock left,
we can focus on every essential block 
and either find $k$ edge-disjoint $K_4$-subdivisions there or an edge hitting set of bounded size (done by Lemma \ref{lem:twoConn}). 
The union of these sets over all remaining essential blocks is the hitting set for the whole graph.

We start the proof of Lemma~\ref{conredlem} with a couple of lemmas about essential blocks and baseblocks.

\begin{lemma}\label{esslem}
Let $B$ be an essential block, and let $u$ be a cutvertex of $G-x$ contained in $B$.
Let $C$ be a component of $G-\{u,x\}$ that is disjoint from $B$,
and let $K$ be a $K_4$-subdivision that makes $B$ essential.  
If $P$ is a $u$--$x$-path internally disjoint from $B$, then $(K-C)\cup P$ 
contains a $K_4$-subdivision that makes $B$ essential.
\end{lemma}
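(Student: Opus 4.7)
The plan is to show that $K$ uses $V(C)$ only minimally---that at most one subdivision path of $K$ touches $V(C)$---and then to obtain the desired $K_4$-subdivision by rerouting that one path through $P$.

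First I would exploit that $K \cap B$ contains a cycle $C_0$: since every cycle in a $K_4$-subdivision traverses at least three of its four branch vertices (as cycles in $K_4$ use at least three vertices), at least three branch vertices of $K$ lie in $V(B)$ and at most one lies outside. A brief case analysis rules out that the remaining branch vertex lies in $V(C)$: the three subdivision paths from it would have to leave $V(C)$ only via $\{u,x\}$, which, combined with the internal disjointness of subdivision paths of $K$ and the fact that at least three branch vertices lie in $V(B)$, is impossible. Next I would show that at most one subdivision path $Q$ of $K$ uses $V(C)$: such a path has its interior $V(C)$-vertices forming a subpath bounded by $\{u,x\}$, so $Q$ contains both $u$ and $x$; a short case check on whether $u$ or $x$ is a branch vertex then shows that at most one subdivision path of $K$ can contain both. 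If no such $Q$ exists, then $K \subseteq (K-V(C)) \cup P$ and we are done.

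Otherwise decompose $Q = Q_1 \cdot R \cdot Q_2$, where $R$ is the $u$--$x$-subpath of $Q$ with internal vertices in $V(C)$ and $Q_1,Q_2$ are its $v_i$--$u$ and $x$--$v_j$ halves. I would replace $R$ by $P$, take a simple $v_i$--$v_j$-path $Q'$ in $Q_1 \cup P \cup Q_2$, and let $K^*$ consist of $Q'$ together with the five unaltered subdivision paths of $K$, all on the same four branch vertices. Verifying that $K^*$ is a $K_4$-subdivision reduces to checking internal disjointness of $Q'$ from the five unaltered paths: the subpaths $Q_1,Q_2 \subseteq Q$ inherit this from the subdivision structure of $K$, while the internal vertices of $P$ lie outside $V(B)$. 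The main obstacle will be ruling out a conflict between the internal vertices of $P$ and those of the other subdivision paths \emph{outside} $V(B)$; one handles this using the uniqueness of $Q$ among subdivision paths touching $V(C)$, together with a careful accounting of where any fourth branch vertex outside $V(B)$ can sit (in particular whether it equals $x$ or lies in a different component of $G-\{u,x\}$).

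Finally, since $C_0$ consists of complete subdivision paths lying entirely in $V(B)$, and $Q$ has vertices in $V(C)$ with $V(C) \cap V(B) = \emptyset$, the path $Q$ is not entirely in $V(B)$ and so cannot be one of the subdivision paths forming $C_0$. Hence $C_0$ is built from the five unaltered subdivision paths, giving $C_0 \subseteq K^*$, so $K^* \cap B \supseteq C_0$ contains a cycle and thus $K^*$ makes $B$ essential, as required.
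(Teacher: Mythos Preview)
Your plan is correct and follows the same rerouting idea as the paper: locate the (at most one) subdivided edge of $K$ that meets $C$ and replace its $u$--$x$ portion by $P$. The paper's proof is much shorter, though, because it opens with a single structural observation you do not use: since $K-x$ has a \emph{unique} non-trivial block and that block lies in $B$ (as $K$ makes $B$ essential), every branch vertex of $K$ except possibly $x$ already lies in $B$. This one line replaces your cycle-counting argument for three branch vertices in $B$, your case analysis excluding a fourth branch vertex in $C$, and most of the ``main obstacle'' you flag about $P$ conflicting with other subdivision paths outside $B$: once you know $K-x$ consists of the non-trivial block in $B$ together with pairwise disjoint pendant paths toward neighbours of $x$, it is immediate that at most one such pendant path can pass through the cutvertex $u$, and hence through any component of $G-\{u,x\}$ that $u$ separates from $B$; since the interior of $P$ sits in exactly such a component, no conflict arises. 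Your proposal would work, but the paper's observation about the block structure of $K-x$ collapses the argument to a couple of sentences.
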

\begin{proof}
Whether $x$ is a branch vertex of $K$ or whether it lies on a subdivided edge does not change the fact
that all other branch vertices lie in the unique non-trivial block of $K-x$, and thus in $B$. As $\{u,x\}$
separates $B$ from $C$ in $G$, it follows that $K\cap C$ is either empty or contains a subpath of 
a subdivided edge of $K$, that then is continued to $x$ and to $u$. Replacing this subpath, if it exists,
by $P$ results in a new $K_4$-subdivision that again contains a cycle in $B$, i.e.\ that makes $B$ essential.
\end{proof}

\begin{lemma}\label{esspathlem}
Let $B$ be an essential block, let $A$ be the baseblock of $B$,
and let $K$ be a $K_4$-subdivision that makes $B$ essential. Then:
\begin{enumerate}[\rm (i)]
\item\label{essi} if $B=A$ then there are two 
internally disjoint $u_B$--$x$-paths in $G_{\geq B}$;
\item\label{essii} if $B>A$ then $K\cap G_{\geq B}$ contains  a $u_B$--$x$-path; and
\item\label{essiii} if $B>A$ then $K\cap G_{\ngeq B}$ contains a $u_B$--$x$-path
that meets $A$.
\end{enumerate}
\end{lemma}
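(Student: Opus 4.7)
The three parts will all rest on the $2$-connectivity of a $K_4$-subdivision, the fact that $V(G_{\geq B})\cap V(G_{\not\geq B})=\{u_B,x\}$ separates $G$, and (for (ii) and (iii)) the maximality of the baseblock $A$.  For (i) I would discard the given $K$ and invoke the definition of the baseblock directly: since $A=B$, there is a $K_4$-subdivision $K'\subseteq G_{\geq B}$ whose intersection with $B$ contains a cycle $C$.  Both $K'$ and $B$ are $2$-connected and they share the (at least three) vertices of $V(C)$, so $K'\cup B\subseteq G_{\geq B}$ is $2$-connected and contains both $u_B\in V(B)$ and $x\in V(K')$; Menger's theorem then supplies the two internally disjoint $u_B$--$x$-paths.

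For (ii) and (iii), write $K_1=K\cap G_{\geq B}$ and $K_2=K\cap G_{\not\geq B}$.  The cycle $K\cap B$ forces $V(K_1)\setminus\{u_B,x\}$ to be non-empty, and $K\not\subseteq G_{\geq B}$ (otherwise $A\geq B$, contradicting $B>A$) forces $V(K_2)\setminus\{u_B,x\}$ to be non-empty.  If $u_B\notin V(K)$, the only shared vertex of $K_1$ and $K_2$ would be $x$, so $x$ would be a cut-vertex of $K$---impossible.  Hence $\{u_B,x\}$ is a $2$-separator of $K$, and by $2$-connectivity each component $C$ of $K-\{u_B,x\}$ has both $u_B$ and $x$ as neighbours, yielding a $u_B$--$x$-path inside $K[V(C)\cup\{u_B,x\}]$.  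Choosing $C$ on the $G_{\geq B}$-side (one exists thanks to the cycle) proves (ii); the symmetric choice gives a $u_B$--$x$-path in $K_2$.

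What remains for (iii) is to force such a path to meet $A$.  If $u_B\in V(A)$ the path meets $A$ at its endpoint, so assume $u_B\notin V(A)$; I claim then that $V(K)\cap V(A)\neq\emptyset$.  Suppose not: then $K-x$ is connected (since $K$ is $2$-connected), avoids $V(A)$, and contains vertices of $B$, so it lies in the component of $G-x-V(A)$ containing $V(B)$.  Letting $B_1$ be the block immediately above $A$ on the path from $A$ to $B$ in the block-cutvertex tree of $G-x$, this component equals $\bigcup_{B'\geq B_1} V(B')\setminus\{u_{B_1}\}$.  In particular $u_{B_1}\notin V(K)$, so the edge $u_{B_1}x$---the only edge removed when forming $G_{\geq B_1}$---is absent from $K$, and therefore $K\subseteq G_{\geq B_1}$; but $B_1>A$, contradicting the maximality of $A$.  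Now any $w\in V(K)\cap V(A)$ lies in $V(K_2)\setminus\{u_B,x\}$; if $C_w$ denotes the component of $K-\{u_B,x\}$ containing $w$, the graph $K[V(C_w)\cup\{u_B,x\}]+u_Bx$ is $2$-connected, so the added edge and the vertex $w$ lie on a common cycle, producing the desired $u_B$--$x$-path through $w$ in $K_2$ that meets $A$ at $w$.  I expect the main obstacle to be precisely this confinement argument: one must verify that the connectivity of $K-x$ together with the avoidance of $V(A)$ forces $K$ both into the $B_1$-branch of the block tree \emph{and} to miss the single excluded edge $u_{B_1}x$, so that $K\subseteq G_{\geq B_1}$ is a genuine contradiction with the choice of $A$.
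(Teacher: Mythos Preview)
Your proof is correct, but it takes a noticeably different route from the paper's, and there is one small imprecision worth flagging.

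\textbf{Comparison of approaches.}
The paper's proof exploits one structural observation you never use: since $K\cap B$ contains a cycle, the \emph{entire} non-trivial block of $K-x$ lies in $B$.  Consequently $K\cap G_{\not\geq B}$ is not just ``some piece of a $2$-connected graph'' --- it is literally a subpath of a single subdivided edge of $K$, hence automatically a $u_B$--$x$-path.  That makes~(iii) a one-liner once one knows $K$ meets $A$.  Similarly, for~(ii) the paper picks the two $C$--$x$-paths in $K$ and notes only one can pass through $u_B$; for~(i) it builds the two paths explicitly from the cycle $C'\subseteq B$, two $u_B$--$C'$-paths in $B$, and two $C'$--$x$-paths in $K'$.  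Your arguments via ``$K'\cup B$ is $2$-connected, apply Menger'' and ``components of $K-\{u_B,x\}$ each see both cut vertices'' are perfectly valid, but they are generic $2$-connectivity facts that do not use the $K_4$-structure; the paper's structural observation is both shorter and what is actually reused downstream (e.g.\ in Lemma~\ref{esslem}).  On the other hand, your careful justification that $K$ must meet $A$ (via the block $B_1$ immediately above $A$) is something the paper only asserts; you have filled in a step the paper leaves implicit.

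\textbf{A minor gap.}
The sentence ``$K\not\subseteq G_{\geq B}$ \ldots\ forces $V(K_2)\setminus\{u_B,x\}$ to be non-empty'' is not quite right: it only forces $E(K_2)\neq\emptyset$, and the single edge $u_Bx$ could in principle be all of $K_2$.  In that degenerate case $\{u_B,x\}$ is not a $2$-separator of $K$ and your ``symmetric choice'' of a component on the $K_2$-side does not exist.  This does not actually break the proof: the edge $u_Bx$ is itself a $u_B$--$x$-path in $K_2$, and if moreover $u_B\notin V(A)$ then your own confinement argument (with $B_1$) shows $K\subseteq G_{\geq B_1}$, a contradiction --- so in that edge case $u_B\in V(A)$ and the path meets $A$ at its endpoint.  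You should add one sentence to dispose of this case explicitly.
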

\begin{proof}
We recall that $K\cap B$ contains a cycle, and thus the whole unique non-trivial block of $K-x$. 
We start with the proof of~\eqref{essiii}. As $B>A$, it follows that $A\subseteq G_{\ngeq B}$. 
By definition of a baseblock, and as $B\neq A$, 
 the $K_4$-subdivision $K$ must meet $A$ and must have an edge outside $G_{\geq B}$, and thus in 
 $G_{\ngeq B}$. As $G_{\ngeq B}$ is separated from $B$ by $\{u_B,x\}$, this means 
that $K\cap G_{\ngeq B}$ is a subpath $Q$ of a subdivided edge of $K$. In particular, $Q$ is a $u_B$--$x$-path
that meets $A$.

Next, we treat~\eqref{essii}.
Let $C$ be a cycle in $K\cap B$. 
Then there are at least two $C$--$x$-paths contained in $K$ that meet only in $x$. Of these, only one may 
pass through $u_B$; the other, $P$ say, is disjoint from $u_B$ and therefore contained in $G_{\geq B}$.
Since, by~\eqref{essiii}, $K$ also contains  $u_B$ and since it is connected, we therefore 
find  a $u_B$--$x$-path in $K\cap G_{\geq B}$.

For~\eqref{essi}, consider a $K_4$-subdivision $K'$ that makes $B$ essential and that is contained
in $G_{\geq B}$. Such a $K'$ exists as $B$ is its own baseblock. As above, $K'\cap B$ contains a 
cycle $C'$, and two 
 $C'$--$x$-paths that meet only in $x$. As $K'\subseteq G_{\geq B}$ (as $A=B$), the two paths 
are contained in  $G_{\geq B}$. 
As $B$ is $2$-connected, there are two $u_B$--$C'$-paths that only meet in $u_B$.
These paths can be extended disjointly from each other on $C'$ 
to the start vertices of the $C'$--$x$-paths so that we obtain internally 
disjoint $u_B$--$x$-paths in $G_{\geq B}$.
\end{proof}

\begin{lemma}\label{bblocklem}$\,$
\begin{enumerate}[\rm (i)]
\item\label{blocki}
Let $A$ be a baseblock, and let $B'$ be some block with $B'>A$. Then $G_{\geq A}\cap G_{\not\geq B'}$
contains a $u_A$--$x$-path.
\item\label{blockii} Let $A$ be the baseblock of an essential block $B>A$, and let $B'$ be some block with $A<B'<B$. Then 
there is no $u_B$--$x$-path in $G_{\geq B'}\cap G_{\ngeq B}$. 
\item\label{blockiii} Let $A$ be the baseblock of an essential block $B$ and let $A'$ be 
another block such that $A<A'<B$. Then $A'$ cannot be a baseblock.
\end{enumerate}
\end{lemma}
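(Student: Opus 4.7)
The plan is to prove the three parts in the order (ii), (i), (iii): (ii) is a direct path-substitution inside the witnessing $K_4$-subdivision, (i) is the technical core with a separator argument, and (iii) combines (i), (ii), and Lemma~\ref{esspathlem}(iii).

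For (ii), let $K$ be a $K_4$-subdivision in $G_{\geq A}$ witnessing that $A$ is the baseblock of $B$, and suppose for contradiction that a $u_B$--$x$-path $P \subseteq G_{\geq B'} \cap G_{\not\geq B}$ exists. By Lemma~\ref{esspathlem}(iii), $Q := K \cap G_{\not\geq B}$ is itself a $u_B$--$x$-path, so $K' := (K \cap G_{\geq B}) \cup P$ is a $K_4$-subdivision: its two pieces share only $\{u_B, x\}$, so are internally disjoint. Since $K'$ keeps its cycle in $B$, it makes $B$ essential, and $B \geq B'$ gives $G_{\geq B} \subseteq G_{\geq B'}$, so $K' \subseteq G_{\geq B'}$. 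With $A < B' \leq B$ this contradicts the maximality of $A$.

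For (i), let $B^*$ be an essential block with baseblock $A$ and $K^* \subseteq G_{\geq A}$ a witnessing $K_4$-subdivision. If $B^* = A$, Lemma~\ref{esspathlem}(i) supplies two internally disjoint $u_A$--$x$-paths in $G_{\geq A}$; because $B' > A$ forces $u_{B'} \neq u_A$ and $\{u_{B'}, x\}$ separates $G_{\geq B'}$ from $G_{\not\geq B'}$ inside $G_{\geq A}$, at most one of these paths passes through $u_{B'}$, and the other is the required $u_A$--$x$-path in $G_{\geq A} \cap G_{\not\geq B'}$. If $B^* > A$, then Lemma~\ref{esspathlem}(ii),(iii) decomposes $K^*$ into internally disjoint $u_{B^*}$--$x$-paths $R \subseteq G_{\geq B^*}$ and $Q \subseteq G_{\not\geq B^*}$ with $Q$ meeting $A$. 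Picking $v \in Q \cap A$, the cycle $R \cup Q$ offers two $v$--$x$-routes in $G_{\geq A}$, namely $vQx$ and $vQu_{B^*}Rx$, and a $u_A$--$v$-path in the 2-connected block $A$ prepended to the right choice yields the required path. The hard part is showing that one of the two routes always lies entirely in $G_{\not\geq B'}$: the maximality of $A$ is the key input, because it prevents $K^*$ from living in $G_{\geq A_1}$ for $A_1$ the block immediately above $A$ on the path to $B^*$, forcing $K^*$ to leave $A$ through a second branch (precisely the branch along which $Q$ continues past $v$ toward $x$). The route $vQu_{B^*}Rx$ then stays inside the $B^*$-branch and $vQx$ stays inside the other branch, so whichever branch above $A$ contains $B'$, at least one of the two routes avoids its interior; when $Q$ enters and leaves $A$ at distinct vertices, the fan lemma in $A$ even yields two internally disjoint $u_A$--$x$-paths in $G_{\geq A}$ and the uniform separator argument of the case $B^* = A$ applies.

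For (iii), suppose for contradiction that $A'$ is a baseblock. Apply (i) to $A'$ and $B$ (using $B > A'$) to obtain a $u_{A'}$--$x$-path $P' \subseteq G_{\geq A'} \cap G_{\not\geq B}$. Let $Q$ be the $u_B$--$x$-path in $K \cap G_{\not\geq B}$ provided by Lemma~\ref{esspathlem}(iii) for $K$ witnessing $A$ as baseblock of $B$. In $G-x$ the cutvertex $u_{A'}$ separates the $r^*$-side (containing $A$) from $A'$'s subtree (containing $B$), and since $Q$ meets $A$ while starting at $u_B$, its $G-x$-portion must pass through $u_{A'}$. The sub-path $u_B Q u_{A'}$ lies in $G_{\geq A'} \cap G_{\not\geq B}$, so its concatenation with $P'$ produces a $u_B$--$x$-walk---hence a simple $u_B$--$x$-path---in $G_{\geq A'} \cap G_{\not\geq B}$, contradicting (ii) applied with $B' := A'$.
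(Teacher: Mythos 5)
Your parts (ii) and (iii) are correct and essentially the paper's own arguments: (ii) is the same path-substitution that the paper packages into Lemma~\ref{esslem} (replace the unique excursion $K\cap G_{\ngeq B}$ of the witness by the hypothesised path inside $G_{\geq B'}$ and contradict maximality of the baseblock), and (iii) is exactly the paper's reduction to (i) and (ii), with the extension along $u_BQu_{A'}$ spelled out. The case $B^*=A$ of (i), and your fan-lemma remark for the case that $Q$ meets $A$ in two distinct vertices, are also sound.

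The gap is in the remaining case of (i), which is precisely the step you yourself label ``the hard part''. The claim that one of the two routes $vQx$, $vQu_{B^*}Rx$ lies in $G_{\ngeq B'}$ is asserted, and the reason you give does not establish it. Maximality of $A$ only yields an edge of $K^*$ outside $G_{\geq A_1}$; that edge may be an edge of $A$ traversed by $Q$, or the edge $u_{A_1}x$, so it does not ``force $K^*$ to leave $A$ through a second branch'': the final segment of $Q$ may end at $x$ directly from inside $A$, and whenever $Q$ meets $A$ in more than one vertex, maximality says nothing at all about where that final segment goes. Likewise ``$vQx$ stays inside the other branch'' is not true as stated, since $v$ is an arbitrary vertex of $Q\cap A$ and the route traverses part of $A$. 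What actually has to be proved is that the portion of $Q$ after its \emph{last} vertex in $A$ avoids $G_{\geq B'}$ whenever $B'$ lies in the subtree above $A_1$ --- in particular when $A_1\leq B'<B^*$, where your other route visibly enters $G_{\geq B'}$ (the descent from $u_{B^*}$ to $u_{A_1}$ crosses it, and $R$ may as well), so nothing in your dichotomy covers this sub-case. The paper closes exactly this hole by taking $a$ to be the last vertex of $P=Q$ in $A$ and observing that, since $Q$ must already pass through $u_{B'}$ on its way from $u_{B^*}$ to $A$, the tail $aQ$ cannot re-enter $G_{\geq B'}$ without revisiting $u_{B'}$; (the paper splits instead on whether $B'$ is comparable with the chosen essential block, using Lemma~\ref{esspathlem}\,\eqref{essii} when they are incomparable). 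Your branch dichotomy can be repaired --- the tail cannot re-enter the subtree of $A_1$ because it cannot revisit the attachment cutvertex $u_{A_1}$, and in the degenerate case $Q\cap A=\{u_{A_1}\}$ your maximality argument does apply --- but as written the pivotal containment is a claim, not a proof, and its stated mechanism is not a valid implication.
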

\begin{proof}
Let $B$ be an essential block such that $A$ is the baseblock of $B$,
and let $K$ be a $K_4$-subdivision that makes $B$ essential, and that is contained in $G_{\geq A}$
(such a $K$ exists, by definition of a baseblock).

Assume first that $B$ and $B'$ are incomparable, or that $B=A$. In either case, 
Lemma~\ref{esspathlem}~\eqref{essi} or~\eqref{essii}
yields a $u_B$--$x$-path in $G_{\geq B}\cap G_{\ngeq B'}$. We extend the path 
through $G_{\geq A}\cap G_{\ngeq B'}$ to a $u_A$--$x$-path.

Now assume that $B$ and $B'$ are comparable and that $B>A$.
Lemma~\ref{esspathlem}~\eqref{essiii} yields a $u_B$--$x$-path $P$ in $K\cap G_{\ngeq B}$ that meets $A$.
In particular, $P\subseteq  G_{\geq A}\cap G_{\ngeq B}$ as $K\subseteq G_{\geq A}$.
Let $a$ be the last vertex of $P$ in $A$. 
Then $aP$ is an $A$--$x$-path in $G_{\geq A}\cap G_{\ngeq B'}$. As $A$ is connected, 
we can extend it to a $u_A$--$x$-path through $A$.
This proves~\eqref{blocki}.

Statement~\eqref{blockii} follows from Lemma~\ref{esslem}: if there was such a path, then we would 
find a $K_4$-subdivision in $G_{\geq B'}$ that makes $B$ essential, in contradiction to that 
$A<B'$ is the baseblock of $B$.

Let us now prove~\eqref{blockiii}.
Suppose that $A'$ is a baseblock. By~\eqref{blocki}, there is then a $u_{A'}$--$x$-path 
in $G_{\geq A'}\cap G_{\ngeq B}$, which implies that there is also a $u_{B}$--$x$-path 
in $G_{\geq A'}\cap G_{\ngeq B}$. This, however, is impossible as~\eqref{blockii} shows.
\end{proof}

The next lemma bounds the number of baseblocks.

\begin{lemma}\label{fewbaseblocks}
There are fewer than  $3k$ distinct baseblocks.
\end{lemma}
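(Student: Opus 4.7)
My plan is to prove the contrapositive: if $G$ has at least $3k$ distinct baseblocks, then it contains $k$ edge-disjoint $K_4$-subdivisions, contradicting~\eqref{nok}.

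The key observation is that if two baseblocks $A, A'$ are incomparable in the block order $\leq$, then the subtrees of the block tree of $G-x$ rooted at $A$ and $A'$ are disjoint, so $G_{\ge A}\cap G_{\ge A'}=\{x\}$ and hence any two $K_4$-subdivisions chosen respectively inside $G_{\ge A}$ and $G_{\ge A'}$ are automatically edge-disjoint. By definition of a baseblock, every $A\in\mathcal A$ carries a $K_4$-subdivision $K_A\subseteq G_{\ge A}$ making its essential block $B_A\ge A$ essential. Thus it suffices to extract $k$ pairwise incomparable baseblocks.

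I would first dispose of the easy case via a Dilworth-type argument on the poset $(\mathcal A,\leq)$: either $\mathcal A$ contains an antichain of size $k$, finishing the proof by the observation above, or $\mathcal A$ is the union of a small number of chains and therefore contains a long chain $A_1<A_2<\cdots<A_s$. In the chain case the structural Lemma~\ref{bblocklem}(iii) is crucial: no baseblock lies strictly between $A_i$ and its essential block $B_{A_i}$, and so consecutive baseblocks in the chain must satisfy $A_{i+1}\ge B_{A_i}$ or be routed to a disjoint branch above $A_i$. This allows one to build $K_{A_i}$ using edges of $A_i$ together with paths produced by Lemma~\ref{esspathlem}~(i) (two internally-disjoint $u_{A_i}$--$x$-paths in $G_{\ge A_i}$ when $A_i=B_{A_i}$) or by Lemma~\ref{esspathlem}~(ii)--(iii) (when $A_i<B_{A_i}$), and then to reroute via Lemma~\ref{bblocklem}(i) through a subtree $G_{\ge A_i}\cap G_{\not\ge B'}$ that avoids the block $B'>A_i$ already used by previously chosen $K_A$'s. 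In this way a chain of length $3t$ yields $t$ edge-disjoint $K_4$-subdivisions, and combining with the antichain case gives $k$ edge-disjoint $K_4$-subdivisions overall.

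The main obstacle I expect is precisely the chain case: one has to do bookkeeping to ensure that the three objects produced from three consecutive chain baseblocks $A_{3i-2}<A_{3i-1}<A_{3i}$ are genuinely edge-disjoint, and also disjoint from those coming from other chains in a chain cover. The factor $3$ should reflect that, for a chain baseblock $A$ with $A<B_A$, the $K_4$-subdivision $K_A$ naturally decomposes (by Lemma~\ref{esspathlem}) into three pieces — a cycle in $B_A$, a $u_{B_A}$--$x$-path in $G_{\ge B_A}$, and a $u_{B_A}$--$x$-path through $A$ — and three consecutive baseblocks in a chain suffice to supply these pieces disjointly for one member of the triple while two other edge-disjoint $K_4$-subdivisions are constructed locally at the other two members using the flexibility granted by Lemma~\ref{bblocklem}(i).
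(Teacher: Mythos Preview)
Your antichain case is fine, but the Dilworth-plus-chain scheme has a genuine gap that is more than bookkeeping. Even granting that a single chain of baseblocks $A_1>A_2>\cdots>A_m$ yields $\lceil m/3\rceil$ edge-disjoint $K_4$-subdivisions (which you do not actually prove --- it essentially requires the rerouting argument of the paper), you have not shown that the $K_4$-subdivisions coming from \emph{different} chains of a chain cover are edge-disjoint. They need not be: elements of distinct chains in a Dilworth cover can be comparable in the block order. Concretely, suppose the baseblock poset is a long chain $C_0<\cdots<C_{2k+1}$ with $k-2$ extra leaves $D_1,\dots,D_{k-2}$ attached at $C_{2k}$. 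The natural chain cover is the long chain together with the singletons $\{D_i\}$. The $K_4$-subdivision you build for $D_i$ lives in $G_{\ge D_i}$, but $G_{\ge D_i}\subseteq G_{\ge C_{2k-2}}\cap G_{\not\ge C_{2k+1}}$, which is exactly the region where your \emph{second} $K_4$-subdivision from the long chain is constructed; nothing prevents these from sharing edges. Your final paragraph about ``three consecutive baseblocks supplying pieces disjointly'' is too vague to resolve this (and seems internally inconsistent about whether three baseblocks yield one $K_4$ or three).

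The paper sidesteps all of this with a clean global induction rather than a per-chain argument: pick a $\le$-maximal baseblock $A_1$, take one $K_4$-subdivision in $G_{\ge A_1}$, and then show that in $G'=G_{\not\ge A_1}$ all but at most three baseblocks survive as baseblocks (namely $A_1$ itself and the two closest baseblocks $A_2,A_3$ strictly below it on the path to the root, via Lemma~\ref{bblocklem}(i) and Lemma~\ref{esslem}). Thus $3\ell$ baseblocks yield one $K_4$ in $G_{\ge A_1}$ plus, by induction, $\ell-1$ edge-disjoint ones in $G'$, and edge-disjointness is automatic because $G_{\ge A_1}$ and $G'$ are edge-disjoint. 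This is the missing idea: peel globally rather than decompose into chains first.
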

\begin{proof}
We prove by induction on $\ell$ that $G$ contains $\ell$ edge-disjoint $K_4$-sub\-di\-visions if $G$
contains at least $3\ell$ distinct baseblocks. By~\eqref{nok}, it then follows that $G$ cannot have $3k$
or more distinct baseblocks. 

Pick a $\leq$-largest baseblock $A_1$, and set $G'=G_{\not\geq A_1}$.
The restriction of the partial order of the blocks of $G-x$ onto the blocks of $G'-x$
is a partial order of the blocks of $G'-x$. 
Thus, we can speak of baseblocks with respect to $G'$.

We claim that $G'-x$ contains at least $3(\ell-1)$ blocks (of $G'-x$) that are baseblocks
with respect to $G'$. Then, by induction, $G'$  contains $\ell-1$ edge-disjoint $K_4$-subdivision. 
Together with one $K_4$-subdivision contained in $G_{\geq A_1}$ 
(that exists as $A_1$ is a baseblock)
we  obtain the desired $\ell$
edge-disjoint $K_4$-subdivisions.  

To prove the claim, consider a  baseblock $A$ (with respect to $G$) with $A\neq A_1$,
and let $B$ be an essential block such that $A$ is its baseblock.
First assume that  $A\not\leq A_1$. From the choice of $A_1$ it then follows 
that $A_1$ and $A$ are incomparable and thus 
\[
G_{\geq A}=G\big[\bigcup_{B'\geq A}B'\cup\{x\}\big]-u_{A}x\subseteq 
G\big[\bigcup_{B'\not\geq A_1}B'\cup\{x\}\big]=G_{\not\geq A_1}=G'.
\]
By definition of a baseblock, there is a
$K_4$-subdivision $K$ that makes $B$ essential (in $G$) and that is contained
in $G_{\geq A}$.
Then $K$  
 is still contained 
in $G'$, and thus still makes $B$ essential in $G'$, 
which in turn means that $A$ is still a baseblock with respect to $G'$.

Next, assume that $A<A_1$. 
Let $A_2$ be the $\le$-largest baseblock such that $A_2<A_1$.
If it exists, let $A_3$ be the $\le$-largest baseblock such that $A_3<A_2$. Otherwise set $A_3:=A_2$.
We show that $A$ is still a baseblock in $G'$ if $A\not\in \{A_2,A_3\}$.
The choice of $A$ implies $A<A_3$.
Lemma~\ref{bblocklem}~\eqref{blockiii} implies $B \ngtr A_3$.
Then, with $A_2$ in the role of $A$, and $A_1$ in the role of $B'$,
we apply Lemma~\ref{bblocklem}~\eqref{blocki} in order to find a $u_{A_2}$--$x$-path
in $G_{\geq A_2}\cap G_{\ngeq A_1}$. 
The path can be extended to a $u_B$--$x$-path $P$ in
$G_{\geq A}\cap G_{\ngeq A_1}$. We follow this by an application of Lemma~\ref{esslem}
to $B,P$ and a $C$ that contains $G_{\geq A_1}-x$. In this way, we find a $K_4$-subdivision 
in $G_{\geq A}\cap G_{\ngeq A_1}$ that makes $B$ essential. 
As a consequence, $A$ is still a baseblock in $G'$.

To conclude, all but at most three of the baseblocks with respect to $G$ (namely all but $A_1$, $A_2$ and $A_3$ if they exist) 
are still baseblocks with respect to $G'$. This proves the claim 
and the lemma. 
\end{proof}

Knowing that $G-x$ only contains few baseblocks 
we aim to bound the number of essential blocks that have the same block as their baseblock. 
This will be done in Lemma~\ref{fewessblocks}. 

For the rest of the section, we call a vertex $v\in V(B)$ of a block $B$ a \emph{gate} of $B$
if $v$ has neighbours outside  $B$.
Note that gates are either cutvertices of $G-x$ or neighbours of $x$.
Furthermore, as $G$ is $2$-connected by~\eqref{blab}, it follows 
for every gate $v$ of $B$ that there is a $v$--$x$-path 
that is internally disjoint from $B$.

\begin{lemma}\label{diamondchain1}
Let $B$ be an essential block with exactly two gates $u,v$, and let $\mathcal P$
be a set of edge-disjoint $u$--$v$-paths contained in $B$. 
Then there is a $u$--$v$-diamond  in $B$ 
that is edge-disjoint from all but at most five of the paths in $\mathcal P$.
\end{lemma}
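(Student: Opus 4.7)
The plan is to argue in two stages. First I would show that $B$ contains some $u$--$v$-diamond, which uses crucially that $B$ is essential. Fix a $K_4$-subdivision $K\subseteq G$ that makes $B$ essential, so $K\cap B$ contains a cycle and hence the unique non-trivial block of $K-x$ (as in the proof of Lemma~\ref{esslem}); consequently all branch vertices of $K$ other than possibly $x$ lie in $B$. The subdivided edges from these branch vertices to $x$ can leave $B$ only through the gates $u,v$, so pigeonhole forces two of them to exit through the same gate. The three internally disjoint paths among the branch vertices inside $B$, together with these two arms to $u$ and $v$, then form a $u$--$v$-diamond in $B$.

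Second, I would pick a $u$--$v$-diamond $D\subseteq B$ that minimises $|\mathcal{S}(D)|$, where $\mathcal{S}(D):=\{P\in\mathcal{P}\colon E(P)\cap E(D)\neq\emptyset\}$. Writing $D=P_1\cup P_2\cup P_3\cup Q_1\cup Q_2$, the diamond has five arcs, so if $|\mathcal{S}(D)|\geq 6$ then by pigeonhole some arc $\alpha$ uses edges of two distinct paths $\pi',\pi''\in\mathcal{S}(D)$. I would then reroute $\alpha$ through $\pi'$ (or $\pi''$) to obtain a $u$--$v$-diamond $D'$ with strictly smaller $|\mathcal{S}(D')|$, contradicting the choice of $D$ and forcing $|\mathcal{S}(D)|\leq 5$.

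The main obstacle is precisely this rerouting. The replacement arc $\alpha'$ must have the same endpoints as $\alpha$ and must remain internally disjoint from the other four arcs of $D$; otherwise $D'$ is no longer a $u$--$v$-diamond. Using that $B$ is series-parallel (by Lemma~\ref{diamonds}(i), since $B$ is $2$-connected and $K_4$-subdivision-free) and that $\pi',\pi''$ are $u$--$v$-paths providing enough freedom to detour, I would cut out the subarc of $\alpha$ between its first and last edges lying in $\pi'$ and replace it by the corresponding subpath of $\pi'$. Verifying that this replacement stays internally disjoint from the remaining four arcs of $D$ -- or, if it cannot, that the analogous operation using $\pi''$ succeeds -- is the delicate technical step where most of the work sits, and it is here that the series-parallel structure of $B$ and the edge-disjointness of $\mathcal{P}$ must be exploited together.
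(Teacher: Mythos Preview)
Your first stage is basically fine, though the pigeonhole sentence is phrased oddly: what the argument actually gives is that if $x$ were a branch vertex of $K$, the three subdivided edges to $x$ would need three distinct exit points from $B$, impossible with only two gates; hence $x$ lies on a subdivided edge and $K\cap B$ is already a $u$--$v$-diamond. That is exactly how the paper opens.

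The second stage, however, has a genuine gap, and the paper proceeds very differently. Your minimality-and-reroute scheme does not decrease $|\mathcal S(D)|$ in general. Suppose arc $\alpha$ contains edges of both $\pi'$ and $\pi''$. After you splice in a subpath of $\pi'$, two things go wrong. First, the portions of $\alpha$ you keep (before the first and after the last $\pi'$-edge) may still contain edges of $\pi''$, so $\pi''$ need not leave $\mathcal S(D')$ at all; and even if you managed to scrub $\pi''$ from $\alpha$, nothing prevents $\pi''$ from meeting some \emph{other} arc of $D$, so again $\pi''\in\mathcal S(D')$. Second, the inserted subpath of $\pi'$ is part of a $u$--$v$-path that may run through any of $B_u,B_M,B_v$, so it can hit the other four arcs internally; you acknowledge this but offer no mechanism to prevent it. Series-parallelness by itself does not give you the disjointness you need here, and ``try $\pi''$ instead'' faces the same obstacles.

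What the paper exploits, and what your plan is missing, is a structural separator. If $s,t$ are the two degree-$3$ vertices of the diamond $D$, then $\{s,t\}$ separates $B$ into three pieces $B_u,B_M,B_v$ (otherwise $B$ would contain a $K_4$-subdivision). Every path in $\mathcal P$ therefore passes through $s$ and $t$ and splits into subpaths inside these pieces. The paper then rebuilds the diamond piecewise: an $s$--$t$-path in $B_M$ can be taken to be a subpath of a single $P\in\mathcal P$ (cost $\le 1$), and in $B_u$ one assembles a tree with leaves $s,t$ containing $u$ from at most two subpaths of paths in $\mathcal P$ joined by a shortest $S$--$T$-path in $B_u-\{s,t\}$ (cost $\le 2$, and similarly for $B_v$). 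The separator is what guarantees both that the pieces do not interfere and that the cost is bounded; without it the rerouting argument has no traction.
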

\begin{proof}
Let $K$ be a $K_4$-subdivision that makes $B$ essential. 
Hence, the unique non-trivial block of $K-x$ lies in $B$.
As $B$ has only two gates, namely $u,v$, it follows that 
$x$ must lie on a subdivided edge of the $K_4$-subdivision $K$, which means that $K\cap B=:D$ is 
a $u$--$v$-diamond.
Let $s,t$ be the two vertices of $D$ that are linked by three internally 
disjoint paths in $D$. Note that $s,t,u,v$ must be four distinct vertices. 
We can express $D$ as $D=D_u\cup D_M\cup D_v$ where for $y\in \{u,v\}$, 
we let $D_y$ be the unique tree in $D$ that contains $y$ and that has leaves $s,t$, 
and where $D_M$ is the $s$--$t$-path in $D$ that is separated from $u$ and $v$ by $\{s,t\}$.

Note that $\{s,t\}$ separates any two of $D_u,D_v,D_M$ in $B$
because otherwise we would find a $K_4$-subdivision in $B\subseteq G-x$.
In particular, $B$ is the edge-disjoint union of connected subgraphs $B_u,B_v,B_M$, where,
for $y\in\{u,v\}$, $B_y$ is connected and contains $y$, and where any two of them meet 
precisely in $s,t$.

We will construct graphs $D'_u\subseteq B_u, D'_v\subseteq B_v,D'_M\subseteq B_M$
such that $D'=D'_u\cup D'_v\cup D'_M$ is a $u$--$v$-diamond that is edge-disjoint from all but
at most five paths in $\cP$.
We start with $D'_M$.
If some path $P$ in $\mathcal P$ shares an edge with $D_M$, then, as $\{s,t\}$ separates $B_M$ 
from the rest of $B$,  
the path $P$ contains an $s$--$t$-subpath $sPt\subseteq B_M$.
In this case put $D'_M=sPt$; if there is no such $P\in\mathcal P$,
put $D'_M=D_M$. 

Next, we construct $D'_u$ in such way that it is edge-disjoint from all but at most two paths of $\cP$. 
The construction of $D'_v$ is analogous.

At least one of the paths in $\mathcal P$ contains an $s$--$u$-subpath that avoids $t$
or a $t$--$u$-subpath that avoids $s$; 
let us assume the former is the case. Define $\mathcal S$ as the set of all $s$--$u$-subpaths
of paths in $\mathcal P$ such that the subpath avoids $t$ 
and put $S=\bigcup_{P\in\mathcal S}V(P)\sm\{s\}$. Observe that both $\mathcal S$
as well as $S$ are non-empty. We also define $\mathcal T$ as the set of all $t$--$\{s,u\}$-subpaths 
of paths in $\mathcal P$ that meet $B_u-\{s,t\}$. To ensure that $\mathcal T$ is also non-empty, 
we add a path $P^*$ to $\mathcal T$ consisting of a single edge between $t$ and a neighbour in $B_u-\{s,t\}$ (such a neighbour exists as $D$ contains one).
We point out that every path in $\mathcal P$ that meets $B_u-\{s,t\}$ is represented by a subpath in $\mathcal S$
or in $\mathcal T$, or both.
Put $T=\bigcup_{P\in\mathcal T}V(P)\sm\{s,t\}$ and observe that $T\neq\emptyset$.

As $B_u-\{s,t\}$ is connected, there is a shortest $S$--$T$-path $Q$ in $B_u-\{s,t\}$.
Pick $P_S\in\mathcal S$
and $P_T\in\mathcal T$ such that both meet $Q$. (Note that possibly $Q$ consists of a single vertex, e.g.~$u$.)
The minimal choice of $Q$ implies that it is edge-disjoint from every $P\in \cP$.
Then $P_S\cup Q\cup P_T\subseteq B_u$ is edge-disjoint from all but at most two paths in $\mathcal P$
(note that this is also the case if $P_T=P^*$).
The graph $P_S\cup Q\cup P_T$ contains a $\subseteq$-minimal tree that has leaves $s$ and $t$
 and that contains $u$;
we pick this tree as $D'_u$. 
\end{proof}

\begin{lemma}\label{moreesslem}
Let $B$ be an essential block, and let $A<B$ be its baseblock. Then:
\begin{enumerate}[\rm (i)]
\item if $B_1,B_2\geq B$ are two essential blocks that have $A$ as a baseblock, then they are comparable; and
\item if $B'> B$ is an essential block that has $A$ as a baseblock, then $B$ has only two gates.
\end{enumerate}
\end{lemma}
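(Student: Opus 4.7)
The plan is to prove both parts by the same contradiction: assuming the conclusion fails, I will rewire a given $K_4$-subdivision so that the rewired copy lies inside $G_{\geq B}$ while still making the relevant block essential. Since $A<B$, this contradicts the choice of $A$ as a baseblock. The rewiring is provided by Lemma~\ref{esslem}: to invoke it for an essential block $\hat B$, I must supply a $u_{\hat B}$--$x$-path $P$ internally disjoint from $\hat B$; then the component $C$ of $G-\{u_{\hat B},x\}$ containing $A$ can be excised from the $K_4$-subdivision and replaced by $P$.

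For~(i), incomparability of $B_1,B_2\geq B$ immediately forces $B_1>B$ and $B_2>B$. Pick a $K_4$-subdivision $K_1\subseteq G_{\geq A}$ making $B_1$ essential. I would construct the required $u_{B_1}$--$x$-path $P$ through the block tree as follows: leave $B_1$ at $u_{B_1}$, travel up the block chain to the block $B^{*}\geq B$ at which $B_1$'s and $B_2$'s subtrees branch apart, cross $B^{*}$ (using its $2$-connectivity) from the attachment vertex toward $B_1$ to the attachment vertex toward $B_2$, descend through $B_2$'s subtree to $u_{B_2}$, and finish with a $u_{B_2}$--$x$-path inside $G_{\geq B_2}$ supplied by Lemma~\ref{esspathlem}(\ref{essii}). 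Every block visited is $\geq B$, so $P\subseteq G_{\geq B}$, and $P$ is internally disjoint from $B_1$. Taking $C$ to be the component of $G-\{u_{B_1},x\}$ containing $A$, Lemma~\ref{esslem} yields a $K_4$-subdivision $K_1^{*}\subseteq(K_1-C)\cup P$ making $B_1$ essential. Since the interior of the $u_{B_1}$--$x$-subpath of $K_1$ in $G_{\ngeq B_1}$ (which exists by Lemma~\ref{esspathlem}(\ref{essiii})) lies in the single component $C$ of $G-\{u_{B_1},x\}$ that meets $A$, one checks that $K_1-C\subseteq G_{\geq B_1}\subseteq G_{\geq B}$, whence $K_1^{*}\subseteq G_{\geq B}$, contradicting that $A<B$ is the baseblock of $B_1$.

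For~(ii), assume $B$ has a third gate $g_3$ besides $u_B$ and the cutvertex $c\in V(B)$ separating $B$ from the subtree containing $B'$. Starting from a $K_4$-subdivision $K'\subseteq G_{\geq A}$ making $B'$ essential, I would build a $u_{B'}$--$x$-path $P$ in $G_{\geq B}$, internally disjoint from $B'$, by descending the block chain from $u_{B'}$ down to $B$ (entering $B$ at $c$), crossing the $2$-connected block $B$ from $c$ to $g_3$, and leaving through $g_3$---either via the edge $g_3x$ if $g_3$ is a neighbour of $x$, or through an $x$-neighbour inside the subtree attached at $g_3$, which must exist because otherwise $g_3$ alone would separate that subtree from $x$ in $G$, violating $2$-connectivity. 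A parallel application of Lemma~\ref{esslem} with $u=u_{B'}$ and $C$ the component of $G-\{u_{B'},x\}$ containing $A$ then yields a $K_4$-subdivision inside $G_{\geq B}$ making $B'$ essential, contradicting $A<B$ as the baseblock of $B'$.

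The main obstacle is the block-tree bookkeeping needed to verify that $P$ really is a simple path, stays inside $G_{\geq B}$, is internally disjoint from the relevant essential block, and connects the correct endpoints; relatedly, one must check that $K-C$ lies inside $G_{\geq \hat B}$ for the essential block $\hat B$, which reduces to the observation that the interior of any path with both endpoints in $\{u_{\hat B},x\}$ sits in a single component of $G-\{u_{\hat B},x\}$. With these structural checks in place, the two parts become parallel applications of the same rewiring argument.
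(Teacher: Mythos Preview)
Your proof is correct and follows the same strategy as the paper: in both parts you produce a $u_{\hat B}$--$x$-path inside $G_{\geq B}\cap G_{\ngeq \hat B}$ (via Lemma~\ref{esspathlem}\eqref{essii} for $B_2$ in~(i), and via the third gate in~(ii)) and derive a contradiction to $A$ being the baseblock. The only difference is packaging: the paper invokes Lemma~\ref{bblocklem}\eqref{blockii} directly (which asserts no such path exists and was itself proved via Lemma~\ref{esslem}), whereas you unwrap that lemma and apply Lemma~\ref{esslem} by hand---so your block-tree bookkeeping is exactly what is hidden inside the proof of Lemma~\ref{bblocklem}\eqref{blockii}.
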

\begin{proof}
For~(i), suppose that $B_1$ and $B_2$ are incomparable. Then, by Lemma~\ref{esspathlem}~\eqref{essii},
there is a $u_{B_2}$--$x$-path in $G_{\geq B_2}\subseteq G_{\ngeq B_1}$, 
which we can extend to a $u_{B_1}$--$x$-path 
in $G_{\geq B}\cap G_{\ngeq B_1}$. This, however, violates Lemma~\ref{bblocklem}~\eqref{blockii}
(with $B_2$ in the role of $B$).

For~(ii), suppose that $B$ has three gates, which implies that there are three disjoint $B$--$x$-paths
in $G$. Only one of these can start in $u_{B}$, and only one can pass through $u_{B'}$. Thus, there 
must be a $B$--$x$-path contained in $G_{\geq B}\cap G_{\ngeq B'}$. Extending this to a $u_{B'}$--$x$-path,
we obtain a contradiction to Lemma~\ref{bblocklem}~\eqref{blockii}.
\end{proof}

\begin{lemma}\label{diamondchain2}
Let $B$ be an essential block and $A<B$ be its baseblock and let $\cB$ be the set of essential blocks $B'\ge B$
that have $A$ as baseblock.
If there is a set $\mathcal C$ of  $k+5$ edge-disjoint cycles 
such that each passes through $x$ and contains an edge from $B$, then 
$|\cB|\le k$.
\end{lemma}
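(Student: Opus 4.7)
The plan is to prove the contrapositive. Assuming $n := |\cB| \ge k+1$, I will construct $k$ edge-disjoint $K_4$-subdivisions in $G$, contradicting~\eqref{nok}. By Lemma~\ref{moreesslem}(i) the members of $\cB$ form a chain $B_1<B_2<\cdots<B_n$ with $B_1=B$, and by Lemma~\ref{moreesslem}(ii) every $B_i$ with $i<n$ has exactly two gates, which I label $u_{B_i}$ and $v_i$.

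The 2-gate restriction at $B_i$ ($i<n$) has a strong structural consequence: in any essential-making $K_4$-subdivision for $B_i$, the vertex $x$ cannot be a branch vertex (three subdivided edges at $x$ would need three exit points from $B_i$), so $x$ sits on a subdivided edge, and the intersection with $B_i$ is forced to be a $u_{B_i}$--$v_i$-diamond $D_i\subseteq B_i$. The diamonds $D_1,\dots,D_{n-1}$ live in distinct blocks of $G-x$ and are therefore pairwise edge-disjoint. Each $D_i$ can be completed to a $K_4$-subdivision $K_i=D_i\cup R_i$ by attaching a single $u_{B_i}$--$v_i$-path $R_i$ in $G-E(B_i)$ that passes through $x$; hence it suffices to exhibit, for $i=1,\dots,k$, such paths $R_i$ which are pairwise edge-disjoint and such that each $R_i$ avoids the edges of every $D_j$ with $j\neq i$.

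The raw material for the $R_i$'s comes from $\cC$. Each cycle $C\in\cC$ contains an edge of $B_1$ and passes through $x\notin B_1$; the 2-gate property of $B_1$ forces $C\cap B_1$ to contain a $u_{B_1}$--$v_1$-path and the complementary part to contain a $u_{B_1}$--$v_1$-path through $x$ in $G-E(B_1)$. Thus $\cC$ supplies $k+5$ pairwise edge-disjoint ``inner'' $u_{B_1}$--$v_1$-paths inside $B_1$ together with $k+5$ pairwise edge-disjoint ``outer'' $u_{B_1}$--$v_1$-paths through $x$. More generally, since each cycle in $\cC$ can enter any block $B_j$ ($j<n$) only through $u_{B_j}$ and $v_j$, each cycle's trace inside $B_j$ (when it traverses $B_j$ at all) is a $u_{B_j}$--$v_j$-path, yielding $k+5$ edge-disjoint candidate traces at $B_j$.

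To construct the $R_i$, I route each one along an outer portion of a cycle and then, whenever $R_i$ enters another block $B_j$, demand that its trace there avoid the edges of $D_j$. Lemma~\ref{diamondchain1} applied at $B_j$ to the $k+5$ candidate traces shows that at most five of them can conflict with $D_j$, leaving at least $k$ safe traces at every block. The main obstacle is the simultaneous nature of this bookkeeping: at each of the up to $n-1$ blocks an $R_i$ may visit, we lose at most five cycles, and we must pair the surviving cycles with the blocks so that $k$ globally edge-disjoint $R_i$'s remain. The ``$+5$'' slack in the hypothesis is exactly what makes this possible: since all conflicts are captured by the 2-gate geometry of Lemma~\ref{diamondchain1}, discarding five cycles per block and matching the rest in a consistent way yields the $k$ pairwise edge-disjoint completions $R_1,\dots,R_k$, hence $k$ edge-disjoint $K_4$-subdivisions $D_i\cup R_i$, which is the desired contradiction.
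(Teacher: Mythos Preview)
Your overall strategy matches the paper's, but there are two genuine gaps that keep the argument from closing.

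First, you need (and omit) the claim that every cycle in $\cC$ contains an edge of \emph{every} block $B_j$ in the chain, not just of $B_1$. Your parenthetical ``(when it traverses $B_j$ at all)'' shows you are aware this might fail; if it did, you would not have $k+5$ traces available at $B_j$ and the whole construction collapses. The paper proves this claim separately using Lemma~\ref{bblocklem}\eqref{blockii}: if some cycle skipped $B_j$, its upper arc would yield a $u_{B_j}$--$x$-path in $G_{\geq B_{j-1}}\cap G_{\ngeq B_j}$, contradicting that $A$ is the baseblock of $B_j$.

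Second, you misapply Lemma~\ref{diamondchain1}. That lemma does \emph{not} say that a pre-chosen diamond $D_j$ is edge-disjoint from all but five of the given paths; it says there \emph{exists} a diamond with that property. So the diamonds must be constructed \emph{from} the $k+5$ cycle-traces at each block, not taken in advance from arbitrary essential-making $K_4$-subdivisions. Once you make this switch, the ``simultaneous bookkeeping'' you hand-wave becomes concrete rather than vacuous: at block $B_j$ you obtain a diamond $D^j$ avoiding traces $P^j_6,\dots,P^j_{k+5}$ (after renaming), and the $i$-th $K_4$-subdivision is built from $D^i$ at block $i$, the trace $P^j_{i+5}$ at every other block $j\neq i$, and the two outer $x$-arcs of $C_i$. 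As you wrote it, ``discarding five cycles per block'' over $k$ blocks would remove far more than five cycles and leave nothing; the point is that the \emph{same} five indices $1,\dots,5$ are sacrificed at every block, and this only works because the diamond at $B_j$ is chosen \emph{after} seeing the traces there.
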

\begin{proof}
Suppose that $r:=|\cB|\ge k+1$.
By Lemma~\ref{moreesslem}~(i), the essential blocks in $\mathcal B$ form a chain $B=B^1<\ldots<B^r$.
By Lemma~\ref{moreesslem}~(ii), the blocks $B^1,\ldots,B^{r-1}$ all have at most two gates.

We claim that 
\begin{equation}\label{throughOtherBlocks}
\emtext{
each cycle in $\mathcal C$ shares an edge with  each of $B^1,\ldots,B^{r}$.  
}
\end{equation}
Suppose not, and pick $j$ minimal such that some cycle $C\in\mathcal C$ fails to 
contain an edge of $B^j$. Since $C$ contains an edge of $B=B^1$, it follows that $j\geq 2$.
Denote by $P$ the $u_{B^{j-1}}$--$x$-subpath of $C$ that is contained in $G_{\geq B^{j-1}}$,
and observe that $P$ meets $B^j$ at most in $u_{B^j}$. 
Consequently, we find a 
$u_{B^j}$--$x$-path in $G_{\geq B^{j-1}}\cap G_{\ngeq B^j}$ contrary to Lemma~\ref{bblocklem}~\eqref{blockii}.
This proves the claim. 

Claim~\eqref{throughOtherBlocks} implies that every cycle $C_1,\ldots, C_{k+5} \in \cC$  
passes through every $u_{B^j}$ for $j=1,\ldots, r$.
For $i=1,\ldots, k+5$ and $j=1,\ldots, r-1$, let $P_i^j$ be the $u_{B^j}$--$u_{B^{j+1}}$-subpath of $C_i$ that meets $B^j$.
We apply Lemma~\ref{diamondchain1} to every block $B^j$ for $j=1,\ldots, r-1$
with $\{P_1^j\cap B^j,\ldots, P_{k+5}^j\cap B^j\}$ 
as the set of edge-disjoint paths between the two gates $u_{B^j}$ and $v_{B^j}$ of $B^j$
and obtain a $u_{B^j}$--$v_{B^j}$-diamond $\bar{D}^j$ in $B^j$
that is edge-disjoint from all but at most five paths $P_i^j$.
After renaming the paths $P_i^j$, we may assume that $\bar{D}^j$ is edge-disjoint from $P_{6}^j,\ldots, P_{k+5}^j$.
Then, $D^j=\bar{D}^j \cup v_{B^j}P_1^j$, is a $u_{B^j}$--$u_{B^{j+1}}$ -diamond 
that is edge-disjoint from $P_{6}^j,\ldots, P_{k+5}^j$.

Consider $i\in\{1,\ldots, k\}$, and let 
$Q^i$ be the $u_{B^r}$--$x$-subpath of $C_i$ that is edge-disjoint from $B^1$,
and let $R^i$ be the $u_{B^1}$--$x$-subpath of $C_i$ that is edge-disjoint from  $B^1$.
We set 
\[
L^i= D^i\cup \bigcup_{j=1,j\neq i}^{r-1} P_{i+5}^j  \cup Q^i \cup R^i
\]
Note that, as $r\geq k+1$ there are enough $D^i$ to define all $L^i$.
Observe that every $L^i$ is a $K_4$-subdivision, 
and that any two $L^i$ are edge-disjoint.
We obtain  $k$ edge-disjoint $K_4$-subdivisions in this way,
which we had excluded in~\eqref{nok}. Thus $r=|\mathcal B|\leq k$.
\end{proof}

Let $B$ be an essential block. 
An edge set $F$ \emph{makes $B$ inessential} if $F$ meets 
every $K_4$-subdivision that makes $B$ 
essential.
For the proof of the next lemma we use a result of Mader about $S$-paths:

\begin{theorem}[Mader~\cite{Mad78}]\label{gallaithm}
Let $S$ be a vertex set in a graph $H$. Then there are either $k$ edge-disjoint $S$-paths in $H$
or there is an edge set of size at most $2k-2$ that meets every $S$-path in $H$. 
\end{theorem}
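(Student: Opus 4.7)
The plan is to derive this edge statement from the classical vertex-disjoint form of Mader's theorem on $S$-paths by way of a local blow-up at every non-$S$ vertex. From $H$, I would build an auxiliary graph $\tilde H$ as follows: first subdivide each edge once so that no edge has both endpoints in $S$; then keep $S$ unchanged and replace every remaining vertex $v\notin S$ of degree $d$ by a clique $K_v$ on $d$ new vertices $\{v_e : e\ni v\}$, one for each edge incident to $v$. Every edge of (the subdivided) $H$ is then redirected to its two private clique-vertices.

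The point of this construction is a clean correspondence: every $S$-path in $H$ lifts uniquely to an $S$-path in $\tilde H$ (walking through the clique $K_v$ via the edge $v_{e}v_{e'}$ whenever the path enters $v$ through $e$ and leaves through $e'$), and every $S$-path in $\tilde H$ projects to one in $H$. Crucially, two $S$-paths in $H$ are edge-disjoint if and only if their lifts in $\tilde H$ are internally vertex-disjoint: a shared edge $e$ at an interior vertex $v$ forces both lifts through the clique-vertex $v_e$, whereas sharing only the vertex $v$ but via disjoint pairs of edges lifts to disjoint vertices inside $K_v$. Hence the maximum size of an edge-disjoint $S$-path packing in $H$ equals the maximum size of an internally vertex-disjoint $S$-path packing in $\tilde H$.

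With this equivalence in hand, I would invoke Mader's classical vertex theorem on $S$-paths applied to $\tilde H$. Either $\tilde H$ contains $k$ internally disjoint $S$-paths, whose projections give $k$ edge-disjoint $S$-paths in $H$, or Mader's min-max formula produces a small obstructing structure; each internal vertex appearing in it has the form $v_e$ and maps back to the unique edge $e\in E(H)$, yielding an edge set $F\subseteq E(H)$ that meets every $S$-path in $H$.

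The main obstacle is to match the exact constant $2k-2$. The factor $2$ is built into the blow-up for free, since each edge $e=uv$ gives rise to \emph{two} potential clique-vertices $u_e,v_e$. Pinning down the additive $-2$ requires the sharper ``half-integral'' form of Mader's theorem, or equivalently an augmenting-path argument applied to a maximum packing of size $r\le k-1$: one would show that a canonical pair of boundary edges (``first'' and ``last'') per packed path already forms a hitting set, for otherwise an $S$-path avoiding all of these together with an exchange along one of the $P_i$ would contradict the maximality of $r$. This gives $|F|\le 2r\le 2(k-1)=2k-2$, matching the stated bound.
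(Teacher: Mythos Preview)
The paper does not give a proof of this theorem at all; it is quoted from Mader~\cite{Mad78} and used as a black box in the proof of Lemma~\ref{fewessblocks}. So there is nothing in the paper to compare against, and your proposal has to stand on its own.

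Your reduction has a genuine gap at the $S$-vertices. Because you keep every $s\in S$ as a single vertex of $\tilde H$, the correspondence you obtain is between edge-disjoint $S$-paths in $H$ and \emph{internally} vertex-disjoint $S$-paths in $\tilde H$. But Gallai's (and Mader's) vertex theorem concerns \emph{fully} vertex-disjoint $S$-paths. These two quantities can differ dramatically: take $S=\{s,t\}$ and $k$ internally disjoint $s$--$t$ paths in $H$. Then $H$ has $k$ edge-disjoint $S$-paths, yet in your $\tilde H$ any two $S$-paths share $s$ and $t$, so the maximum vertex-disjoint packing has size~$1$. Your argument would therefore try to produce a small hitting set, and the Gallai set $\{s\}$ (or $\{t\}$) does not translate to any bounded number of edges of $H$. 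The same phenomenon shows that the covering side breaks too: whenever the vertex hitting set in $\tilde H$ contains a vertex of $S$, there is no single edge of $H$ to send it to.

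The closing paragraph does not repair this. The assertion that the first and last edges of a maximum edge-disjoint packing already hit every $S$-path is exactly the non-trivial content of the theorem; the one-line exchange argument you sketch (``otherwise augment'') does not work for $S$-paths the way it does for $A$--$B$ paths in Menger's theorem, because rerouting through a shared interior segment need not produce a larger packing of $S$-paths. A correct derivation along your lines is possible, but you must also split every $s\in S$ into $\deg(s)$ leaves, put all copies into $S'$, and then apply Mader's vertex theorem for $\mathcal A$-paths with $\mathcal A$ the partition of $S'$ into fibres over $S$; the degree-$1$ copies make every vertex of the resulting hitting set correspond to a single edge of $H$, and the $\mathcal A$-structure prevents spurious ``cycles through $s$'' from counting as paths.
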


\begin{lemma}\label{fewessblocks}
Let $A$ be a baseblock. Then 
 there is an edge set $F_A$ of size $|F_A|\leq 33k^2$ such that 
all but at most $5k^2$ of the essential blocks with $A$ as baseblock are made
inessential by $F_A$.
\end{lemma}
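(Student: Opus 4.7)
The plan is as follows. By Lemma~\ref{moreesslem}(i), the set $\mathcal{E}(A)$ of essential blocks with baseblock $A$ decomposes into pairwise incomparable chains $\mathcal{C}_1,\ldots,\mathcal{C}_t$ with $\mathcal{C}_i=(B_i^1<B_i^2<\cdots<B_i^{r_i})$. My aim is to kill each chain---except possibly a few short ones---with a small edge set that hits every cycle through $x$ carrying an edge of the chain's bottom block $B_i^1$.

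First I would establish the key structural claim: for every $i$ and every $B\in\mathcal{C}_i$, every $K_4$-subdivision $K$ making $B$ essential contains a cycle through $x$ that uses an edge of $B_i^1$. When $B>A$, Lemma~\ref{esspathlem}(ii) and~(iii) provide two $u_B$--$x$-paths in $K$, one in $G_{\geq B}$ and one in $G_{\not\geq B}$ meeting $A$, whose union is a cycle through $x$ in $K$; since $B_i^1\le B$ and $B_i^1$ lies on the block-tree path from $B$ down to $A$, the path in $G_{\not\geq B}$ must traverse an edge of $B_i^1$. The case $B=A=B_i^1$ is handled analogously via Lemma~\ref{esspathlem}(i). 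Hence any edge set hitting all cycles through $x$ with an edge of $B_i^1$ makes every block of $\mathcal{C}_i$ inessential.

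Next I would apply Mader's Theorem~\ref{gallaithm} separately to each chain. Cycles through $x$ in $G$ can be realised as $S$-paths in the auxiliary graph obtained from $G$ by splitting $x$ into $|N(x)|$ copies, one per neighbour of $x$, with $S$ the set of these copies; restriction to cycles using an edge of $B_i^1$ is arranged by subdividing each edge of $B_i^1$ and adding the subdivision vertices to $S$. Either~(a) there are $k+5$ edge-disjoint such cycles, so that Lemma~\ref{diamondchain2} yields $|\mathcal{C}_i|\leq k$ and the chain is ``short'', or~(b) Mader provides an edge set $F_i$ with $|F_i|\leq 2(k+5)-2=2k+8$ hitting all such cycles and hence, by the key claim, making all of $\mathcal{C}_i$ inessential.

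The hardest step will be to bound the number of chains by $t\leq 5k$. If $t>5k$, I would pick one $K_4$-subdivision $K_i$ from each chain (making $B_i^1$ essential) and exploit that, because the $B_i^1$ are pairwise incomparable, the subgraphs $G_{\geq B_i^1}$ are pairwise edge-disjoint (they share only the vertex $x$), so the ``upper halves'' of the $K_i$ in $G_{\geq B_i^1}$ are automatically edge-disjoint; a further Mader-type argument would route the ``lower halves''---each a $u_{B_i^1}$--$x$-path through $A$ in $G_{\not\geq B_i^1}$---edge-disjointly, assembling $k$ edge-disjoint $K_4$-subdivisions and contradicting~\eqref{nok}. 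With $t\leq 5k$ in hand, setting $F_A:=\bigcup_{i\text{ as in (b)}}F_i$ yields $|F_A|\leq 5k(2k+8)\leq 33k^2$, while the essential blocks not made inessential by $F_A$ lie in the short chains and total at most $5k\cdot k=5k^2$.
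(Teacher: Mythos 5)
The decisive step of your plan --- bounding the number of chains by $t\leq 5k$ --- is false, and the whole architecture rests on it. The minimal essential blocks $B_1^1,\ldots,B_t^1$ with baseblock $A$ are indeed pairwise incomparable, so the ``upper halves'' of the witnessing $K_4$-subdivisions live in edge-disjoint regions $G_{\geq B_i^1}$; but their ``lower halves'' are $u_{B_i^1}$--$x$-paths through $A$ that live in heavily overlapping regions and may all be forced through one and the same edge at $x$. Concretely, let $A$ be a triangle $a_1a_2a_3$ with $u_A=a_1$, let $a_3$ be the only neighbour of $x$ at or below $A$, and attach at $a_2$ arbitrarily many branches, each a path from $a_2$ to a cycle $C_i$ containing exactly two neighbours of $x$: every $C_i$ is an essential block with baseblock $A$, so $t$ is unbounded, yet every $K_4$-subdivision of $G$ uses the edge $a_3x$, so there are not even two edge-disjoint ones. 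Hence no ``Mader-type routing of the lower halves'' can exist, the bound $t\le 5k$ fails, and with it the accounting $|F_A|\le 5k(2k+8)$ and the count of at most $5k\cdot k$ surviving blocks; note also that since $t$ is unbounded you cannot afford to spend $2k+8$ edges on each chain either. This is exactly the difficulty the paper's proof is built around: it never bounds the number of chains, but instead forms one auxiliary graph $G'$ with a pendant vertex $y_i$ at each $u_{B_i}$, applies Menger to $Y$--$x$-paths and Theorem~\ref{gallaithm} to $Y$-paths in $G'$, and shows that the single resulting set $F'\cup F''$ of size at most $3k$ already makes \emph{all} chains inessential except for at most $|I|<5k$ exceptional indices (in the example above $F'$ would simply be $\{a_3x\}$); only those few remaining chains are then treated individually.

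Your per-chain dichotomy is also not established as encoded. With $S$ consisting of the copies of $x$ together with the subdivision vertices of $E(B_i^1)$, an $S$-path need not correspond to a cycle through $x$ containing an edge of $B_i^1$: a packing could consist of paths between two subdivision vertices inside $B_i^1$ that avoid $x$ altogether, or of cycles through $x$ avoiding $B_i^1$. So in your outcome~(a) the $k+5$ edge-disjoint $S$-paths do not supply the cycles needed for Lemma~\ref{diamondchain2}, and you are left with neither a short chain nor a hitting set. The paper obtains this dichotomy differently, via Menger applied to $u_{B_r}$ and $x$ separately in $G_{\geq B_r}$ and $G_{\not\geq B_r}$: either both sides admit many edge-disjoint $u_{B_r}$--$x$-paths, which combine into the $k+5$ cycles required by Lemma~\ref{diamondchain2}, or one side has a cut $F_r$ of size at most $k+5$, which by Lemma~\ref{esspathlem}~(ii) and~(iii) hits every $K_4$-subdivision making a block of that chain essential. (Two smaller points: your ``key structural claim'' is only argued for $B>B_i^1$ and for $B=A$, while the case $B=B_i^1>A$ needs a separate argument that the cycle really uses an edge of $B_i^1$; and the block $A$ itself, if essential, belongs to no chain and must be accounted for, as in the paper's ``$+1$''.)
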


\begin{proof}
Let $B_1,\ldots, B_N$ be the set of $\leq$-minimal blocks with baseblock $A$
such that $B_i\neq A$ for all $i\in\{1,\ldots, N\}$. In particular
\begin{equation}\label{incompclm}
\emtext{$B_1,\ldots, B_N$ are pairwise incomparable.} 
\end{equation}

Let $Z''$ denote the set of edges 
between $x$ and vertices in $\bigcup_{i=1}^NG_{\geq B_i}$, and let 
$Z'$ be the set of edges incident with $x$
that are not contained in $Z''$. 
For every $i\in\{1,\ldots, N\}$, let $y_i$ be a new vertex. Form $G'$ from $\bigcap_{i=1}^NG_{\not\geq B_i}$
by adding $y_i$ and the edge $e_i=y_iu_{B_i}$ for every $i\in\{1,\ldots, N\}$. 
Set $Y=\{y_1,\ldots,y_N\}$.

We apply the edge-version of Menger's theorem to $Y$ and $x$ in $G'$. Assume first that there are $k$ edge-disjoint
$Y$--$x$-paths $P_1,\ldots, P_k$ in $G'$, where we may assume that $P_i$ starts in $y_i$. 
Then, for $i=1,\ldots, k$, we apply Lemma~\ref{esslem} with $u_{B_i}P_i\subseteq G_{\not\geq B_i}$
and obtain a $K_4$-subdivision that is contained in $G_{\geq B_i}\cup u_{B_i}P_i$. 
By~\eqref{incompclm}, these $k$ $K_4$-subdivisions are all pairwise edge-disjoint, which 
we had excluded in~\eqref{nok}.

So, let us treat the case when there is an edge set $F'$ of size $|F'|\leq k-1$ such that 
any $Y$--$x$-path in $G'$ meets $F'$.
We claim
\begin{equation}\label{F'sep}
\emtext{
if $e_i\notin F'$, then $B_i$ is separated from
$x$ in $G-F'-Z''$. 
}
\end{equation}
Indeed, suppose that $e_i\notin F'$ but there is a $B_i$--$x$-path $P$
in $G-F'-Z''$.
Then, the last edge of $P$ must lie in $Z'$, and the penultimate vertex must lie outside 
any $G_{\geq B_j}$. In particular, $P$ passes through $u_{B_i}$ by~\eqref{incompclm}.
As a result, the interior of  $u_{B_i}P$ lies outside any $G_{\geq B_j}$. Thus $y_iu_{B_i}P$
is a $Y$--$x$-path in $G'$ and  must be met by $F'$. As $e_i\notin F'$,
it follows that $F'$ meets $P$, which is impossible.
Thus, the claim is proved.

Next, we apply Theorem~\ref{gallaithm} to $Y$ in $G'$. Assume first that the theorem 
yields $k$ edge-disjoint $Y$-paths $Q_1,\ldots, Q_k$ in $G'$, where we may assume 
that $Q_i$ starts in $y_{2i-1}$ and ends in $y_{2i}$. 
By Lemma~\ref{esspathlem}~\eqref{essii}, there is 
for every $i\in \{1,\ldots,k\}$ a $u_{B_{2i}}$--$x$-path $R_i$ contained in $G_{\geq B_{2i}}$. 
By~\eqref{incompclm}, the paths $R_i$ are pairwise edge-disjoint,
and since $Q_j\subseteq G'$, the paths $R_i$ are also edge-disjoint
from every path $Q_j$.  Lemma~\ref{esslem} 
yields for every $i\in\{1,\ldots, k\}$ a $K_4$-subdivision 
contained in $G_{\geq 2i-1}\cup u_{B_{2i-1}}Q_iu_{B_{2i}}\cup R_i$. 
These $k$ $K_4$-subdivisions are, again by~\eqref{incompclm},
pairwise edge-disjoint, which contradicts~\eqref{nok} again.

Thus, by Theorem~\ref{gallaithm}, there is an edge set $F''$ of size $|F''|\leq 2k-2$ such that
every $Y$-path in $G'$ meets $F''$. We show:
\begin{equation}\label{exceptJ}
\begin{minipage}[c]{0.8\textwidth}\em
for every $e_i\in F''$ there is at most one $j$ with $e_j\notin F''$ such that 
$F''$ does  not separate $u_{B_i}$ and $u_{B_j}$ in $G-x$.
\end{minipage}\ignorespacesafterend 
\end{equation} 
Suppose that there are two such indices $j$, namely $j'$ and $j''$. Thus, there is
a $u_{B_{j'}}$--$u_{B_i}$-path $P$ in $G-x$ and a $u_{B_i}$--$u_{B_{j''}}$-path $Q$ in $G-x$,
both of which avoid $F''$. By~\eqref{incompclm} it follows that $P,Q\subseteq G'$. 
But then $y_{j'}PQy_{j''}$ contains a $Y$-path in $G'-F''$, which is impossible.

Denote by $J$ the set of all such $j$ as in~\eqref{exceptJ}, and note that $|J|\leq |F''|\leq 2k-2$.
Set $I=J\cup \{i: e_i\in F'\cup F''\}$. 
Note that
\begin{equation}\label{sizeofI}
|I|< 5k\emtext{ and }|F'\cup F''|\leq 3k.
\end{equation}
For $s\in\{1,\ldots, N\}$, we define $\mathcal B_s$
as the set of all essential blocks $B$ with baseblock $A$ such that $B\geq B_s$. 
 The rest of the proof consists of two steps:
First, we prove that for all $r\not\in I$, 
the set $F'\cup F''$ makes all essential blocks $B\in \cB_r$ inessential.
Then we prove that for $r\in I$, there are either few essential blocks $B\in \cB_r$
or there is a small set of edges that makes all $B\in \cB_r$ inessential.

Consider $r\notin I$. That means 
 $r\notin J$ and also $e_r\notin F'\cup F''$.
 Let $K$ be a $K_4$-subdivision
that makes a block $B\in\mathcal B_r$ essential (i.e.\ $B\geq B_r$). 
We show that $K$ contains an edge from $F'\cup F''$, which means that $F'\cup F''$ makes $B$ inessential.

Since $A<B_r$ it follows from Lemma~\ref{bblocklem}~\eqref{blocki} that $K$ contains 
a $u_{B_r}$--$x$-path $R$ in $G_{\not\geq B_r}$. 
If the last edge of $R$, the one incident with $x$, lies in $Z'$, then $R$ is edge-disjoint from $Z''$,
and thus, by~\eqref{F'sep}, meets $F'$. If, on the other hand, the last edge of $R$ lies in $Z''$
then, by~\eqref{incompclm}, $R$ contains a $u_{B_r}$--$u_{B_i}$~path $S$ for some $i\neq r$. 
The path $S$ lies in $G'$ (since the $u_{B_s}$ are cutvertices in $G-x$). 
Since the path $y_ru_{B_r}Su_{B_i}y_i$ has to 
meet $F''$ by definition of $F''$, it follows that either $S$ and thus $R$ meets $F''$, in which 
case we are done, or that $F''$ contains $e_r=y_ru_{B_r}$ (which we had excluded) or that $F''$ contains $e_i=y_iu_{B_i}$.
The latter, however, is also impossible as this would imply $r\in J$. 
Therefore, $F'\cup F''$ makes $B$ inessential. 
We have proved:
\begin{equation}\label{Iiness}
\emtext{
for $r\notin I$,
each essential block in $\mathcal B_r$ is made inessential by $F'\cup F''$.
}
\end{equation}

Now, consider $r\in I$.
First assume that there are $k+5$ edge-disjoint cycles each of which meets $x$ and contains an 
edge of $B_r$. In this case, 
we apply Lemma~\ref{diamondchain2}, obtain $|\cB_r|\le k$ and set $F_r=\emptyset$.
Otherwise, there is an edge set $F_r$ of size $|F_r|\leq k+5$ that separates $u_{B_r}$ 
from $x$ in either $G_{\geq B_r}$ or in $G_{\ngeq B_r}$. 
Consider some $B\in\mathcal B_r$, and a $K_4$-subdivision $K$ that makes $B$ essential.
We see with Lemma~\ref{esspathlem}~\eqref{essii} and~\eqref{essiii}
that $K$ 
contains a $u_{B}$--$x$-path in $G_{\ge B}$ and one in $G_{\ngeq B}$.
In particular, the union of these two paths must be met by $F_r$. 

Thus, we have
for $r\in I$, that either there is an edge set $F_r$ of size at most $k+5$ that makes every $B\in\mathcal B_r$
inessential, or $|\mathcal B_r|\leq k$.
We set 
\[
F_A=F'\cup F''\cup \bigcup_{r\in I} F_r,
\]
and observe with~\eqref{Iiness} that $F_A$ makes all but $|I|\cdot k+1\leq 5k^2$ 
essential blocks with baseblock $A$
inessential (the $+1$ 
is due to the fact that $A$ itself might be essential). With~\eqref{sizeofI} we see that
the of size $F_A$ is at most $3k+5k\cdot (k+5)\le 5k^2+28k\le 33k^2$
\end{proof}

Let $F$ be the union of the $F_A$ as in the previous lemma, where we range over all baseblocks $A$,
and denote by $\mathcal B$ the set of all essential blocks that are not made inessential by~$F$. 
Then, by Lemma~\ref{fewbaseblocks} and Lemma \ref{fewessblocks}
\begin{equation}\label{sizeofF}
|F|\leq 3k\cdot 33k^2\le 100 k^3\emtext{ and }|\mathcal B|\leq 3k\cdot 5k^2=15k^3
\end{equation}

Consider a block $B\in\mathcal B$, i.e.\ an essential block that is not made inessential by $F$. 
Let $U_B$ be the set of gates of $B$. That is, $U_B$ is the union of the neighbours of $x$ in $B$
with the set of cutvertices of $G-x$ that lie in $B$.
For each gate $u\in U_B$, fix a maximal set $\mathcal P_u$ of edge-disjoint $u$--$x$-paths
that are internally disjoint from $B$ and a minimal edge set $F_u$ that meets every such $u$--$x$-path
that is internally disjoint from $B$. By Menger's theorem, we have $|\mathcal P_u|=|F_u|$. 

We form a graph $G_B$ as follows.
Start with $G[B\cup\{x\}]$, and
 for each  $u\in U_B$, add 
$|\mathcal P_u|$ many internally disjoint $u$--$x$-paths of length~$2$; 
let the set of these be $\mathcal P'_u$.
We observe that $G_B$ satisfies the requirements of Lemma~\ref{lem:twoConn}:
the neighbours of $x$ have degree~$2$, 
$G_B-N_{G_B}(x)-x$ is $2$-connected, and  every $K_4$-subdivision of $G_B$ still needs 
to contain $x$. 
Using Lemma \ref{lem:twoConn} we can prove
\begin{lemma}
\label{claim:gb}
For each $B\in\mathcal B$, 
there is an edge set $Y_B\subseteq E(G)$ of size $|Y_B|= \bigO(k^4)$ that makes $B$ inessential (in $G$).
\end{lemma}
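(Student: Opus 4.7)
The plan is to apply Lemma~\ref{lem:twoConn} to $G_B$, whose hypotheses have just been verified in the excerpt, and to translate either of its two outcomes back into $G$. The lemma yields either $k$ edge-disjoint $K_4$-subdivisions $K^1,\dots,K^k$ in $G_B$, or an edge set $Y\subseteq E(G_B)$ of size $|Y|=\bigO(k^4)$ meeting every $K_4$-subdivision of $G_B$.

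In the first case, I would show that the $K^i$'s translate into $k$ edge-disjoint $K_4$-subdivisions in $G$, contradicting~\eqref{nok} and so ruling this case out. Each $K^i$ meets $x$ and hence uses two or three auxiliary paths taken from the various $\mathcal P'_u$; because the $K^i$ are edge-disjoint in $G_B$ and distinct auxiliary paths are pairwise edge-disjoint in $G_B$, the total number $N_u$ of auxiliary paths used at gate~$u$ over all $K^i$ is at most $|\mathcal P_u|$, and $\sum_u N_u\le 3k$. The replacement strategy is to swap each used auxiliary path at gate~$u$ for an actual $u$--$x$-path from $\mathcal P_u$. The catch is that paths in $\mathcal P_u$ and $\mathcal P_v$ for distinct gates $u,v$ may share edges in $G-E(B)$, whereas auxiliary paths never do; this is what I expect to be the main obstacle of the proof, and I would tackle it with a Menger/flow argument in $G-E(B)$ which uses the maximality of each $\mathcal P_u$ together with the global bound $\sum_u N_u\le 3k$ to realise a pairwise edge-disjoint routing that satisfies the per-gate demand $N_u$ at every gate~$u$.

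In the second case, set $Y'=Y\cap E(G)$ and call a gate $u\in U_B$ \emph{fully blocked} if $Y$ meets every auxiliary path in $\mathcal P'_u$. Put
\[
   Y_B \;=\; Y' \;\cup\; \bigcup_{u\text{ fully blocked}} F_u.
\]
Since distinct auxiliary paths in $G_B$ share no edges, blocking all of $\mathcal P'_u$ costs $Y$ at least $|\mathcal P'_u|=|\mathcal P_u|=|F_u|$ edges, and these edges are disjoint across different gates; summing yields $\sum_{u\text{ f.b.}}|F_u|\le|Y|$, so $|Y_B|\le 2|Y|=\bigO(k^4)$.

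To verify that $Y_B$ makes $B$ inessential, take any $K_4$-subdivision $K$ of $G$ that makes $B$ essential: then $K-E(B)$ consists of at most three gate-to-$x$ paths internally disjoint from $B$. If some gate~$u$ used by $K$ is fully blocked, then the corresponding external path of $K$ meets $F_u\subseteq Y_B$. Otherwise, for every gate used by $K$ we can pick an auxiliary path in $\mathcal P'_u$ that avoids $Y$; replacing each external path of $K$ by such an auxiliary path yields a $K_4$-subdivision $\tilde K\subseteq G_B$ whose external part is disjoint from $Y$. Since $Y$ nevertheless hits $\tilde K$, it must hit it on $\tilde K\cap G[B\cup\{x\}]=K\cap G[B\cup\{x\}]\subseteq E(G)$, and any such edge lies in $Y'\subseteq Y_B$ and meets $K$.
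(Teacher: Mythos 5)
Your overall route is the paper's route: apply Lemma~\ref{lem:twoConn} to $G_B$ and translate both outcomes back to $G$. Your second case is correct and essentially the paper's argument; the paper instead takes the hitting set of $G_B$ inclusion-minimal, so that at each gate it meets either all or none of the auxiliary paths in $\mathcal P'_u$, and then swaps each blocked bundle for $F_u$ without increasing the size, whereas your ``fully blocked'' bookkeeping pays a harmless factor $2$. Either way the size stays $\bigO(k^4)$ and the verification that $Y_B$ makes $B$ inessential is the same.

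The problem is the first case. You identify possible edge-sharing between $\mathcal P_u$ and $\mathcal P_v$ for distinct gates as ``the main obstacle'' and propose to overcome it by a Menger/flow argument based on the maximality of each $\mathcal P_u$ and the bound $\sum_u N_u\le 3k$. As stated this does not work: per-gate max-flow information together with a bound on the total demand does not yield a single system of pairwise edge-disjoint paths realising every per-gate demand (think of a bottleneck of fewer than $\sum_u N_u$ edges in front of $x$ that all gates must cross, while each individual $|\mathcal P_u|$ still exceeds $N_u$); nothing you invoke rules such a configuration out. What actually makes the naive substitution legitimate -- and what the paper uses without comment -- is the block structure: $B$ is a block of $G-x$, so every component of $(G-x)-V(B)$ attaches to $B$ in exactly one gate. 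Hence the interior of any $u$--$x$-path that is internally disjoint from $B$ lies in components attached at $u$, and two such paths at distinct gates $u\neq v$ meet only in $x$; in particular they are edge-disjoint, and within one $K_4$-subdivision of $G_B$ no two of its at most three external segments leave through the same gate. With this one observation, replacing the distinct auxiliary paths used at each gate by distinct paths of $\mathcal P_u$ immediately gives $k$ edge-disjoint $K_4$-subdivisions of $G$, contradicting~\eqref{nok}; no routing argument is needed. So your first case, as written, has a genuine gap (the proposed fix would not prove the required simultaneous disjointness), but it is repaired by this single structural remark rather than by a flow argument.
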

\begin{proof}
Suppose there are $k$ edge-disjoint $K_4$-subdivisions in $G_B$. We turn these into $K_4$-subdivisions
of $G$  by substituting 
any path in $\mathcal P'_u$, for $u\in U_B$,
 by a distinct path in $\mathcal P_u$. As $G$ is assumed to have fewer than $k$ edge-disjoint $K_4$-subdivisions,
by~\eqref{nok}, we obtain a contradiction. 

Thus, an application of Lemma~\ref{lem:twoConn} to $G_B$ yields an edge hitting set $Y'_B$ of $G_B$
of size $\bigO(k^4)$. We may assume that $Y'_B$ is minimal subject to inclusion.
As a consequence of minimality, 
for each $u\in U_B$,  whenever $Y'_B$ contains an edge from one of the paths in $\mathcal P'_u$
it contains edges from all of the paths in $\mathcal P'_u$. Note that $|F_u|=|\mathcal  P'_u|$.
We turn $Y'_B$ into an edge set $Y_B\subseteq E(G)$
by replacing any edge in any path from $\mathcal P'_u$, $u\in U_B$, by $F_u$. 
Then $|Y'_B|=|Y_B|$. 

Suppose there is a $K_4$-subdivision $K$ of $G$ that makes $B$ essential but is not met by $Y_B$.
In particular, if $K$ has a subdivided edge that leaves $B$ through a vertex $u\in U_B$ 
(and then continues on to $x$), then 
$F_u$ is disjoint from $Y_B$. Thus, we may replace the part of the subdivided edge between $u$ and $x$
by a path in $\mathcal P_u'$, and by doing so for all subdivided edges that leave $B$ 
through  a vertex in $U_B$, we
obtain a $K_4$-subdivision in $G_B$ that is disjoint from $Y'_B$, which is impossible. 
\end{proof}

We may finally finish the proof of the main lemma of this section.

\begin{proof}[Proof of Lemma \ref{conredlem}]
In view of Lemma~\ref{claim:gb}, the set
\[
Y:=F\cup \bigcup_{B\in\mathcal B}Y_B
\]
meets every $K_4$-subdivision of $G$. 
Recalling~\eqref{sizeofF}, we see that its size is 
\(
|Y|\leq 100k^3 + 15k^3\cdot \bigO(k^4)  = \bigO(k^7).
\)
\end{proof}

\section{Nested ear decompositions}
\label{section:neds}

To finish the proof of our main theorem (Theorem \ref{mainThm}) it remains to prove Lemma~\ref{lem:twoConn},
which we will do in the course of the next three sections. Before we start, however, 
we describe a structural tool for series-parallel graphs that was found by Eppstein~\cite{Epp92}.

An \emph{ear decomposition} of a graph $G$ is a sequence $E_1,\ldots, E_n$
of non-trivial paths 
such that 
\begin{enumerate}[\rm (i)]
\item $G=\bigcup_{i=1}^nE_i$; and
\item $E_i$ is a $\left (\bigcup_{j=1}^{i-1}E_j \right)$-path for every $i=2,\ldots,n$. 
\end{enumerate}
The paths $E_i$ are the \emph{ears} of the ear decomposition. The ear $E_1$ is the \emph{first ear}.
The endvertices of the \emph{first ear} $E_1$ are the \emph{terminals}
of the ear decomposition.

If $E_i$ and $E_j$ are ears with $i<j$
 such that both endvertices of $E_j$ lie in $E_i$ and such that 
no ear $E_{i'}$ with $i'<i$ contains both endvertices of $E_j$,
then $E_j$ is \emph{nested in} $E_i$. If $E_j$ is nested in $E_i$,
then we write $I(E_j)$ for the subpath of $E_i$ between the two endvertices
of $E_j$ and call it the \emph{nest interval} of $E_j$.

\begin{figure}[ht]
\centering
\begin{tikzpicture}
\node[hvertex] (s) at (0,0){};
\node[hvertex] (t) at (8,0){};
\draw[hedge,bend left=10] (s) to node[hvertex,pos=0.1] (p){} node[hvertex,pos=0.2] (q){} node[hvertex,pos=0.3] (r){} node[hvertex,pos=0.4] (rr){} node[hvertex,pos=0.5] (m){} node[hvertex,pos=0.7] (y){} node[hvertex,pos=0.9] (z){} node[pos=0.6,auto]{$E_1$} (t);
\draw[hedge,bend left=80] (s) to node[hvertex,pos=0.6] (a){} node[hvertex,pos=0.9] (b) {} node[pos=0.3,auto]{$E_2$} (m);
\draw[hedge,bend left=80] (y) to node[pos=0.9,label=right:$E_7$]{} (z);
\draw[hedge,bend left=85,auto] (y) to node[pos=0.5]{$E_8$} (t);
\draw[hedge,bend left=80] (a) to node[hvertex,pos=0.6] (c){} node[pos=0.9,auto]{$E_5$} (b);
\draw[hedge,bend left=90] (a) to node[pos=0.5,auto]{$E_6$} (c);
\draw[hedge,bend left=80] (p) to node[pos=0.5,auto]{$E_3$} (q);
\draw[hedge,bend left=80] (r) to node[pos=0.5,auto]{$E_4$} (rr);
\end{tikzpicture}
\caption{A nested ear decomposition}\label{nedfig}
\end{figure}
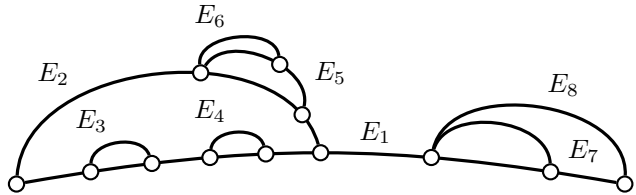

Eppstein~\cite{Epp92} introduced \emph{nested ear decompositions}: these are ear 
decompositions $E_1,\ldots, E_n$ such that 
\begin{enumerate}[\rm (i)]
\item for every $j\in\{2,\ldots, n\}$ 
there is a $i<j$ such that $E_j$ is nested in $E_i$; and
\item if $E_{j'}$ and $E_j$ are both nested in $E_i$, then $I(E_j)$ and $I(E_{j'})$
are either edge-disjoint or one contains the other.   
\end{enumerate}
For the sake of brevity we write \emph{NED} for \emph{nested ear decomposition} from now on.

\begin{theorem}[Eppstein~\cite{Epp92}] \label{eppstein}
A graph with at least one edge is series-parallel with terminals $s,t$ if and only if it admits a 
NED with terminals $s,t$.
\end{theorem}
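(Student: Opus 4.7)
The argument splits into the two implications, and each direction is handled by an induction tailored to the recursive nature of the object being built.

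\textbf{From series-parallel to NED.} I would induct on the number of series and parallel operations used to construct $G$. The base case is a single edge $st$, where the one-ear sequence $E_1=st$ is trivially a NED with terminals $s,t$. For the series case $G=G_1\cup G_2$ with $G_1\cap G_2=\{r\}$ and respective terminals $s,r$ and $r,t$, take NEDs $E_1^{(1)},\ldots,E_{n_1}^{(1)}$ and $E_1^{(2)},\ldots,E_{n_2}^{(2)}$ by induction and concatenate the two first ears at $r$ to obtain a single new first ear $E_1:=E_1^{(1)}\cup E_1^{(2)}$, an $s$--$t$-path; then append the remaining ears of $G_1$ followed by those of $G_2$. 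Each such ear is still nested in the same earlier ear as before (with $E_1^{(1)}$ or $E_1^{(2)}$ now viewed as subpaths of the new $E_1$), and nest intervals from $G_1$ and $G_2$ lie in edge-disjoint halves of the new $E_1$, preserving the laminar condition. For the parallel case $G=G_1\cup G_2$ with $V(G_1\cap G_2)=\{s,t\}$, take the NED of $G_1$, then append the first ear of $G_2$ as an $s$--$t$-path nested in $E_1^{(1)}$ with nest interval all of $E_1^{(1)}$, and finally append the remaining ears of $G_2$ in their original order; nestings from $G_2$ survive unchanged, and the new nest intervals along $E_1^{(1)}$ are all equal to or contained in $E_1^{(1)}$, which is compatible with the existing laminar family on $E_1^{(1)}$.

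\textbf{From NED to series-parallel.} I would induct on the number $n$ of ears. If $n=1$ then $G=E_1$ is an $s$--$t$-path, and repeated series decomposition at the internal vertices shows it is series-parallel with terminals $s,t$. For $n\geq 2$, pick an \emph{innermost} ear $E_j$ — one in which no other ear is nested; such an $E_j$ exists because the ``$E_a$ nests in $E_b$'' relation defines a forest rooted at $E_1$. Let $u,v$ be its endpoints and $I(E_j)\subseteq E_i$ its nest interval. A short check using the nesting conditions shows that the interior of $E_j$ is disjoint from every other ear, so removing the interior of $E_j$ leaves a NED of $G^-:=G-\mathring{E_j}$ with $n-1$ ears and the same terminals $s,t$. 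By induction, $G^-$ is series-parallel with terminals $s,t$. Reinserting $E_j$ is a parallel decomposition: $E_j$ is a $u$--$v$-path internally disjoint from $G^-$, and it should be combined in parallel with the subgraph of $G^-$ containing $I(E_j)$ (viewed with terminals $u,v$), the rest being glued back by the same series/parallel operations that built $G^-$.

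\textbf{Main obstacle.} The delicate point is the last step of the backward direction: the series-parallel expression for $G^-$ with terminals $s,t$ must be refined so that $u$ and $v$ appear as terminals of some intermediate decomposition, since parallel composition is only defined between graphs sharing terminals. The cleanest way to push this through is to strengthen the inductive hypothesis: carry along not just ``$G$ is SP with terminals $s,t$'' but rather a full SP-expression tree whose internal nodes expose, as terminals of sub-expressions, exactly the endpoints of the ears of the NED. The laminar structure of the nest intervals is precisely what guarantees that such an expression tree can be maintained as ears are peeled off one by one. Once this strengthening is in place, the parallel step gluing $E_j$ back is immediate.
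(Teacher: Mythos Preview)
The paper does not prove this theorem; it is quoted from Eppstein~\cite{Epp92} and used as a black box, so there is no in-paper proof to compare against.

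Your sketch is on the right track and matches the spirit of Eppstein's original argument. The forward direction is clean and complete as written. For the backward direction you have correctly located the only real difficulty: after deleting an innermost ear $E_j$ with endpoints $u,v$ and obtaining an SP structure on $G^-$ with terminals $s,t$, nothing a priori guarantees that $\{u,v\}$ occurs as the terminal pair of any sub-expression, which is exactly what you need to glue $E_j$ back by a parallel step. Your proposed fix---carrying a richer inductive invariant that tracks an SP-expression tree compatible with the laminar family of nest intervals---is the right idea, but as stated it is only a slogan. To make it go through you would need to formulate precisely which terminal pairs are ``exposed'' (for instance: for every ear $E_i$ and every maximal nest interval on $E_i$, there is a sub-expression with those endpoints as terminals) and then verify that this invariant survives both the removal of $E_j$ and its reinsertion. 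The laminarity of nest intervals is indeed what makes this possible, as you say, but the bookkeeping is the entire content of the proof and you have not done it. So: correct strategy, correct diagnosis of the obstacle, but the hard step is still owed.
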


Whenever we consider a (nested) ear decomposition of a series-parallel graph
we will implicitly assume that the terminals of the decomposition and of the graph 
are the same.\bigskip

We repeat Lemma~\ref{lem:twoConn} here  for the reader's convenience:
\newtheorem*{lem4}{Lemma~\ref{lem:twoConn}}

\begin{lem4}
Let $x$ be a vertex in a graph $G$ such that every $K_4$-subdivision in $G$ meets $x$, and let
each  neighbour of $G$ have degree~$2$. If $G-N(x)-x$ is $2$-connected, then 
$G$ either contains $k$ edge-disjoint $K_4$-subdivisions
or an edge  set of size $\bigO(k^4)$ that meets every $K_4$-subdivision.
\end{lem4}

We fix $G$ as in the lemma 
throughout the rest of the article. We also define the vertex set $X$ as $N_G(x)\cup\{x\}$.
Moreover, we note that, as $G-X$ is $2$-connected and does not contain any $K_4$-subdivision,
it follows by Lemma \ref{diamonds} (i) that $G-X$ is series-parallel (with some terminals). 
In conclusion, we will use the following properties throughout the remainder of this article:
\begin{equation}\label{standardG}
\begin{minipage}[c]{0.8\textwidth}\em
$G$ has a vertex $x$, the set $N(x)\cup\{x\}$ is $X$, 
all vertices in $X\sm\{x\}$ 
have degree~$2$, $G-X$ is $2$-connected and series-parallel, but $G$ contains a
 $K_4$-subdivision.
\end{minipage}\ignorespacesafterend 
\end{equation} 


For the proof of Lemma~\ref{lem:twoConn}, we will often work in certain subgraphs $H$ of $G$ such 
that $H-X$ is series-parallel. To simplify notation somewhat we will say that a $\cE$
is a \emph{NED of $H$} if it is one of $H-X$ (if $H-X$ contains an edge). 
Let $F$ be an ear of such a NED $\cE$ of some subgraph $H$ of $G$ (such that $H-X$ is
series-parallel with the same terminals as $\cE$). Let the endvertices of $F$ be $u$ and $v$. 
We define $\overline F$ as the union of $F$ with all $(F-\{u,v\})$--$x$-paths in $G$ of length~$2$. 
That is, to form $\overline F$ we start with $F$, then we add all edges between $F-\{u,v\}$ and $X$
(together with the corresponding endvertices in $X$) and finally we add all edges between the newly
added vertices in $X$ and $x$ (together with $x$ if we added any edge at all).

An ear $F$ of $\mathcal E$ is an \emph{$x$-ear} if it contains a vertex from $N(X)$ in its interior, 
or equivalently, if $\overline F$ contains $x$. 
The NED $\mathcal E=E_1,\ldots,E_n$ is \emph{good} if 
\begin{enumerate}[\rm (i)]
\item among all NEDs of $H$ the number of $x$-ears in $\mathcal E$ is maximal; and
\item subject to~(i) $x$-ears appear as early as possible, that is, 
the binary number $b_1\ldots b_n$ is maximal, where
 $b_i=1$ if $E_i$ is an $x$-ear and $b_i=0$ otherwise.
\end{enumerate}

In the next lemma we collect some properties of good NEDs.
\begin{lemma}\label{goodNEDlem}
Let $H\subseteq G$ be a subgraph such that $H-X$ is series-parallel and contains at least one edge.
Let $\mathcal E$ be a good NED of $H$, and let $E, F$ be ears of $\mathcal E$
such that $F$ is nested in $E$. Then the following holds:
\begin{enumerate}[\rm (i)]
\item Let $P$ be the interior of $I(F)$. 
If $E$ is an $x$-ear but $F$ is none, then either $P$ is disjoint from $N(X)$
or $P$ contains all of $N(X)\cap E$ except possibly the endvertices of~$E$; 
\item If $F$ is an $x$-ear, then so is $E$.
\end{enumerate}
\end{lemma}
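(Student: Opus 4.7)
The plan is to apply a structural swap to the good NED $\mathcal E$ and derive a contradiction in each case. Let $u,v$ be the endvertices of $F$, set $E^*:=(E-\inn{I(F)})\cup F$ and $F':=I(F)$, and let $\mathcal E'$ be the sequence obtained from $\mathcal E$ by replacing $E$ with $E^*$ at its original position and inserting $F'$ immediately after $E^*$ (while deleting $F$ from its original position). Since $E^*\cup F'=E\cup F$ as subgraphs, $\mathcal E'$ covers the same edges as $\mathcal E$. Routine verification, relying on the NED axiom that sibling nest intervals are edge-disjoint or nested, shows that $\mathcal E'$ is again a NED of $H-X$: ears of $\mathcal E$ with nest interval contained in $I(F)$ now have parent $F'$, ears originally nested in $F$ become nested in $E^*$, and all remaining ears keep their parent in $\mathcal E'$.

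The two statements then follow from the identities
\[
\inn{E^*}\;=\;(\inn E\setminus\inn{I(F)})\cup\inn F\qquad\text{and}\qquad\inn{F'}\;=\;\inn{I(F)}\;=\;P,
\]
which pin down when $E^*$ and $F'$ are $x$-ears. For part (ii), assume $F$ is an $x$-ear but $E$ is not: then $\inn F\cap N(X)\ne\emptyset$ yields that $E^*$ is an $x$-ear, while $\inn E\cap N(X)=\emptyset$ forces $P\cap N(X)=\emptyset$, so $F'$ is not an $x$-ear. Consequently $\mathcal E'$ has the same total number of $x$-ears as $\mathcal E$, but the bit of its binary string at the position of $E$ flips from $0$ to $1$, making $\mathcal E'$ lexicographically larger than $\mathcal E$. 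This contradicts the earliness clause of goodness.

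For part (i), assume $E$ is an $x$-ear, $F$ is not, $P\cap N(X)\ne\emptyset$, and fix $w\in N(X)\cap\inn E$ with $w\notin P$. Then $w\in\inn E\setminus\inn{I(F)}\subseteq\inn{E^*}$ (note that $u,v$ themselves belong to $\inn E\setminus\inn{I(F)}$, which handles the edge case $w\in\{u,v\}$), so $E^*$ is an $x$-ear, and $\inn{F'}=P$ meeting $N(X)$ makes $F'$ an $x$-ear as well. The swap therefore strictly increases the number of $x$-ears (the non-$x$-ear $F$ becomes the $x$-ear $F'$, while $E\to E^*$ remains an $x$-ear), contradicting the maximality clause of goodness. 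The main obstacle is the verification that $\mathcal E'$ is a genuine NED after the (minor) reordering; once this is in place, both parts reduce to a short tally of $x$-ears or a lexicographic comparison of binary strings.
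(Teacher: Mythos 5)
Your proposal is correct and follows essentially the same route as the paper: the paper performs the identical swap, replacing $E$ by $EuFvE$ and inserting $I(F)$ as a new ear immediately after it, then contradicts the maximality of the number of $x$-ears for part (i) and the ``$x$-ears appear as early as possible'' clause for part (ii). Like the paper, you leave the verification that the modified sequence is again a NED as a routine check, so the two arguments match in both substance and level of detail.
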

\begin{proof}
Let $\mathcal E=E_1,\ldots, E_N$, and let $E=E_i$ and $F=E_j$, which implies $i<j$.

(i) Suppose that $P$ contains a vertex from $N(X)$ and that, at the same time, 
some vertex of the interior of $E_i$ lies in $N(X)\sm V(P)$. 
Let $u,v$ be the endvertices of $E_j$. 
Form the paths $E'_i=E_iuE_jvE_i$ and $E'_j=uE_iv=I(E_j)$, and observe that both contain a vertex from $N(X)$
in their respective interiors. Moreover, the sequence
\[
\mathcal E'=E_1,\ldots, E_{i-1},E'_i,E'_j,E_{i+1},\ldots ,\hat{E_j},\ldots,E_n,
\]
where $\hat{E_j}$ indicates that $E_j$ is omitted, is a NED with more $x$-ears than $\mathcal E$, which is impossible. 

(ii) Suppose that $E_j$ is an $x$-ear but $E_i$ is not. As in~(i), form the paths
$E'_i=E_iuE_jvE_i$ and $E'_j=uE_iv=I(E_j)$. Observe that now $E'_i$ contains a vertex of $N(X)$
in its interior. Thus, the $x$-ears of the NED $\mathcal E'$ from (i) appear earlier than in $\cE$, 
contradicting the choice of $\cE$.
\end{proof}

If $H-X$ contains an edge, we define the \emph{$x$-ear number} of  $H$
as the number of $x$-ears in a good NED of $H$  
(by definition the number of $x$-ears is the same in all good NEDs of $H$
and equals the maximal number of $x$-ears in any NED of $H$).
If $H-X$ consists of a single vertex,
we define the $x$-ear number of $H$ as~$0$.

We use the $x$-ear number of $G$ to distinguish between two major cases in the 
proof of Lemma~\ref{lem:twoConn}: If the $x$-ear number is large, we will always
find $k$ edge-disjoint $K_4$-subdivisions; if it is small, then both outcomes will still be 
possible, but we will be able to do an induction on the $x$-ear number.
We first treat the case when the $x$-ear number is large.

\section{Many $x$-ears}
\label{largeS}

We prove in this section:
\begin{lemma}\label{manylem}
If the $x$-ear number of $G$ is at least $200k$, for some integer $k$,
 then $G$ contains $k$ edge-disjoint $K_4$-subdivisions.
\end{lemma}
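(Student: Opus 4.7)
The plan is to reduce the problem to finding many ``parallel triples'' of $x$-ears. The key base construction is the following: if $F_1,F_2,F_3$ are three $x$-ears of the NED sharing the same pair of endpoints $\{u,v\}$ (and hence all nested in the earliest ear containing $u$ and $v$), then they are three pairwise internally disjoint $u$--$v$-paths in $G-X$, since the interior of each ear is disjoint from all earlier ears. Picking an $x$-adjacent interior vertex $y_i\in F_i$ with witness $z_i\in N(x)$ for $i=1,2$, the subgraph
\[
F_1\cup F_2\cup F_3\cup\{y_1z_1,z_1x,xz_2,z_2y_2\}
\]
is a $K_4$-subdivision with branch vertices $u,v,y_1,y_2$: the six paths $uF_3v$, $uF_1y_1$, $y_1F_1v$, $uF_2y_2$, $y_2F_2v$, and $y_1z_1xz_2y_2$ are internally disjoint. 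Crucially, this construction uses no edges of the parent ear, only $F_1\cup F_2\cup F_3$ and two specific $x$-edges; so $K_4$-subdivisions built from \emph{ear-disjoint} parallel triples are automatically edge-disjoint.

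First I would fix a good NED $\cE$ of $G-X$ with at least $200k$ $x$-ears, and consider the nesting subtree $T$ induced by the $x$-ears of $\cE$. By Lemma~\ref{goodNEDlem}(ii), $T$ is downward-closed in the full nesting tree of $\cE$, so $|V(T)|\geq 200k$. I would then partition the children of each node of $T$ into parallel classes by endpoint-pair and pigeonhole: either some node has a parallel class of size $\geq 3k$, in which case the base construction immediately yields $k$ edge-disjoint $K_4$-subdivisions, or all parallel classes are small. In the latter case I fall back to nested configurations: for a nested pair $F\subset E$ of $x$-ears, the cycle $C=F\cup I(F)$ together with the length-$4$ path $y_Fz_Fxz_Ey_E$ forms an $y_F$-$y_E$-diamond whenever $y_E$ lies in $I(F)$, which, by Lemma~\ref{goodNEDlem}(i) and the maximality clauses of a good NED, will hold for many pairs $(F,E)$. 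Such a diamond is then upgraded to a $K_4$-subdivision by supplying a fourth branch vertex from yet another $x$-ear whose $x$-adjacency witness lies on $C$.

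The main obstacle is the nested case. Because interior vertices of distinct ears are disjoint, the ``fourth-branch-vertex'' $x$-ear in the nested construction must be chosen carefully so that its $x$-adjacency witness lands on $F\cup I(F)$; and the resulting $K_4$-subdivisions must be edge-disjoint across the many chosen pairs, which forces us to route through disjoint portions of the parent ears using the laminar structure of the nest intervals. The constant $200$ is chosen to absorb the cumulative pigeonhole losses from (i)~the split between the parallel-class case and the nested case, (ii)~the partition of each node's children into parallel classes, and (iii)~the reservation of a constant number of auxiliary $x$-ears per $K_4$-subdivision to supply the fourth branch vertex in the nested construction.
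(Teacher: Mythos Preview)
Your proposal has a genuine structural gap: the dichotomy ``large parallel class by endpoint-pair'' versus ``nested configurations'' is not exhaustive. Consider a single $x$-ear $E$ with internal vertices $v_0,v_1,\ldots,v_n$ and $x$-ears $F_1,\ldots,F_n$ nested in $E$ with endpoints $v_{i-1},v_i$. Every parallel class has size~$1$, yet there is no nesting among the $F_i$ at all---they sit side by side with pairwise edge-disjoint nest intervals. Your plan sends this configuration to the ``nested'' branch, where nothing applies. This is not a corner case; it is the generic picture once you forbid large parallel classes, because nest intervals of children form a laminar family and can perfectly well be a long antichain.

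Two further issues compound this. First, your invocation of Lemma~\ref{goodNEDlem}(i) to place $y_E$ inside $I(F)$ is inapplicable: that lemma concerns the case where $F$ is \emph{not} an $x$-ear, whereas both $E$ and $F$ are $x$-ears in your setting, and indeed there is no reason the $X$-neighbour of $E$ should lie in $I(F)$. Second, the nested case is explicitly left as ``the main obstacle'' without a construction that guarantees edge-disjointness across many pairs.

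The paper's argument circumvents all of this by working not with same-endpoint classes but with \emph{edge-disjoint nest intervals}: Lemma~\ref{alleNebeneinander} turns any three $x$-ears nested in a common ear with edge-disjoint nest intervals into a $K_4$-subdivision, directly handling the side-by-side case your dichotomy misses. The global argument (Lemma~\ref{lem:manyears}) then refines $\nestle$ to a partial order $\leq$ that also compares siblings by inclusion of nest intervals, picks a $\leq$-maximal $x$-ear $E^*$ with at least~13 $\leq$-descendants, and splits: if $E^*$ has $\geq 78\ell$ $\leq$-descendants it has $\geq 3\ell$ immediate ones with edge-disjoint intervals (Lemma~\ref{alleNebeneinander} finishes); otherwise one cuts the NED at $E^*$ into two edge-disjoint sub-NEDs, each still carrying enough $x$-ears for induction, with Lemma~\ref{threeAreEnough} supplying the base $K_4$-subdivision in each piece. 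Your parallel-triple idea is essentially the degenerate case of Lemma~\ref{alleNebeneinander} where all three intervals coincide; broadening it to edge-disjoint intervals is the missing ingredient.
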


Let $H\subseteq G$ be a subgraph of $G$ such that $H-X$ is series-parallel
and let $\cE$ be a good NED of $H$.
By Lemma~\ref{goodNEDlem}~(ii) the union of all $x$-ears in $\cE$ is a connected graph,
i.e.\ every $x$-ear except for the first ear is nested in another $x$-ear.
Whether an $x$-ear is nested in another $x$-ear or not defines a relation whose 
transitive closure is 
a partial order $\nestle$ on the $x$-ears. 
The first ear is always the unique minimal element in this partial order.
If $F\nestle F'$ holds for two $x$-ears $F$ and $F'$, the ear $F'$ is a \emph{$\nestle$-descendant} of $F$.
Note that we define the $\nestle$-relation only for $x$-ears.

\begin{lemma}\label{alleNebeneinander}
Let $F_1,\ldots, F_{\ell}$ be $x$-ears that are nested in an ear $E$
 such that all nest intervals are edge-disjoint.
Let $P$ be the minimal subpath of $E$ that contains 
$\bigcup_{i=1}^\ell I(F_i)$.
Then $\bigcup_{i=1}^\ell\overline F_i\cup P$ contains $\lfloor \frac{\ell}{3}  \rfloor$ 
edge-disjoint $K_4$-subdivisions.
\end{lemma}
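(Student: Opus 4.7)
The plan is to linearly order the ears along $E$, partition them into consecutive triples, and extract one $K_4$-subdivision from each triple, then verify edge-disjointness. After relabelling, I may assume that $I(F_1), \ldots, I(F_\ell)$ appear in that order along $E$; denote the endpoints of $F_i$ by $u_i, v_i$ so that the sequence $u_1, v_1, u_2, v_2, \ldots$ traces $E$ (with possible coincidences $v_i = u_{i+1}$ when consecutive nest intervals touch). For each $i$, fix an interior vertex $w_i$ of $F_i$ adjacent to some vertex of $X$, together with a $w_i$-$x$-path $Q_i \subseteq \overline{F_i}$ of length at most~$2$.

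For each triple $(F_{3k-2}, F_{3k-1}, F_{3k})$ I build a $K_4$-subdivision with branch vertices $x$, $w_{3k-1}$, $u_{3k-1}$, $v_{3k-1}$. The central ear $F_{3k-1}$ together with its nest interval $I(F_{3k-1})$ forms a triangle subdivision on $u_{3k-1}, w_{3k-1}, v_{3k-1}$: the two halves of $F_{3k-1}$ split at $w_{3k-1}$ give the $u$-$w$ and $w$-$v$ sides, while $I(F_{3k-1}) \subseteq P$ gives the $u$-$v$ side. The fourth vertex $x$ is connected to $w_{3k-1}, u_{3k-1}, v_{3k-1}$ by, respectively, $Q_{3k-1}$; the concatenation of $Q_{3k-2}$, the $w_{3k-2}$-to-$v_{3k-2}$ half of $F_{3k-2}$, and the gap $v_{3k-2} E u_{3k-1}$; and symmetrically the concatenation of $Q_{3k}$, the $w_{3k}$-to-$u_{3k}$ half of $F_{3k}$, and the gap $v_{3k-1} E u_{3k}$. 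Checking that the six paths are pairwise internally disjoint is routine: three of them live in $\overline{F_{3k-1}} \cup I(F_{3k-1})$ and meet only at branch vertices, while the remaining three use the outer ears, their $x$-connections, and the two interior $E$-gaps, which are pairwise disjoint and meet the central region only at the branch vertices $x, u_{3k-1}, v_{3k-1}$.

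For edge-disjointness across triples, the $k$-th $K_4$-subdivision uses only edges from $\overline{F_{3k-2}}, \overline{F_{3k-1}}, \overline{F_{3k}}$, the interior gaps $v_{3k-2} E u_{3k-1}$ and $v_{3k-1} E u_{3k}$, and $I(F_{3k-1})$. Since each neighbour of $x$ has degree~$2$ by~\eqref{standardG}, the $x$-connections $Q_i$ for distinct ears use disjoint edges; and distinct triples trivially involve distinct ears, distinct interior gaps, and distinct central intervals. Thus the $\lfloor \ell/3 \rfloor$ $K_4$-subdivisions are pairwise edge-disjoint, while the ``boundary'' gaps $v_{3k} E u_{3k+1}$ between consecutive triples, and the intervals $I(F_{3k-2}), I(F_{3k})$, remain entirely unused.

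No serious technical obstacle arises once the correct branch vertices are chosen. The subtlety is that the apparently symmetric choice $x, w_{3k-2}, w_{3k-1}, w_{3k}$ does not work: the $w_{3k-2}$-$w_{3k-1}$ and $w_{3k-1}$-$w_{3k}$ paths would monopolise both interior gaps and both halves of $F_{3k-1}$, leaving no route for a $w_{3k-2}$-$w_{3k}$ path inside $\bigcup \overline{F_i} \cup P$. Promoting $u_{3k-1}, v_{3k-1}$ to branch vertices instead lets the middle ear itself carry the whole triangle $u_{3k-1} w_{3k-1} v_{3k-1}$ via its two halves and $I(F_{3k-1})$, and leaves just one arm for each outer ear to connect to $x$.
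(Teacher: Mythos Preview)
Your proof is correct and follows essentially the same approach as the paper: partition the ears into consecutive triples and, for each triple, take the cycle formed by the middle ear together with its nest interval, then attach three $x$-paths through the middle ear's $X$-neighbour and through the two flanking ears via the gaps on $E$. Your branch vertices $x, w_{3k-1}, u_{3k-1}, v_{3k-1}$ coincide with the paper's $x, v_2, s_2, t_2$, and your edge-disjointness argument is slightly more explicit than the paper's one-line reduction to the case $\ell=3$.
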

\begin{proof}
We show the statement first for $\ell=3$. The general statement then follows by 
partitioning $F_1,\ldots,F_\ell$ into groups of three with consecutive nest intervals. 

For $i=1,2,3$, let $s_i,t_i$ be the endvertices of $F_i$ 
and assume that they appear in the order $s_1<t_1\le s_2<t_2\le s_3<t_3$ on $E$.
For $i=1,2,3$, let $v_i$ be a vertex in $F_i\cap N(X)$
and $y_i$ the unique neighbour of $v_i$ in $X$.
Let $C$ be the cycle $F_2\cup I(F_2)$.
Then, there are three $C$--$x$-paths $P_1,P_2,P_3$ that only meet in $x$,
namely $P_1=s_2Et_1F_1v_1y_1x$, $P_2=v_2y_2x$ and $P_3=t_2Es_3F_3v_3y_3x$.
The union $C\cup P_1\cup P_2\cup P_3$ is a $K_4$-subdivision 
contained in $P\cup \overline F_1 \cup \overline F_2 \cup \overline F_3$ for $P=s_1Et_3$;
see Figure~\ref{fig:dreiNebeneinander}.
\end{proof}

\begin{lemma}
\label{threeAreEnough}
Let $H$ be a subgraph of $G$ such that $H-X$ is series-parallel and $\cE$ be a good NED of $H$.
Let $E$ be an $x$-ear with at least seven $\nestle$-descendants. Then $H$ contains a $K_4$-subdivision.
\end{lemma}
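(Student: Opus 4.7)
The plan is to exhibit a $K_4$-subdivision by analyzing the subtree $T$ of the nesting tree of $\mathcal E$ consisting of $E$ together with its $\nestle$-descendants. Lemma~\ref{goodNEDlem}(ii) guarantees that the $x$-ears form a subtree of the full nesting tree, so $T$ is a rooted tree on at least eight vertices.

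I would split into two cases. In the first, some $x$-ear $F^*$ of $T$ has three children $F_1, F_2, F_3$ directly nested in it. Property~(ii) of a NED forces the nest intervals $I(F_i) \subseteq F^*$ to form a laminar family. If they are pairwise edge-disjoint, Lemma~\ref{alleNebeneinander} with $\ell = 3$ immediately yields a $K_4$-subdivision. Otherwise, after relabelling so that $I(F_1)$ is $\subseteq$-maximal and $I(F_2) \subseteq I(F_1)$, I would take the cycle $C = F_1 \cup I(F_1)$ and construct three internally disjoint $C$-to-$x$ paths: the length-two path $v_1 y_1 x$; a path entering $F_2$ at its endvertex on $I(F_1) \subseteq C$ and reaching $x$ via $v_2$; and a third path using $F_3$---entering $F_3$ directly from $I(F_1) \subseteq C$ if $I(F_3) \subseteq I(F_1)$, and otherwise leaving $C$ at an endvertex of $F_1$ and crossing $F^* \setminus I(F_1)$ to reach an endvertex of $F_3$. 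Because these three paths traverse pairwise disjoint ears and use pairwise distinct $y_ix$ edges, they are edge-disjoint, so together with $C$ they form a $K_4$-subdivision.

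In the complementary case, every node of $T$ has at most two children, so $T$ is a binary tree. Since $|V(T)| \geq 8$ implies depth at least three, $T$ contains a chain $E = F_0 \nestlt F_1 \nestlt F_2 \nestlt F_3$ of directly nested $x$-ears. Using the cycle $C = F_3 \cup I(F_3) \subseteq F_3 \cup F_2$ and three $C$-to-$x$ paths---namely $v_3 y_3 x$; a path leaving $C$ at the endvertex $s_3$ of $F_3$ and running along $F_2$ to $v_2$ (or simply $v_2 y_2 x$ when $v_2 \in I(F_3) \subseteq C$); and a third path leaving $C$ at the opposite endvertex $t_3$, crossing $F_2$ on the other side of $I(F_3)$ to $t_2$, and continuing along $F_1$ to $v_1$ (extending further into $F_0$ to $v_0$ whenever the position of $v_1$ would otherwise force an edge collision on $F_1$)---I would obtain a $K_4$-subdivision. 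Routing the two auxiliary paths through opposite sides of $I(F_3)$ on $F_2$ keeps them edge-disjoint on $F_2$, and since $F_1$ and $F_0$ are each used by at most one of them, full edge-disjointness follows.

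The main obstacle is the routing in the second case: the positions of $v_1, v_2$ on their respective ears are not under our control, and we must decide for each auxiliary path which of $v_0, v_1, v_2$ to terminate at so that the $F_1$-portions (and, if required, the $F_0$-portions) used by the two paths remain disjoint. The four-level chain supplied by the binary-tree argument provides precisely the flexibility needed to handle every configuration, which is why a tree of size eight---equivalently, seven $\nestle$-descendants of $E$---is enough.
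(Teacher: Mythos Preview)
Your approach matches the paper's: the same case split (some node with three children in the nesting tree, versus a depth-three chain in a binary tree) and the same idea of exhibiting a cycle together with three paths to a common vertex. The paper's constructions are a little cleaner --- in the nested sub-case of Case~1 it takes $C=F_2\cup I(F_2)$, uses only $F_1,F_2$, and places the fourth branch vertex at an $X$-neighbour $v$ on $F_1$ (not at $x$), with one $C$--$v$ path through $x$ and two along $I(F_1)\setminus I(F_2)$ and $F_1$; in Case~2 it again takes $C=F_2\cup I(F_2)$ and routes the three $C$--$x$ paths through $F_2$, $F_3$, and $F_1$, so the extra level $F_0=E$ is never needed.

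One point to tighten: three \emph{edge}-disjoint $C$--$x$ paths together with $C$ need not form a $K_4$-subdivision; you need the paths internally \emph{vertex}-disjoint with distinct feet on $C$. Your paths do satisfy this (distinct ears have disjoint interiors, and the degree-$2$ hypothesis on $X$ forces the $y_i$ to be pairwise distinct), but the justification you wrote (``pairwise distinct $y_ix$ edges, so edge-disjoint'') does not establish it.
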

\begin{proof}
Let $\cF$ be the set consisting of $E$ and its $\nestle$-descendants.
If there is an ear $F\in \cF$
that has three distinct ears $F_1,F_2,F_3\in \cF\setminus \{E\}$ nested in $F$,
their nest intervals are either edge-disjoint (Figure~\ref{fig:dreiNebeneinander})
or the nest intervals are nested (Figure~\ref{fig:ineinander}).
In the first case we find a $K_4$-subdivision with $x$ as branch vertex by Lemma~\ref{alleNebeneinander}.
In the second case, let $F_2$ be the ear with $I(F_2)\subseteq I(F_1)$.
Set $C=F_2\cup I(F_2)$ and let $v$ be an $X$-neighbour in the interior of $F_1$.
Then, there are three $C$--$v$-paths that only meet in $v$,
namely one from an $X$-neighbour in $F_2$ via $x$
and two from the endvertices of $F_2$ via $I(F_1)-I(F_2)$ and $F_1-v$.
Hence, the union of $C$ and these paths is a $K_4$-subdivision.

We therefore may assume that every $F\in\cF$ has at most two $\nestle$-descendants.
If there were no ears $F_1,F_2,F_3$ such that $F_1$ is nested in $E$,
$F_2$ is nested in $F_1$ and $F_3$ is nested in $F_2$,
then $E$ would have at most six $\nestle$-descendants.
Therefore, those ears exist (looks similar as Figure~\ref{fig:stacked}, however some paths there could be trivial) 
and we can construct a $K_4$-subdivision in 
$\overline{F_1}\cup \overline{F_2}\cup \overline{F_3}$ as follows.

Let $C=F_2\cup I(F_2)$.    
Let $u$ be a neighbour of $X$ in $F_2$ and let $P_1$ be the $u$--$x$-path of length 2.
Then, there is an endvertex $v$ of $F_3$ with $v\neq u$ and let $P_2$ start in $v$, follow $F_3$ until it reaches a neighbour of $X$ in $F_3$ and then continues via $X$ to $x$.
If $I(F_2)$ contains a neighbour of $X$, let $w$ be this neighbour and $P_3$ be the $w$--$x$-path of length 2.
Otherwise, $F_1-I(F_2)$ contains a neighbour of $X$ and we chose $w$ as an endvertex of $F_2$ closest to this neighbour of $X$.
In this case, we let $P_3$ start in $w$, follow $F_1$ up to this neighbour of $X$ and finally reach $x$ via $X$.
Then, the paths $P_1,P_2,P_3$ are $C$--$x$-paths that only meet in $x$
and therefore, $C\cup P_1\cup P_2\cup P_3$ is a $K_4$-subdivision.
\end{proof}

\begin{figure}
\tikzstyle{xed}=[hedge,color=dunkelgrau]
\centering
\begin{subfigure}[b]{0.45\textwidth}
\begin{tikzpicture}
\node[hvertex] (a) at (0,0){};
\node[hvertex] (b) at (5,0){};
\draw[hedge,bend left=10] (a) to 
node[hvertex,pos=0.1] (a2){} 
node[hvertex,pos=0.3] (a3){} 
node[hvertex,pos=0.4] (a4){}
node[hvertex,pos=0.6] (a5){}
node[hvertex,pos=0.7] (a6){}
node[hvertex,pos=0.9] (a7){} 
(b);
\draw[hedge,bend left = 70] (a2) to node[hvertex,pos=0.5](c1){} (a3);
\draw[hedge,bend left = 70] (a4) to node[hvertex,pos=0.5](c2){} (a5);
\draw[hedge,bend left = 70] (a6) to node[hvertex,pos=0.5](c3){} (a7);

\node[hvertex,label=right:$x$](x) at (3,1.5){};
\draw[xed] (c1)--(x) node[midway,hvertex]{};
\draw[xed] (c2)--(x) node[midway,hvertex]{};
\draw[xed] (c3)--(x) node[midway,hvertex]{};
\end{tikzpicture}
\caption{Side by side}
\label{fig:dreiNebeneinander}
\end{subfigure}
\begin{subfigure}[b]{0.45\textwidth}
\begin{tikzpicture}
\node[hvertex] (a) at (0,0){};
\node[hvertex] (b) at (5,0){};
\draw[hedge,bend left=10] (a) to 
node[hvertex,pos=0.2] (a2){} 
node[hvertex,pos=0.4] (a3){} 
node[hvertex,pos=0.6] (a4){}
node[hvertex,pos=0.8] (a5){}
(b);
\draw[hedge,bend left = 70] (a2) to node[hvertex,pos=0.5](c1){} (a5);
\draw[hedge,bend left = 70] (a3) to node[hvertex,pos=0.5](c2){} (a4);

\node[hvertex,label=right:$x$](x) at (4,1.5){};
\draw[xed] (c1)--(x) node[midway,hvertex]{};
\draw[xed,bend right=20] (c2) to node[near end,hvertex]{} (x);
\end{tikzpicture}
\caption{Nested}
\label{fig:ineinander}
\end{subfigure}
\begin{subfigure}[b]{0.45\textwidth}

\begin{tikzpicture}
\node[hvertex] (a) at (0,0){};
\node[hvertex] (b) at (4,0){};
\draw[hedge,bend left=10] (a) to node[hvertex,pos=0.1] (c){}  node[hvertex,pos=0.9] (d){} (b);
\draw[hedge,bend left = 70] (c) to node[hvertex,pos=0.3](e){} node[hvertex,pos=0.7](f){} node[hvertex,pos=0.85](x1){} (d);
\draw[hedge,bend left = 90] (e) to node[hvertex,pos=0.2](g){} node[hvertex,pos=0.65](x2){} node[hvertex,pos=0.8](h){} (f);
\draw[hedge,bend left = 90] (g) to node[hvertex,pos=0.5](x3){} (h);

\node[hvertex,label=right:$x$](x) at (4,2){};
\draw[xed] (x)--(x1) node[midway,hvertex]{};
\draw[xed] (x)--(x3) node[midway,hvertex]{};
\draw[xed] (x)--(x2) node[midway,hvertex]{};
\end{tikzpicture}
\caption{Stacked}
\label{fig:stacked}
\end{subfigure}
\caption{$K_4$-subdivisions}
\label{fig:fewDescendants}
\end{figure}
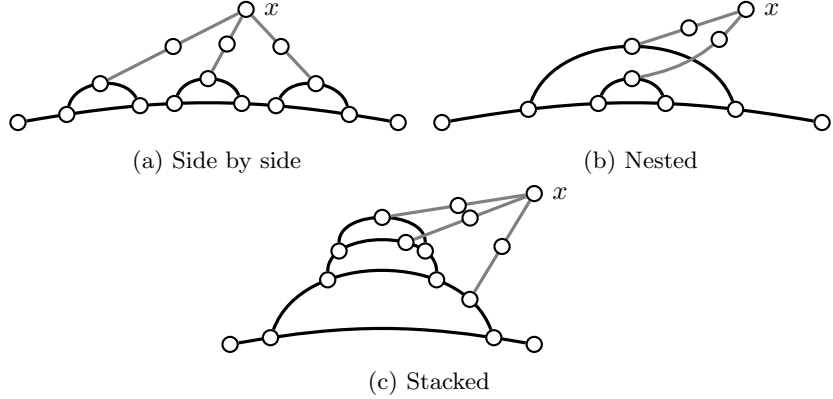

Let $\mathcal E$ be a good NED of some subgraph $H$ of $G$.
We refine the $\nestle$-relation  on the set of $x$-ears to a relation $\leq$.
We  first define the $\le$-relation on specific pairs $(F,F')$ of $x$-ears,
and then take  the transitive closure of the relation.
For two distinct $x$-ears $F,F'$ of $\mathcal E$ set 
\begin{equation}
\begin{minipage}{0.7\textwidth} \label{poset}
$F < F' \quad$ if $F'$ is nested in $F$ 
or there is an ear $E$ such that $F'$ and $F$ are nested in $E$ with $I(F')\subsetneq I(F)$
or $I(F')=I(F)$ but $F$ appears earlier in $\cE$ than $F'$.
\end{minipage}
\end{equation}
Let $\le$ denote the partial order obtained from the transitive closure of \eqref{poset}.
If $F<F'$, the $x$-ear $F'$ is a \emph{$\le$-descendant} of $F$. 

\begin{lemma}
\label{lem:manyears}
Let $H$ be a subgraph of $G$ that contains a $K_4$-subdivision and such that $H-X$ is series-parallel.
Let $\mathcal E$ be a good NED of $H$ with $\lambda$ $x$-ears.
Then $\bigcup_{F\in\mathcal E}\overline F$ 
contains at least $\lfloor \frac{\lambda}{200} \rfloor+1$ edge-disjoint
$K_4$-subdivisions.
\end{lemma}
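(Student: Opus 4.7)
I would prove Lemma~\ref{lem:manyears} by strong induction on $\lambda$. For the base case $\lambda\le 199$ the required count $\lfloor\lambda/200\rfloor+1$ equals~$1$, and this single $K_4$-subdivision is supplied by the hypothesis that $H$ contains one. Such a subdivision automatically lies in $\bigcup_{F\in\cE}\overline F$, because its $H-X$ edges belong to ears of $\cE$ and each $X$-incident edge lies on a length-two path $v$--$y$--$x$ with $v$ in the interior of some $x$-ear (after a mild adjustment of the terminals of $\cE$ if needed).

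For the inductive step $\lambda\ge 200$, the plan is to extract one $K_4$-subdivision $K$ that uses only a bounded number of $x$-ears, then set $H^\dagger:=H-E(K)$ (pruning any newly isolated vertex of $X$) and recurse. By Lemma~\ref{goodNEDlem}(ii) the $\nestle$-relation on the $\lambda$ $x$-ears of $\cE$ is a rooted tree $T$ with root $E_1$. I choose $E$ to be a $\nestle$-minimal $x$-ear whose subtree $T_E$ has at least~$8$ nodes, so every child of $E$ has subtree of size at most~$7$. If $|T_E|\le 199$, Lemma~\ref{threeAreEnough} applied inside $T_E$ produces $K$; inspecting the three cases in that proof (Figures~\ref{fig:dreiNebeneinander}--\ref{fig:stacked}) shows $K$ uses at most four $x$-ears from $T_E$. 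If instead $|T_E|\ge 200$, then $E$ has at least~$29$ children (each contributing at most~$7$ to $|T_E|-1$) whose nest intervals in $E$ form a laminar family of at least~$29$ sub-intervals; by a Dilworth-style argument, three of these are either pairwise edge-disjoint (triggering Lemma~\ref{alleNebeneinander}) or strictly nested (triggering the configuration of Figure~\ref{fig:ineinander}). In either sub-case $K$ uses at most four $x$-ears.

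After extracting $K$, I fix a good NED $\cE^\dagger$ of $H^\dagger$. Losses of $x$-ears in $\cE^\dagger$ come from three sources: the at most four $x$-ears of $K$, their (possibly orphaned) $\nestle$-descendants, and any refactoring of a partially-deleted parent ear into new $\cE^\dagger$-ears; together these amount to at most $199$ lost $x$-ears, so $\lambda^\dagger\ge\lambda-199$. In particular $\lambda^\dagger\ge 1$, and when $\lambda^\dagger\ge 8$ Lemma~\ref{threeAreEnough} guarantees that $H^\dagger$ still contains a $K_4$-subdivision (the case $\lambda^\dagger<8$ forces $\lambda$ into a small bounded range and can be dispatched by a direct argument). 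The induction hypothesis applied to $(H^\dagger,\cE^\dagger)$ then yields $\lfloor\lambda^\dagger/200\rfloor+1\ge\lfloor(\lambda-199)/200\rfloor+1$ edge-disjoint $K_4$-subdivisions in $\bigcup_{F\in\cE^\dagger}\overline F\subseteq\bigcup_{F\in\cE}\overline F$, each edge-disjoint from $K$ because its edges lie in $H-E(K)$. Adjoining $K$ produces the desired $\lfloor\lambda/200\rfloor+1$ edge-disjoint $K_4$-subdivisions.

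The main obstacle I expect is the removal and refactoring step: checking that after deleting $E(K)$ the graph $H^\dagger-X$ remains series-parallel (immediate as a subgraph) and admits a good NED $\cE^\dagger$ with at least $\lambda-199$ $x$-ears (delicate), and that $H^\dagger$ still contains a $K_4$-subdivision so that the induction hypothesis applies. The factor~$200$ in the lemma statement is slack calibrated precisely to absorb the at most few dozen $x$-ears actually destroyed per recursive step, together with the reorganisation of partially-deleted ancestor ears into fresh ears of $\cE^\dagger$; keeping this bookkeeping under control is where the bulk of the work lies.
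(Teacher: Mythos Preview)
Your overall induction scheme (base case $\lambda\le 199$; extract one $K_4$-subdivision, peel it off, recurse) is natural, but the peel-off step is where the argument breaks, and you yourself flag it as ``the main obstacle''. The claim that $H^\dagger-X$ ``remains series-parallel (immediate as a subgraph)'' is not correct in the sense needed here: the paper's notion of series-parallel is two-terminal, and Eppstein's theorem (Theorem~\ref{eppstein}) only guarantees a NED for a series-parallel graph with its terminals. A subgraph of such a graph need not be series-parallel with the same terminals; indeed, the $K_4$-subdivision $K$ contains a cycle in $H-X$, and deleting its edges will typically destroy $2$-connectedness of $H-X$, so $H^\dagger-X$ need not admit any ear decomposition at all. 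Even if one exists, you give no argument for the asserted bound of $199$ lost $x$-ears: the ``orphaned descendants'' and ``refactoring of a partially-deleted parent ear'' are exactly the uncontrolled quantities, and a single deleted subpath of the first ear can sever the nest intervals of arbitrarily many $x$-ears from one another. The student honestly concedes this is ``delicate'', but in fact it is the whole proof.

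The paper sidesteps this difficulty by never deleting edges from ears. Instead it locates a pivot ear $E^*$ (a $\le$-maximal $x$-ear with at least~$13$ $\le$-descendants) and \emph{partitions the NED itself} into two subsequences $\cE_1,\cE_2$, each of which is already a NED of an edge-disjoint subgraph $H_1,H_2$. Because ears are reassigned wholesale rather than cut, the $x$-ear counts $\lambda_1,\lambda_2$ are computed directly from how many $x$-ears of $\cE$ land in each piece (with a loss of at most one or two from the splicing at $E^*$), and Lemma~\ref{threeAreEnough} certifies that both $H_i$ contain a $K_4$-subdivision so that induction applies to each. The key preliminary step---showing that $E^*$ has fewer than $78\ell$ $\le$-descendants, else Lemma~\ref{alleNebeneinander} already yields $\ell$ subdivisions outright---guarantees that neither piece is too small. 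This two-sided split, rather than a one-sided peel, is what makes the bookkeeping go through.
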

\begin{proof}
Let $E$ be the first ear in $\cE$.
Let $\ell=\lfloor \frac{\lambda}{200} \rfloor+1$, hence $\lambda=200(\ell-1)+r$ with $0\le r\leq 199$. 

If $\ell=1$, the statement holds because $H$ contains at least one $K_4$-subdivision, by assumption.
Hence, we assume $\ell\ge 2$ and thus, $\lambda\ge 200$ holds. 
Among all $x$-ears $F$ that have at least 13 $\le$-descendants (the first ear $E$ is one of them),
let $E^*$ be a $\le$-maximal one.
As a result, every $\le$-descendant of $E^*$ has at most 12 $\le$-descendants itself.

If 
$E^*$ has at least  $78\ell$ $\leq$-descendants,
then it has at least $6\ell$ \emph{immediate} $\le$-descendants of $E^*$ (those $F$ such that $E^*< F' \le F$ implies $F'=F$)
because every immediate $\le$-descendant has at most 12 $\le$-descendants itself.
If at least $3\ell$ of the immediate $\leq$-descendants of $E^*$ are nested in $E^*$, set $E':=E^*$.
If not, then let $E'$ be the ear in which $E^*$ is nested. Then there 
 must be at least $3\ell$ immediate $\leq$-descendants of $E^*$
that are nested in $E'$. In both cases, 
as  they are immediate $\le$-descendants of $E^*$, their nest intervals have to be edge-disjoint.
We can apply Lemma \ref{alleNebeneinander} and obtain $\ell$ edge-disjoint $K_4$-subdivisions in $H$.
Thus, we may assume that $E^*$ has fewer than $78\ell$ $\le$-descendants.

Next, consider the case when  $E^*$ has at least seven $\nestle$-descendants.
Define $\mathcal E_1$ as the subsequence of $\cE$
of all $F\in\mathcal E$ with $F\gestle E^*$, 
and $\cE_2$ as the subsequence of those ears $F\in\mathcal E$ with $F\not\gestle E^*$.
Define edge-disjoint subgraphs $H_1,H_2$ of $H$ as $H_i=\bigcup_{F\in\mathcal E_i}\overline F$ for $i=1,2$.
Note that $\mathcal E_i$ is a NED of $H_i$ for $i=1,2$. 
We check that both subgraphs contain a $K_4$-subdivision.
By  Lemma~\ref{threeAreEnough}, $H_1$ contains a $K_4$-subdivision.
The number of $x$-ears of $\cE$ that are contained in $H_2$ is at least 
\[
200(\ell-1)-78\ell=122\ell-200 \ge 44
\]
because $\cE$ contains at least $200(\ell-1)$ $x$-ears and $E^*$ has at most $78\ell$ $\le$-descendants
and $\ell\ge 2$. 
Thus, the first ear of $\mathcal E_2$ has at least seven $\le$-descendants (in $\mathcal E_2$)
and these are even $\nestle$-descendants because it is the first ear.
By Lemma \ref{threeAreEnough}, $H_2$ contains a $K_4$-subdivision, as well.

We apply induction to $H_1$ and $H_2$.
The $x$-ear number $\lambda_i$ of $H_i$ is at least 
as large as the number of $x$-ears of $\cE_i$, which implies
 $\lambda_1+\lambda_2\ge \lambda$.
In total, we obtain
\[
\left\lfloor\frac{\lambda_1}{200}\right\rfloor + 1 + \left\lfloor\frac{\lambda_2}{200}\right\rfloor + 1 \ge \left\lfloor\frac{\lambda_1+\lambda_2}{200}\right\rfloor + 1 \ge \left\lfloor\frac{\lambda}{200}\right\rfloor + 1 = \ell
\]
edge-disjoint $K_4$-subdivisions.

Next, consider the case when  $E^*$ has at most six $\nestle$-descendants. 
Then, $E^*$ is not the first ear but nested in some other $x$-ear $E'$.
We decompose $\cE$ into two NEDs $\cE_1$ and $\cE_2$ 
belonging to two edge-disjoint subgraphs $H_1$ and $H_2$.
Let $\cE_1$ consist of $I(E^*)$ as first ear 
and all $F\in \cE$ with $F\geq E^*$ but $F\not\gestle E^*$, in the same order as in $\cE$.
For $\mathcal E_2$, let $E''$ be the ear obtained from $E'$ by replacing the subpath $I(E^*)$
with $E^*$. We then set $\mathcal E_2=\mathcal E\sm (\mathcal E_1\cup\{E'\})\cup \{E''\}$ (where keep the order of $\cE$). 
The graphs $H_1$ and $H_2$ are defined as $H_i=\bigcup_{F\in \cE_i} \overline{F}$. 

Again, the $x$-ear numbers $\lambda_1$ and 
$\lambda_2$ of $H_1$ and $H_2$ are at least the number of $x$-ears in $\cE_1$ 
resp.\ $\cE_2$. 
Since $E^*$ was chosen to have at least 13 $\leq$-descendants, and since it has, in this case, at most 
six $\nestle$-descendants, 
it follows that $\mathcal E_1$ contains $I(E^*)$ as first ear 
and at least seven $x$-ears from $\cE$.
These $x$-ears are $\nestle$-descendants of $I(E^*)$ in $\cE_1$. 
Furthermore, as $E^*$ contains fewer than $78\ell$ $\le$-descendants, the sequence $\cE_1$ contains fewer than $78\ell$ $x$-ears from $\cE$.
Together with $I(E^*)$ as new $x$-ear in $\cE_1$, we have $\lambda_1\le 78\ell$.
Every $x$-ear of $\cE\setminus\{E^*,E'\}$ appears either in $\cE_1$ or $\cE_2$.
Thus, $\cE_2$ contains at least $200(\ell-1) - (78\ell-1)-2 = 122\ell-201 \ge 43$ $x$-ears from $\cE$
where we used $\ell\ge 2$.
Hence, both $\cE_1$ and $\cE_2$ contain at least seven $\nestle$-descendants of their respective first ear,
and Lemma~\ref{threeAreEnough} shows that both subgraphs $H_1$ and $H_2$ contain a $K_4$-subdivision.

Every $x$-ear of $\cE$ except for $E^*$ and $E'$ appears either in $\cE_1$ or $\cE_2$.
We lose the two $x$-ears $E^*$ and $E'$ and gain the new $x$-ear $E''$.
The other new ear, $I(E^*)$, however, is not necessarily an $x$-ear. 
Thus $\lambda_1+\lambda_2\ge \lambda-1$.

We prove that nevertheless $
\left\lfloor\frac{\lambda_1}{200}\right\rfloor + 1 + \left\lfloor\frac{\lambda_2}{200}\right\rfloor + 1 \le \left\lfloor\frac{\lambda_1+\lambda_2}{200}\right\rfloor + 1 = \left\lfloor\frac{\lambda}{200}\right\rfloor + 1
$ holds.
Let $\ell_i=\left\lfloor\frac{\lambda_i}{200}\right\rfloor + 1$, i.e. $\lambda_i=200(\ell_i-1)+r_i$ with $0\le r_i\le 199$, for $i=1,2$.
Then $\lambda\le \lambda_1+\lambda_2+1\le 200(\ell_1+\ell_2-2)+(r_1+r_2+1)$.
As $r_i\le 199$, we have $r_1+r_2+1<2\cdot 200$ which means $\left\lfloor \frac{\lambda}{200} \right\rfloor + 1 \le (\ell_1+\ell_2-2)+1+1=\ell_1+\ell_2$.
Thus, applying induction on $H_1$ and $H_2$ yields $\ell_1+\ell_2\ge \left\lfloor \frac{\lambda}{200} \right\rfloor + 1$ edge-disjoint $K_4$-subdivisions,
which finishes the proof.
\end{proof}

We finally observe that, because $G$ contains a $K_4$-subdivision by~\eqref{standardG},
Lemma~\ref{manylem} is a special case of Lemma~\ref{lem:manyears}, which means that it is proved as well. 
In particular, we may from now on assume that the $x$-ear number of $G$ is smaller than $200k$.\sloppy

\section{Few $x$-ears}
\label{smallS}

In the previous section we saw that the $x$-ear number of $G$ is smaller than $200k$.
We will next find a hitting set for $K_4$-subdivisions 
whose size is bounded by some function in the $x$-ear number $\lambda$.
We construct the hitting set by induction on $\lambda$,
which means we will need to decompose the graph into
\emph{parts} with smaller $x$-ear number. 


Recall that $G$ is fixed and satisfies \eqref{standardG}.
The main lemma in this section is the following.
\begin{lemma}\label{mainfewlem}
Let $\lambda$ be the $x$-ear number of $G$, and let $k$ be a positive integer. 
Then $G$ either contains $k$ edge-disjoint $K_4$-subdivisions or 
$G$ has an edge hitting set of size $\bigO(\lambda k^3)$.
\end{lemma}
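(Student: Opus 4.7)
The plan is to induct on the $x$-ear number $\lambda$, peeling off one $x$-ear at a cost of $O(k^3)$ edges per step.

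For the base case $\lambda = 0$, I would argue that $G$ has no $K_4$-subdivision at all, so the empty edge set is a hitting set. Either $G-X$ is a single vertex, in which case $G$ is a theta-like graph with no $K_4$-subdivision; or $G-X$ has edges but no ear of a good NED contains an $N(X)$-vertex in its interior, in which case a short swap argument in the spirit of Lemma~\ref{goodNEDlem} shows that any hypothetical $K_4$-subdivision through $x$ could be used to move an $N(X)$-vertex into the interior of some ear, contradicting goodness.

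For the inductive step, $\lambda \geq 1$, fix a good NED $\mathcal E$ of $G$ and pick a $\nestle$-leaf $x$-ear $E^*$---one in which no other $x$-ear is nested. The core of the argument is to find either $k$ edge-disjoint $K_4$-subdivisions in $G$, or an edge set $Y^*$ of size $O(k^3)$ that meets every $K_4$-subdivision which uses an edge of $E^*$. The cycle $E^* \cup I(E^*)$, together with the length-$2$ paths linking vertices of $E^* \cap N(X)$ to $x$ through $X \setminus \{x\}$, is the local source of such $K_4$-subdivisions. Splitting into a small number of cases according to the role of $x$ in the subdivision (branch vertex versus vertex on a subdivided edge) and how the subdivision exits the zone of $E^*$ through its two endpoints, I would build $Y^*$ by iterated use of Menger's theorem and Mader's $A$-path theorem (Theorem~\ref{gallaithm}). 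Roughly, a $K_4$-subdivision through $E^*$ has three ``tentacles'' connecting the local cycle in $E^* \cup I(E^*)$ to the outside world, and a covering-versus-packing argument applied to each tentacle contributes a factor of $k$, explaining the $O(k^3)$ bound.

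Once $Y^*$ is in hand, every $K_4$-subdivision of $G - Y^*$ avoids $E^*$. Replacing $E^*$ by its nest interval $I(E^*)$ (and deleting the interior of $E^*$ together with its incident $X$-edges) yields a subgraph $G'$ whose $K_4$-subdivisions are precisely those of $G - Y^*$ and whose $x$-ear number is at most $\lambda - 1$. By induction, $G'$ either contains $k$ edge-disjoint $K_4$-subdivisions, which transfer back to $G$, or admits an edge hitting set $Y'$ of size $O((\lambda-1)k^3)$; then $Y^* \cup Y'$ is a hitting set of $G$ of the claimed size $O(\lambda k^3)$. The principal obstacle I anticipate is producing $Y^*$: $K_4$-subdivisions through $E^*$ can attach to the rest of $G$ through the endpoints of $E^*$ along essentially arbitrary series-parallel paths, so organising them into a bounded number of patterns and applying Menger/Mader to each---while keeping the overall count at $O(k^3)$---relies in an essential way on the leaf choice of $E^*$ and on the maximality properties of good NEDs established in Lemma~\ref{goodNEDlem}.
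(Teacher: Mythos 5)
Your overall shape (induction on $\lambda$, cost $\bigO(k^3)$ per $x$-ear) matches the paper's accounting, but the two pivotal steps of your induction are exactly where the real difficulty lies, and as stated they do not go through. First, the claim that one can either find $k$ edge-disjoint $K_4$-subdivisions or an edge set $Y^*$ of size $\bigO(k^3)$ meeting every $K_4$-subdivision that uses an edge of $E^*$ is not justified by a tentacle-by-tentacle Menger/Mader argument. If covering one tentacle fails, you get many edge-disjoint reroutings of \emph{that tentacle}, but these all attach to the same remainder of the subdivision; to turn local packings into $k$ edge-disjoint $K_4$-subdivisions you must reroute the local cycle and all three tentacles simultaneously and keep the reroutings of the \emph{different} pieces edge-disjoint from one another, even though they may run through the same series-parallel region. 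This composition problem is precisely what the paper's machinery is built for: it records the whole trace of a $K_4$-subdivision inside a part as a \emph{module} with one of finitely many \emph{blueprints}, proves replacement lemmas (Lemmas~\ref{replacelem}, \ref{replacelem2}, \ref{legolem}) so that traces can be swapped consistently, and composes \emph{hit-or-miss sets} along series/parallel decompositions (Lemma~\ref{hitormisslem}); the genuinely hard configurations are then handled by the ladder/fan packings and pseudo-parts (Lemmas~\ref{ladderfanlem}, \ref{pseudolem}, \ref{badcase1lem}, \ref{type5lem}). Note also that ``$k$ edge-disjoint modules of the same blueprint'' only yields $k$ edge-disjoint $K_4$-subdivisions at the very top level $H=G-x$ (this is the paper's short proof of Lemma~\ref{mainfewlem} from Lemma~\ref{fewlem}); inside a proper part it does not, which is why a per-ear ``hit all subdivisions through $E^*$'' statement is not the right intermediate goal.

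Second, the reduction step is structurally flawed. Choosing $E^*$ as a $\nestle$-leaf among $x$-ears only guarantees that no \emph{$x$-ear} is nested in it (Lemma~\ref{goodNEDlem}(ii) gives the converse direction); arbitrary non-$x$-ears, indeed a whole series-parallel zone, can be nested in $E^*$, with all its attachments to the rest of the graph passing through interior vertices of $E^*$. Deleting the interior of $E^*$ therefore destroys this nested material as well, so the $K_4$-subdivisions of your $G'$ are \emph{not} precisely those of $G-Y^*$: a subdivision can traverse the zone of $E^*$ using nested ears while avoiding $E(E^*)$, and such subdivisions are neither hit by $Y^*$ nor present in $G'$. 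Moreover, the assertion that $G'$ has $x$-ear number at most $\lambda-1$ is not automatic: the $x$-ear number is a maximum over all (good) NEDs of the new graph, not the count inherited from the old decomposition, and controlling it requires arguments of the type the paper proves in Lemmas~\ref{pushoutlem}, \ref{inducedned} and \ref{splitearlem}. (Your base case is fine in substance: $\lambda=0$ forces every $N(X)$-vertex of $G-X$ to be a terminal, and then series-parallelity excludes any $K_4$-subdivision; under \eqref{standardG} this case does not even occur.)
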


With Lemma \ref{mainfewlem} we can finally prove Lemma \ref{lem:twoConn}.

\begin{proof}[Proof of Lemma \ref{lem:twoConn}]
Let $\lambda$ be the $x$-ear number of $G-x$.
If $\lambda\ge 200k$, Lemma \ref{manylem} shows that $G$ contains $k$ edge-disjoint $K_4$-subdivisions.
Hence, $\lambda<200k$ and Lemma \ref{mainfewlem} shows
that $G$ either contains $k$ edge-disjoint $K_4$-subdivisions
or a hitting set of size at most $\bigO(\lambda k^3) = \bigO(k^4)$.
\end{proof}

The proof of Lemma~\ref{mainfewlem} will occupy the rest of the article. 
In the next section we
  investigate how the $x$-ear number relates to the natural decomposition
of $G-X$ as a series-parallel graph. In Section~\ref{laddersfans}, 
we take a short detour and discuss two configurations that will 
play a role in the hardest case of the induction. In Section~\ref{moduleSection}
we look at the traces of $K_4$-subdivisions in an arbitrary 
part of the decomposition of $G-X$. 

\subsection{Parts and ears}
\label{sec:parts}

Let $H$ be a subgraph of $G-x$. 
Any vertex in $H$ that is incident with an edge $e\in E(G-x)\setminus E(H)$ is a \emph{terminal} of $H$.
If $H=G-x$, then $H$ contains no such vertices
but by \eqref{standardG}, there are two vertices $s,t$ such
that $G-x$ is series-parallel with terminals $s,t$.
Then, we use $s,t$ as terminals of $H=G-x$.

The graph $H$ is a \emph{part} (of $G$) 
if it has at most two terminals. 
This means that a part $H$ with terminals $s,t$ is  an induced subgraph except for possibly the edge $st$, which may 
be a chord of $H$.
A part $H$ is \emph{trivial}
if $H-X$ consists of a single vertex (the terminal).
Two parts $H_1,H_2$ are \emph{internally disjoint} if $H_1$ and $H_2$ meet at most in their terminals
and have no common edge. 
Using the facts that $G-x$ is connected and $G-X$ is $2$-connected from \eqref{standardG}
we deduct
\begin{equation}\label{partProps}
\begin{minipage}[c]{0.8\textwidth}\em
A part is a connected graph, it has at least one terminal and the only parts with exactly one terminal are trivial parts.
\end{minipage}\ignorespacesafterend 
\end{equation} 
%
%


Mimicking the notions of series-parallel graphs, we define series and parallel decomposition for parts as well.
Let $H$ be a part with terminals $s,t$, and let $H_1,H_2$ be parts, too. 
Then $H=H_1\cup H_2$ is a \emph{series decomposition} of $H$ into parts 
if $H_1$ and $H_2$ are internally disjoint and
have a common terminal $w$ such that $H_1$ has $s,w$ as terminals and $H_2$ has $w,t$ as terminals. 
Note that we explicitly allow $H_1$ or $H_2$ (or both) to be trivial parts, which entails $s=w$ or $w=t$.

We say that $H=H_1\cup H_2$ is a \emph{parallel decomposition} of $H$ into parts if $H_1$ and $H_2$
are internally disjoint and both have $s,t$ as terminals.

A part $H$ with terminals $s,t$ is \emph{substantial} if it contains a vertex in $X$ that is adjacent
to a vertex in $H-\{s,t\}$. In particular,  $H-X$ must contain more 
than a single edge.
A part $B\subseteq H$ is a \emph{block-part} of $H$ if $B-X$ is a block of $H$.

\begin{lemma}\label{partsplem}
If $H$ is a part with terminals $s,t$, then $H-X$ is series-parallel with terminals $s,t$. 
\end{lemma}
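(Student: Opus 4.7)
The plan is to deduce the lemma from Lemma~\ref{diamonds}(i) applied to $H^+$, the graph obtained from $H-X$ by adjoining the edge $st$ if it is not already present (I write $s,t$ for the two terminals of $H$). First I would dispose of the easy cases: if $H$ is trivial then $H-X$ is a single vertex and there is nothing to prove, while if $H=G-x$ then the claim is immediate from the very choice of $s,t$ as series-parallel terminals of $G-x$, restricted to the 2-connected block containing them.

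For a proper non-trivial part, I would establish two properties of $H^+$: it is 2-connected, and it contains no $K_4$-subdivision. Lemma~\ref{diamonds}(i) then yields that $H^+$ is series-parallel with terminals $s,t$. Because the edge $st$ joins the two terminals, it must appear as a top-level parallel component of any such decomposition of $H^+$: a series decomposition with terminals $s,t$ cannot host $st$ in either side, and in a nested parallel decomposition the component carrying $st$ can be re-associated to the top. Stripping off $st$ then leaves $H-X$ as a series-parallel graph with terminals $s,t$. For the 2-connectedness of $H^+$ I would use that $G-X$ is 2-connected and that $\{s,t\}$ separates $V(H-X)\setminus\{s,t\}$ from $V(G-X)\setminus V(H-X)$ in $G-X$: any path in $G-X$ witnessing connectivity after deleting a single vertex either lies inside $H-X$ or re-enters $H-X$ at the remaining terminal, whereupon the edge $st$ closes it up inside $H^+$.

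The crux is showing that $H^+$ is $K_4$-subdivision free. A hypothetical $K_4$-subdivision $K\subseteq H^+$ must use the edge $st$, since $H-X\subseteq G-X$ and $G-X$ contains no $K_4$-subdivision by Lemma~\ref{diamonds}(i). I would then build an $s$--$t$-path $Q$ in $G-X$ whose interior avoids $V(H-X)$, by observing that every connected component of $(G-X)[V(G-X)\setminus V(H-X)]$ has neighbours at both $s$ and $t$; otherwise one of $s$ or $t$ alone would disconnect this component from the remainder of $G-X$, contradicting 2-connectedness. Substituting $Q$ for $st$ in $K$ yields a $K_4$-subdivision inside $G-X$, which is impossible.

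The main obstacle I foresee is the degenerate subcase $V(G-X)=V(H-X)$, where the routing argument collapses because there is no vertex of $G-X$ outside $H-X$. In that case $V(G-x)\setminus V(H)$ consists only of $X$-pendants which, being the witnesses to $s$ and $t$ being terminals of $H$, must all attach to $s$ or $t$. Together with the standing assumption that $G-x$ is series-parallel, this forces the block-cut tree of $G-x$ to be a path through $G-X$ with cut vertices $s,t$, and hence $G-X$ itself is series-parallel with terminals $s,t$. The conclusion is then immediate, or, in the variant where $st\in E(G-X)\setminus E(H)$, follows by removing $st$ as a top-level parallel component.
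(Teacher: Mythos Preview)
Your approach mirrors the paper's: form $H^+=(H-X)+st$, show it is $2$-connected and $K_4$-free, apply Lemma~\ref{diamonds}(i), then peel off $st$. The paper routes the replacement $s$--$t$-path through $G-x$ rather than $G-X$, but since the vertices of $X\setminus\{x\}$ are pendants in $G-x$ this is essentially the same manoeuvre. Your explicit justification for why deleting $st$ from $H^+$ preserves series-parallelness with terminals $s,t$ is more careful than the paper's one-word ``obviously''.

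Where you go beyond the paper is in isolating the degenerate subcase $V(G-X)=V(H-X)$, and here your resolution is not solid. Knowing that the only vertices of $G-x$ outside $H$ are $X$-pendants attached at $s$ and $t$ does tell you that $s,t$ are cutvertices of $G-x$, but it does \emph{not} follow that the $2$-connected block $G-X$ is series-parallel with the particular terminal pair $s,t$: series-parallelness is terminal-sensitive, and $G-X$ is only promised (by~\eqref{standardG}) to be series-parallel with its originally chosen terminals, which need have nothing to do with $s,t$. Your block--cut-tree sketch does not bridge this gap. The paper's own proof glosses over exactly this corner --- it asserts that $G-x$ is $2$-connected, which is not literally true in the presence of $X$-pendants --- so your instinct to flag the case is good, but the patch you offer does not close it.
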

\begin{proof}
If $H-X$ is a single vertex $s=t$ or a single edge $st$, it is trivially series-parallel with terminals $s,t$.

If $H=G-x$, then the definition of the terminals of $H$ imply the statement of the lemma. 
We therefore assume that $H\subsetneq G-x$.

Let $s,t$ be the terminals of $H$ and let $H':=(H-X)+st$.
Assume that $H'$ is not $2$-connected and let $v$ be a cutvertex in $H'$.
In $H'-v$, the adjacent vertices $s$ and $t$ belong to the same component 
and are separated by $v$ from some other component $F$.
As $\{s,t\}$ separates $H-X$ from $G-H-X$, the vertex $v$ separates $F$ from the rest of $G-X$.
This, however, contradicts the fact that $G-X$ is $2$-connected, by \eqref{standardG}.
We conclude that $H'$ is $2$-connected.

Assume that $H'$ contains a $K_4$-subdivision $K$.
If $H'=H$, the graph $G-x\supsetneq H$ would contain one as well which contradicts \eqref{standardG}.
If $H'\neq H$, the fact that $G-x$ is 2-connected by \eqref{standardG}
and $\{s,t\}$ separates $H$ from $G-E(H)$
imply that there is an $s$--$t$-path $P$ in $G-x$ that is internally disjoint from $H$.
Replacing the edge $st$ in $K$ by $P$ yields a $K_4$-subdivision in $G-x$ which is a contradiction to \eqref{standardG}.
We conclude that $H'$ contains no $K_4$-subdivision.

Thus, by Lemma~\ref{diamonds}~(i), it follows that $H'$ with terminals $s,t$ is series-parallel with terminals $s,t$
because $st$ is an edge in $H'$.
Obviously, then also $H-X=H'-st$ is series-parallel with terminals $s,t$.
\end{proof}

Let $H$ be a part such that $H-X$ contains an edge. 
By Lemma \ref{partsplem} and Theorem \ref{eppstein}, 
$H-X$ has a (good) NED. 
As mentioned in Section \ref{section:neds}, we also say that $H$ has a NED $\cE$ if it is one of $H-X$.
Let $H=H_1\cup H_2$ be a series or parallel decomposition into parts $H_1$ and $H_2$.
Given a good NED $\cE=(E_1,\ldots, E_n)$ of $H$, we define an \emph{induced  NED} $\cE_j$ of $H_j$
as the subsequence of $E_1\cap H_j,\ldots, E_n\cap H_j$ of all $E_i\cap H_j$ that are non-trivial
paths. 
The NEDs $\cE_j$ do not have to be good.
We define the \emph{$\cE$-induced $x$-ear number} as the number of $x$-ears in $\cE_j$.
Note that  $H_i$ is substantial if and only if its induced $x$-ear number is at least~$1$.

\begin{lemma}\label{pushoutlem}
Let $H$ be a part with terminals $s,t$ and $x$-ear number~$\lambda$.
If there is an $s$--$t$-path $P$ in $H$ that does not contain any vertex from $N(X)$ in its interior,
then there is a NED $\mathcal E$ of $H$ with $\lambda$ $x$-ears that has $P$ as first ear.
\end{lemma}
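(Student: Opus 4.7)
The plan is induction on $|E(H-X)|$, driven by the series-parallel structure of $H-X$ (Lemma~\ref{partsplem}). The base case is immediate: if $H-X$ is a single edge $st$ then $P=st$, and the one-ear NED $(st)$ has zero $x$-ears, which matches $\lambda=0$.

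For the inductive step, I would decompose $H-X$ as either a series or a parallel decomposition $H-X=H_1'\cup H_2'$ in the series-parallel sense, and extend it to a decomposition $H=H_1\cup H_2$ of the part $H$ by assigning each $X$-vertex of $H$ (which, by~\eqref{standardG}, has a unique neighbour in $H-X$) to the side containing that neighbour. Both $H_i$ are parts whose $H_i-X$ has strictly fewer edges, so the induction hypothesis is applicable; moreover, since $P$ avoids $X$ in its interior (a degree argument using \eqref{standardG} shows this), $P$ is in fact a path of $H-X$.

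In the \emph{parallel} case, $\{s,t\}$ separates $H_1-X$ from $H_2-X$, so the connected interior of $P$ lies entirely in one side, say $H_1$. By induction, I obtain a NED $\cE_1$ of $H_1$ with first ear $P$ and $\lambda(H_1)$ $x$-ears. Choose any good NED $\cE_2$ of $H_2$ with first ear $Q$ and $\lambda(H_2)$ $x$-ears, and form the concatenation $P,\;\cE_1\setminus\{P\},\;Q,\;\cE_2\setminus\{Q\}$. This is a NED of $H$ with $P$ as first ear: $Q$ becomes nested in $P$ with $I(Q)=P$, and the other ears retain their old nesting since every ear of $\cE_2$ other than $Q$ has its endpoints inside $H_2$. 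In the \emph{series} case, every $s$--$t$-path in $H-X$ crosses the cut vertex $w$ of $H_1\cap H_2$, so $P=P_1\cup P_2$ with $P_i\subseteq H_i$, and each $P_i$ still avoids $N(X)$ in its interior. Induction yields NEDs $\cE_i$ of $H_i$ with first ear $P_i$ and $\lambda(H_i)$ $x$-ears; taking $P=P_1\cup P_2$ as the new first ear and appending $\cE_1\setminus\{P_1\}$ and $\cE_2\setminus\{P_2\}$ works, because the nest intervals of the ears inherited from $\cE_1$ lie in $P_1$ and those inherited from $\cE_2$ lie in $P_2$, meeting at most in $w$.

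The hypothesis that $P$'s interior avoids $N(X)$ is used exactly once, but crucially: it guarantees that neither $P$ nor either $P_i$ is an $x$-ear, so the construction produces a NED with $\lambda(H_1)+\lambda(H_2)$ $x$-ears, losing none. To finish I would need the equality $\lambda=\lambda(H_1)+\lambda(H_2)$: the $\geq$ direction is exactly the above construction applied to good NEDs $\cE_i$, and the $\leq$ direction follows because every NED of $H$ restricts to NEDs of $H_1$ and $H_2$ (split the first ear at $w$ in the series case; in the parallel case no non-first ear can cross $\{s,t\}$), and no $x$-ear can straddle the partition. The main technical hazard I foresee is the bookkeeping of the nested-intervals condition in the parallel case when several ears inherited from $\cE_2$ have both endpoints in $\{s,t\}$ and thus inherit the nest interval $P$ alongside $Q$: this is still permitted by the ``edge-disjoint or one contains the other'' clause, but it deserves a careful verification.
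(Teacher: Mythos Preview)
Your approach is correct and genuinely different from the paper's. Where you induct on the series-parallel structure, the paper argues directly by an ear-swapping extremal argument: among all NEDs of $H$ with $\lambda$ $x$-ears it picks one $E_1,\ldots,E_m$ minimising the largest index $r$ for which $E_r$ contains an edge of $P$; a short argument shows $E_r\subseteq P$, and if $r\ge 2$ with $E_r$ nested in $E_q$ (endpoints $u,v$), one replaces $E_q$ by $E_q u E_r v E_q$ and $E_r$ by $uE_qv$, pushing the last $P$-edge to an earlier position. The hypothesis that the interior of $P$ avoids $N(X)$ is used exactly once, to ensure $E_r=uPv$ was not an $x$-ear so the swap cannot lower the $x$-ear count. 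This forces $r=1$ and hence $E_1=P$.

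The paper's route is shorter and needs no decomposition of $H$. Your route has the pleasant side effect of establishing $\lambda(H)=\lambda(H_1)+\lambda(H_2)$ under the lemma's hypothesis; the paper only obtains such additivity statements later (in the proofs of Lemmas~\ref{inducedned} and~\ref{splitearlem}), and there it actually \emph{uses} the present lemma, so your argument bundles two steps into one. One phrase to tighten: in the series case the first ear \emph{does} straddle the cut vertex $w$, so ``no $x$-ear can straddle the partition'' should really read ``every non-first $x$-ear lies entirely on one side, and if $E_1$ is an $x$-ear then at least one of $E_1\cap H_1$, $E_1\cap H_2$ is''; that is what yields $\lambda\le\lambda_1+\lambda_2$. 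The nest-interval bookkeeping you flag in the parallel case is indeed harmless: every ear of $\cE_2$ with both endpoints in $\{s,t\}$ becomes nested in $P$ with nest interval all of $P$, which automatically contains every other nest interval on $P$.
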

\begin{proof}
Among all NEDs of $H$ with $\lambda$  $x$-ears choose a NED $E_1,\ldots, E_m$
such that the largest $r$ such that $E_r$ contains an edge 
of $P$ is as small as possible. 

Let $Q$ be a maximal subpath of $E_r$ with $Q\subseteq P$, and suppose that $Q\neq E_r$.
Then $Q$ has an endvertex $u$ that is not an endvertex of $E_r$. 
Moreover, $u\notin\{s,t\}$ as $u$ is an internal vertex of $E_r$.
Thus, $u$ is incident with an edge of $P$ outside $E_r$, which then 
already belongs to $\bigcup_{\ell=1}^{r-1}E_\ell$ by choice of $r$. 
On the other hand, this edge is incident with an inner vertex of $E_r$, 
which implies that it belongs to an ear nested in $E_r$.
Therefore, the edge cannot be contained in $\bigcup_{\ell=1}^{p-1}E_\ell$.
Because of this contradiction, we deduce that 
\begin{equation}\label{inside}
E_r\subseteq P.
\end{equation}

Suppose that $r\geq 2$.
Let   $u,v$ be the endvertices of $E_r$, and let $E_r$ be nested in $E_q$. 
Then omitting $E_r$ from the sequence
\[
E_1,\ldots, E_{q-1},E_{q}uPvE_{q},uE_{q}v,E_{q+1},\ldots,E_m
\]
results in a NED  of $H$. 

Observe that the number of $x$-ears does not decrease as $uPv=E_r$
is not an $x$-ear by assumption. 
If $uE_qv$ is edge-disjoint from $P$, we immediately obtain a contradiction to 
the minimal choice of $r$. 
Thus, assume that $uE_qv$ contains an edge from $P$.
As $E_r\subseteq P$ has endvertices $u,v$ and $P$ does not contain a cycle,
it follows that $uE_qv$ contains an 
internal vertex that is incident with an edge $e$ that lies in $P$ but outside
$E_q$. Then, $e$ must lie in an ear $E_p$ with $p>q$. Since $e$ lies in $P$
we also get $p<r$. Thus, $q<p<r$, i.e.\ $q+1<r$. In the NED above, the last edge of $P$
appears in the $(q+1)$th ear, which again contradicts the choice of $r$. 
Thus, $r=1$.

As $E_1$ has endvertices $s$ and $t$, the same as $P$,
 we get  $P=E_1$ from~\eqref{inside}.
\end{proof}

\begin{lemma}\label{inducedned}
Let $H$ be a part, $\cE$ a good NED of $H$ and $H=H_1\cup H_2$ be a series or parallel decomposition into parts $H_1$ and $H_2$.
Then, for $j=1,2$, the $\cE$-induced $x$-ear number $\lambda^{\cE}_j$ of $H_j$
equals the $x$-ear number $\lambda_j$ of $H_j$.
\end{lemma}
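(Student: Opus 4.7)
The easy direction $\lambda^{\cE}_j\le\lambda_j$ is immediate from the definition of $\lambda_j$ as the maximum $x$-ear count over all NEDs of $H_j$. For the reverse I will argue by contradiction: if $\lambda^{\cE}_1<\lambda_1$ (the other index is symmetric), pick a NED $\cE'_1$ of $H_1$ realizing $\lambda_1$ and splice it with the $\cE$-ears in $H_2$ to obtain a NED $\cE'$ of $H$ with strictly more $x$-ears than $\cE$, contradicting condition~(i) of being good.

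For a parallel decomposition with common terminals $s,t$, every ear of $\cE$ lies entirely in one of $H_1,H_2$, so the $x$-ear count of $\cE$ equals $\lambda^{\cE}_1+\lambda^{\cE}_2$. Assuming $E_1\subseteq H_1$, I would concatenate $\cE'_1$ followed by the ears of $\cE_2$. The first ear of $\cE_2$ is an $s$--$t$-path in $H_2$ whose endvertices lie in the first ear of $\cE'_1$ and is therefore nested there; any two nest intervals from opposite sides are edge-disjoint since $H_1\cap H_2=\{s,t\}$; and the remaining NED axioms transfer unchanged. The resulting $\cE'$ has $\lambda_1+\lambda^{\cE}_2$ $x$-ears, at least one more than $\cE$.

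The series case $H_1\cap H_2=\{w\}$ is trickier, because $E_1$ passes through $w$ and so the possibility $w\in N(X)$ can make $E_1$ an $x$-ear even when neither $E_1\cap H_1$ nor $E_1\cap H_2$ is. Writing $\cE'_1=(P'_1,F'_2,\dots,F'_a)$ with $P'_1$ an $s$--$w$-path, and $\cE_2=(G_1,G_2,\dots,G_b)$ with $G_1=E_1\cap H_2$, I fuse the first ears into $E'_1=P'_1\cup G_1$ and set $\cE'=(E'_1,F'_2,\dots,F'_a,G_2,\dots,G_b)$; the NED conditions go through as in the parallel case because $P'_1\cap G_1=\{w\}$ forces nest intervals from opposite sides to be edge-disjoint. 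Abbreviating $a=\1[P'_1\text{ is an }x\text{-ear}]$, $b_j=\1[E_1\cap H_j\text{ is an }x\text{-ear}]$ and $c=\1[w\in N(X)]$, an easy count yields
\[
 (\text{$x$-ears in }\cE')-(\text{$x$-ears in }\cE)=(\lambda_1-\lambda^{\cE}_1)+\bigl[(a\vee b_2\vee c)-a\bigr]-\bigl[(b_1\vee b_2\vee c)-b_1\bigr].
\]
To make the right-hand side positive I would choose $P'_1$ so as to match the $x$-status of $E_1\cap H_1$: if $H_1$ admits an $s$--$w$-path with no $N(X)$-vertex in its interior, Lemma~\ref{pushoutlem} furnishes a NED $\cE'_1$ with $\lambda_1$ $x$-ears whose first ear is such a path, giving $a=0$; otherwise every $s$--$w$-path is an $x$-path and so $a=b_1=1$ automatically. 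A short case distinction on $(a,b_1)\in\{(0,0),(0,1),(1,1)\}$ shows the bracketed correction is always nonnegative, so the total gain is at least $\lambda_1-\lambda^{\cE}_1\ge 1$, which is the desired contradiction.

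The main obstacle is precisely this series case: the shared first ear together with the possibility $w\in N(X)$ rules out any naive additive counting across the decomposition, and Lemma~\ref{pushoutlem} is essential for obtaining enough flexibility in the first ear of $\cE'_1$ to absorb the potential correction term. The parallel case and the $\le$ direction are routine.
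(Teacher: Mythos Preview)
Your proof is correct and follows essentially the same approach as the paper: splice a maximal NED of one side with the $\cE$-induced NED of the other and use Lemma~\ref{pushoutlem} to control the $x$-status of the fused first ear in the series case. The paper does the bookkeeping by a two-way case split on whether $E_1\cap H_2$ is an $x$-ear, whereas you encode the same distinction via the indicators $a,b_1,b_2,c$ and rule out $(a,b_1)=(1,0)$ with Lemma~\ref{pushoutlem}; the arguments are interchangeable up to swapping the roles of $H_1$ and $H_2$.
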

\begin{proof}
Observe that we only have to prove $\lambda^{\cE}_j\geq \lambda_j$ for $j=1,2$, and by symmetry
it suffices to prove that $\lambda_2^{\mathcal E}\geq \lambda_2$. 
Let $E_1,\ldots,E_m$ be the NED $\mathcal E$, and denote by $i_1<\ldots<i_n$ the indices $i$
such that $E_i\cap H_1$ is a non-trivial path. Note that 
$E_{i_1}\cap H_1,\ldots,E_{i_n}\cap H_1$ has $\lambda_1^{\mathcal E}$ many $x$-ears.
Let $\lambda$ be the $x$-ear number of~$H$.

First, assume that $H=H_1\cup H_2$ is a parallel decomposition.
Then $E_i\subseteq H_1$ or $E_i\subseteq H_2$ for $i=1,\ldots,m$.
This implies that $\lambda=\lambda_1^{\mathcal E}+\lambda_2^{\mathcal E}$.
Let $E'_1,\ldots,E'_\ell$ be a NED of $H_2$ with $\lambda_2$ many $x$-ears.
Then
\[
E_{i_1},\ldots,E_{i_n},E'_1,\ldots,E'_\ell
\]
is a NED of $H$ with $\lambda_1^{\mathcal E}+\lambda_2$ many $x$-ears. 
With $\lambda=\lambda_1^{\mathcal E}+\lambda_2^{\mathcal E}$ it follows that
$\lambda_2\leq\lambda_2^{\mathcal E}$.

\medskip

Second, assume $H=H_1\cup H_2$ to be a series decomposition,
and observe that we may assume $\lambda_2\geq 1$ as otherwise $\lambda_2\leq\lambda_2^{\mathcal E}$
trivially holds. Set $\delta_1=1$ if $E_1\cap H_1$ is an $x$-ear, and $\delta_1=0$ otherwise.
We distinguish two cases. In both cases, we will pick a specific NED $E'_1,\ldots,E'_\ell$ of $H_2$
and form a NED $\mathcal E^*$
\[
(E_1\cap H_1)\cup E_1',E_{i_2},\ldots, E_{i_n},E'_2,\ldots, E'_\ell
\]
of $H$. This is a NED as $i_1=1$ and $E_{i_j}\cap H_1=E_{i_j}$ for $j=2,\ldots, n$.

We first treat the case when $E_1\cap H_2$ is not an $x$-ear. 
That implies that $\mathcal E$ has 
\begin{equation}\label{earcount}
\lambda=1+(\lambda^{\cE}_1-\delta_1)+\lambda^{\cE}_2
\end{equation}
many $x$-ears. 

Moreover, since $E_1\cap H_2$ is not an $x$-ear, it follows from Lemma~\ref{pushoutlem} 
(with $E_1\cap H_2$ as $P$) that
there is a NED $E'_1,\ldots,E'_\ell$ of $H_2$ with $\lambda_2$ many $x$-ears such that $E'_1$ is not an $x$-ear.
 As $E_1$ is an $x$-ear (in $H$) but $E_1\cap H_2$ is not (in $H_2$), it follows
that $(E_1\cap H_1)\cup E_1'$ is an $x$-ear (in $H$). 
The NED $\mathcal E^*$ then has $1+(\lambda^{\cE}_1-\delta_1)+\lambda_2$ many $x$-ears. 
We get
\[
1+(\lambda^{\cE}_1-\delta_1)+\lambda_2\leq \lambda=1+(\lambda^{\cE}_1-\delta_1)+\lambda^{\cE}_2,
\]
by~\eqref{earcount}, which implies $\lambda_2\leq \lambda^{\cE}_2$.

Finally, we treat the case when $E_1\cap H_2$ is an $x$-ear. 
That implies that $\mathcal E$ has 
\begin{equation}\label{earcount2}
\lambda=1+(\lambda^{\cE}_1-\delta_1)+(\lambda^{\cE}_2-1)
\end{equation}
many $x$-ears.

Let $E'_1,\ldots,E'_\ell$
be a good NED of $H_2$. Then it has $\lambda_2$ many $x$-ears, and in particular, by Lemma~\ref{goodNEDlem}~(ii)
the first ear $E'_1$ is an $x$-ear. Consequently $(E_1\cap H_1)\cup E_1'$ is an $x$-ear, and we get that 
the NED $\mathcal E^*$
has
\[
1+(\lambda^{\cE}_1-\delta_1)+(\lambda_2-1)\leq \lambda=1+(\lambda^{\cE}_1-\delta_1)+(\lambda^{\cE}_2-1)
\]
many $x$-ears, where we have used~\eqref{earcount2}. Again we deduce $\lambda_2\leq \lambda^{\cE}_2$.
 \end{proof}

The next lemma describes how $x$-ear numbers behave 
in  series or parallel decompositions in relation to the $x$-ear number of the original part.
We will use it in Section~\ref{earinduction}.

Let $H$ be a part with terminals $s,t$, and let $B$ be a block-part of $H$ with terminals $a,b$, where 
we assume that there is an $s$--$a$~path in $H$ that is internally disjoint from $B$. 
Then we define $L_B,R_B\subseteq H$ as the parts  that are internally disjoint from $B$ 
such that $L_B$ has terminals $ \{s,a\}$, $R_B$ has terminals $\{b,t\}$
and such that $H=L_B\cup B\cup R_B$.  \label{LBRBdef}

\begin{lemma}\label{splitearlem}
Let $H$ be a part with $x$-ear number $\lambda$, and let $H=H_1\cup H_2$ be a 
series or parallel decomposition into parts $H_1,H_2$. Let $\lambda_1$ and $\lambda_2$
be the respective $x$-ear numbers of $H_1$ and $H_2$. 
\begin{enumerate}[\rm (i)]
\item\label{para} If $H=H_1\cup H_2$ is a parallel decomposition, then $\lambda_1+\lambda_2\leq\lambda$.
\item\label{substi} If $H=H_1\cup H_2$ is a series decomposition  and if both $H_1$ 
and $H_2$ contain a substantial block-part,
then $1\le \lambda_1,\lambda_2\le \lambda-1$ and $\lambda_1+\lambda_2\le\lambda$,
or $2\leq\lambda_1,\lambda_2\le \lambda-1$ and $\lambda_1+\lambda_2\leq\lambda+1$.
\item\label{terrible} 
If 
$H$ has exactly one substantial block-part $B$ that parallel decomposes into $B=B_1\cup B_2$ such that 
$B_2$ is non-substantial, and if at least one of $L_B-s$, $R_B-t$ contains a vertex that is adjacent to $X$,
then the $x$-ear number of $B$ is at most $\lambda-1$, and $\lambda\geq 2$.
\end{enumerate}
\end{lemma}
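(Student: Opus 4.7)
The overall strategy uses the interplay between good NEDs (where every good NED realises the $x$-ear number and first ears are $x$-ears by Lemma~\ref{goodNEDlem}~(ii)) and their restriction/lifting behaviour under part decompositions, captured by Lemma~\ref{inducedned} and Lemma~\ref{pushoutlem}.

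For~\eqref{para}, I take a good NED $\mathcal{E}$ of $H$. Since $H_1$ and $H_2$ share both terminals $s,t$, the first ear of $\mathcal{E}$ (an $s$--$t$-path) lies entirely in one of $H_1,H_2$, and inductively every subsequent ear, having its endvertices on an earlier ear, stays within a single side. Hence $\lambda = \lambda^{\mathcal{E}}_1 + \lambda^{\mathcal{E}}_2$, and Lemma~\ref{inducedned} converts this into $\lambda_1 + \lambda_2 = \lambda$.

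For~\eqref{substi}, glueing good NEDs $\mathcal{E}_i$ of $H_i$ at the common terminal $w$ produces a NED of $H$ whose first ear is $E^1_1 \cup E^2_1$. Substantiality forces $\lambda_i \ge 1$, so both $E^i_1$ are $x$-ears by Lemma~\ref{goodNEDlem}~(ii); the combined ear is therefore a single $x$-ear, and the resulting NED has $\lambda_1 + \lambda_2 - 1$ many $x$-ears, giving $\lambda \ge \lambda_1 + \lambda_2 - 1$, valid in both sub-cases. The heart of the proof is the dichotomy: I claim that if $\lambda_2 = 1$ (say), then $\lambda_1 + \lambda_2 \le \lambda$. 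Suppose for contradiction $\lambda_1 + \lambda_2 = \lambda + 1$ and fix a good NED $\mathcal{E}$ of $H$. By Lemma~\ref{inducedned}, the induced NED of $H_2$ has exactly one $x$-ear, namely $E_1 \cap H_2$. In such a structure the single $x$-ear of $H_2$ traverses one side of the parallel decomposition of the substantial block-part of $H_2$, while the opposite side is non-substantial and hosts an ear of $\mathcal{E}$ nested in $E_1$ which is \emph{not} an $x$-ear. Swapping these two $u$--$v$-subpaths, that is, routing the first ear of $\mathcal{E}$ through the non-substantial side and promoting the displaced substantial sub-path to a fresh nested ear, yields a NED of $H$ with $\lambda + 1$ many $x$-ears, contradicting the goodness of $\mathcal{E}$. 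Hence $\lambda_2 \ge 2$, and together with the upper bound this gives $\lambda_1 \le \lambda - 1$, finishing~\eqref{substi}. I expect this rearrangement to be the main obstacle: one must analyse the block-cut structure of $H_2$ carefully, in particular handling $N(X)$-vertices lying on internal cutvertices of $H_2$, in order to verify that the modified first ear remains an $x$-ear after the swap and that the re-nesting of downstream ears is legitimate.

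For~\eqref{terrible}, I exhibit a NED of $H$ with $\lambda_B + 1$ many $x$-ears, which forces $\lambda \ge \lambda_B + 1 \ge 2$ using $\lambda_B \ge 1$ from substantiality of $B$. Let $\mathcal{E}_B$ be a good NED of $B$; its first ear lies in the substantial side $B_1$, and $B_2$ carries an $a$--$b$-path $Q$ with no $N(X)$-vertex in its interior (since $B_2$ is non-substantial). Set $F_1 := P_L \cup Q \cup P_R$, where $P_L$ is an $s$--$a$-path in $L_B$ and $P_R$ a $b$--$t$-path in $R_B$, and make $F_1$ the first ear of the new NED. Since $B$ is the \emph{only} substantial block-part of $H$, every $X$-adjacent vertex of $L_B - s$ is either $a$ itself or a cutvertex of $L_B - X$ separating $s$ from $a$ (and similarly for $R_B - t$); in either case the hypothesis forces such a vertex to appear in the interior of $F_1$, so $F_1$ is an $x$-ear. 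Nesting $\mathcal{E}_B$ beneath $F_1$, with $E^B_1$ becoming the first nested ear on endpoints $a,b$, and filling out the remaining non-substantial structures of $L_B$, $R_B$ and $B_2$ by non-$x$-ear ears gives a NED of $H$ which contains $F_1$ together with all $x$-ears of $\mathcal{E}_B$, for a total of $\lambda_B + 1$ $x$-ears. The remaining check is that the non-substantial parts genuinely contribute only non-$x$-ear ears; this follows because none of their block-parts contains an $N(X)$-vertex in its interior.
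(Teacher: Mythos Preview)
Your argument for~(i) is exactly the paper's. For~(ii) and~(iii) your approach is sound but takes a different, more hands-on route than the paper, and in both cases the paper's short-cut is simply to invoke Lemma~\ref{goodNEDlem}~(i) on a good NED of $H$ rather than to rebuild an NED by hand.

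For~(ii) the paper does not glue NEDs of $H_1,H_2$; it starts from a good NED $\mathcal E=E_1,\ldots,E_m$ of $H$ and splits into the two cases ``$E_1\cap H_j$ is an $x$-ear for at most one $j$'' (then no $x$-ear is double-counted, so $\lambda_1+\lambda_2\le\lambda$) and ``both $E_1\cap H_j$ are $x$-ears'' (then $\lambda_1+\lambda_2\le\lambda+1$, and one must rule out $\lambda_1=1$). In the second case the paper observes that the $N(X)$-vertex $b$ inside the substantial block-part of $H_1$ lies on $E_1$ and, because the block-part is a block, some non-$x$-ear $E_i$ is nested in $E_1$ with $b$ in the interior of $I(E_i)$; Lemma~\ref{goodNEDlem}~(i) then forces \emph{all} $N(X)$-vertices of the interior of $E_1$ into $I(E_i)\subseteq H_1$, contradicting that $E_1\cap H_2$ is an $x$-ear. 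Your swap is precisely the operation hidden in the \emph{proof} of Lemma~\ref{goodNEDlem}~(i); citing that lemma removes the ``main obstacle'' you anticipate. (Two small clarifications your write-up would need: you may pick \emph{any} substantial block-part of $H_2$, not ``the'' one; and the new $x$-ear you gain is $E_1\cap B$, which is guaranteed to carry an $N(X)$-vertex because the block-part is substantial---the $N(X)$-vertex witnessing that $E_1\cap H_2$ is an $x$-ear could a priori lie outside $B$.)

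For~(iii) the paper again stays with a good NED $\mathcal E$ of $H$: it shows by Lemma~\ref{goodNEDlem}~(i) that $E_1\cap B$ cannot be an $x$-ear (using the non-substantial side $B_2$ to produce a non-$x$-ear $F$ with $I(F)=E_1\cap B$, and the $N(X)$-vertex in $L_B-s$ or $R_B-t$ to block the ``contains all'' alternative). Then the induced NED of $B$ loses the first ear as an $x$-ear, so Lemma~\ref{inducedned} gives $\lambda_B\le\lambda-1$. Your bottom-up construction is a legitimate alternative, but as written it needs one fix: you cannot simultaneously put an arbitrary $a$--$b$-path $Q\subseteq B_2$ into $F_1$ and nest all of $\mathcal E_B$ (which already covers $B_2$) beneath it without double-covering edges. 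The clean repair is either to take $Q$ to be the first ear of $\mathcal E_B$ that lies in $B_2$ (it necessarily has endpoints $a,b$) and drop it from $\mathcal E_B$, or to work with a good NED of $B_1$ together with a separate NED of $B_2$ whose first ear is $Q$ (Lemma~\ref{pushoutlem}), and then use~(i) to identify $\lambda_{B_1}=\lambda_B$.
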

\begin{proof}
Given a good NED $\cE=(E_1,\ldots, E_r)$ of $H$, let $\lambda^{\cE}_j$ be the $\cE$-induced $x$-ear number of $H_j$,
while $\lambda_j$ is the $x$-ear number of $H_j$.
By Lemma~\ref{inducedned}, the two numbers are equal.
However, the distinction between these two meanings makes this proof clearer to understand.

Assume first that $H_1\cup H_2$ is a parallel decomposition. Then every ear
of $E_1,\ldots, E_r$ lies either in $H_1$ or in $H_2$.
Consequently, $\lambda=\lambda^{\cE}_1+\lambda^{\cE}_2$.
By Lemma~\ref{inducedned}, also $\lambda=\lambda_1+\lambda_2$ is true.
This proves~\eqref{para}.

\medskip
Let us prove~\eqref{substi} now. 
As both parts $H_1$ and $H_2$ contain a substantial block-part, we have $\lambda_1,\lambda_2\ge 1$.
Let us first treat the case that $E_1\cap H_j$ is an $x$-ear for at most one $j\in \{1,2\}$.
Then, no $x$-ear of $\mathcal E$ is counted twice in the sum $\lambda^{\cE}_1+\lambda^{\cE}_2$
and therefore, using Lemma \ref{inducedned}, we get $\lambda_1+\lambda_2\le \lambda$.
Using $\lambda_1,\lambda_2\ge 1$ we also conclude $\lambda_1,\lambda_2\le \lambda-1$.

Second, we treat the case when both $E_1\cap H_1$ and $E_1\cap H_2$ are $x$-ears.
No $x$-ear of $\mathcal E$ except for $E_1$ is counted twice in $\lambda^{\cE}_1+\lambda^{\cE}_2$, 
so again by Lemma~\ref{inducedned}
we have $\lambda_1+\lambda_2\leq\lambda+1$.
It remains to prove $\lambda_1,\lambda_2\ge 2$.
Suppose  $\lambda_1=1$.
Then, the induced NED $\cE'_1$ of $H_1$ has only one $x$-ear, namely $E_1\cap H_1$.
However, as $H_1$ contains a substantial block-part $B$,
there must be vertex $b$ in $B$ that is not a terminal and that is adjacent to a vertex in $X$. 
As $\mathcal E'_1$ has a unique $x$-ear, that vertex $b$ lies in the interior of $E_1\cap H_1$.
Since $B$ is a block-part, there is, moreover, some ear $E_i$ that is nested on $E_1$ 
such that $b$ lies in the interior of its nest interval $I(E_i)$. Note that $E_i$ cannot be an $x$-ear.

From Lemma~\ref{goodNEDlem}~(i) we get that the interior of $I(E_i)$ must contain all neighbours
of $X$ that are contained in the interior of $E_1$. This, however, is impossible as $E_1\cap H_2$ 
is an $x$-ear, too. We have proved \eqref{substi}.

 \medskip
At last consider the situation in~\eqref{terrible}.
Suppose that $E_1\cap B$ is an $x$-ear, i.e.\ it contains
a neighbour of $X$ in its interior. In particular, it follows that $E_1\cap B\subseteq B_1$. 
Let $a,b$ be the terminals of $B$. Observe that $\mathcal E$ contains an ear $F$ with 
endvertices $a,b$ that is contained in $B_2$. Then, as $B_2$ is not substantial, the ear $F$ is not an $x$-ear.
Moreover, $F$ is nested on $E_1$ since $E_1$ passes through $a$ and $b$. Note that, since at least one
of $L_B-s$ and $R_B-t$ contains a neighbour of $X$ and since neither $L_B$ nor $R_B$
contains a substantial block-part it follows that the interior of $E_1$ contains a neighbour of $X$
outside $B-\{a,b\}$. If we apply Lemma~\ref{goodNEDlem}~(i) to $E_1$ and $F$, we obtain, however, a contradiction,
since $E_1\cap B$ is supposed to contain a neighbour of $X$ as well in its interior. 
Thus $E_1\cap B$ is not an $x$-ear. 

As a consequence, the induced NED of $B$, of which $E_1\cap B$ is an ear, contains 
at least one $x$-ear less than $\mathcal E$ (we lose the first ear as $x$-ear).
With Lemma~\ref{inducedned}, we obtain $\lambda_B\leq\lambda-1$. As, by assumption, 
$B$ is substantial, we have, on the other hand, $\lambda_B\geq 1$, which implies $\lambda\geq 2$.
\end{proof}

\subsection{Ladders and fans}\label{laddersfans}
A \emph{well-connected ladder} is a subgraph $H$ of $G-x$ that is the union of internally disjoint 
non-trivial parts
$Q_1,\ldots, Q_{3k+2},R_1,\ldots,R_{3k+2},S_1\ldots,S_{3k+3}$ such that 
\begin{itemize}
\item there are distinct vertices $s_1,\ldots,s_{3k+3},t_1,\ldots, t_{3k+3}$ 
such that for $i=1,\ldots, 3k+2$ the terminals of $Q_i$ are $s_i$ and $s_{i+1}$, 
the terminals of $R_i$ are $t_i$ and $t_{i+1}$, and such that for $i=1,\ldots,3k+3$
the terminals of $S_i$ are $s_i$ and $t_i$;
\item there are $a\in\{s_1,t_1\}$, $b\in\{s_{3k+3},t_{3k+3}\}$ and $8k$ 
edge-disjoint $a$--$b$-paths in $H$, there are $k$ edge-disjoint $a$--$x$-paths that meet $H$ at most in $\{s_1,t_1\}$
and there are $k$ edge-disjoint $b$--$x$-paths that meet $H$ at most in $\{s_{3k+3},t_{3k+3}\}$.
\end{itemize}

We define a second structure that is very similar. For later use, we keep the definition 
a bit more flexible, though.

A \emph{fan-graph} is a subgraph $H$ of $G-x$ that is the union of internally disjoint 
non-trivial parts $Q_1,\ldots, Q_\ell,S_1,\ldots,S_{\ell+1}$ such that 
there are distinct vertices $s_1,\ldots,s_{\ell+1},t$ 
such that for $i=1,\ldots, \ell$ the terminals of $Q_i$ are $s_i$ and $s_{i+1}$, 
and such that for $i=1,\ldots,\ell+1$
the terminals of $S_i$ are $s_i$ and $t$.
The integer~$\ell$ is the \emph{size} of the fan-graph
and we call $s_1,s_{\ell+1},t$ the terminals of the fan-graph.

The fan-graph $H$ is a \emph{well-connected fan} if 
the size of $H$ is $\ell=3k$, and if 
there are $6k$ edge-disjoint $s_1$--$s_{3k+1}$-paths in $H-t$, and for each $c\in\{s_1,s_{3k+1}\}$
there are $k$ edge-disjoint $c$--$x$-paths in $G$ that are internally disjoint from $H$.

\begin{lemma}\label{ladderfanlem}
If $G$ contains a well-connected ladder or a well-connected fan,  then $G$
contains $k$ edge-disjoint $K_4$-subdivisions.
\end{lemma}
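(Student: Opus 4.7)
I would split into two cases, one for each structure, and in each construct $k$ edge-disjoint $K_4$-subdivisions $K_1,\ldots,K_k$ explicitly, indexed by $i\in\{1,\ldots,k\}$.

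For the well-connected fan, I let $K_i$ have branch vertices $s_{3i-2},s_{3i-1},s_{3i},t$, with $x$ lying on one subdivided edge. Its six paths are the three spokes $S_{3i-2},S_{3i-1},S_{3i}$ from $t$; edge-disjoint sub-paths of $Q_{3i-2}$ and $Q_{3i-1}$ realising $s_{3i-2}$--$s_{3i-1}$ and $s_{3i-1}$--$s_{3i}$; and the $x$-diagonal $s_{3i-2}$--$s_{3i}$, routed outside the window by going backward through $Q_{3i-3},\ldots,Q_1$ to $s_1$, along the $i$-th $s_1$--$x$-path, along the $i$-th $s_{3k+1}$--$x$-path back to $s_{3k+1}$, and forward through $Q_{3k},\ldots,Q_{3i}$ to $s_{3i}$. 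The diagonal's interior stays outside the window $\{s_{3i-2},s_{3i-1},s_{3i}\}$, so the six paths of $K_i$ are pairwise internally vertex-disjoint. Across the $K_i$'s, the spoke-windows $\{3i-2,3i-1,3i\}$ are pairwise disjoint, the $s_1$--$x$- and $s_{3k+1}$--$x$-paths are edge-disjoint by hypothesis, and each $Q_j$ needs to supply only $k$ edge-disjoint $s_j$--$s_{j+1}$-sub-paths in total, which is within the $6k$ the well-connected-fan hypothesis guarantees.

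For the well-connected ladder I again place $x$ on one subdivided edge of each $K_i$. Fix pairwise distinct $j_i \in \{2,\ldots,3k+2\}$ (for instance $j_i=i+1$) and let $K_i$ have branch vertices $u_i = s_{j_i}$, $v_i = t_{j_i}$, $a$, and $b$. Its five in-$H$ paths form a theta on $u_i,v_i$ with $a$ on one loop and $b$ on another: the spoke $S_{j_i}$; the left loop $u_i$--$a$--$v_i$ going along the top rail $Q_{j_i-1},\ldots,Q_1$ to $a$, through $S_1$ to $t_1$, along the bottom rail $R_1,\ldots,R_{j_i-1}$ to $v_i$ (split at $a$ into two subdivision-paths); and the symmetric right loop through $b$ and $S_{3k+3}$ (split at $b$). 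The sixth path is an $a$--$b$-path through $x$ obtained by concatenating the $i$-th $a$--$x$- and $b$--$x$-paths at $x$. The five in-$H$ paths use pairwise disjoint portions of the top rail, the bottom rail, the middle spoke, and the corner rungs, and the sixth is internally disjoint from $H$, so internal vertex-disjointness within each $K_i$ is immediate.

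The hardest part will be ensuring the $K_i$'s are pairwise edge-disjoint in the ladder case, since the naive construction reuses $S_1,S_{3k+3}$ and every rail-part across all $i$. I plan to invoke the $8k$-hypothesis via Menger-type cut arguments: each interior column cut between $V(S_j)$ and $V(S_{j+1})$ in $H$ must have total edge-capacity at least $8k$ across $Q_j$ and $R_j$, and the boundary cuts isolating $V(S_1)$ and $V(S_{3k+3})$ give analogous estimates. These bounds supply enough aggregate crossing-capacity to accommodate the $k$ $K_i$'s by routing some left loops through the top rail and others through the bottom; and when $S_1$ cannot support $k$ edge-disjoint $s_1$--$t_1$-sub-paths, the deficient $K_i$'s' left loops must be rerouted through a spare rung $S_{j'_i}$ — the ladder has $3k+3$ rungs but only $k$ are fixed as $S_{j_i}$'s, leaving at least $2k+3$ unused — with the corresponding $K_i$'s branch-vertex set and the sixth $x$-path adjusted accordingly. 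A symmetric treatment handles $S_{3k+3}$, and a careful bookkeeping of the edge-disjoint sub-paths in each $Q_j$ and $R_j$ finishes the argument.
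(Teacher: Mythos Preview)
Your fan argument is essentially correct and matches the spirit of what the paper (which omits the fan case as ``very similar'') would do; the one point you gloss over is why the $i$-th $s_1$--$x$-path and the $i$-th $s_{3k+1}$--$x$-path meet only in $x$, but this follows because a second common vertex would yield an $s_1$--$s_{3k+1}$-path internally disjoint from $H$, and together with the fan this gives a $K_4$-subdivision in $G-x$, contradicting~\eqref{standardG}. The paper makes exactly this observation in its ladder proof.

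Your ladder argument, however, has a genuine gap. Your naive construction uses \emph{every} $Q_j$, every $R_j$, and both end-rungs $S_1,S_{3k+3}$ in every $K_i$, so you need each of these parts to contain $k$ edge-disjoint terminal-to-terminal paths. But the $8k$-hypothesis does not guarantee this: the $8k$ edge-disjoint $a$--$b$-paths may all run along (say) the bottom rail, leaving some $Q_j$ with only a single $s_j$--$s_{j+1}$-path. Your column-cut observation only gives $\alpha_j+\beta_j\ge 8k$ for the capacities of $Q_j$ and $R_j$ combined, not $\alpha_j,\beta_j\ge k$ separately. The proposed fix---rerouting some loops through the other rail and through spare rungs---would require you to coordinate, across all columns and all $K_i$, which rail each $K_i$ takes at each column and where it switches via a rung, while keeping the six subdivision-paths of each $K_i$ internally vertex-disjoint. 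You have not shown this can be done, and it is far from a bookkeeping matter.

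The paper sidesteps this entirely by treating the ladder the same way you treat the fan: it uses disjoint windows $\{3i+2,3i+3,3i+4\}$ and builds in each a small theta-graph $L_{i+1}$ from one path in each of $Q_{3i+2},Q_{3i+3},R_{3i+2},R_{3i+3},S_{3i+2},S_{3i+3},S_{3i+4}$. The key trick is to choose each of these seven paths as a subpath of one of the $8k$ given $a$--$b$-paths; then $L_1,\ldots,L_k$ together spoil at most $7k$ of the $8k$ paths, and the $\ge k$ surviving paths $P_1,\ldots,P_k$ provide, for each $i$, edge-disjoint $a$--$L_i$ and $L_i$--$b$ connections without any per-part capacity requirement. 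This is precisely the idea that makes your fan argument work (there each $Q_j$ automatically carries all $6k$ paths since the $s_j$ are cutvertices of $H-t$), transplanted to a setting where that automatic property fails.
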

\begin{proof}
First assume that $G$ contains a well-connected ladder $H$ consisting of parts 
$Q_1,\ldots, Q_{3k+1},R_1,\ldots,R_{3k+1},S_1\ldots,S_{{3k}+2}$. 
Let $a\in\{s_1,t_1\}$, $b\in\{s_{3k+2},t_{3k+2}\}$ such that $H$ contains $8k$ edge-disjoint 
$a$--$b$-paths $\mathcal P$. 

For each $i=0,\ldots, k-1$ pick in each of the parts $Q_{3i+2}$, $Q_{3i+3}$, $R_{3i+2}$, $R_{3i+3}$,
$S_{3i+2}$, $S_{3i+3}$, $S_{3i+4}$ a path between the respective terminals and denote their union 
by $L_{i+1}$. Choose $L_{i+1}$ in such a way such that $L_{i+1}$ contains edges from at most seven distinct paths 
in $\mathcal P$. Then $L_1,\ldots, L_k$ are pairwise edge-disjoint, no $L_i$ contains any vertex of $s_1,t_1,s_{3k+2},t_{3k+2}$
as the parts $S_1,Q_1,R_1,S_{3k+2},Q_{3k+1},R_{3k+1}$ are never used for any $L_i$.
Furthermore, in $\mathcal P$ there are 
still at least $k$ paths that are also edge-disjoint from $L_1,\ldots, L_k$; denote these paths by~$P_1,\ldots,P_k$.

Let $L'_i$ be the union of $L_i$ with the initial segment of $P_i$ from $a$ to $L_i$ and 
with the terminal segment from $L_i$ to $b$. In particular, the graphs $L'_1,\ldots,L'_k$
are still pairwise edge-disjoint. By definition of a well-connected ladder, there are 
furthermore $k$ edge-disjoint $a$--$x$-paths $\mathcal P_a$ that meet $H$ only in $\{s_1,t_1\}$
and $k$ edge-disjoint $b$--$x$-paths $\mathcal P_b$ that meet $H$ only in $\{s_{3k+3},t_{3k+3}\}$.
Hence, no path in $\cP_a$ or $\cP_b$ meets any $L_i$.
Note that also no path in $\mathcal P_a$ may meet any path in $\mathcal P_b$ outside $x$ since 
otherwise $G-x$ would contain a  $K_4$-subdivision, which  contradicts~\eqref{standardG}. 
Thus, each $L'_i$ together with a distinct path in $\mathcal P_a$
and a distinct path $\mathcal P_b$ yields a $K_4$-subdivision, and all of these are pairwise edge-disjoint.

\comment{
\medskip
Second assume that $G$ contains a well-connected fan $H$ consisting of parts\mymargin{we can skip second case as it is very similar} 
$Q_1,\ldots, Q_{3k},S_1\ldots,S_{{3k}+1}$. Let $\mathcal P$ be the set of $6k$
pairwise edge-disjoint $s_1$--$s_{3k+1}$-paths in $H-t$ that are guaranteed by the definition.

For each $i=0,\ldots, k-1$ pick in each of the parts $Q_{3i+1}$, $Q_{3i+2}$,
$S_{3i+1}$, $S_{3i+2}$, $S_{3i+3}$ a path between the respective terminals and denote their union 
by $L_{i+1}$. Choose $L_{i+1}$ in such a way such that $L_{i+1}$ contains edges from at most five distinct paths 
in $\mathcal P$.
Then $L_1,\ldots, L_k$ are pairwise edge-disjoint, and in $\mathcal P$ there are 
still at least $k$ paths that are also edge-disjoint from $L_1,\ldots, L_k$; denote these paths by~$P_1,\ldots,P_k$.

Let $L'_i$ be the union of $L_i$ with the initial segment of $P_i$ from $s_1$ to $L_i$ and 
with the terminal segment from $L_i$ to $s_{3k+1}$. In particular, the graphs $L'_1,\ldots,L'_k$
are still pairwise edge-disjoint. By definition of a well-connected fan, there are 
furthermore $k$ edge-disjoint $s_1$--$x$-paths $\mathcal P_a$ 
and $k$ edge-disjoint $s_{3k+1}$--$x$-paths $\mathcal P_b$ that are all internally disjoint from $H$. 
Note first that also no path in $\mathcal P_a$ may meet any path in $\mathcal P_b$ outside $x$ since 
otherwise $G-x$ would have $K_4$ as a minor. Thus, each $L'_i$ together with a distinct path in $\mathcal P_a$
and a distinct path $\mathcal P_b$ yields a $K_4$-subdivision, and all of these are pairwise edge-disjoint.
}
We omit the proof for well-connected fans as it is very similar. 
\end{proof}

We will apply the lemma in the proof of Lemma \ref{type5lem}.

\subsection{Modules}
\label{moduleSection}

Let $H$ be a part. A non-empty subgraph $M\subseteq H$ is a \emph{module of $H$} if there is a $K_4$-subdivision $K$
such that $M$ is obtained from $K\cap H$ by deleting isolated vertices.
(Deleting isolated vertices is merely a matter of convenience: it helps to reduce
the number of of possible types of modules.)
Although there are infinitely many possible modules
we will prove that they can be classified into only finitely many types of them, the  blueprints.
The aim of this subsection is to define blueprints, to prove that every module has a blueprint
and to investigate
how modules of a part $H$ behave in a series or parallel decomposition.

A \emph{labelled graph} is a graph in which some of the vertices are endowed with a label from 
some alphabet~$\Sigma$ (while other vertices remain unlabelled). We only use the labels \texttt s, \texttt t
and \texttt x. We fix a set of labelled graphs that we call \emph{basic blueprints}. 
These are the labelled graphs in Figure~\ref{basicfig},  
as well
as some more graphs obtained from them: namely, we also allow to contract any dashed 
edge of a graph in Figure~\ref{basicfig}, where we require that the resulting vertex receives the 
label of the labelled endvertex of the contracted edge. For instance, the labelled graph in Figure~\ref{accidentfig}~(a)
is obtained from (h) in Figure~\ref{basicfig} by contracting the unique dashed edge. Moreover, 
we allow that the labels \texttt s and \texttt t are exchanged. That means, for instance, that 
an edge with endvertices labelled \texttt t and \texttt x is a basic blueprint, too.

\begin{figure}[ht]
\centering
\begin{tikzpicture}[scale=0.65]

\def\xshift{6}
\def\yshift{-3.8}
\def\hh{0.7}
\def\bb{0.37}

\newcounter{plbl}
\def\lbly{-1.2}
\newcommand{\patternlabel}{
\stepcounter{plbl}
\node at (0,\lbly) {(\alph{plbl})};
}
\newcommand{\expatternlabel}{
\stepcounter{plbl}
\node at (0,\lbly) {(\alph{plbl})$^*$};
}

\begin{scope}[shift={(0,-1)}]
\draw[hedge] (0,-0.5) \svx -- (0,0.5) \xvx;
\patternlabel
\end{scope}

\begin{scope}[shift={(\xshift,-1)}]
\draw[hedge] (-0.5,-0.5) \svx -- (-0.5,0.5) \xvx;
\draw[hedge] (0.5,-0.5) \tvx -- (0.5,0.5) \xvx;
\patternlabel
\end{scope}

\begin{scope}[shift={(2*\xshift,-1)}]
\draw[hedge] (-0.5,-0.5) \svx -- (0.5,-0.5) \tvx;
\patternlabel
\end{scope}

\begin{scope}[shift={(0,\yshift)}]
\begin{scope}[shift={(-1,-0.5)}]
\draw[ced] (0,0) \svx  -- (1,0) node[hvertex] (v) {};
\draw[hedge] (v) -- (2,0) \tvx;
\draw[hedge] (v) -- (1,1) \xvx;
\end{scope}
\patternlabel
\end{scope}

\begin{scope}[shift={(\xshift,\yshift)}]
\begin{scope}[shift={(-1.5,-0.5)}]
\draw[ced] (0,0) \svx  -- (1,0) node[hvertex] (v) {};
\draw[hedge] (v) -- (2,0) node[hvertex] (w) {};
\draw[ced] (w) -- (3,0) \tvx;
\draw[hedge] (v) -- (1,1) \xvx;
\draw[hedge] (w) -- (2,1) \xvx;
\end{scope}
\patternlabel
\end{scope}

\begin{scope}[shift={(2*\xshift,\yshift)}]
\begin{scope}[shift={(-2,-0.5)}]
\draw[ced] (0,0) \svx  -- (1,0) node[hvertex] (v) {};
\draw[hedge] (v) -- (2,0) node[hvertex] (w) {} -- (3,0) node[hvertex] (u) {};
\draw[ced] (u) -- (4,0) \tvx;
\draw[hedge] (v) -- (1,1) \xvx;
\draw[hedge] (w) -- (2,1) \xvx;
\draw[hedge] (u) -- (3,1) \xvx;
\end{scope}
\expatternlabel
\end{scope}

\begin{scope}[shift={(0,2*\yshift)}]
\draw[hedge] (0,\hh) node[hvertex] (u) {} -- (-30:\hh) node[hvertex] (v){} -- (210:\hh) node[hvertex] (w){} -- cycle;
\draw[hedge] (u) -- (0,\hh+1) \xvx;
\draw[hedge] (v) -- (-30:\hh+1) \xvx;
\draw[hedge] (w) -- (210:\hh+1) \xvx;
\patternlabel
\end{scope}

\begin{scope}[shift={(\xshift,2*\yshift)}]
\draw[hedge] (0,\hh) node[hvertex] (u) {} -- (-30:\hh) node[hvertex] (v){} -- (210:\hh) node[hvertex] (w){} -- cycle;
\draw[hedge] (u) -- (0,\hh+1) \xvx;
\draw[hedge] (v) -- (-30:\hh+1) \xvx;
\draw[ced] (w) -- (210:\hh+1) \svx;
\patternlabel 
\end{scope}

\begin{scope}[shift={(2*\xshift,2*\yshift)}]
\draw[hedge] (0,\hh) node[hvertex] (u) {} -- (-30:\hh) node[hvertex] (v){} -- (210:\hh) node[hvertex] (w){} -- cycle;
\draw[hedge] (u) -- (0,\hh+1) \xvx;
\draw[ced] (v) -- (-30:\hh+1) \tvx;
\draw[ced] (w) -- (210:\hh+1) \svx;
\patternlabel
\end{scope}

\begin{scope}[shift={(0,3*\yshift)}]
\draw[hedge] (0,\hh) node[hvertex] (u) {} -- (-30:\hh) node[hvertex] (v){} -- (210:\hh) node[hvertex] (w){} -- cycle;
\draw[ced] (u) -- (0,\hh+1) \tvx;
\draw[hedge] (v) -- (-30:2*\hh) node[hvertex] (z){};
\draw[ced] (z) -- ++(-30:1) \svx;
\draw[hedge] (w) -- (210:\hh+1) \xvx;
\draw[hedge] (z) -- ++(60:1) \xvx;
\expatternlabel
\end{scope}

\begin{scope}[shift={(\xshift,3*\yshift)}]
\draw[hedge] (0,\hh) node[hvertex] (u) {} -- (-30:\hh) node[hvertex] (v){} -- (210:\hh) node[hvertex] (w){} -- cycle;
\draw[hedge] (u) -- (0,\hh+1) \xvx;
\draw[hedge] (v) -- (-30:\hh+1) \xvx;
\draw[ced] (w) -- (210:\hh+1) \svx;
\draw[hedge] (210:\hh+1 |- 0,\hh+1) \xvx -- ++(0,-1) \tvx;
\expatternlabel
\end{scope}

\begin{scope}[shift={(2*\xshift,3*\yshift)}]

\def\cc{0.6}
\draw[hedge] (45:\cc) node[hvertex] (a) {} -- (135:\cc) node[hvertex] (b) {} -- (225:\cc) node[hvertex](c) {} -- (-45:\cc) node[hvertex] (d) {} -- cycle;
\draw[ced] (a) -- (45:1+\cc) \tvx;
\draw[hedge] (b) -- (135:1+\cc) \xvx;
\draw[ced] (c) -- (225:1+\cc) \svx;
\draw[hedge] (d) -- (-45:1+\cc) \xvx;
\expatternlabel

\end{scope}

\begin{scope}[shift={(0,4*\yshift+0.4)}]

\begin{scope}[shift={(-\bb,0)}]
\draw[hedge] (60:\hh) node[hvertex] (a){} -- (180:\hh) node[hvertex] (b){} -- (-60:\hh) node[hvertex](c){} -- (a) -- (2*\bb+\hh,0) node[hvertex] (d){} -- (c);

\draw[hedge] (b) -- ++(0,1) \xvx;
\draw[hedge] (d) -- ++(0,1) \xvx;
\end{scope}
\patternlabel
\end{scope}

\begin{scope}[shift={(\xshift,4*\yshift+0.4)}]

\begin{scope}[shift={(-\bb,0)}]
\draw[hedge] (60:\hh) node[hvertex] (a){} -- (180:\hh) node[hvertex] (b){} -- (-60:\hh) node[hvertex](c){} -- (a) -- (2*\bb+\hh,0) node[hvertex] (d){} -- (c);

\draw[ced] (b) -- ++(0,1) \svx;
\draw[hedge] (d) -- ++(0,1) \xvx;
\end{scope}
\patternlabel
\end{scope}

\begin{scope}[shift={(2*\xshift,4*\yshift+0.4)}]
\begin{scope}[shift={(-\bb,0)}]
\draw[hedge] (60:\hh) node[hvertex] (a){} -- (180:\hh) node[hvertex] (b){} -- (-60:\hh) node[hvertex](c){} -- (a) -- (2*\bb+\hh,0) node[hvertex] (d){} -- (c);

\draw[ced] (b) -- ++(0,1) \svx;
\draw[hedge] (d) -- ++(0,1) \xvx;

\path (b) ++(-1,0) \xvx [draw,hedge] -- ++(0,1) \tvx;

\end{scope}
\expatternlabel
\end{scope}
\end{tikzpicture}
\caption{The basic blueprints. Starred blueprints are exceptional; see 
discussion before Lemma~\ref{hittinglem}.}\label{basicfig}
\end{figure}
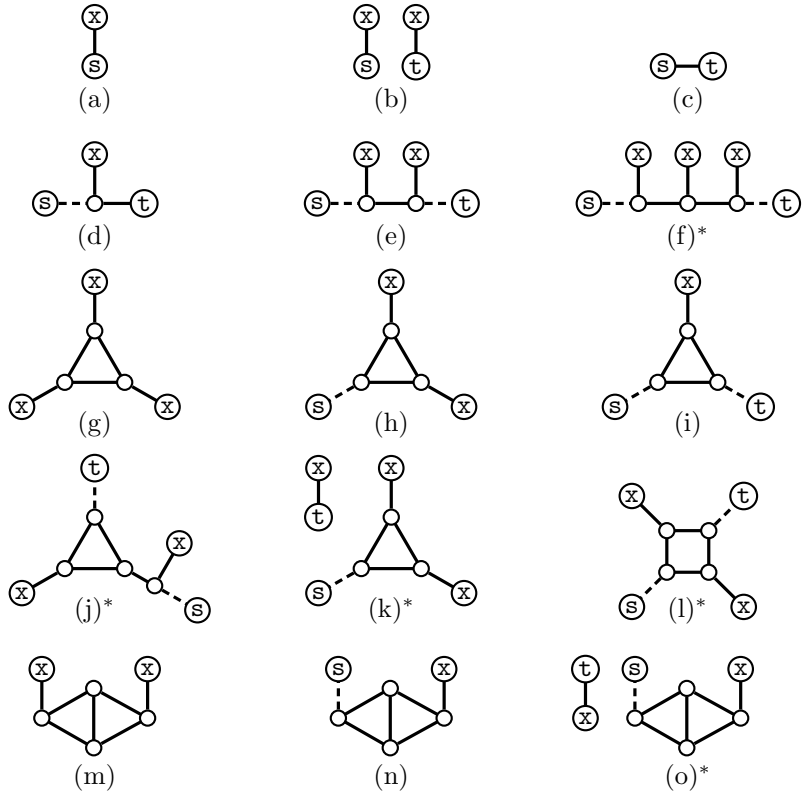

A \emph{blueprint} is either a basic blueprint, or it is \emph{derived} from a basic blueprint~$B$
as follows: for each label $\alpha\in\{\texttt s,\texttt t\}$ that does not appear in $B$
we may either label an unlabelled vertex with $\alpha$, or we may subdivide an edge and label the 
resulting subdividing vertex with $\alpha$, or we may choose to omit~$\alpha$. (That is, if both labels
\texttt s and \texttt t are missing, we may add one or both of the missing labels.) 
In this case, we say that $\alpha$ is an \emph{accidental}
label of the blueprint. 
As an example, consider 
Figure~\ref{accidentfig} where we derive four more blueprints from 
basic blueprint~(a) by introducing an accidental label (\texttt t here).

\begin{figure}[ht]
\centering
\begin{tikzpicture}[scale=0.8]
\tikzstyle{hvertex}=[solid,thick,circle,inner sep=0mm, minimum size=2mm, fill=white, draw=black]
\tikzstyle{labvx}=[solid,thick,circle,inner sep=0.3mm, minimum size=2mm, fill=white, draw=black]
\tikzstyle{ced}=[hedge,densely dashed]

\def\xshift{3.7}
\def\yshift{-3.8}
\def\hh{0.7}
\def\bb{0.37}

\setcounter{plbl}{0}
\def\lbly{-1.2}
\newcommand{\patternlabel}{
\stepcounter{plbl}
\node at (0,\lbly) {(\alph{plbl})};
}

\begin{scope}[shift={(1.5*\xshift,0)}]
\draw[hedge] (0,\hh) node[hvertex] (u) {} -- (-30:\hh) node[hvertex] (v){} -- (210:\hh) \svx -- cycle;
\draw[hedge] (u) -- (0,\hh+1) \xvx;
\draw[hedge] (v) -- (-30:\hh+1) \xvx;
\patternlabel
\end{scope}

\begin{scope}[shift={(0,\yshift)}]
\draw[hedge] (0,\hh) node[hvertex] (u) {} -- (-30:\hh) node[hvertex] (v){} -- (-90:\bb) \tvx -- (210:\hh) \svx -- cycle;
\draw[hedge] (u) -- (0,\hh+1) \xvx;
\draw[hedge] (v) -- (-30:\hh+1) \xvx;
\patternlabel
\end{scope}

\begin{scope}[shift={(\xshift,\yshift)}]
\draw[hedge] (0,\hh) node[hvertex] (u) {} -- (-30:\hh) coordinate (v) -- (210:\hh) \svx -- cycle;
\draw[hedge] (u) -- (0,\hh+1) \xvx;
\draw[hedge] (v) -- (-30:\hh+1) \xvx;
\path (v) \tvx;
\patternlabel
\end{scope}

\begin{scope}[shift={(2*\xshift,\yshift)}]
\draw[hedge] (0,\hh) node[hvertex] (u) {} -- (30:\bb) \tvx -- (-30:\hh) node[hvertex] (v){} -- (210:\hh) \svx -- cycle;
\draw[hedge] (u) -- (0,\hh+1) \xvx;
\draw[hedge] (v) -- (-30:\hh+1) \xvx;
\patternlabel
\end{scope}

\begin{scope}[shift={(3*\xshift,\yshift)}]
\draw[hedge] (0,\hh) node[hvertex] (u) {} -- (-30:\hh) node[hvertex] (v){} -- (210:\hh) \svx -- cycle;
\draw[hedge] (u) -- (0,\hh+1) \xvx;
\draw[hedge] (v) -- (-30:\hh+0.6) \tvx -- (-30:\hh+1.3) \xvx;
\patternlabel
\end{scope}
\end{tikzpicture}
\caption{We obtain four more blueprints from the basic blueprint~(a) by accidental use of \texttt t}\label{accidentfig}
\end{figure}
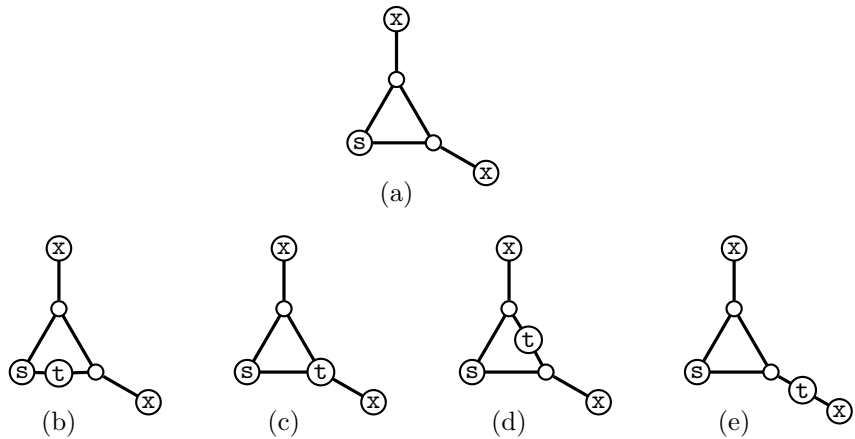

Let $H$ be a part with terminals $s,t$ (where we think of $s$ as the first terminal, and $t$ as the second terminal,  
i.e.\ the order of the terminals matters) and let $B$ be a blueprint.
We say that a subgraph $M\subseteq H$ (that is usually, but not necessarily, a module) 
\emph{has blueprint} $B$ 
if $M$ is a subdivision of $B$ such that any branch vertex of $M$ that has label \texttt x in $B$ lies in $X$, 
and such that a vertex of $M$ is equal to $s$ (resp.\ to~$t$) if and only if it is a branch vertex that is labelled with 
\texttt s (resp.~\texttt t) in $B$.
If the blueprint $B$ 
contains \texttt s or \texttt t as an accidental label, then $M$ \emph{uses} $s$, resp.\ $t$, \emph{accidentally}.

We will show in the next lemma that every module has a blueprint.
To talk more easily about the different kinds of modules, we give names to most of them. 
Let $M$ be a subgraph of some part that has blueprint $B$. 
If $B$ was obtained from 
basic blueprint~(a) in Figure~\ref{basicfig}, then $M$ is an \emph{appendix}. If $B$ came from~(b),
then $M$
is a \emph{double appendix}, if $B$ came from one of~(c)--(f), then $M$ is a \emph{comb}, 
and it is a \emph{$0$-, $1$-,
$2$- or $3$-comb} depending on the number of vertices from $X$ it contains. 
Finally, if $B$ arose from one of (m)--(o), it is a \emph{rooted diamond}.

\begin{lemma} \label{moduleHasBlueprint}
Every module of a part has a blueprint.
\end{lemma}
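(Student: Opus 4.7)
The plan is a structural case analysis of how a $K_4$-subdivision $K$ witnessing $M$ can actually sit inside $H$. The key starting point, already implicit in \eqref{standardG}, is that $G-x$ contains no $K_4$-subdivision, so every $K_4$-subdivision $K$ must pass through $x$. Hence one of two things happens: either $x$ is itself one of the four branch vertices of $K$, or $x$ lies in the interior of exactly one of the six subdivided edges of $K$.

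Before case-splitting, I would record a boundary observation that controls where $K$ can leave $H$. Because $H$ is a part with terminals $s,t$, the only vertices of $V(H)$ incident with an edge of $E(G-x)\setminus E(H)$ are $s$ and $t$; and because every vertex of $X\setminus\{x\}$ has degree $2$ in $G$ with one of its two edges going to $x$, the only vertices of $V(H)$ incident with an edge of $K$ heading to $x$ are those in $N(x)\cap V(H)$. Consequently, after discarding isolated vertices from $K\cap H$, every $w\in V(M)$ with $\deg_M(w)<\deg_K(w)$ lies in $\{s,t\}\cup (N(x)\cap V(H))$.

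In Case~(A), with $x$ a branch vertex of $K$, the three subdivided edges of $K$ incident to $x$ each begin with an edge to some $y\in X\setminus\{x\}$, while the remaining three subdivided edges join the other three branch vertices in a triangle. I would split further according to how many of those three branch vertices lie in $V(H)$ and which of the six subdivided edges actually meet $E(H)$. In Case~(B), with $x$ interior to a subdivided edge $P$, the graph $K-x$ is a copy of $K_4$ minus an edge, together with two pendant edges $xy_1,xy_2$ attached at the endpoints of the removed edge, and an analogous subcase split applies. Throughout both cases, Lemma~\ref{diamonds}(ii) rules out any configuration in which $H-X$ would contain an $s$--$t$-diamond, eliminating several otherwise plausible shapes. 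After identifying the topological skeleton of $M$ in each subcase, I would label by $\texttt{x}$ the vertices of $M$ that lie in $X$, and by $\texttt{s}$ and $\texttt{t}$ the occurrences of $s$ and $t$ in $M$, treating $s$ or $t$ as an \emph{accidental} label whenever it appears at a subdivision point of $M$ rather than at a branch vertex of the underlying topological skeleton. The result is always a subdivision of one of the basic blueprints of Figure~\ref{basicfig}, possibly after contracting a dashed edge whose two endpoints happen to coincide in $M$.

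The main obstacle is the sheer amount of book-keeping: one has to verify that the fifteen basic blueprints, together with the three allowed derivations (contracting dashed edges, swapping $\texttt{s}$ and $\texttt{t}$, and introducing accidental labels at subdivision points), truly cover every way $K\cap H$ can look. Each individual subcase is routine once the labelling convention and Lemma~\ref{diamonds}(ii) are fixed, but the combinatorial explosion---made slightly worse by the fact that $K\cap H$ need not be connected before isolated vertices are removed, and by the need to account for symmetric placements of $s$ and $t$---is what prevents a short slick proof and forces the exhaustive approach outlined above.
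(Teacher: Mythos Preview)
Your proposal is correct and follows essentially the same approach as the paper: split on whether $x$ is a branch vertex or a subdividing vertex of $K$, run an exhaustive case analysis on how $K$ meets $H$ through the boundary $\{s,t\}\cup X$, and invoke Lemma~\ref{diamonds}(ii) to exclude the configurations that would force an $s$--$t$-diamond in $H-X$. The only organisational difference is that the paper encodes the case split via a minimal set $S\subseteq\{s,t\}$ with $S\cup(H\cap X)$ separating $M$ from $x$ in $K$, which gives a clean criterion for when a terminal is accidental (namely when it lies in $(\{s,t\}\cap V(K))\setminus S$), whereas you track this through degree drops; both lead to the same enumeration.
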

\begin{proof}

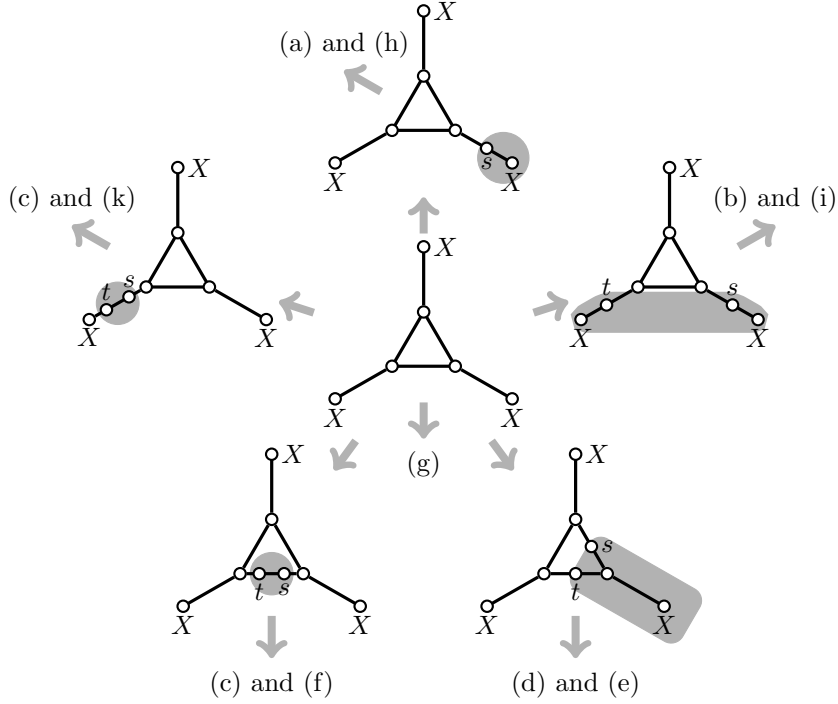
\begin{figure}[ht]
\centering
\begin{tikzpicture}[scale=0.8,label distance=-1mm]
\tikzstyle{hvertex}=[solid,thick,circle,inner sep=0.cm, minimum size=1.5mm, fill=white, draw=black]
\tikzstyle{part}=[fill=hellgrau, draw=hellgrau,rounded corners=5pt]
\tikzstyle{thickarrow}=[line width=3pt,color=hellgrau,->]

\newcommand{\ssvx}{node[labvx]{\tiny $s$}}
\newcommand{\ttvx}{node[labvx]{\tiny $t$}}
\newcommand{\xxvx}{node[labvx]{\tiny $X$}}

\def\hh{0.6}
\def\gg{2.8}
\def\radius{1.7}

\draw[hedge] (0,\hh) node[hvertex] (u) {} -- (-30:\hh) node[hvertex] (v){} -- (210:\hh) node[hvertex] (w){} -- cycle;
\draw[hedge] (u) -- (0,\gg*\hh) node[hvertex,label=right:$X$]{};
\draw[hedge] (v) -- (-30:\gg*\hh) node[hvertex,label=below:$X$]{};
\draw[hedge] (w) -- (210:\gg*\hh) node[hvertex,label=below:$X$]{};

\foreach \i in {0,1,2,3,4}{
\draw[thickarrow] (90+\i*72:\radius+0.2) -- (90+\i*72:\radius+0.8);
}

\draw[thickarrow] (90:2.5*\radius) ++ (-0.7,0) -- ++(150:0.7) node[color=black,above]{(a) and (h)};
\draw[thickarrow] (90+1*72:3.2*\radius) -- ++(150:0.7) node[color=black,above]{(c) and (k)};
\draw[thickarrow] (90+2*72:2.5*\radius) ++(0,-1) -- ++(0,-0.7) node[color=black,below]{(c) and (f)};
\draw[thickarrow] (90+3*72:2.5*\radius) ++(0,-1) -- ++(0,-0.7) node[color=black,below]{(d) and (e)};
\draw[thickarrow] (90+4*72:3.2*\radius) -- ++(30:0.7) node[color=black,above]{(b) and (i)};

\draw[thickarrow] (-90:\radius-0.8) -- (-90:\radius-0.2) node[color=black,below] {(g)};

\begin{scope}[shift={(0,2.3*\radius)}]
\draw[hedge] (0,\hh) node[hvertex] (u) {} -- (-30:\hh) node[hvertex] (v){} -- (210:\hh) node[hvertex] (w){} -- cycle;
\draw[hedge] (u) -- (0,\gg*\hh) node[hvertex,label=right:$X$]{};
\draw[hedge] (v) -- (-30:\gg*\hh) node[hvertex,label=below:$X$](X42){} node[midway,hvertex,label=below:$s$](s){};
\draw[hedge] (w) -- (210:\gg*\hh) node[hvertex,label=below:$X$]{};
\begin{scope}[on background layer]
\draw[part] (-30:0.9*\gg*\hh) circle (12pt);
\end{scope}
\end{scope}

\begin{scope}[shift={(90+72:2.5*\radius)}]
\draw[hedge] (0,\hh) node[hvertex] (u) {} -- (-30:\hh) node[hvertex] (v){} -- (210:\hh) node[hvertex] (w){} -- cycle;
\draw[hedge] (u) -- (0,\gg*\hh) node[hvertex,label=right:$X$]{};
\draw[hedge] (v) -- (-30:\gg*\hh) node[hvertex,label=below:$X$]{};
\node[hvertex,label=below:$X$] (X1) at (210:\gg*\hh) {};
\draw[hedge] (w) -- (X1)  node[near start,hvertex,label=above:$s$](s){} coordinate[midway] (H1)
node[near end,hvertex,label=above:$t$](t){};
\end{scope}

\begin{scope}[shift={(90+2*72:2.5*\radius)}]
\node[hvertex] (u) at (0,\hh){};
\node[hvertex] (v) at (-30:\hh){};
\node[hvertex] (w) at (210:\hh){};
\draw[hedge] (u) -- (v) -- (w) node[near start,hvertex,label=below:$s$](s){} coordinate[midway] (H2)
node[near end,hvertex,label=below:$t$](t){} -- (u);
\draw[hedge] (u) -- (0,\gg*\hh) node[hvertex,label=right:$X$]{};
\draw[hedge] (v) -- (-30:\gg*\hh) node[hvertex,label=below:$X$]{};
\draw[hedge] (w) -- (210:\gg*\hh) node[hvertex,label=below:$X$]{};
\end{scope}

\begin{scope}[shift={(90+3*72:2.5*\radius)}]
\node[hvertex] (u) at (0,\hh){};
\node[hvertex] (v) at (-30:\hh){};
\node[hvertex] (w) at (210:\hh){};
\draw[hedge] (u) -- (v)  node[midway,hvertex,label=right:$s$](s){} -- (w) 
  node[midway,hvertex,label=below:$t$](t){} -- (u);
\draw[hedge] (u) -- (0,\gg*\hh) node[hvertex,label=right:$X$]{};
\draw[hedge] (v) -- (-30:\gg*\hh) node[hvertex,label=below:$X$]{} coordinate[midway] (H3);
\draw[hedge] (w) -- (210:\gg*\hh) node[hvertex,label=below:$X$]{};
\begin{scope}[on background layer, shift={(H3)},rotate=-30]
\draw[part] (-1.1,-0.5) rectangle (1,0.5);
\end{scope}
\end{scope}

\begin{scope}[shift={(90+4*72:2.5*\radius)}]
\draw[hedge] (0,\hh) node[hvertex] (u) {} -- (-30:\hh) node[hvertex] (v){} -- (210:\hh) node[hvertex] (w){} -- cycle;
\draw[hedge] (u) -- (0,\gg*\hh) node[hvertex,label=right:$X$]{};
\draw[hedge] (v) -- (-30:\gg*\hh) node[hvertex,label=below:$X$](Xs){} node[midway,hvertex,label=above:$s$](s){};
\draw[hedge] (w) -- (210:\gg*\hh) node[hvertex,label=below:$X$](Xt){} node[midway,hvertex,label=above:$t$](t){};
\begin{scope}[on background layer]
\draw[part,line width=10pt,rounded corners=3pt] (s.center) -- (Xs.center) -- (Xt.center) -- (t.center) -- cycle;
\end{scope}
\end{scope}

\begin{scope}[on background layer]
\draw[part] (H1) circle (10pt);
\draw[part] (H2) circle (10pt);
\end{scope}

\end{tikzpicture}
\caption{Where the blueprints come from if $x$ is a branch vertex. The two terminals
separate $G-x$ into two parts, a gray part and the rest.}
\label{fig:k4toblueprints1}
\end{figure}

Let $M$ be a module of a part $H$, and let $K$ be a $K_4$-subdivision such that $M$
is obtained from $H\cap K$ by deleting isolated vertices. 
If $s,t$ are the terminals of $H$, then there is a smallest set $S\subseteq \{s,t\}$ 
such that $S\cup (H\cap X)$ separates $M$ from $x$ in $K$. 
Figure~\ref{fig:k4toblueprints1} shows schematically how $M$ may be situated 
with respect to $H$ if $x$ is a branch vertex of $K$, and Figure~\ref{fig:k4toblueprints2} 
does the same if $x$ is  a subdividing vertex. In the figures, $M$ and thus $H$ may 
be either represented by the gray area, or by everything outside the gray area (except 
for the terminals). The different cases arise from the different locations of $S$ with respect 
to $M$, where we have omitted terminals outside $S$ that meet $M$ from the drawing --- 
their inclusion  would multiply the number of configurations considerably. Moreover, 
if only one of $s,t$ lies in $S$ then, by symmetry, we assume that it is $s\in S$. 
There is a final simplification in the drawings: $s,t$ are always shown as subdividing vertices in $K$ 
but they may obviously be branch vertices. Thus, the figures should be understood so that 
each terminal might move to the closest branch vertex. 
The reader might want to check that we have covered all possible configurations: here, 
we note that in Figure~\ref{fig:k4toblueprints2} it is not possible that $s,t$ separates 
the two vertices in $M\cap V(X)$
from the rest of $M$ as this would contradict Lemma~\ref{diamonds}~(ii).

\begin{figure}[ht]
\centering
\begin{tikzpicture}[scale=0.8,label distance=-1mm]
\tikzstyle{hvertex}=[solid,thick,circle,inner sep=0.cm, minimum size=1.5mm, fill=white, draw=black]
\tikzstyle{thickarrow}=[line width=3pt,color=hellgrau,->]
\tikzstyle{part}=[fill=hellgrau, draw=hellgrau,rounded corners=5pt]

\newcommand{\ssvx}{node[labvx]{\tiny $s$}}
\newcommand{\ttvx}{node[labvx]{\tiny $t$}}
\newcommand{\xxvx}{node[labvx]{\tiny $X$}}

\def\hh{0.7}
\def\bb{0.37}
\def\radius{1.5}

\begin{scope}[shift={(-\bb,-0.3)}]
\draw[hedge] (60:\hh) node[hvertex] (a){} -- (180:\hh) node[hvertex] (b){} -- (-60:\hh) node[hvertex](c){} -- (a) -- (2*\bb+\hh,0) node[hvertex] (d){} -- (c);

\draw[hedge] (b) -- ++(0,1) node[hvertex,label=right:$X$] {};
\draw[hedge] (d) -- ++(0,1) node[hvertex,label=left:$X$] {};
\end{scope}

\foreach \i in {0,1,2,3,4}{
\draw[thickarrow] (90+\i*72:\radius) -- (90+\i*72:\radius+0.7);
}

\draw[line width=3pt,color=hellgrau,->] (90-72-10:3.2*\radius) -- ++(-45:0.7) 
node[below,color=black]{(c) and (l)};
\draw[line width=3pt,color=hellgrau,->] (90+72+10:3.2*\radius) -- ++(180+45:0.7)
node[below,color=black]{(a) and (n)};
\draw[line width=3pt,color=hellgrau,->] (90+2*72:3.2*\radius) -- (90+2*72:3.2*\radius+0.7)
node[below,color=black]{(c) and (o)};
\draw[line width=3pt,color=hellgrau,->] (90+3*72:3.2*\radius) -- (90+3*72:3.2*\radius+0.7)
node[below,color=black]{(c) and (j)};
\draw[line width=3pt,color=hellgrau,->] (110:2.7*\radius) -- ++ (-0.7,0){} node[left,color=black]{(d) and (i)};

\draw[thickarrow] (200:\radius) -- (200:\radius+0.7) node[color=black,left] {(m)};

\begin{scope}[shift={(-\bb,0)}]

\begin{scope}[shift={(90+72:2.6*\radius)}]
\draw[hedge] (60:\hh) node[hvertex] (a){} -- (180:\hh) node[hvertex] (b){} -- (-60:\hh) node[hvertex](c){} -- (a) -- (2*\bb+\hh,0) node[hvertex] (d){} -- (c);

\draw[hedge] (b) -- ++(0,1) node[hvertex,label=right:$X$] {};
\path (d) -- ++(0,1) node[hvertex,label=left:$X$] (X0) {};
\draw[hedge] (d) -- (X0) node[midway,hvertex,label=right:$s$](s){};
\begin{scope}[on background layer]
\draw[part] (d) ++ (0,0.9) circle (12pt);
\end{scope}
\end{scope}

\begin{scope}[shift={(90+2*72:2.6*\radius)}]
\draw[hedge] (60:\hh) node[hvertex] (a){} -- (180:\hh) node[hvertex] (b){} -- (-60:\hh) node[hvertex](c){} -- (a) -- (2*\bb+\hh,0) node[hvertex] (d){} -- (c);

\draw[hedge] (b) -- ++(0,1) node[hvertex,label=right:$X$] {};
\path (d) -- ++(0,1) node[hvertex,label=left:$X$] (X1) {};
\draw[hedge] (d) -- (X1) node[near start,hvertex,label=right:$s$](s){}
coordinate[midway] (H1) node[near end,hvertex,label=right:$t$](t){};
\begin{scope}[on background layer]
\draw[part] (H1) circle (10pt);
\end{scope}
\end{scope}

\begin{scope}[shift={(90+3*72:2.6*\radius)}]
\node[hvertex] (a) at (60:\hh){};
\node[hvertex] (b) at (180:\hh){};
\node[hvertex] (c) at (-60:\hh){};
\node[hvertex] (d) at (2*\bb+\hh,0){};
\draw[hedge] (a) -- (b) -- (c) node[pos=0.3,hvertex,label=below:$s$](s){}
coordinate[midway] (H2) node[pos=0.7,hvertex,label=below:$t$](t){} -- (a) -- (d) -- (c);
\draw[hedge] (b) -- ++(0,1) node[hvertex,label=right:$X$] {};
\draw[hedge] (d) -- ++(0,1) node[hvertex,label=left:$X$] {};
\begin{scope}[on background layer]
\draw[part] (H2) circle (10pt);
\end{scope}
\end{scope}

\begin{scope}[shift={(90-72:2.6*\radius)}]
\node[hvertex] (a) at (60:\hh){};
\node[hvertex] (b) at (180:\hh){};
\node[hvertex] (c) at (-60:\hh){};
\node[hvertex] (d) at (2*\bb+\hh,0){};
\draw[hedge] (a) -- (b) -- (c) -- (a) node[near start,hvertex,label=right:$s$](s){}
coordinate[midway] (H3) node[near end,hvertex,label=right:$t$](t){} -- (d) -- (c);
\draw[hedge] (b) -- ++(0,1) node[hvertex,label=right:$X$] {};
\draw[hedge] (d) -- ++(0,1) node[hvertex,label=left:$X$] {};
\begin{scope}[on background layer]
\draw[part] (H3) circle (10pt);
\end{scope}
\end{scope}

\begin{scope}[shift={(90:2.3*\radius)}]
\draw[hedge] (60:\hh) node[hvertex] (a){} -- (180:\hh) node[hvertex] (b){} -- (-60:\hh) node[hvertex](c){} -- (a)  --
node[midway, hvertex,label=above:$s$](s){} (2*\bb+\hh,0) node[hvertex] (d){} -- node[midway, hvertex,label=below:$t$](t) {} (c);

\draw[hedge] (b) -- ++(0,1) node[hvertex,label=right:$X$] {};
\draw[hedge] (d) -- ++(0,1) node[hvertex,label=left:$X$] (X4){};
\begin{scope}[on background layer]
\draw[part,line width=8pt,rounded corners=3pt] (t.center) -- (s  |- X4) -- (X4.center) -- (X4 |- t) -- cycle;
\end{scope}

\end{scope}

\end{scope}

\end{tikzpicture}
\caption{Where the blueprints come from if $x$ is a subdividing vertex}
\label{fig:k4toblueprints2}
\end{figure}
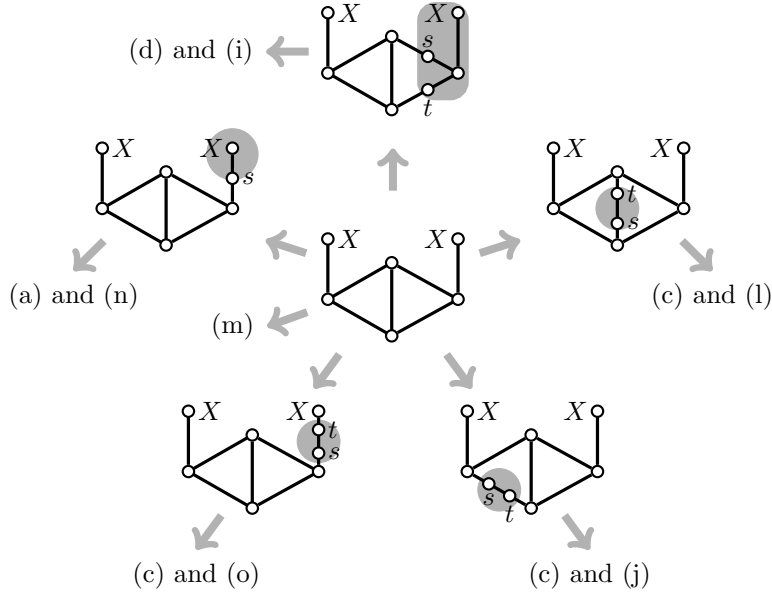

Now, if $S=\{s,t\}\cap V(K)$, then contracting edges in $M$ we arrive at the
basic blueprints as indicated in the figures. If not, then the terminals 
in $(\{s,t\}\cap V(K))\sm S$ will appear as accidental terminals. 
\end{proof}

We also fix an observation that is immediate from the definition of a module:

\begin{lemma}\label{smallerlem}
Let $H$ be a part, let $H'\subseteq H$ be a part and let $M$ be a module of $H$.   
If $M'$ is obtained from $M\cap H'$ by deleting isolated vertices, then
 $M'$ is a module of $H'$.
\end{lemma}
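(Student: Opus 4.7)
The plan is to exhibit the same $K_4$-subdivision $K$ that witnesses $M$ as a module of $H$ as a witness that $M'$ is a module of $H'$. By the definition of module, pick a $K_4$-subdivision $K$ such that $M$ is obtained from $K\cap H$ by deleting isolated vertices. I will argue that $M'$ coincides with the graph obtained from $K\cap H'$ by deleting isolated vertices, so $M'$ is a module of $H'$ witnessed by $K$.

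First I would unpack the statement. Let $I_H$ denote the set of vertices that are isolated in $K\cap H$, so that $M=(K\cap H)-I_H$. Define $I_{H'}$ analogously for $K\cap H'$. The goal is the identity
\[
M'=(K\cap H')-I_{H'}.
\]
Since $H'\subseteq H$, we have $K\cap H'\subseteq K\cap H$, and hence any vertex of $V(H')$ that is isolated in $K\cap H$ is still isolated in $K\cap H'$; that is, $I_H\cap V(H')\subseteq I_{H'}$.

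Next I would compute $M\cap H'$ edge by edge and vertex by vertex. The edges of $M\cap H'$ are exactly the edges of $M$ lying in $H'$; since $M$ and $K\cap H$ have the same edge set and $H'\subseteq H$, these are precisely the edges of $K\cap H'$. For the vertices, $V(M\cap H')=V(M)\cap V(H')=(V(K\cap H)\setminus I_H)\cap V(H')=V(K\cap H')\setminus I_H$. Thus $M\cap H'$ is obtained from $K\cap H'$ by deleting the vertices in $I_H\cap V(H')$, all of which are isolated in $K\cap H'$. Therefore, deleting the remaining isolated vertices of $M\cap H'$ yields the same result as deleting all of $I_{H'}$ from $K\cap H'$, proving the identity above.

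The argument is almost purely bookkeeping; the only subtle point is checking that the order of operations (``remove isolated vertices, then intersect'' versus ``intersect, then remove isolated vertices'') does not matter, which follows from the monotonicity observation $I_H\cap V(H')\subseteq I_{H'}$. Under the standing convention that modules are non-empty, one implicitly assumes $M'\neq\emptyset$, in which case $M'$ is a module of $H'$ as claimed.
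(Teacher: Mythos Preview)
Your proof is correct and is exactly the straightforward verification the paper has in mind: the paper states this lemma as ``an observation that is immediate from the definition of a module'' and gives no proof at all. Your argument simply unpacks that observation, using the same $K_4$-subdivision $K$ as witness and checking that deleting isolated vertices commutes with restricting to the smaller part $H'$.
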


\begin{lemma}\label{replacelem}
Let $H$ be a part with terminals $s,t$,  
let $M$ be a module of $H$, and let $P\subseteq M$ be a path 
such that no interior vertex lies in $\{s,t\}\cup X$ or has degree~$3$ or larger in $M$.
Let $P'\subseteq H$ be a path between the endvertices of $P$ that is internally disjoint from $M-P$
and also from  $\{s,t\}$. Then replacing $P$ by $P'$ in $M$ results in a module $M'$ of $H$
that has the same blueprint as $M$.
\end{lemma}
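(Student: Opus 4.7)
The plan is to lift the replacement from $M$ up to the underlying $K_4$-subdivision. Fix a $K_4$-subdivision $K$ of $G$ with $M$ obtained from $K\cap H$ by deleting isolated vertices, and write $u,w$ for the endvertices of $P$. I first claim that every interior vertex $v$ of $P$ is a subdividing vertex of $K$ whose two $K$-edges are exactly the two $P$-edges at $v$. Indeed, since $v\notin\{s,t\}\cup X$, the vertex $v$ is a non-terminal of $H$ not adjacent to $x$, so every $G$-edge incident with $v$ lies in $H$; hence the $K$-degree of $v$ equals its $M$-degree, which is at most $2$ by hypothesis, so $v$ is subdividing in $K$ and both its $K$-edges lie in $M$, and a count identifies them as the $P$-edges at $v$. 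Consequently $P$ lies inside a single subdivided edge $Q$ of $K$ with branch-vertex endpoints $a,b$, and $\mathrm{int}(P)\subseteq\mathrm{int}(Q)$.

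Next, set $K':=(K-E(P)-\mathrm{int}(P))\cup P'$. To show $K'$ is a $K_4$-subdivision, the key sublemma is $V(\mathrm{int}(P'))\cap V(K)\subseteq\mathrm{int}(P)$. I would prove this by first observing that any isolated vertex of $K\cap H$ must lie in $\{s,t\}$: a vertex $v\in V(K)\cap V(H)\setminus\{s,t\}$ either has all its $G$-edges in $H$ (if $v\notin X$), or by~\eqref{standardG} lies in $N(x)$ with its unique non-$x$ $G$-edge in $H$, so in either case at least one $K$-edge of $v$ lies in $H$. Hence $V(K)\cap V(H)\subseteq V(M)\cup\{s,t\}$. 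Combining this with the hypothesis that $\mathrm{int}(P')\subseteq V(H)$ is disjoint from $V(M-P)=V(M)\setminus V(P)$ and from $\{s,t\}$, and noting that $\mathrm{int}(P')\cap X=\emptyset$ is automatic (interior vertices of $P'\subseteq G-x$ have degree at least $2$ in $G-x$ while $N(x)$-vertices have degree $1$ there by~\eqref{standardG}), the sublemma follows. A standard path-concatenation then gives that $Q':=aQu\cup P'\cup wQb$ is a simple $a$-$b$-path internally disjoint from $K-Q$ --- any shared vertices in $\mathrm{int}(P)$ are precisely the ones removed from $K$ and re-added by $P'$ --- and a short case check handles edge-disjointness of $E(P')$ from $E(K)\setminus E(P)$ when $P'$ has length $1$. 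Hence $K'$ is a $K_4$-subdivision.

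It then remains to check that $K'\cap H$ and $K\cap H$ have the same set $I\subseteq\{s,t\}$ of isolated vertices: vertices in $I$ are untouched by the modification, while every vertex of $M':=(M-E(P)-\mathrm{int}(P))\cup P'$ retains an incident $M'$-edge (non-$P$ edges in $M$ for vertices outside $V(P)$, the $P'$-edge for endvertices of $P$, and the two $P'$-edges for new interior vertices). So $M'=K'\cap H$ minus isolated vertices, i.e. $M'$ is a module of $H$. For the blueprint, the interior vertices of $P'$ have degree $2$ in $M'$ and lie outside $\{s,t\}\cup X$, so they appear as unlabelled subdividing vertices in any blueprint realisation of $M'$, exactly as the interior of $P$ did in $M$; all branch vertices of $M$ together with their degrees and all labels (including $s,t$-labels on $u$ or $w$, where applicable) are preserved, so $M$ and $M'$ realise the same blueprint. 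I expect the main obstacle to be the first step --- pinning $P$ to a single subdivided edge of $K$ --- since this is where the hypotheses on $\mathrm{int}(P)$ have to be combined with the fine local structure of $K\cap H$; the remainder of the argument is careful bookkeeping.
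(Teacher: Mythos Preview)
Your proof is correct and follows essentially the same approach as the paper: lift the replacement to the ambient $K_4$-subdivision $K$, verify that $K'$ is again a $K_4$-subdivision, and read off that $M'$ is a module with the same blueprint. The paper's own proof is four sentences and simply asserts that replacing $P$ by $P'$ in $K$ ``obviously'' yields a $K_4$-subdivision; you have carefully unpacked this, in particular the observation that interior vertices of $P$ have all their $K$-edges in $P$ (so $P$ sits inside a single subdivided edge of $K$), the sublemma $V(\mathrm{int}(P'))\cap V(K)\subseteq\mathrm{int}(P)$, and the fact that $\mathrm{int}(P')\cap X=\emptyset$ is automatic from~\eqref{standardG}. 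Your ``short case check'' for the length-$1$ case of $P'$ also goes through: if $uw\in E(K)\setminus E(P)$ one quickly derives that $u$ or $w$ would need degree~$\geq 4$ in $K$, or that $P=uw$ after all.
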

\begin{proof}
Let  $K$ be a $K_4$-subdivision such that $M$ arises from $K\cap H$ by deleting isolated vertices. 
Then, if we replace $P$ by $P'$ in $K$, we obviously obtain a $K_4$-subdivision $K'$.
(Here it matters that $P'$ is internally disjoint from $M-P$ and from $\{s,t\}$.)
As $M'$ is equal to $K'\cap H$, up to isolated vertices, we see that $M'$ is a module. 
That $M$ and $M'$ have the same blueprint follows from the fact that $K$ and $K'$ differ
only in the interior of a subdivided edge, and only along a path inside $H$ that avoids its terminals.
\end{proof}

\begin{lemma}\label{replacelem2}
Let $H_1$ be a part, let $H_2\subseteq H_1$ be a part with terminals $s_2,t_2$,
and let $M_1$ be a module of $H_1$. Let $M_2$ be the module of $H_2$ obtained from $M_1\cap H_2$ by deleting isolated vertices, 
let $B_2$ be the blueprint of $M_2$.
Let $M'_2$ be a subgraph of $H_2$ that has blueprint $B_2$ (a priori not necessarily a module).
Then 
\begin{enumerate}[\rm (i)]
\item $M_1'=M_2'\cup (M_1-(H_2-\{s_2,t_2\}))$ has blueprint $B_1$; and
\item $M_2'$ and $M_1'$ are modules.
\end{enumerate}
\end{lemma}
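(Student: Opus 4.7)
My plan is to prove (i) by a direct blueprint/topology comparison and (ii) by performing surgery on a $K_4$-subdivision witnessing that $M_1$ is a module.

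For (i), the crucial observation is that, since $H_2$ is a part, the edges of $M_1$ split into those inside $H_2$ (namely $E(M_2)$) and those outside $\mathrm{int}(H_2):=H_2-\{s_2,t_2\}$; these two parts meet only in $\{s_2,t_2\}$. In particular, the local incidence pattern of $M_1$ at $s_2$ and $t_2$ is determined separately by $M_2$ and by the rest of $M_1$. Since $M_2$ and $M_2'$ share blueprint $B_2$, the roles of $s_2,t_2$ (branch vertex, subdivision vertex, or absent) and the placement of the \texttt{x}-labels coincide. Gluing $M_2'$ to $M_1-\mathrm{int}(H_2)$ at $s_2,t_2$ therefore yields a subgraph with the same topological shape, the same $X$-labels, and the same terminal labels as $M_1$; here we use that any terminal of $H_1$ lying in $H_2$ must be in $\{s_2,t_2\}$, because otherwise it would be separated from $G-H_1$ by $\{s_2,t_2\}$, contradicting the definition of a terminal. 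Hence $M_1'$ has blueprint $B_1$.

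For (ii), pick a $K_4$-subdivision $K$ in $G$ witnessing that $M_1$ is a module, so $E(M_1)=E(K\cap H_1)$. The plan is to build $K'$ by replacing the portion of $K$ inside $\mathrm{int}(H_2)$ by the analogous portion built from $M_2'$, and then verify $K'$ is still a $K_4$-subdivision. Because $\{s_2,t_2\}$ separates $\mathrm{int}(H_2)$ from the rest of $G-x$, the edges of $K$ incident to $\mathrm{int}(H_2)$ consist of $E(M_2)$ together with the pendant edges $vx$ for every \texttt{x}-labelled branch vertex $v$ of $M_2$ (a neighbour of $x$ in $\mathrm{int}(H_2)$ has degree~$2$ in $G$, so in $K$ it is a subdivision vertex of a length-two path through $x$). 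Writing $\bar M_2$ for $M_2$ together with these pendants and $\bar M_2'$ analogously for $M_2'$, define
\[
K':=\bigl(K-\mathrm{int}(H_2)\bigr)\cup \bar M_2'.
\]
Both $\bar M_2$ and $\bar M_2'$ are subdivisions of the same augmented labelled multigraph (the blueprint $B_2$ with a pendant edge from each \texttt{x}-vertex to an outside point identified with $x$), and they attach to $K-\mathrm{int}(H_2)$ only at $\{s_2,t_2,x\}$. A topological equivalence between $\bar M_2$ and $\bar M_2'$ that fixes this interface therefore lifts to a topological equivalence between $K$ and $K'$, so $K'$ is a $K_4$-subdivision. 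An edge count gives $E(K'\cap H_1)=(E(M_1)\setminus E(M_2))\cup E(M_2')=E(M_1')$, which shows $M_1'$ is a module of $H_1$; applying Lemma~\ref{smallerlem} to $M_1'$ with $H_2\subseteq H_1$ then yields that $M_2'$ is a module of $H_2$.

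The main obstacle is justifying rigorously that replacing $\bar M_2$ by $\bar M_2'$ preserves the $K_4$-subdivision property, because $M_2$ and $M_2'$ may differ not only in their subdivision paths but also in their \emph{unlabelled} branch vertices and in their \emph{\texttt{x}}-labelled branch vertices; consequently $\bar M_2$ and $\bar M_2'$ may use genuinely different pendant edges $vx$. Making the surgery careful---removing exactly the pendants that correspond to the \texttt{x}-vertices of $M_2$ and adding those for $M_2'$, and arguing that two subdivisions of the same labelled multigraph are topologically equivalent relative to their labelled vertices---is what requires attention.
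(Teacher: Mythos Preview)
Your approach coincides with the paper's: for (i) you argue that gluing $M_2'$ in place of $M_2$ preserves the blueprint because the interface data at $\{s_2,t_2\}$ match, while the paper makes this precise by building an explicit bijection $\phi_2:U(M_2)\to U(M_2')$ between branch vertices, extending it by the identity on the outside to $\phi_1$, and checking each subdivided edge; for (ii) both you and the paper perform surgery on a witnessing $K_4$-subdivision, and your explicit treatment of the pendant edges $vx$ at the \texttt{x}-labelled vertices is in fact more detailed than the paper's one-line ``this can be seen via $\phi_2$ as above'', as is your derivation of $M_2'$ being a module via Lemma~\ref{smallerlem}. One small technical point you should patch (the paper does so when defining $\overline H_2$): if the edge $s_2t_2$ lies in $E(H_2)\cap E(M_2)$ then $K-\mathrm{int}(H_2)$ still contains it, so your formula $K'=(K-\mathrm{int}(H_2))\cup\bar M_2'$ would superimpose it on the corresponding path of $M_2'$; simply delete $s_2t_2$ as well before gluing.
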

\begin{proof}
Set $\overline H_2:=H_1-(H_2-\{s_2,t_2\})$, where we additionally 
omit $s_2t_2$ from $\overline H_2$ if it is an edge of $H_2$.
For a subdivision $M$ we write $U(M)$ for the set of branch vertices. 
If $M_1$ meets $s_2$ but $s_2$ is not already a branch vertex of $B_1$, then 
subdivide the corresponding edge of $B_1$, and do the same for $t_2$. In this 
way we obtain a new (labelled) graph $B'_1$ of which $M_1$ is a subdivision.
Let us first  show that $M'_1$ is also a subdivision of $B'_1$.
Note that, ignoring labels, we may see $B_2$ as a subgraph of $B'_1$.

For a vertex $u$ of $B'_1$ denote by $u^*$ the corresponding branch vertex in $M_1$.
As $M_2$ and $M_2'$ have the same blueprint,
there is a bijection $\phi_2:U(M_2)\to U(M_2')$ such that for every $u\in V(B_2)$
the vertex $\phi_2(u^*)$ is the branch vertex corresponding to $u$ in $M_2'$. 
We extend $\phi_2$ via the identity on $U(M_1\cap V(\overline H_2))$ 
to an injective function $\phi_1$ with domain $U(M_1)$. We claim that $\phi_1(U(M_1))$
as the set of branch vertices makes $M'_1$ a subdivision of $B'_1$.

Consider an edge $uv$ in $B'_1$. Since $M_1$ is a subdivision of $B'_1$
it contains a subdivided edge between $u^*$ and $v^*$, i.e.\ a
$u^*$--$v^*$-path $P$ whose internal vertices all have degree~$2$ in $M_1$.
Moreover, as each vertex in $\{s_2,t_2\}\cap V(M_1)$ is a branch vertex of $M_1$, 
it follows that $P\subseteq \overline H_2$ or $P\subseteq H_2$. 
In the latter case, it follows that $P\subseteq M'_1$ as $M'_1\cap\overline H_2=M_1\cap\overline H_2$,
and we see that $P$ is a subdivided edge between $\phi_1(u^*)$ and $\phi_1(v^*)$ in $M'_1$.
If, on the other hand, $P\subseteq H_2$, then $P$ is a subdivided edge between the 
two branch vertices $u^*$ and $v^*$ of $M_2$. Therefore, $M'_2\subseteq M'_1$ contains a $\phi_1(u^*)$--$\phi_1(v^*)$-path $P'$ whose internal vertices have degree~$2$ in $M'_2$ anthen also in $M'_1$. 

As $u\in X$ if and only if $\phi_2(u)\in X$ for every $u\in U(M_2)$, and 
as $M_1$ and $M'_1$ coincide on $\overline H_2$, it follows that $M_1'$ contains 
a terminal of $H_1$ or a vertex in $X$ if and only if the corresponding vertex in $B_1$
bears the appropriate label. Therefore, $M_1$ and $M'_1$ have the same blueprint. 

Let $K$ be a $K_4$-subdivision such that $M_1$ is obtained from $M_1\cap H_1$ by deleting 
isolated vertices. Then, replacing $M_2$ with $M_2'$ in $K$ we obtain another $K_4$-subdivision $K'$
(this can be seen via $\phi_2$ as above). Moreover,
 $M'_1$ is equal to $K'\cap H_1$, up to isolated vertices, and thus $M'_1$ is a module.
\end{proof}

The previous lemmas show that modules are compatible with parts and thus with series and parallel decomposition. 
This is summarised as follows

\begin{lemma}\label{legolem}
Let $H$ be a part, and let $H=H_1\cup H_2$ be a series or parallel decomposition of $H$ into parts. 
Let $M$ be a module of $H$ and for $i=1,2$, let $M_i$ be obtained from $M\cap H_i$ by deleting isolated vertices. Then
\begin{enumerate}[\rm (i)]
\item $M_i$ is either empty or a module of $H_i$, with blueprint $B_i$ say, for $i=1,2$; and 
\item if $M'_i$ has blueprint $B_i$ in $H_i$ for $i=1,2$, then $M'_1\cup M'_2$
is a module of $H$ with the same blueprint as $M$. 
\end{enumerate} 
\end{lemma}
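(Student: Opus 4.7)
The idea is to derive both parts as direct corollaries of the earlier lemmas, the real content being Lemma~\ref{replacelem2}.

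Part~(i) is immediate: taking $H':=H_i$ in Lemma~\ref{smallerlem} shows that $M_i$, obtained by discarding isolated vertices from $M\cap H_i$, is a module of $H_i$ whenever it is non-empty, and Lemma~\ref{moduleHasBlueprint} then assigns it a blueprint~$B_i$.

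For part~(ii) my plan is a double invocation of Lemma~\ref{replacelem2}. First I put $H,H_2,M,M_2,M'_2$ in the roles of $H_1,H_2,M_1,M_2,M'_2$ in that lemma to obtain a module
\[
\widetilde M \;=\; M'_2 \cup \bigl(M-(H_2-\{s_2,t_2\})\bigr)
\]
of $H$ with the same blueprint $B$ as $M$, where $s_2,t_2$ are the terminals of $H_2$. I then want to apply Lemma~\ref{replacelem2} a second time to $\widetilde M$, now with $H_1$ as the inner part, the existing sub-module being $M_1$ (with blueprint $B_1$ from part~(i)), and the replacement being $M'_1$. For this the key bookkeeping step is to verify that $\widetilde M\cap H_1$, stripped of isolated vertices, is indeed $M_1$. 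This uses the internal disjointness of the two parts: since $H_1,H_2$ meet only in (a subset of) their terminals and share no edges, subtracting $H_2-\{s_2,t_2\}$ from $H$ does not touch $H_1$, while $M'_2\cap H_1$ lies inside $\{s_2,t_2\}$ and so contributes only isolated vertices to $\widetilde M\cap H_1$. The second invocation then produces a module of $H$ which, up to isolated vertices living in the shared terminals, equals $M'_1\cup M'_2$ and still has blueprint $B$; after the usual convention of discarding such isolated vertices, this is the module claimed in the lemma. If either $M_i$ is empty, the corresponding replacement step is simply omitted, the other side being handled exactly as above.

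The only genuine obstacle is the intermediate identity $\widetilde M\cap H_1 = M_1$ (up to isolated vertices). Once that is checked, both statements drop out of the already-established machinery, and (ii) should be phrased so as to tolerate the harmless addition or removal of isolated vertices at the shared terminals.
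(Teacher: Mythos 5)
Your proposal is correct and follows essentially the same route as the paper, which derives (i) directly from Lemma~\ref{smallerlem} and (ii) from two applications of Lemma~\ref{replacelem2}. The extra bookkeeping you supply (that $\widetilde M\cap H_1$ equals $M_1$ up to isolated vertices at shared terminals, by internal disjointness of the parts) is exactly the detail the paper leaves implicit.
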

\begin{proof}
(i) follows directly from Lemma~\ref{smallerlem}.
(ii) follows from two applications of Lemma~\ref{replacelem2} (i).
\end{proof}

If $M$ is a module with a blueprint that is derived from basic blueprints (f), (j), (k), (l) or (o),
$M$ is called an \emph{exceptional} module. A look at the blueprints shows that 
\begin{equation}\label{twoDisjPaths}
\begin{minipage}[c]{0.8\textwidth}\em
	an exceptional module in a part $H$ with terminals $s,t$ 
	contains two disjoint $\{s,t\}$--$X$-paths.
\end{minipage}\ignorespacesafterend 
\end{equation} 
In the next two lemmas we make two observations about exceptional 
and unexceptional modules that will become important much later, in Lemma~\ref{type5lem}. 

%

A module $M$ of a part $H$ is \emph{hit} by an edge set $F\subseteq E(G)$ 
if every $K_4$-subdivision $K$ with $E(K\cap H)=E(M)$ meets $F$.
In particular, if $F$ meets $M$ then it hits it. 
There are other ways, however, how $F$ can hit $M$:

\begin{lemma}\label{hittinglem}
Let $H$ be a part with terminals $s,t$, and 
let $M$ be an unexceptional module of $H$. 
Let $y\in \{s,t\}$ be used non-accidentally in $M$.
If $F\subseteq E(G)$ meets every $y$--$x$-path that is internally disjoint from $H-\{s,t\}$
then $F$ hits $M$.
\end{lemma}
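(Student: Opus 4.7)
The plan is to show that for every $K_4$-subdivision $K$ with $E(K\cap H)=E(M)$, the graph $K$ contains a $y$--$x$-path whose interior lies in $V(G)\setminus (V(H)\setminus\{s,t\})$. Once such a path $P$ is produced, the hypothesis on $F$ forces $F$ to meet $P$, and since $P\subseteq K$ this gives an edge of $F$ in $E(K)$; as $K$ was an arbitrary $K_4$-subdivision realising $M$, it follows that $F$ hits $M$.

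The construction of $P$ reduces to the combinatorial statement that $y$ and $x$ lie in the same component of $J:=K[\,V(K)\setminus(V(H)\setminus\{s,t\})\,]$. Both are present in $J$: $x\notin V(H)$ because $H\subseteq G-x$, and $y\in\{s,t\}$; moreover, any edge of $K$ with both endpoints in $V(J)$ is retained in $J$. To identify a $y$--$J$-walk in $K$ that continues to $x$, I would trace the subdivided edges of $K$ incident with the branch vertex $y$ of the blueprint $B$. Because $y$ is used non-accidentally, $y$ plays a definite role in $B$, and the local structure of $K$ around $M$ near $y$ is one of the configurations enumerated in the proof of Lemma~\ref{moduleHasBlueprint} (Figures~\ref{fig:k4toblueprints1} and~\ref{fig:k4toblueprints2}). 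Concretely, for each of the unexceptional basic blueprints that carry a non-accidental label in $\{s,t\}$---(a), (b), (c), (d), (e), (h), (i) and (n)---I would inspect the corresponding picture and exhibit a path in $J$ from $y$ to $x$, possibly routed through the other terminal (which is allowed to appear in the interior since $\{s,t\}\subseteq V(J)$) and across subdivided edges of $K$ disjoint from $V(H)\setminus\{s,t\}$.

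The conceptual reason why unexceptionality matters is \eqref{twoDisjPaths}: an exceptional $M$ contains two disjoint $\{s,t\}$--$X$-paths inside $M$, which can absorb the $y$--$x$-connection of $K$ entirely into $V(H)$, disconnecting $y$ from $x$ in $J$. For unexceptional $M$ the $K_4$-topology cannot be realised that way, so the escape via $J$ must exist. The main obstacle of the proof is therefore the blueprint-by-blueprint check that $J$ connects $y$ to $x$ in each of the listed cases; for each blueprint this amounts to matching the local picture of $K$ against the prescribed shape of $M$, distinguishing whether $x$ is a branch vertex of $K$ (Figure~\ref{fig:k4toblueprints1}) or a subdivision vertex (Figure~\ref{fig:k4toblueprints2}), and invoking the second terminal or a $\texttt{x}$-labeled gateway of $K$ when $y$'s own neighbourhood in $M$ already uses all edges of $K$ at $y$.
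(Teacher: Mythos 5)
Your proposal is correct and follows essentially the same route as the paper: the paper's proof also takes an arbitrary $K_4$-subdivision $K$ realising $M$ and verifies, by inspecting the unexceptional blueprints via Figures~\ref{fig:k4toblueprints1} and~\ref{fig:k4toblueprints2}, that $K-(M-\{s,t\})$ is connected and contains $x$ (which is exactly your graph $J$, since a vertex of $K$ in $H-\{s,t\}$ cannot be isolated in $K\cap H$), giving a $y$--$x$-path internally disjoint from $H-\{s,t\}$ that $F$ must meet.
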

\begin{proof}
	Let $K$ be a $K_4$-subdivision such that $M$ is obtained from $K\cap H$ by deleting isolated vertices.
	By checking the list of blueprints of unexceptional modules,
see in particular Figures~\ref{fig:k4toblueprints1} and~\ref{fig:k4toblueprints2},
	we see that  $K-(M- \{s,t\})$
is connected and contains~$x$.
\end{proof}

\begin{lemma}\label{exceptlem}
	Let $H$ be a part with terminals $s,t$ and $H_2\subseteq H$ 
	be  a part with terminals $s',t'$.
Let $H_1$ be a subgraph of $H$ such that $H$  
is the edge-disjoint union of $H_1$ and $H_2$, and 
$H_1\cap H_2=\{s',t'\}$.
	If $M$ is an exceptional module in $H$,
	then $M\cap H_1$ is the disjoint union of two $\{s,t\}$--$\{s',t'\}$-paths.
\end{lemma}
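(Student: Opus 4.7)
My plan is a case analysis on the blueprint $B$ of $M$, which is derived from one of $(f), (j), (k), (l), (o)$ (possibly with accidental uses of $s$ or $t$). As a preliminary, since $H_2$ is a part of $H$ with terminals $s', t'$, the pair $\{s', t'\}$ separates $V(H_1)\setminus\{s', t'\}$ from $V(H_2)\setminus\{s', t'\}$ in $H$, and the terminals $s, t$ of $H$ must lie in $V(H_1)$ (they are incident in $G-x$ with edges outside of $H$, and hence outside $H_2$). By~\eqref{twoDisjPaths}, $M$ contains two disjoint $\{s,t\}$--$X$-paths $P_1, P_2$; a quick look at each of the exceptional basic blueprints shows that we may always choose $P_1, P_2$ so that $P_1$ starts at $s$ and $P_2$ starts at $t$.

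The heart of the argument is the claim that every $X$-vertex of $M$ lies in $V(H_2)\setminus\{s', t'\}$. Once this is granted, each $P_i$ must cross $\{s', t'\}$ on its way from $\{s, t\} \subseteq V(H_1)$ to its $X$-endpoint in $V(H_2) \setminus \{s', t'\}$. The disjointness of $P_1, P_2$ then forces one to cross at $s'$ and the other at $t'$; truncating each $P_i$ at its first hit of $\{s', t'\}$ yields two disjoint $\{s,t\}$--$\{s', t'\}$-paths $P_1', P_2'$ inside $M \cap H_1$. To see that $M \cap H_1$ equals $P_1' \cup P_2'$, one checks for each exceptional blueprint that $B$ minus $\{s, t\}$ and the interiors of the two arms to $s$ and $t$ is a connected ``core'' containing all $X$-labels of $B$ (for blueprints $(k)$ and $(o)$ there is the additional isolated $t$--$X$ edge, which is treated separately). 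Any edge of $M$ in $H_1$ not lying on $P_1'$ or $P_2'$ would come from a subdivided edge of $B$ in this core; but because the core is connected and at least one of its vertices (any $X$-vertex) already lies in $V(H_2) \setminus \{s',t'\}$, the whole core is confined to $H_2$, so no such extra edge exists.

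The main obstacle is the claim that every $X$-vertex of $M$ lies in $V(H_2) \setminus \{s', t'\}$. I would prove this using Lemma~\ref{smallerlem} applied to $H_2$: the subgraph obtained from $M \cap H_2$ by deleting isolated vertices is either empty or a module of $H_2$ with some blueprint $B_2$. For each pairing of an exceptional $B$ with a possible $B_2$ from the finite list of blueprints, one verifies directly that if some $X$-vertex of $M$ sat in $V(H_1)$, then $B$ and $B_2$ would be structurally incompatible with the way $M$ decomposes as $(M \cap H_1) \cup (M \cap H_2)$ across the $\{s', t'\}$-cut. The subcases $(k)$ and $(o)$, whose blueprints contain an isolated $t$--$X$ edge, are the most delicate: here one additionally argues that the $X$-neighbour of $t$ in the extra edge must also be in $V(H_2)$, by exploiting the structure of the underlying $K_4$-subdivision together with the two disjoint arms $P_1, P_2$ already constructed.
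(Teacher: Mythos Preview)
Your argument has a genuine gap in the step where you conclude that ``the whole core is confined to $H_2$''. Connectedness of the core together with the fact that some vertex of it (an $X$-vertex) lies in $V(H_2)\setminus\{s',t'\}$ does \emph{not} force the whole core into $H_2$: a connected subgraph of $M$ may perfectly well straddle the cut $\{s',t'\}$. Concretely, for blueprint~(l) the $4$-cycle of $M$ can sit entirely inside $H_1$, with $s',t'$ lying on the two subdivided edges leading from the cycle to the $X$-vertices; then the two $X$-vertices are in $H_2$ as you want, but $M\cap H_1$ contains the whole cycle together with the two arms to $s$ and $t$, and is certainly not two disjoint paths. The same happens for blueprint~(o) with the diamond in place of the $4$-cycle. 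Your blueprint-compatibility case analysis via Lemma~\ref{smallerlem} does not exclude this, because $M\cap H_2$ is then a perfectly legitimate module of $H_2$ (a double appendix, or a $1$-comb).

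This is exactly the configuration the paper has to work to exclude, and it does so by a different mechanism you are missing: if $M\cap H_1$ had this shape, then picking any $s'$--$t'$-path $P$ inside $H_2$ (which exists since parts are connected) makes $(M\cap H_1)\cup P$ an $s$--$t$-diamond in $H-X$. But $H-X$ is series-parallel with terminals $s,t$ by Lemma~\ref{partsplem}, so Lemma~\ref{diamonds}~(ii) gives a contradiction. A secondary point: your ``main claim'' that every $X$-vertex of $M$ lies in $V(H_2)\setminus\{s',t'\}$ is something the paper simply uses as a fact (``as $H_1$ does not contain any vertex of $X$''), reflecting the intended application where $H_1=L$ is disjoint from $X$; your proposed derivation of it by matching blueprints $B$ and $B_2$ is too vague to be convincing and is in any case unnecessary once one accepts this as part of the setup.
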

\begin{proof}
The vertices $s',t'$ separate $s,t$ from $X$ in $M$ as $H_1$ does not contain any vertex of $X$. Considering Figure~\ref{basicfig}, we see that this immediately implies
that  $M\cap H_1$ is the disjoint union of two $\{s,t\}$--$\{s',t'\}$-paths,
unless the blueprint of $M$ is derived from (l) or (o).
If, in that case, $M\cap H_1$ is not as desired, then
we find ourselves in the situation as shown in Figure~\ref{fig:excepts}.
	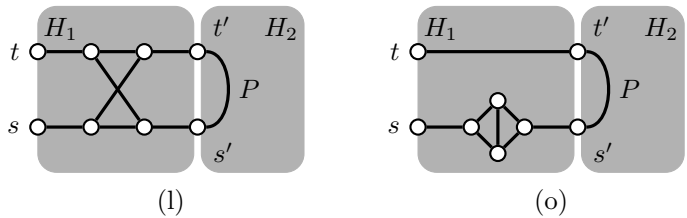
\begin{figure}[ht]
		\centering
		\begin{tikzpicture}
\tikzstyle{hinter}=[rounded corners=7, color=hellgrau, fill=hellgrau]
\def\step{0.7}
\def\ystep{1}
\def\pad{0.6}

\draw[hinter] (0,2*\ystep+\pad) rectangle (3*\step-0.05,\ystep-\pad);
\draw[hinter] (5*\step,2*\ystep+\pad) rectangle (3*\step+0.05,\ystep-\pad);

\node[hvertex,label=left:$s$] (s) at (0,\ystep){};
\node[hvertex,label=left:$t$] (t) at (0,2*\ystep){};
\node[hvertex] (a) at (\step,\ystep){};
\node[hvertex] (b) at (\step,2*\ystep){};
\node[hvertex] (c) at (2*\step,\ystep){};
\node[hvertex] (d) at (2*\step,2*\ystep){};
\node[hvertex,label=below right:$s'$] (ss) at (3*\step,\ystep){};
\node[hvertex,label=above right:$t'$] (tt) at (3*\step,2*\ystep){};
\draw[hedge] (s) -- (a) -- (c) -- (ss);
\draw[hedge] (t) -- (b) -- (d) -- (tt);
\draw[hedge] (a) -- (d);
\draw[hedge] (b) -- (c);
\draw[hedge,out=0, in=0] (tt) to node[auto,midway] {$P$} (ss);

\node at (0.5*\pad,2*\ystep+0.5*\pad) {$H_1$};
\node at (5*\step-0.5*\pad,2*\ystep+0.5*\pad) {$H_2$};
\node at (2.5*\step,0) {(l)};

\begin{scope}[shift={(5,0)}]
\draw[hinter] (0,2*\ystep+\pad) rectangle (3*\step-0.05,\ystep-\pad);
\draw[hinter] (5*\step,2*\ystep+\pad) rectangle (3*\step+0.05,\ystep-\pad);

\node[hvertex,label=left:$s$] (s) at (0,\ystep){};
\node[hvertex,label=left:$t$] (t) at (0,2*\ystep){};
\node[hvertex] (a) at (\step,\ystep){};
\node[hvertex] (b) at (2*\step,\ystep){};
\node[hvertex] (d) at (1.5*\step,\ystep+0.5*\step){};
\node[hvertex] (e) at (1.5*\step,\ystep-0.5*\step){};
\node[hvertex,label=below right:$s'$] (ss) at (3*\step,\ystep){};
\node[hvertex,label=above right:$t'$] (tt) at (3*\step,2*\ystep){};

\draw[hedge] (s) -- (a) (b) -- (ss);
\draw[hedge] (a) -- (d) -- (b) -- (e) -- (a);
\draw[hedge] (d) -- (e);
\draw[hedge] (t) -- (tt);
\draw[hedge,out=0, in=0] (tt) to node[auto,midway] {$P$} (ss);
\node at (0.5*\pad,2*\ystep+0.5*\pad) {$H_1$};
\node at (5*\step-0.5*\pad,2*\ystep+0.5*\pad) {$H_2$};
\node at (2.5*\step,0) {(o)};
\end{scope}

		\end{tikzpicture}
		\caption{Obtaining an $s$--$t$-diamond in the proof of Lemma \ref{exceptlem}.}
		\label{fig:excepts}
	\end{figure}
	Since every part is connected by~\eqref{partProps},
	there is an $s'$--$t'$-path $P$ in $H_2$.
	Then,  $(M\cap H_1) \cup P$ is an $s$--$t$-diamond in $H-X$,
which  contradicts Lemma~\ref{diamonds}~(ii) because $H-X$ is series-parallel,
by Lemma~\ref{partsplem}.
\end{proof}

An edge set $F\subseteq E(G)$ is a \emph{hit-or-miss set for $H$} if for any module $M$ of $H$ that 
is not hit by $F$ there are $k$ edge-disjoint modules in $H$ that have the same blueprint as $M$.
Note that a hit-or-miss set for all of $G-x$ is either an edge hitting set, or indicates that $G$ contains 
$k$ edge-disjoint $K_4$-subdivisions.

A part $H$ is \emph{simple} if it is disjoint from $X$ or if it is trivial, i.e.\ if 
$H-X$ consists of a single vertex.


\begin{lemma}\label{simplelem}
Let $H$ be a simple part, and let $M$ be a module of $H$. Then $M$ is either a path between the terminals
of $H$  or it is an edge between the terminal of $H$ and a vertex in $X$.
In particular, $H$ has a hit-or-miss set of size at most~$k-1$. 
\end{lemma}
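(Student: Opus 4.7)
I would split on the definition of simple: either $H$ is disjoint from $X$, or $H$ is trivial (i.e., $H-X$ consists of a single vertex).

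Suppose first that $H$ is disjoint from $X$. If $H$ has just one vertex and no edge then there is no module, so I may assume $H$ has two distinct terminals $s,t$. Any module $M$ of $H$ avoids $X$; by Lemma~\ref{moduleHasBlueprint} it has a blueprint, and inspection of Figure~\ref{basicfig}, together with the derivation rules---which may only add \texttt{s}- or \texttt{t}-labels and never \texttt{x}---shows that the only blueprint without an \texttt{x}-label is basic blueprint~(c), a single $\texttt{s}$--$\texttt{t}$ edge. Hence $M$ is an $s$--$t$ path in $H$. For the hit-or-miss set, apply the edge version of Menger's theorem to $s,t$ in $H$: if there are $k$ edge-disjoint $s$--$t$ paths $P_1,\ldots,P_k$, then each $P_i$ is a module (Lemma~\ref{replacelem} with $P=M$ and $P'=P_i$), so the empty set is a hit-or-miss set; otherwise there is an $s$--$t$ edge cut $F$ in $H$ with $|F|\leq k-1$, and $F$ meets every $s$--$t$ path and hence every module.

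Suppose next that $H$ is trivial with $H-X=\{v\}$, so $v$ is the unique terminal of $H$. Because every vertex of $X\setminus\{x\}$ has degree~$2$ in $G$ with exactly one neighbour other than $x$, the part $H$ is a star centred at $v$ with leaves in $X\cap V(H)$. A blueprint of a module must place \texttt{s} at $v$ (after possibly relabelling) and cannot carry a \texttt{t}-label, since $H$ has no $t$-terminal. The basic blueprints without a \texttt{t}-label are (a), (g), (h), (m), (n); of these, all except (a) (and all their \texttt{s}-derivatives) contain a triangle or a $4$-cycle and therefore fail to embed in the acyclic star $H$. Thus $M$ has blueprint~(a) and is a single edge $vu$ with $u\in X$. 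Letting $F$ be the set of $v$--$X$ edges in $H$: if $|F|\geq k$, then $F$ consists of $\geq k$ pairwise edge-disjoint single-edge modules of blueprint~(a) (every such edge is a module by swapping the pendant $u$ in the original $K_4$-subdivision for another neighbour of $v$ in $X$), so $\emptyset$ is a hit-or-miss set; otherwise $|F|\leq k-1$ and $F$ hits every module directly.

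The only real work is the blueprint inspection, and it reduces to two facts that are immediate from Figure~\ref{basicfig}: the derivation rules never introduce a new \texttt{x}-label, and every basic blueprint other than (a)--(f) contains a cycle. Once these are in hand, the two hit-or-miss conclusions follow from Menger's theorem in the first case and a trivial counting in the second.
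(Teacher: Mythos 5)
Your proof is correct and follows essentially the same route as the paper: split according to whether the simple part is disjoint from $X$ or trivial, identify the possible modules by inspecting the blueprints, and then produce the hit-or-miss set via a cut-or-many-paths (Menger-type) argument. You merely make explicit two points the paper leaves implicit, namely that the $k$ edge-disjoint $s$--$t$-paths (via Lemma~\ref{replacelem}) and the $v$--$X$-edges (via rerouting the $K_4$-subdivision) are themselves modules of the same blueprint, which is a welcome but not essentially different addition.
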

\begin{proof}
If $H$ is disjoint from $X$, then,
by consulting the list of basic blueprints in Figure~\ref{basicfig},
we see that the only possible modules of $H$ are paths 
between the terminals of $H$. 
If it is possible to separate the terminals with at most $k-1$ edges, then 
such an edge set is a hit-or-miss set; otherwise the empty set is a hit-or-miss set.

If $H-X$ consists of a single vertex $s$, then $H$ is a star with centre $s$ and all leaves in $X$. 
If $H$ has at least $k$ vertices in $X$, then the empty set is a hit-or-miss set; otherwise
$E(H)$ is a hit-or-miss set.
\end{proof}

\begin{lemma}\label{substantiallem}
Let $M$ be a module of a part $H$. If $H$ has no substantial block-part, then $M$ is 
either a comb, an appendix or  a double appendix.
\end{lemma}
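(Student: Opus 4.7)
The plan is to argue the contrapositive by a case analysis on the catalogue of blueprints: if $M$ is not a comb, appendix, or double appendix, then by Lemma~\ref{moduleHasBlueprint} its blueprint is derived from one of the basic blueprints (g)--(o) in Figure~\ref{basicfig}, and I will show any such module forces $H$ to have a substantial block-part.

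The first step is the observation that in each of (g)--(o), the subgraph obtained by deleting the $X$-labelled vertices still contains a cycle: a triangle for (g)--(k), a $4$-cycle for (l), and a diamond $K_4 - e$ for (m)--(o). Consequently $M - X$ contains a cycle $C$, and this cycle is a cycle of $H - X$. Because $H - X$ is series-parallel (Lemma~\ref{partsplem}), $C$ lies entirely inside a single block $B^*$ of $H - X$. Let $\hat B$ denote the block-part of $H$ associated with $B^*$, that is, $B^*$ together with all $X$-vertices of $H$ whose neighbour in $H - X$ lies in $B^*$. The plan is to exhibit an $X$-vertex $z \in \hat B$ whose unique non-$x$ neighbour $y$ in $G$ lies in $B^* \setminus X$ and is neither a cut vertex of $H - X$ nor a terminal of $H$; such a $y$ would make $\hat B$ substantial, contradicting the hypothesis.

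To find such a $z$, I would use that each of the blueprints (g)--(o) contains an edge from a branch vertex on the cycle to an $X$-labelled vertex; in $M$ this realises as a leg $L$ starting at some $v \in B^*$ and ending at an $X$-vertex $z$, with $y$ the penultimate vertex of $L$. The key claim is that $L$ stays inside $B^*$ and that $y$ is a non-terminal of $\hat B$. To establish this I would invoke that the subdivided edges of $M$ are pairwise internally disjoint, that every branch vertex of $M$ has exactly its blueprint-prescribed degree in $M$ (so no branch vertex can absorb an extra passage of another leg), and that $B^*$ is $2$-connected.

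The main obstacle will be carrying out this case analysis cleanly: a priori $L$ could leave $B^*$ through a cut vertex and terminate at an $X$-vertex attached in another block. For the diamond blueprints (m)--(o), I expect to rule this out by combining the edge-disjointness of the two legs emanating from the diamond's tips with Lemma~\ref{diamonds}(ii), which forbids an $s$-$t$-diamond inside a series-parallel graph with adjacent terminals. For the triangle-based blueprints (g)--(l), I would use a pigeonhole argument: the cycle offers three (or four, in the case of (l)) branch vertices carrying $X$-legs, but $H$ has at most two terminals, so at least one of the corresponding $y$-vertices must be an interior vertex of $B^*$, witnessing that $\hat B$ is substantial and producing the desired contradiction.
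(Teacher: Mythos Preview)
Your strategy --- locate the cycle $C\subseteq M-X$ inside a block $B^*$ and then exhibit an $X$-neighbour in the interior of $B^*$ --- is the contrapositive of what the paper does, and in principle it can be made to work.  As written, however, the pigeonhole step is wrong.  You assert that for the triangle-based blueprints (g)--(l) ``the cycle offers three (or four, in the case of (l)) branch vertices carrying $X$-legs''.  This is false: blueprint~(i) has a single $X$-leg (the other two legs go to $s$ and $t$), blueprints~(h), (k), (l) have only two $X$-legs, and in~(j) only one $X$-leg emanates from the triangle itself.  With just one or two $y$-vertices there is nothing to pigeonhole against the two terminals of $\hat B$.  Even reinterpreting your claim as ``three legs of any kind'', the conclusion does not follow: two internally disjoint subdivided edges may well share a boundary vertex $c$ of $B^*$ provided $c$ is an endpoint of one of them (e.g.\ $c$ equals the branch vertex where one leg starts), so no contradiction with internal disjointness arises.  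A genuine argument here has to track, for each of the six blueprints, where the $s$- and $t$-legs exit $B^*$ and combine this with the $X$-leg; this is doable but amounts to a fairly long case analysis you have not carried out.  Your plan for (m)--(o) via Lemma~\ref{diamonds}(ii) is similarly underspecified: you would need the two tip-legs to end at the \emph{terminals of $\hat B^*$}, not merely at cutvertices somewhere in $H-X$, before the forbidden-diamond lemma applies.

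The paper avoids all of this by arguing the forward direction directly.  The hypothesis ``no substantial block-part'' is first reformulated as: every vertex of $N(X)$ in $H$ is a cutvertex of $H-X$ or a terminal.  This immediately yields an iterated series decomposition $H=B_1\cup\cdots\cup B_\ell$ into \emph{simple} parts (each $B_i$ is either a block of $H-X$ disjoint from $X$, or a star centred at a cutvertex with leaves in $X$).  Lemma~\ref{simplelem} then forces $M\cap B_i$ to be a path for every $i$, so $M-X$ is a forest, and one glance at Figure~\ref{basicfig} shows the only blueprints with acyclic $M-X$ are (a)--(f).  This structural decomposition sidesteps the blueprint-by-blueprint analysis entirely.
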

\begin{proof}
Any neighbour of $X$ in $H$ is either a cutvertex of $H-X$, 
or a terminal of $X$. 
Therefore, there is a sequence $B_1,\ldots, B_\ell$  of edge-disjoint subgraphs of $H$
such that $H=\bigcup_{i=1}^\ell B_i$ is an iterated series decomposition of parts, 
and such that every $B_i$ is either a block of $H$ (and then disjoint from $X$),
 or a star whose centre is a cutvertex of $H-X$ and whose leaves lie in $X$.
In other words, every $B_i$ is a simple part.
It follows from Lemma~\ref{simplelem} that each $B_i\cap M$ is a path. 
Consequently, $M-X$
is a forest, and a look at the basic blueprints in Figure~\ref{basicfig} confirms that $M$
must be a comb, an appendix or a double appendix.
\end{proof}

\subsection{Ear induction}
\label{earinduction}

We start with a tool with which we can inductively construct hit-or-miss sets.

\begin{lemma}\label{hitormisslem}
Let $H$ be a part, and let $H=H_1\cup H_2$ be a series or parallel decomposition of $H$
into parts $H_1$ and $H_2$. If $F_1,F_2$ are hit-or-miss sets for $H_1$ and $H_2$, respectively, 
then $F_1\cup F_2$ is a hit-or-miss set for $H$.
\end{lemma}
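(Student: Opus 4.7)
The plan is straightforward. Given any module $M$ of $H$ that is not hit by $F:=F_1\cup F_2$, I will split $M$ into its two halves inside $H_1$ and $H_2$, argue that neither $F_1$ nor $F_2$ hits the respective half, and then use the hit-or-miss property on each side to produce $k$ edge-disjoint ``half-modules'' that can be glued back together via Lemma~\ref{legolem}(ii).

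Concretely, let $M_i$ denote the graph obtained from $M\cap H_i$ by deleting isolated vertices; by Lemma~\ref{legolem}(i) each $M_i$ is empty or a module of $H_i$ with some blueprint $B_i$. Since $F$ does not hit $M$, there is a $K_4$-subdivision $K$ in $G$ that avoids $F$ and satisfies $E(K\cap H)=E(M)$. Because $E(H_1)$ and $E(H_2)$ are disjoint (a direct consequence of the definition of internally disjoint parts, which is built into any series or parallel decomposition), the edges of $K$ split as $E(K\cap H)=E(K\cap H_1)\cup E(K\cap H_2)$, and hence $E(K\cap H_i)=E(M_i)$. Thus $K$ also witnesses that $F_i$ fails to hit $M_i$ in $H_i$ whenever $M_i$ is non-empty, and the hit-or-miss hypothesis for $F_i$ then supplies $k$ pairwise edge-disjoint modules $M_i^{(1)},\ldots,M_i^{(k)}$ of $H_i$, each with blueprint $B_i$.

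In the main case, where both $M_1$ and $M_2$ are non-empty, Lemma~\ref{legolem}(ii) immediately promotes each union $N^{(j)}:=M_1^{(j)}\cup M_2^{(j)}$ to a module of $H$ with the same blueprint as $M$. The $N^{(j)}$ are pairwise edge-disjoint since any shared edge would lie entirely in $E(H_1)$ or entirely in $E(H_2)$, contradicting edge-disjointness within one of the halves. Since $E(M)\neq\emptyset$, at least one of $M_1,M_2$ is non-empty, so the only remaining situation is the degenerate one in which, say, $M_2=\emptyset$ and thus $E(M)\subseteq E(H_1)$ and $V(M)\subseteq V(H_1)$. Here $M$ is essentially a module of $H_1$, but its blueprint as a module of $H$ may differ from the blueprint $B_1$ it carries as a module of $H_1$, because of accidental occurrences of the terminals of $H$; this is exactly what Lemma~\ref{replacelem2} is designed to accommodate. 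Applied with $H$ as ambient part and $H_1$ as subpart, it converts each $M_1^{(j)}$ into a module of $H$ with the blueprint of $M$, the only modification being the adjunction of the terminals of $H_1$ lying in $M$ as isolated vertices, which preserves both the blueprint of $M$ and the pairwise edge-disjointness of the $M_1^{(j)}$.

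In short, the argument is a direct consequence of Lemmas~\ref{legolem} and~\ref{replacelem2}; the only mildly delicate point -- and what I expect to be the main obstacle to presenting cleanly -- is handling the degenerate case where one of $M_1,M_2$ is empty, for which Lemma~\ref{replacelem2} gives precisely the blueprint-preserving extension we need.
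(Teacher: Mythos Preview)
Your proof is correct and follows essentially the same route as the paper's. The paper's own proof also splits into the case $M\subseteq H_i$ versus the case where $M$ meets both $E(H_1)$ and $E(H_2)$, and in the latter invokes Lemma~\ref{legolem} exactly as you do; the only difference is that the paper dispatches the degenerate case in one line (``They are also modules of $H$ of the same blueprint as $M$''), whereas you spell out why via Lemma~\ref{replacelem2}. One minor remark: your phrase about ``adjunction of the terminals of $H_1$ lying in $M$ as isolated vertices'' is slightly off, since each $M_1^{(j)}$ already contains those terminals (same blueprint), so in fact $M_1'=M_1^{(j)}$ and no modification occurs.
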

\begin{proof}
Let $M$ be a module of $H$ that is not hit by $F_1\cup F_2$,  
that is, there is a $K_4$-subdivision $K$ with $M\subseteq K\subseteq G-(F_1\cup F_2)$.
If $M\subseteq H_i$ for some $i=1,2$, the fact that the hit-or-miss set $F_i$ for $H_i$ 
does not hit $M$ implies that there are $k$ edge-disjoint modules of $H_i$ that have the same blueprint as $M$.
They are also modules of $H$ of the same blueprint as $M$.

Therefore we may assume that $M$ contains an edge of both, $H_1$ and $H_2$.
For $i=1,2$, let $M_i$ be obtained from $M\cap H_i$ by deleting isolated vertices.
By Lemma~\ref{legolem} (i), $M_i$ is a module of $H_i$ for $i=1,2$.
Because $M_i\subseteq M\subseteq K\subseteq G-F_i$ holds,
the set $F_i$ does not hit $M_i$ for $i=1,2$.
As $F_i$ is a hit-or-miss set for $H_i$, 
there are $k$ edge-disjoint modules $M_i^1,\ldots, M_i^k$ in $H_i$
that have the same blueprint as $M_i$, for $i=1,2$.
By Lemma \ref{legolem} (ii), for every $j=1,\ldots, k$, 
the graph $M^j=M_1^j\cup M_2^j$ is a module that has the same blueprint as $M$.
The modules $M^1,\ldots, M^k$ are edge-disjoint because $H_1$ and $H_2$ are edge-disjoint and 
because the smaller modules $M_1^j,\ldots, M_k^j$ are edge-disjoint for $j=1,2$. 
This proves that $F_1\cup F_2$ is a hit-or-miss set for $H$.
\end{proof}

Let $H$ be a part with terminals $s\neq t$, and let $B$ be a block-part of $H$.
As defined in Section~\ref{LBRBdef}, we use $L_B$ and $R_B$
to decompose $H=L_B\cup B\cup R_B$ twice in series into edge-disjoint parts.
Moreover, we say that $H$ has
\begin{itemize}
\item \emph{type I:} if $H$ does not have any substantial block-part;
\item \emph{type II:} if $H$ has at least two substantial block-parts;
\item \emph{type III:} if 
$H$ has exactly one substantial block-part $B$ that parallel decomposes into $B=B_1\cup B_2$ such that 
$B_2$ is non-substantial, and if at least one of $L_B-s,R_B-t$ contains a vertex that is adjacent to $X$;
\item \emph{type IV:} if $H$ has exactly one substantial block-part $B$ that parallel decomposes into 
two substantial parts $B=B_1\cup B_2$; and 
\item \emph{type V:} if $H$ has not type~I,~II,~III or~IV.
\end{itemize}

Note that if the $x$-ear number $\lambda$ of $H$ is 0, then $H$ is type I 
and $\lambda=1$ implies that $H$ is type I or V.
Define 
\[
f_*(\lambda,k)=\begin{cases}
(\lambda-2)\cdot 2000k^3+1000k^3 &\emtext{for }\lambda\geq 2\\
6k^2&\emtext{for }\lambda\leq 1
\end{cases}
\]
and 
\[
f(\lambda,k)=
f_*(\lambda,k)+250k^3
\]
%
The following lemma is the main lemma in this section.

\begin{lemma}\label{fewlem}
Let $H$ be a part with $x$-ear number~$\lambda$. Then
there is a hit-or-miss set $F$ for $H$ such that 
$|F|\leq f_*(k,\lambda)$ if $H$ has type~I--IV and such that
$|F|\leq f(k,\lambda)$ if $H$ has type~V. 
\end{lemma}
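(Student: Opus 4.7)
The plan is to induct on the $x$-ear number $\lambda$ of $H$, handling the five types of part in the inductive step.

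For the base case $\lambda = 0$, the part $H$ is simple, so Lemma~\ref{simplelem} immediately provides a hit-or-miss set of size at most $k - 1 < 6k^2 = f_*(0,k)$.

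For the inductive step, the strategy in each of Types~I--IV is to find a decomposition of $H$ whose pieces have strictly smaller $x$-ear number (with the arithmetic adding up), apply the induction hypothesis to each piece, and glue the resulting hit-or-miss sets together via Lemma~\ref{hitormisslem}. Concretely: in Type~I, where Lemma~\ref{substantiallem} guarantees every module is a comb, appendix or double appendix, I would decompose $H$ as an iterated series of simple parts (each $B_i$ is either a non-substantial block or a star centered at a cutvertex of $H-X$ with leaves in $X$) and apply Lemma~\ref{simplelem} to each, using the observation that only a $k$-bounded number of simple parts need contribute non-trivially. In Type~II, I would pick a series decomposition $H = H_1 \cup H_2$ into parts both containing a substantial block-part, use Lemma~\ref{splitearlem}(ii) to control $\lambda_1 + \lambda_2$, and recurse. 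In Type~III, Lemma~\ref{splitearlem}(iii) directly gives that the unique substantial block-part $B$ has $x$-ear number at most $\lambda - 1$, so I recurse on $B$ and handle the (non-substantial) $L_B, R_B$ using the Type~I approach. In Type~IV, I would use the parallel decomposition $B = B_1 \cup B_2$ with both $B_i$ substantial, together with Lemma~\ref{splitearlem}(i) to recurse on each $B_i$ separately, and then combine.

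Type~V is the hard case and will require a dedicated lemma (referenced later as Lemma~\ref{type5lem}). The reason is that when $H$ has a unique substantial block-part $B$ which either does not parallel-decompose or has all $X$-neighbours concentrated inside $B-\{s,t\}$, no natural decomposition reduces~$\lambda$ in a useful way. I expect the proof of this companion lemma to exploit the rigid structure of $H$ to produce either a well-connected ladder or a well-connected fan inside $G$ -- yielding $k$ edge-disjoint $K_4$-subdivisions via Lemma~\ref{ladderfanlem} -- or else to find a small hitting set directly, probably by cutting $O(k^3)$ edges that separate the $X$-neighbours in $B$ from the two ``sides'' of $B$ reached through $L_B$ and $R_B$. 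The extra additive summand $250k^3$ distinguishing $f$ from $f_*$ is precisely the budget for destroying this Type~V obstruction once per recursion.

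The main obstacle will be the careful arithmetic bookkeeping throughout Types~II--IV: in the recursion, a Type~V sub-part may be glued to a Type~I--IV sibling, and the $+250k^3$ charge may need to be absorbed into the difference $f_*(\lambda, k) - f_*(\lambda_1, k) - f_*(\lambda_2, k)$. Since $f_*(\lambda,k)$ grows by $2000k^3$ per unit of $\lambda$ (for $\lambda \geq 2$), there is slack for several Type~V children per recursive step, but verifying this in each of the cases provided by Lemma~\ref{splitearlem} will require a short but fussy case split. The other significant obstacle is of course the Type~V case itself, where one must use the blueprint classification (Lemma~\ref{moduleHasBlueprint}) together with Lemmas~\ref{hittinglem} and~\ref{exceptlem} to argue that surviving modules force the dense ladder/fan substructure.
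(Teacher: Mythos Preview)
Your proposal is essentially the paper's own approach: induction on $\lambda$, with Types~II--IV handled by decomposing via Lemma~\ref{splitearlem}, recursing, and gluing via Lemma~\ref{hitormisslem}; an arithmetic helper (the paper's Lemma~\ref{rechnung}) absorbs the $+250k^3$ from possible Type~V children; and Type~V is deferred to a dedicated lemma (Lemma~\ref{type5lem}) whose proof indeed hinges on finding a well-connected ladder or fan and invoking Lemma~\ref{ladderfanlem}, exactly as you anticipate.

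One genuine slip to fix: the claim that $\lambda=0$ implies $H$ is \emph{simple} is false. The $x$-ear number sees only interior vertices of ears, so the terminals $s,t$ may still have neighbours in $X$ inside $H$; what $\lambda=0$ actually forces is that $H$ is of Type~I. Hence the base case cannot be discharged by a single call to Lemma~\ref{simplelem}; you must run the full Type~I argument already at $\lambda=0$. Relatedly, your Type~I sketch (``only a $k$-bounded number of simple parts need contribute non-trivially'') glosses over a real step: in the paper, after splitting on whether $s$ and $t$ are $k$-edge-separable, the well-connected case first argues that if there are at least $3k+2$ neighbours of $X$ (necessarily cutvertices or terminals) one exhibits $k$ edge-disjoint $3$-combs outright, which is what reduces to $O(k)$ simple parts and gives the $6k^2$ bound.
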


Before we prove the lemma, let us first show how we can finish the proof 
of  Lemma~\ref{mainfewlem} with it.

\begin{proof}[Proof of Lemma \ref{mainfewlem}]
Let $\lambda$ be the $x$-ear number of $G$.
By Lemma~\ref{fewlem} there is a hit-or-miss set $Y$ for the part $G-x$ of size at most $2000\lambda k^3$.
If $Y$ meets every $K_4$-subdivision of $G$, then we are done. So,
let $K$ be a $K_4$-subdivision in $G-Y$.
Then $K-x$ is a module of $G-x$ (either with a blueprint as in Figure~\ref{basicfig}~(g),
or a rooted diamond with blueprint as in~(m))
that is not hit by $Y$.
Therefore, there are $k$ edge-disjoint modules that have the same blueprint as $K-x$
and thus, as the vertices in $X$ have degree~$2$, these can be completed to 
 $k$ edge-disjoint $K_4$-subdivisions.
\end{proof}

We start  with the proof of Lemma~\ref{fewlem}, the final
piece in the proof of the main theorem.
We proceed by induction on $\lambda$
and  treat each  type of $H$ in a separate lemma.
In the first lemma we deal with the 
 base case, when the $x$-ear number of $H$ is~$0$.

\begin{lemma}\label{type1lem}
Let $H$ be a part of type~I. Then
there is a hit-or-miss set $F$ for $H$ of size~$|F|\leq 6k^2$. 
\end{lemma}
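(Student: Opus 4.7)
Since $H$ is of type~I, Lemma~\ref{substantiallem} tells us that every module of $H$ is an appendix, a double appendix, or a comb (with possibly an accidental use of one of the terminals). The blueprints of such modules lie in a finite list of absolutely bounded size. My plan is to treat each such blueprint $B$ separately: I will construct an edge set $F_B\subseteq E(H)$ such that either $F_B$ hits every $B$-module in $H$, or $H$ contains $k$ pairwise edge-disjoint $B$-modules. The union $F=\bigcup_B F_B$ (over those blueprints where the first alternative holds) is then the sought hit-or-miss set.

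A useful structural observation before starting: because no block-part of $H$ is substantial, every neighbour of $X$ in $H$ is a terminal of $H$ or a cutvertex of $H-X$; call this set $N_H$. Consequently the ``hooks'' at which $X$-edges attach to any module lie in the restricted set $\{s,t\}\cup N_H$, and the non-$X$ part of every module is a tree in $H-X$ whose leaves lie in $\{s,t\}\cup N_H$. This is what makes Menger-type arguments effective.

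For appendices (the $s$--$X$- or $t$--$X$-paths, together with variants that accidentally use the other terminal), I apply the edge-version of Menger's theorem in $H$, or in $H-t$, or in a split-vertex auxiliary graph for variants that must pass through a terminal: either $k$ edge-disjoint paths of the required kind exist (giving $k$ edge-disjoint appendices), or the minimum cut provides an $F_B$ of size at most $k-1$. For double appendices, I look for edge-disjoint pairs of internally disjoint $s$--$X$- and $t$--$X$-paths by running Menger between $\{s,t\}$ and $X$ and splitting the resulting paths by their endpoint, or else extracting a separator of size $O(k)$. For combs (blueprints of one, two, or three attached hooks), I first test whether the $s$--$t$-edge-connectivity in $H$ is less than $k$; if so, a cut of size at most $k-1$ hits every comb, since every comb contains an $s$--$t$-path. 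Otherwise, I take $k$ edge-disjoint $s$--$t$-paths and attempt to decorate each with the required distinct $X$-edges at hook vertices on the path to form $k$ edge-disjoint combs; when the supply of free $X$-edges at a bottleneck hook runs out, a local cut of size at most $k-1$ at that hook hits the remaining $i$-combs through it.

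Because the number of blueprints is an absolute constant and each $F_B$ built above has size at most $O(k)$ (per hook), and because in the worst case (3-combs) up to $O(k)$ hook vertices can contribute cuts of size $O(k)$, the total satisfies $|F|\le 6k^2$ --- in fact, a natural accounting is $|F_B|\le i\cdot k^2$ for the $i$-comb blueprint, summing to at most $(1+2+3)k^2=6k^2$, with the appendix and double-appendix contributions easily absorbed. The main obstacle I anticipate is the multi-hook case of 2- and 3-combs, where one must coordinate the choice of hook vertices across $k$ edge-disjoint $s$--$t$-spines so that each spine carries two or three distinct free $X$-edges; this is made tractable by the block-cut structure of $H-X$, since along any fixed $s$--$t$-traversal the hooks appear in a linear order, allowing one to process blocks sequentially and apply the local Menger bound at each.
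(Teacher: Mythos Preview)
Your outline has the right opening moves --- reducing to combs, appendices, and double appendices via Lemma~\ref{substantiallem}, and observing that every vertex of $N(X)\cap V(H)$ is a terminal or a cutvertex of $H-X$ --- but the comb case contains a real gap.

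The problem is the sentence ``up to $O(k)$ hook vertices can contribute cuts of size $O(k)$'': you never establish why only $O(k)$ hooks matter. If there are, say, $k^3$ cutvertices of $H-X$ each with a single edge to $X$, then your scheme of cutting the $X$-edges at every bottleneck hook produces a set of size $k^3$, not $6k^2$. What is missing is the dichotomy the paper proves: either $H-X$ has at least $3k+2$ vertices adjacent to $X$, in which case (using that all of them are cutvertices and hence lie on \emph{every} $s$--$t$-path) one can build $k$ edge-disjoint $3$-combs directly and these contain modules of every smaller comb blueprint; or there are at most $3k+1$ such vertices, and then $H$ admits an iterated series decomposition into at most $6k+1$ simple parts, each of which has a hit-or-miss set of size at most $k-1$ by Lemma~\ref{simplelem}, and one combines them via Lemma~\ref{hitormisslem}. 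Your sketch gestures at the second branch but never isolates the threshold, and does not invoke the simple-part decomposition at all.

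A second, smaller issue: treating each blueprint $B$ in isolation and taking $F=\bigcup_B F_B$ is logically sound, but the number of blueprints here is not small enough to be ignored in the constant (there are many variants once you account for contracted dashed edges and accidental terminals), and your per-blueprint bounds ``$|F_B|\le i\cdot k^2$ for the $i$-comb'' are asserted rather than derived. The paper avoids this bookkeeping by handling all comb-type modules uniformly through the simple-part decomposition, and by observing that any module whose blueprint has a contracted dashed edge must contain an $\{s,t\}$--$X$ edge, which is covered by a separate cut of size at most~$2k$.
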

\comment{Note if we do this lemma a bit more carefully, we should get a linear bound here. 
How is this done? In the second case, when there are $k$ edge-disjoint $s$--$t$-paths, check the following quantity:
\[draw
\sum_{v\in V(H-\{s,t\})}\min(k,|N_H(v)\cap X|)
\]
If that quantity is at least $3k$, then we find $k$ edge-disjoint $3$-combs. }
\begin{proof}
Let $s,t$ be the terminals of $H$.
Note that, by Lemma~\ref{substantiallem}, any non-trivial module of $H$ is a comb, an appendix or a double appendix
(modules of blueprints (a)--(f) in Figure \ref{basicfig}).

Assume first that there is an edge set $F_1$ of size $|F_1|<k$ that separates $s$ from $t$ in $H$. 
Then $F_1$ hits any comb of $H$, as well as any appendix that contains both terminals (one 
of them accidentally).  Thus, the only modules we still need to consider are appendices that do not accidentally
contain the other terminal as well as double appendices.

If there is a set  of at most $k-1$ edges
separating $s$ from $X$ in $H-F_1$, then denote the set by $F_2$; otherwise put $F_2=\emptyset$. 
Define $F_3$ for $t$ in the analogous way. Then either $F=F_1\cup F_2\cup F_3$ hits every $s$--$X$-path in 
$H$ or there are $k$ edge-disjoint ones, and the same holds for $t$--$X$-paths. 
Assume that there is an $s$-appendix $M$ in $H$ (i.e. a $K_4$-subdivision $K$ meets $H$ exactly in an $s$--$X$-path).
Then, Lemma \ref{replacelem2} (ii) shows that every other $s$--$X$-path in $H$ (that does not contain $t$ accidentally)
is a module (an $s$-appendix). 
Thus, there are $k$ edge-disjoint $s$-appendices in $H$. The same holds for $t$ instead of $s$.

Consider a double appendix $M$ of $H$ that is not  hit by $F$. 
By the choice of $F$, there must be $k$ edge-disjoint $s$--$X$-paths in $H-F_1$ and also $k$ edge-disjoint $t$--$X$-paths in $H-F_1$,
and none of the $s$--$X$-paths can meet any of the the $t$--$X$-paths as $F_1$ separates $s$ from $t$ in $H$. 
The union of any $s$--$X$-path and any $t$--$X$-path is a double appendix by Lemma \ref{replacelem2} (ii).
Hence, $H$ contains $k$ edge-disjoint double appendices.
Thus $F$ is a hit-or-miss set for $H$
of size $|F|\leq 3k$.

\medskip
Second assume that there are $k$ edge-disjoint $s$--$t$-paths in $H$. 
Recall that every neighbour of $X$ in $H$ is a  cutvertex of $H$ or a terminal
and therefore there is an $s$--$t$-path in $H$ that contains every neighbour of $X$.
If $H$ contains at least $3k+2$ neighbours of $X$, we let $F$ consist of all edges between $s$ and $X$
if there are at most $k-1$ of them and of all $t$--$X$-edges if there are at most $k-1$ of them;
otherwise we set $F=\emptyset$.
Then, $F$ has size at most $2k$ and we claim that $F$ is a hit-or-miss set in this case.

As $H$ contains $3k$ neighbours of $X$ which are not adjacent to $s$ or $t$,
the graph $H$ contains $k$ edge-disjoint  subdivisions of the blueprint~(f) in Figure \ref{basicfig} 
in which the terminals each have  degree~$1$ (neither of the dashed edges is contracted)
and by Lemma \ref{replacelem2} (ii) they are $3$-combs if there is any $3$-comb in $H$.
Any such $3$-comb contains any module of blueprint (a)--(f) where the dashed edges are not contracted
if there is at least one such module in $H$ (again by Lemma \ref{replacelem2} (ii)).

%

Any other module in $H$ thus contains an $\{s,t\}$--$x$-edge and is either hit by $F$ or we find $k$ such edges.
Let $B$ be any blueprint of (a)--(f) where  (some or all of) the dashed edges are contracted
and such that $H$ contains a module of blueprint $B$.
Then, any $3$-comb together with an $s$--$x$-edge and/or a $t$--$x$-edge contains 
another module of  blueprint $B$.

Therefore, we may assume that $H-X$ has at most $3k+1$ vertices that have a neighbour in $X\cap V(H)$.
Let the set of these be $v_1,\ldots, v_\ell$, and assume them to be enumerated
in the order they appear on an $s$--$t$-path $R$ in $H$. For each $v_i$, let $P_i$ be 
the simple part consisting of $v_i$ and its neighbours in $X\cap V(H)$ together with 
the edges between them. For $i=1,\ldots,\ell-1$, let $Q_i$ be the union of all blocks
in $H-X$ that share an edge with $v_iRv_{i+1}$. Then $Q_i$ is a simple part, too, 
and $H=\bigcup_{i=1}^\ell P_i\cup\bigcup_{i=1}^{\ell-1}Q_i$ is an iterated series decomposition
into simple parts. (Note that $G-X$ is $2$-connected, by~\eqref{standardG}.)
By Lemma~\ref{simplelem}, there is a hit-or-miss set of size at most $k-1$ in each of these
simple parts, and by Lemma~\ref{hitormisslem} their union $F$ is a hit-or-miss set of $H$, which 
then has size $|F|\leq (3k+1)\cdot 2\cdot (k-1)\leq 6k^2$.
\end{proof}

The proofs for types II and IV need similar calculations that are extracted in the following lemma.

\begin{lemma}\label{rechnung}
Let $\lambda_1,\lambda_2,\lambda$ be non-negative integers.
Let $\lambda_i<\lambda$ and either $2\le \lambda_i$ and $2\le \lambda_1+\lambda_2\le \lambda+1$, 
or $1\le \lambda_i$ and $2\le \lambda_1+\lambda_2\le \lambda$.
Then $f(\lambda_1,k)+f(\lambda_2,k)\le f_*(\lambda,k)-12k^2$.
\end{lemma}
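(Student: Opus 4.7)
The plan is a routine case analysis on the two clauses of the hypothesis. Since $f(\mu,k)=f_*(\mu,k)+250k^3$ by definition, the desired inequality is equivalent to
\[
f_*(\lambda_1,k)+f_*(\lambda_2,k)\le f_*(\lambda,k)-500k^3-12k^2,
\]
so it suffices to analyse $f_*$ alone. I would then split on the two clauses of the hypothesis, and within each clause on how many of $\lambda_1,\lambda_2$ equal~$1$, since $f_*$ switches form at argument~$2$.

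In the first clause both $\lambda_i\ge 2$ and $\lambda_1+\lambda_2\le\lambda+1$ (which forces $\lambda\ge 3$), so plugging into the formula for $f_*$ at arguments $\ge 2$ gives
\[
f_*(\lambda_1,k)+f_*(\lambda_2,k)=(\lambda_1+\lambda_2-4)\cdot 2000k^3+2000k^3\le (\lambda-3)\cdot 2000k^3+2000k^3=f_*(\lambda,k)-1000k^3,
\]
and the claim reduces to $12k^2\le 500k^3$, which holds for every positive integer~$k$.

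In the second clause ($\lambda_1,\lambda_2\ge 1$, $\lambda_1+\lambda_2\le\lambda$) I would handle three subcases. If $\lambda_1=\lambda_2=1$, then $\lambda\ge 2$ so $f_*(\lambda,k)\ge 1000k^3$, and the claim becomes $24k^2\le 500k^3$. If exactly one of them, say $\lambda_1$, equals~$1$, then $\lambda_2\le\lambda-1$ with $\lambda_2\ge 2$, and a direct substitution gives $f_*(\lambda_1,k)+f_*(\lambda_2,k)\le f_*(\lambda,k)-2000k^3+6k^2$, leaving $18k^2\le 1500k^3$. If both are at least~$2$, then $\lambda\ge 4$ and the calculation of the first clause, sharpened by using $\lambda_1+\lambda_2\le\lambda$ instead of $\lambda+1$, improves to $f_*(\lambda_1,k)+f_*(\lambda_2,k)\le f_*(\lambda,k)-3000k^3$, leaving $12k^2\le 2500k^3$.

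The only obstacle is clerical: keeping the two piecewise regimes of $f_*$ straight across the cases. Every resulting scalar inequality is immediate from $k\ge 1$ (so that $k^3\ge k^2$), which completes the proof.
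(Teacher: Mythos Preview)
Your proof is correct and follows essentially the same approach as the paper: a direct case analysis on the size of $\lambda_1,\lambda_2$ followed by substitution into the piecewise definition of $f_*$. The only cosmetic difference is that the paper works directly with $f$ and merges your ``both $\lambda_i\ge 2$'' subcases from the two clauses into a single case (using the weaker bound $\lambda_1+\lambda_2\le\lambda+1$ throughout), giving three cases in total rather than four; your extra subcase is harmless redundancy.
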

\begin{proof}
Assume first that $\lambda_1,\lambda_2\geq 2$.  Then:
\begin{align*}
f(\lambda_1,k)+f(\lambda_2,k)&\leq (\lambda_1-2)2000k^3+1250k^3+(\lambda_2-2)2000k^3+1250k^3\\
&\leq(\lambda-2)2000k^3+2500k^3-2000k^3\leq f_*(\lambda,k) -12k^2,
\end{align*}
as $\lambda_1+\lambda_2\leq\lambda+1$.

Second, assume that one of $\lambda_1,\lambda_2$ is at most one, and that the other is at least~$2$. 
Then, by assumption, $3\le \lambda_1+\lambda_2\leq\lambda$. Thus
\begin{align*}
f(\lambda_1,k)+f(\lambda_2,k)&\leq f(1,k)+f(\lambda-1,k) = 
256k^2+(\lambda-3)2000k^3+1250k^3\\
&\leq(\lambda-2)2000k^3+1506k^3-2000k^3\leq f_*(\lambda,k) - 12k^2
\end{align*}

Finally, assume that $\lambda_1,\lambda_2\leq 1$. Then 
\[
f(\lambda_1,k)+f(\lambda_2,k)\leq 256k^3+256k^3\leq 1000k^3-12k^2=f_*(2,k)-12k^2
\]
Note that $\lambda\geq 2$, by assumption.
\end{proof}

\begin{lemma}\label{type2lem}
Let $H$ be a part with $x$-ear number~$\lambda$ of type~II. Then
there is a hit-or-miss set $F$ for $H$ of size~$|F|\leq f_*(\lambda,k)$. 
\end{lemma}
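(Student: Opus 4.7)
My plan is to find a series decomposition $H = H_1 \cup H_2$ at a cutvertex of $H-X$ that lies between two substantial block-parts, obtain hit-or-miss sets $F_i$ for the $H_i$ inductively, and then glue them together via Lemma~\ref{hitormisslem}.

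First, I would argue the decomposition exists. Since $H-X$ is series-parallel with terminals $s,t$ by Lemma~\ref{partsplem}, its blocks form a path in the block-cut tree from the block containing $s$ to the block containing $t$; this follows readily from the recursive definition of two-terminal series-parallel graphs. Because $H$ has type~II, two distinct block-parts $B, B'$ are substantial and so correspond to distinct blocks on this path. Hence there is at least one cutvertex $w$ of $H-X$ separating $B$ from $B'$, and $w$ yields a series decomposition $H = H_1 \cup H_2$ into parts, with $H_1$ having terminals $\{s,w\}$ and containing $B$, and $H_2$ having terminals $\{w,t\}$ and containing $B'$. In particular, each $H_i$ contains a substantial block-part.

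I would then invoke Lemma~\ref{splitearlem}~\eqref{substi}: the $x$-ear numbers $\lambda_1, \lambda_2$ of $H_1, H_2$ satisfy either $1 \le \lambda_i$ with $\lambda_1+\lambda_2 \le \lambda$, or $2 \le \lambda_i$ with $\lambda_1+\lambda_2 \le \lambda+1$; in both cases $\lambda_i \le \lambda-1$. Since $\lambda_i < \lambda$, the induction hypothesis of Lemma~\ref{fewlem} applies to each $H_i$ and produces a hit-or-miss set $F_i$ of size at most $f(\lambda_i, k)$ (using the larger bound $f$, as we do not know the type of $H_i$). Lemma~\ref{hitormisslem} then shows that $F := F_1 \cup F_2$ is a hit-or-miss set for $H$, and Lemma~\ref{rechnung} yields
\[
|F| \le f(\lambda_1,k) + f(\lambda_2,k) \le f_*(\lambda,k) - 12k^2 \le f_*(\lambda,k),
\]
as required.

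The main subtlety is to verify that the decomposition really produces $\lambda_i$'s that fall into the hypothesis of Lemma~\ref{rechnung}, and this is precisely what Lemma~\ref{splitearlem}~\eqref{substi} guarantees. A secondary point is to check that the induction never reaches the base case $\lambda \le 1$ in this situation: indeed, a type~II part must have $\lambda \ge 2$, because each of its two substantial block-parts forces $\lambda_i \ge 1$ in any such decomposition and Lemma~\ref{splitearlem}~\eqref{substi} then forces $\lambda \ge \lambda_1+\lambda_2-1 \ge 1$, while inspection of the argument (each block-part has an $x$-ear of its own in a good NED of $H$) shows $\lambda \ge 2$.
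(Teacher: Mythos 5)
Your proposal is correct and takes essentially the same route as the paper: decompose $H$ in series so that each piece contains a substantial block-part, apply Lemma~\ref{splitearlem}~\eqref{substi} and induction to get hit-or-miss sets of size $f(\lambda_i,k)$, and combine them via Lemmas~\ref{hitormisslem} and~\ref{rechnung}. Your explicit justification of the decomposition's existence (the block path of the series-parallel graph $H-X$) and of $\lambda\geq 2$ merely spells out steps the paper leaves implicit.
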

\begin{proof}
As $H$ is of type~II, there is a series decomposition
 $H=H_1\cup H_2$ such that each part $H_i$ contains a substantial block-part.
If $\lambda_i$ is the $x$-ear number of $H_i$, for $i=1,2$, we see with 
 Lemma~\ref{splitearlem}~\eqref{substi} that $\lambda_1,\lambda_2$
satisfy the conditions of Lemma~\ref{rechnung}.
As $\lambda_1,\lambda_2<\lambda$, we can apply induction in $H_1$ and in $H_2$,
and get hit-or-miss sets $F_1,F_2$ for $H_1,H_2$ of sizes 
$|F_1|\leq f(\lambda_1,k)$ and $|F_2|\leq f(\lambda_2,k)$. 
Then $F=F_1\cup F_2$ is a hit-or-miss set
for $H$, by Lemma~\ref{hitormisslem}, of size
\[
|F|\leq f(\lambda_1,k)+f(\lambda_2,k)\leq f_*(\lambda,k),
\]
by Lemma~\ref{rechnung}.
\end{proof}

\begin{lemma}\label{type3lem}
Let $H$ be a part with $x$-ear number~$\lambda$ of type III. Then
there is a hit-or-miss set $F$ for $H$ of size~$|F|\leq f_*(\lambda,k)$. 
\end{lemma}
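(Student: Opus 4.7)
The plan is to decompose $H$ along the unique substantial block-part $B$ as $H=L_B\cup B\cup R_B$ (an iterated series decomposition into parts), and then glue hit-or-miss sets for the three pieces. Because $B$ is the \emph{only} substantial block-part of $H$, no block-part of $L_B$ or of $R_B$ can be substantial, so each of $L_B,R_B$ has type~I, and Lemma~\ref{type1lem} provides hit-or-miss sets $F_L$ and $F_R$ for them of size at most $6k^2$ each.

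For $B$, the type-III hypothesis on $H$ is exactly what is needed to invoke Lemma~\ref{splitearlem}\eqref{terrible}: it yields $\lambda\ge 2$ and that the $x$-ear number of $B$ is at most $\lambda-1$. Since this is strictly smaller than $\lambda$, the inductive hypothesis on the $x$-ear number supplies a hit-or-miss set $F_B$ of size at most $f(\lambda-1,k)$ for $B$ (using the larger type-V bound as a safe upper bound, since $f\ge f_*$ always). Two applications of Lemma~\ref{hitormisslem} to the decomposition $H=L_B\cup B\cup R_B$ then certify that $F:=F_L\cup F_B\cup F_R$ is a hit-or-miss set for $H$, of size at most $f(\lambda-1,k)+12k^2$.

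The remaining step is a short arithmetic check that $f(\lambda-1,k)+12k^2\le f_*(\lambda,k)$. For $\lambda\ge 3$ the left-hand side equals
\[
(\lambda-3)\cdot 2000k^3+1250k^3+12k^2\le(\lambda-2)\cdot 2000k^3+1000k^3=f_*(\lambda,k)
\]
for every $k\ge 1$; for $\lambda=2$ we have $f(1,k)+12k^2=250k^3+18k^2\le 1000k^3=f_*(2,k)$.

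The only genuinely substantive piece of the argument is Lemma~\ref{splitearlem}\eqref{terrible}, which has already been proved; its NED-based analysis is what forces the first ear of a good NED of $H$ to ``spend'' an $x$-ear outside of $B$, so that restricting the NED to $B$ strictly reduces the $x$-ear count. Once that is available, the proof of Lemma~\ref{type3lem} is essentially bookkeeping: a routine decomposition, two applications of the type-I base case and of Lemma~\ref{hitormisslem}, and an arithmetic verification that the gap of roughly $1750k^3$ between $f_*(\lambda,k)$ and $f(\lambda-1,k)+12k^2$ built into the definitions of $f_*$ and $f$ is enough to absorb one induction step.
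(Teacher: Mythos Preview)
Your proof is correct and follows essentially the same approach as the paper's: decompose $H=L_B\cup B\cup R_B$, use Lemma~\ref{type1lem} on the type-I pieces $L_B,R_B$, apply Lemma~\ref{splitearlem}\eqref{terrible} to drop the $x$-ear number of $B$ below $\lambda$ so that induction gives a hit-or-miss set of size at most $f(\lambda-1,k)$ for $B$, and then combine via Lemma~\ref{hitormisslem} and check the arithmetic. The paper's arithmetic is organised slightly differently (it first bounds $f(\lambda-1,k)\le f_*(\lambda,k)-12k^2$ and then adds the $12k^2$), but the content is identical.
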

\begin{proof}
Let $B$ the unique substantial block-part of $H$ and
let $L_B,R_B$ be defined as before.
We can apply Lemma \ref{splitearlem}~\eqref{terrible}
and get $\lambda_B\leq\lambda-1$ and $\lambda\geq 2$. 
Induction yields a hit-or-miss set $F_B$ for $B$ of size $|F_B|\le f(\lambda-1,k)$.

As $B$ is the unique substantial block-part,
it follows that  $L_B,R_B$ are of type~I. Thus, by Lemma~\ref{type1lem},
we obtain hit-or-miss sets $F_L,F_R$ for $L_B,R_B$ of sizes $|F_L|,|F_R|\le 6k^2$.
By Lemma~\ref{hitormisslem}, the union $F=F_B\cup F_L\cup F_R$ is a hit-or-miss set for $H$ of size
$f(\lambda-1,k)+2\cdot 6k^2$.
If $\lambda=2$, then $f(\lambda-1,k)\le 256k^3 \le 1000k^3-744k^3 \le f_*(\lambda,k)-12k^2$
and if $\lambda\ge 3$, then $f(\lambda-1,k)=2000k^3 (\lambda-3) + 1250k^3 \le 2000k^3 (\lambda-2) +1000k^3 - 750k^3 \le f_*(\lambda,k)-12k^2$. Thus, $|F|\le f(\lambda-1,k)+2\cdot 6k^2 \le f_*(\lambda,k)$.
\end{proof}

\begin{lemma}\label{type4lem}
Let $H$ be a part with $x$-ear number~$\lambda$ of type~IV. Then
there is a hit-or-miss set $F$ for $H$ of size~$|F|\leq f_*(\lambda,k)$. 
\end{lemma}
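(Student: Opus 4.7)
The plan is to follow the same pattern as Lemma \ref{type3lem}, only now the inductive step will be applied to both halves of the parallel decomposition of the unique substantial block-part, rather than to the block-part itself.

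I will let $B$ be the unique substantial block-part of $H$, fix its parallel decomposition $B = B_1 \cup B_2$ into two substantial parts, and decompose $H = L_B \cup B \cup R_B$ as an iterated series decomposition. Since $B$ is the only substantial block-part of $H$, both $L_B$ and $R_B$ are of type~I. Lemma \ref{type1lem} therefore supplies hit-or-miss sets $F_L, F_R$ for $L_B, R_B$ of sizes at most $6k^2$ each.

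Let $\lambda_1, \lambda_2, \lambda_B$ denote the $x$-ear numbers of $B_1, B_2, B$. Substantiality of each $B_i$ forces $\lambda_i \geq 1$. Lemma \ref{splitearlem}~\eqref{para} applied to the parallel decomposition $B = B_1 \cup B_2$ gives $\lambda_1 + \lambda_2 \leq \lambda_B$, and the iterated series decomposition $H = L_B \cup B \cup R_B$ together with Lemma \ref{inducedned} yields $\lambda_B \leq \lambda$. In particular $\lambda_1, \lambda_2 < \lambda$, and so the inductive hypothesis provides hit-or-miss sets $F_1, F_2$ for $B_1, B_2$ of sizes at most $f(\lambda_1, k)$ and $f(\lambda_2, k)$ respectively.

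Applying Lemma \ref{hitormisslem} along the three decompositions (parallel on $B$, then the two series steps assembling $H$) shows that $F := F_L \cup F_1 \cup F_2 \cup F_R$ is a hit-or-miss set for $H$, with size at most $12k^2 + f(\lambda_1, k) + f(\lambda_2, k)$. Since $\lambda_1, \lambda_2 \geq 1$, $\lambda_1 + \lambda_2 \geq 2$, and $\lambda_1 + \lambda_2 \leq \lambda$, the second alternative of Lemma \ref{rechnung} applies and gives $f(\lambda_1, k) + f(\lambda_2, k) \leq f_*(\lambda, k) - 12k^2$, whence $|F| \leq f_*(\lambda, k)$. The only mildly delicate point is checking the parameter hypotheses of Lemma \ref{rechnung}; no new structural ideas beyond those in the type~III case are needed.
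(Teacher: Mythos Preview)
Your proof is correct and follows essentially the same approach as the paper: decompose $H=L_B\cup B\cup R_B$, handle $L_B,R_B$ via Lemma~\ref{type1lem}, apply induction to the two substantial halves $B_1,B_2$ of the parallel decomposition of $B$, and combine via Lemmas~\ref{hitormisslem} and~\ref{rechnung}. Your justification of $\lambda_B\le\lambda$ through Lemma~\ref{inducedned} is in fact slightly more careful than the paper's bare ``$B\subseteq H$''.
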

\begin{proof}
Let $B$ be the unique substantial block-part of $H$.
Then $L_B$ and $R_B$ do not contain any substantial block-parts.
By Lemma~\ref{type1lem}, there are thus hit-or-miss sets $F_L$ and $F_R$
for $L_B$ and $R_B$ such that $|F_L|,|F_R|\leq 6k^2$.
Below we will find a  hit-or-miss set $F_B$ for $B$ of size at most $|F_B|\leq f_*(\lambda,k)-12k^2$.
Then, by two applications of Lemma~\ref{hitormisslem} we see that $F_L\cup F_B\cup F_R$ is 
a hit-or-miss set for $H$ of size at most $f_*(\lambda,k)$.

So, let us construct $F_B$. Denote the $x$-ear number of $B$ by $\lambda_B$
and observe that $\lambda_B\leq\lambda$ as $B\subseteq H$.
By assumption, there is a parallel decomposition $B=B_1\cup B_2$ of $B$ into parts $B_1$
and $B_2$ such that the respective $x$-ear numbers $\lambda_1,\lambda_2$ are positive. 
By Lemma~\ref{splitearlem}~\eqref{para}, it follows that $2\leq\lambda_1+\lambda_2\leq\lambda_B\leq\lambda$.
In particular, we have $\lambda_1,\lambda_2<\lambda$, which means by induction 
we find a hit-or-miss set $F_1$ for $B_1$, and a hit-or-miss set $F_2$ for $B_2$
such that $|F_1|\leq f(\lambda_1,k)$ and $|F_2|\leq f(\lambda_2,k)$. Put $F_B=F_1\cup F_2$,
and observe that it is a hit-or-miss set for~$H$, by Lemma~\ref{hitormisslem}.

Now, we can apply Lemma~\ref{rechnung} to $\lambda_1,\lambda_2, \lambda$ and obtain
\(
|F_B|\le f_*(\lambda,k)-12k^2.
\)
\end{proof}

Unfortunately, the last case, type~V, needs a bit more work and we will need to prove
two more lemmas first.
The reader might notice that exceptional modules are excluded within the next two lemmas.
The first reason lies in the difference between exceptional and unexceptional modules that is explained in Lemma \ref{hittinglem} 
and the second reason in the fact, that a hit-or-miss set for exceptional modules
in type V can be easily constructed using Lemma \ref{exceptlem}
and therefore, we do not have to care about them.

We first define \emph{simple pseudo-parts} as a generalization of simple parts (compare Lemma \ref{simplelem})
and then, the next lemma plays the same role for simple pseudo-parts as Lemma \ref{simplelem} does for simple parts.
Note that when we speak about separators $\{a,b\}$ of $G-X$ in the next definition and Lemma \ref{pseudolem},
neither $\{a\}$ nor $\{b\}$ is a separator of $G-X$ because $G-X$ is $2$-connected by \eqref{standardG}.

Let $H$ be a part, and let $F\subseteq E(G)$ be an edge set. An induced subgraph $H'$ of $H$ is 
a \emph{simple pseudo-part} of $(H,F)$ if $H'$ has three terminals, $s'$, $t'$ and $w'$ 
such that both $\{s',t'\}$ and $\{w',t'\}$ are separators of $G-X$, 
if $H'$ is disjoint from $X$ 
and if
\begin{enumerate}[\rm (a)]
\item either $F$ separates $w'$ from $x$ in $G-E(H')$ and
there is an edge set $D\subseteq E(H')$ such that an $s'$--$t'$-path in $H'$ passes through $w'$ 
if and only if it is disjoint from $D$; 
\item or $F$ separates $w'$ from $s'$ in $H'-t'$.
\end{enumerate}
Even though pseudo-parts have three terminals they are quite similar 
to ordinary parts.
The next lemma shows how to construct one part of a hit-or-miss set in pseudo-parts.

\begin{lemma}\label{pseudolem}
Let $H$ be a part, $F$ an edge set, and let $H'$ be a simple pseudo-part of $(H,F)$
with terminals $s',t',w'$.
Then there is an edge set  $F'$ of size at most $2k$ such that: 
if $M$ is an unexceptional module of $H$ that contains an edge of $H'$ but is not hit by $F\cup F'$,
then  
\begin{enumerate}[\rm (i)]
\item  
$M\cap H'$ consists of a path $P$ between two terminals $r_1,r_2$ of $H'$
plus possibly the third terminal $r_3$ as isolated vertex; and
\item there are $k$ edge-disjoint $r_1$--$r_2$-paths $P_1,\ldots, P_k$ in $H'$
such that $r_3\in V(P)$ if and only if $r_3\in V(P_i)$ for all $i$.
\end{enumerate}
\end{lemma}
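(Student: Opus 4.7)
The plan is to apply the edge version of Menger's theorem twice inside $H'$, producing an edge set $F'$ of total size at most $2k$, and then to analyse the trace $M \cap H'$ for any unexceptional module $M$ not hit by $F \cup F'$. The key starting observation is that, because $H'$ is induced and disjoint from $X$, the intersection $M \cap H'$, after deleting isolated vertices, is a union of internally disjoint paths whose endvertices all lie in $\{s',t',w'\}$—essentially the three-terminal analogue of the reasoning behind Lemma~\ref{simplelem}.

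First I would carry out a case analysis to prove~(i). The unexceptional blueprints in Figure~\ref{basicfig} are restrictive enough that the possibilities for $M \cap H'$ reduce to: a single path between two of the terminals, a pair of paths that meet at one terminal of $H'$, or two disjoint paths that pair up the terminals of $\{s',t',w'\}$. The pseudo-part conditions are designed to rule out every configuration except the single-path one (with possibly the third terminal as an isolated vertex). In case~(a), the set $F$ separates $w'$ from $x$ outside $H'$, so an unexceptional $M$ cannot route through $w'$ except via $H'$; the edge set $D$ in the definition then forces the $s'$--$t'$-traffic of $M$ through $H'$ to be uniform with respect to $w'$, ruling out the configurations involving $w'$ as a shared endpoint. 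In case~(b), $F$ separates $w'$ from $s'$ in $H' - t'$, which directly forbids an $s'$--$w'$-path in $H'$ avoiding $t'$ and $F$, again eliminating the problematic configurations. Lemma~\ref{hittinglem} is the main tool for turning the global connectivity constraints inside the $K_4$-subdivision (outside $H'$) into hitting statements about $F$ for unexceptional $M$.

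Once~(i) is established, let $r_1,r_2$ be the endpoints of $P = M \cap H'$ and let $r_3$ be the third terminal. I would then apply Menger's theorem in $H'$ if $r_3 \notin V(P)$, and in the graph obtained from $H'$ by identifying $r_3$ with $r_1$ (equivalently, looking for $r_1$--$r_2$-paths forced through $r_3$) if $r_3 \in V(P)$. Each application either yields $k$ edge-disjoint $r_1$--$r_2$-paths of the required kind, or a separator of fewer than $k$ edges that we place into $F'$. Since there are essentially two relevant pairings to check (determined by whether the third terminal must lie on the paths or must be avoided), $F'$ can be taken with $|F'| \leq 2(k-1) \leq 2k$.

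The main obstacle is the case analysis in part~(i): several a priori configurations for $M \cap H'$ are compatible with the unexceptional blueprints, and ruling out each one requires combining the limited branching structure of these blueprints with the somewhat delicate conditions~(a) and~(b) in the definition of a simple pseudo-part, together with the assumption that $F$ does not hit $M$. A secondary technical point in~(ii) is to select the correct variant of Menger (in $H'$ itself versus after identifying $r_3$) so that the resulting $k$ edge-disjoint paths uniformly agree with $P$ on whether or not they pass through~$r_3$.
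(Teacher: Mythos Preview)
Your overall plan (two Menger cuts for $F'$; structural analysis of $M\cap H'$ for (i)) matches the paper's, but two of your concrete steps do not go through.

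\textbf{Gap in (i): rooted diamonds.} Your starting claim that ``$M\cap H'$, after deleting isolated vertices, is a union of internally disjoint paths whose endvertices lie in $\{s',t',w'\}$'' is false for unexceptional rooted diamonds (blueprints~(m) and~(n)). If the diamond part of such an $M$ lies entirely inside $H'$, then $M\cap H'$ contains two cycles sharing a path, not a tree. The paper rules this out by an argument you do not mention: using that $\{s',t'\}$ and $\{w',t'\}$ are separators of $G-X$ and that $G-X$ is $2$-connected, one finds an external path between two of the three terminals which, together with the diamond, yields a $K_4$-subdivision in $G-x$, contradicting Lemma~\ref{diamonds}. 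After that, the paper shows no vertex of $M\cap H'$ can have degree~$\ge 3$ (three internally disjoint paths to the three terminals are impossible in either pseudo-part type), and only then concludes $M\cap H'$ is a single path. Your invocation of Lemma~\ref{hittinglem} does not substitute for this: that lemma concerns the two terminals of $H$, not the three terminals of $H'$, and the paper does not use it here. You also mis-attribute the role of the edge set~$D$: it plays no part in establishing~(i); it is used only to construct~$F'$.

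\textbf{Gap in (ii): forcing paths through $r_3$.} Identifying $r_3$ with $r_1$ and applying Menger does not produce $r_1$--$r_2$-paths through $r_3$; it produces edge-disjoint $\{r_1,r_3\}$--$r_2$-paths, which need not visit $r_3$ at all. The paper instead exploits the pseudo-part structure. In the $x$-type case, $P$ is always an $s'$--$t'$-path, and an $s'$--$t'$-path passes through $w'$ iff it avoids $D$; so the two Menger applications are in $H'-w'$ and in $H'-D$. In the $s'$-type case, $F$ already separates $s'$ from $w'$ in $H'-t'$, so the two Menger applications are for $s'$--$t'$ and $w'$--$t'$ in $H'-F$; the separation guarantees both that the resulting paths avoid the third terminal and that the two families can be concatenated edge-disjointly when $P$ passes through $t'$.
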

\begin{proof}
First note that $M\cap H'$ has a unique component that contains an edge.
If there were two of them, one of them would have to be an appendix because $H'$ has only three terminals.
This, however, is impossible as $H'$ contains no neighbour of $X$.

For the purpose of this proof, let us say that $H'$ is of \emph{$x$-type}
if $F$ separates $w'$ from $x$ in $G-E(H')$, and that it is of \emph{$s'$-type}
if $F$ separates $w'$ from $s'$ in~$H'-t'$.
Let $s,t$ be the terminals of $H$. We first show:
\begin{equation}\label{simplemod}
\begin{minipage}[c]{0.8\textwidth}\em
If $M$ is an unexceptional module of $H$ that meets $E(H')$ and
that is not hit by $F$, then the unique component $P$ of $M\cap H'$ with an edge is a path between 
two terminals of $H'$. 
\end{minipage}\ignorespacesafterend 
\end{equation} 

Let $M$ be such a module. 
First let  $M$ be a rooted diamond (Figure \ref{basicfig} (m), (n))
consisting of two cycles $C_1,C_2$ that intersect along a non-trivial path $Q$
with two disjoint paths $Q_1,Q_2$ from $C_1-Q$, $C_2-Q$
to $\{s,t\}\cup X$ and suppose that $C_1\cup C_2\subseteq H'$. 
We will show that there is $q_i \in V(Q_i)\cap\{s',t',w'\}$ for $i=1,2$ such that $\{q_1,q_2\}$
is a separator of $G-X$. 
As $G-X$ is $2$-connected (by \eqref{standardG}), then $G-x$ contains a $q_1$--$q_2$-path that avoids $H'$.
That, however, will lead to a contradiction to Lemma~\ref{diamonds} (i) as follows:
any such $q_1$--$q_2$-path together with $M$ yields a $K_4$-subdivision that is contained in $G-x$,
which is series-parallel.

To find such $q_1,q_2$ consider first the case when $H'$ is of $x$-type. 
Let $q_1$ be the last vertex on $Q_1$
such that $Q_1q_1\subseteq H'$ and define $q_2$ in the analogous way for $Q_2$.
Let $K$ be a $K_4$-subdivision such that $C_1\cup C_2\cup Q_1q_1 \cup Q_2q_2$
is obtained from $K\cap H'$ by deleting isolated vertices.
As $M$ is a rooted diamond, $K$ contains a $q_1$--$x$-path and $q_2$--$x$-path that are edge-disjoint from $H'$.
Since $F$ does not hit $M$ and since thus the ends of $Q_1,Q_2$ outside $H'$ cannot be separated from $x$
and since $H'$ is of $x$-type,
it follows that $\{q_1,q_2\}=\{s',t'\}$, which is a separator of $G-X$ by assumption.
If, on the other hand, $H'$ is of $s'$-type, then, as $F$ separates $w'$ from $s'$, and as both $Q_1,Q_2$
contain a vertex with neighbours outside $H'$, it follows that one of  $Q_1,Q_2$ meets $t'$,
while the other meets $w'$ or $s'$. Since both $\{s',t'\}$ and $\{w',t'\}$ are separators
of $G-X$ we are done with this case. In particular, if $M$ is a rooted diamond, then $M\cap H'$ 
cannot contain two cycles.

Second,  suppose there is a vertex $v$ in $M\cap H'$ that has degree~$3$ in $M$ and at least degree~$2$
in $H'$. If $M$ is not a rooted diamond then it contains three internally disjoint paths from $v$ to 
three vertices in $\{s,t\}\cup X$ such that none of these is separated by $F$ from $x$ in $G-E(H')$. 
In particular,  there are also three internally disjoint paths from $v$ to $\{s',t',w'\}$ since $H'$ is disjoint from $X$,
which is  impossible when $H'$ is of $s'$-type. If $H'$ is of $x$-type, then none of these paths 
can end in $w'$ as $F$ separates $w'$ from $x$ in $G-E(H')$, which means this is impossible as well. 
If $M$, on the other hand, is a rooted diamond, then, as $M\cap H'$ cannot contain two cycles, there are 
still three internally disjoint paths from $v$ to $s',t',w'$, and each of these cannot be separated from $x$ by $F$. 
Again, this is impossible.

Therefore, the unique component $P$ of $M\cap H'$ that contains an edge 
is either a path or a cycle. 
If $P$ is a cycle, however, then we see by inspecting the blueprints that $P$ must contain a vertex that has
degree~$3$ in $M$, which we already had excluded. 
Thus we have shown~\eqref{simplemod}.

Now let us define $F'$. First assume that $H'$ is of $x$-type. 
In particular, for any module $M$ as in~\eqref{simplemod} the path $P$ is an $s'$--$t'$-path that 
may pass through $w'$. Indeed, $P$ cannot start or end in $w'$ as $F$ separates $w'$ from $x$ in $G-E(H')$.
If it is possible to separate $s'$ from $t'$
in $H'-w'$ by at most $k$ edges, then let $F_1$ be such an edge set; otherwise set $F_1=\emptyset$.
If it is possible to separate $s'$ from $t'$ in $H'-D$ (where $D$ is as in the definition of a simple pseudo-part)
by at most $k$ edges, then let $F_2$ be such an edge set; otherwise put $F_2=\emptyset$. Then $F'=F_1\cup F_2$
is as desired.

Finally, consider the case when $H'$ is of $s'$-type. Again if $M$ is a module as in~\eqref{simplemod},
then the path $P$ is either an $s'$--$t'$-path, a $t'$--$w'$-path, or an $s'$--$w'$-path that passes through $t'$
(note that $F$ does not intersect $M$). Now, for $a\in\{s',w'\}$ if it is possible to separate $a$ from $t'$ 
in $H'-F$ by at most $k$ edges, then let $F_a$ be such a set, and otherwise set $F_a=\emptyset$. Then $F'=F_{s'}\cup F_{w'}$
is as desired. 
\end{proof}

We say that an edge set $F$  \emph{fails} a module $M$ of a part $H$
if $F$ does not hit  $M$ and if there are no 
$k$ edge-disjoint modules in $H$ that have the same blueprint as $M$. Clearly,  a set $F$ that 
does not fail any module of $H$ is a hit-or-miss set of $H$.

\begin{lemma}\label{badcase1lem}
Let $H$ be a part, and let $F$ be an edge set such that $H$
is the edge-disjoint union $H=L\cup B$
such that $B$ is a part, and such that $L$ is the edge-disjoint union of $r$ simple non-trivial parts or simple
pseudo-parts of $(H,F)$. If $F$ does not fail any  exceptional module of $H$ and if 
$B$ has a hit-or-miss set $F_B$,
then there is a hit-or-miss set for $H$ of size at most $|F\cup F_B|+2rk$.
\end{lemma}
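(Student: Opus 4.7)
The plan is to build a hit-or-miss set for $H$ by taking $F \cup F_B$ and adding, for each of the $r$ simple parts or simple pseudo-parts $L_1, \ldots, L_r$ in the decomposition of $L$, a small edge set capturing the modules that meet $L_i$ non-trivially. To $F \cup F_B$ I would add, for each $i$, either the hit-or-miss set $F_i'$ of $L_i$ of size at most $k-1$ furnished by Lemma \ref{simplelem} (when $L_i$ is a simple part), or the edge set $F_i'$ of size at most $2k$ furnished by Lemma \ref{pseudolem} applied with respect to $F$ (when $L_i$ is a simple pseudo-part). Writing $F' = F_1' \cup \ldots \cup F_r'$, the resulting set $F \cup F_B \cup F'$ has size at most $|F \cup F_B| + 2rk$, as required.

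To verify the hit-or-miss property, I would take a module $M$ of $H$ not hit by $F \cup F_B \cup F'$ and produce $k$ edge-disjoint modules of $H$ with the same blueprint. If $M$ is exceptional, this is immediate: $F$ itself does not hit $M$, and by hypothesis $F$ does not fail $M$. Otherwise $M$ is unexceptional, and I would decompose $M$ according to the edge-partition of $E(H)$ into $E(B), E(L_1), \ldots, E(L_r)$. Writing $M_B$ and $M_i$ for the subgraphs obtained from $M \cap B$ and $M \cap L_i$ by deleting isolated vertices, a short argument shows that because $F_B$ (resp.\ $F_i'$) does not hit $M$ in $H$, it also does not hit $M_B$ as a module of $B$ (resp.\ $M_i$ in $L_i$): any $K_4$-subdivision realising $M$ in $H$ restricts to a $K_4$-subdivision realising $M_B$ in $B$ and $M_i$ in $L_i$. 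Hence, when $M_B$ has an edge, the hit-or-miss property of $F_B$ yields $k$ edge-disjoint modules $M_B^1, \ldots, M_B^k$ of $B$ with the blueprint of $M_B$; and when $M_i$ has an edge, Lemma \ref{simplelem} (in the simple-part case) or Lemma \ref{pseudolem}(ii) (in the pseudo-part case) yields $k$ edge-disjoint paths $P_i^1, \ldots, P_i^k$ in $L_i$ that share the same pair of endpoints as $M_i$ (and include or avoid the third terminal in step with $M_i$, in the pseudo-part case).

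To conclude, for each $\ell \in \{1, \ldots, k\}$ I would form $N_\ell$ by simultaneously replacing $M_B$ in $M$ by $M_B^\ell$ and each nonempty $M_i$ by $P_i^\ell$. Iterated application of Lemma \ref{replacelem2} (for the $B$-piece, which is genuinely a part) and Lemma \ref{replacelem} (for each path replacement inside an $L_i$) shows that every $N_\ell$ is a module of $H$ with the blueprint of $M$, and the $N_\ell$ are pairwise edge-disjoint because the summands $L_1, \ldots, L_r, B$ are edge-disjoint and the replacements within each summand are pairwise edge-disjoint. The principal obstacle is the bookkeeping for Lemma \ref{replacelem} in the pseudo-part case: one must check that each replacement path $P_i^\ell$ is internally disjoint from $M - (M \cap L_i)$ and from the terminals of $H$. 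This should follow from the summands of $L$ being pairwise internally disjoint and disjoint from $X$ except possibly at their terminals, so that the interior of $P_i^\ell$ lies strictly inside $L_i$ and hence outside the remaining pieces of $M$.
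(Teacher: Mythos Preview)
Your proposal is correct and follows essentially the same approach as the paper's proof. The paper constructs the identical hit-or-miss set $F\cup F_B\cup Y_1\cup\ldots\cup Y_r$ (with $Y_i$ coming from Lemma~\ref{simplelem} or Lemma~\ref{pseudolem}) and then, for an unexceptional module $M$ not hit by it, builds the $k$ edge-disjoint copies by an explicit induction on $j=-1,0,\ldots,r$, replacing the piece in $P_j$ at step~$j$ via Lemma~\ref{replacelem2} (for $B$ and simple parts) or Lemma~\ref{replacelem} (for pseudo-parts); your ``simultaneous'' description followed by ``iterated application'' of the same two lemmas is the same argument in different packaging.
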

\begin{proof}
Put $P_0=B$, $Y_0=F_B$ and let $P_1,\ldots, P_r$ be an enumeration of the simple parts and pseudo-parts
that make up $L$. We apply Lemma~\ref{simplelem} to every simple part in $P_1,\ldots, P_r$, 
and Lemma~\ref{pseudolem} to every simple pseudo-part in $P_1,\ldots, P_r$.
This results in $r$ edge sets
$Y_1,\ldots, Y_r$, each of size at most $2k$. We set $Y=F\cup Y_1\cup \ldots \cup Y_r \cup F_B$ and
observe that $|Y|\le |F\cup F_B|+2rk$. Let us check that $Y$ does not fail any module $M$ of $H$.
We may assume that $M$  is unexceptional and does not meet $Y$ as then $Y$ does not fail $M$
(note that $F\subseteq Y$).

Inductively, we define for $j=-1,0,\ldots, r$ modules $M^j_1,\ldots, M^j_k$ of $H$ such that 
all have the same blueprint as $M$ and such that $M^j_1,\ldots, M^j_k$ are edge-disjoint on $\bigcup_{s=0}^jP_j$,
and such that $M^j_i\cap P_s=M\cap P_s$ for $i=1,\ldots, k$ and $s=j+1,\ldots, r$.
Note that $M^r_1,\ldots, M^r_k$ will then be $k$ edge-disjoint modules of $H$ of the same blueprint as $M$,
which means that $Y$ does not fail $M$.

To start the induction, we set $M^j_1,\ldots, M^j_k:=M$ for $j=-1$. Assume now that $j\in\{0,\ldots, r\}$
and that we have achieved the construction for smaller $j$. 
Consider $M^{j-1}_1\cap P_j=\ldots=M^{j-1}_k\cap P_j=M\cap P_j$. If $M\cap P_j$ is edgeless, 
then we may simply put $M^j_i=M^{j-1}_i$ for $i=1,\ldots, k$. Thus, assume $M\cap P_j$ to contain an edge.

If $P_j$ is a  part then, 
after deletion of isolated vertices,  $M\cap P_j$ 
is a module of $P_j$ with blueprint $B'$, by Lemma~\ref{smallerlem}. Moreover, as $Y_j$ is a hit-or-miss set
there are $k$ edge-disjoint modules $N_1,\ldots, N_k$ of $P_j$ with blueprint $B'$. 
With Lemma~\ref{replacelem2} we may, for $i=1,\ldots, k$ replace $M^{j-1}_i\cap P_j$ by $N_i$
and thus obtain modules $M^j_1,\ldots, M^j_k$ as desired. 

If, on the other hand, $P_j$ is a simple pseudo-part of $(H,F)$ then, by Lemma~\ref{pseudolem},
$M\cap P_j$ is a path between two terminals of $P_j$, and there are $k$ edge-disjoint paths $R_1,\ldots, R_k$
in $P_j$ that meet the terminals of $P_j$ in the same way as $M\cap P_j$. With Lemma~\ref{replacelem} we may,
for $i=1,\ldots, k$ replace $M^{j-1}_i\cap P_j$ by $R_i$
and thus obtain modules $M^j_1,\ldots, M^j_k$ as desired. 
\end{proof}

\begin{lemma}\label{type5lem}
Let $H$ be a part with $x$-ear number~$\lambda$ of type~V. Then
there is a hit-or-miss set $F$ for $H$ of size~$|F|\leq f(\lambda,k)$. 
\end{lemma}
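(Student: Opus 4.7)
The type V hypothesis guarantees that $H$ contains a unique substantial block-part $B$, and that the surrounding regions $L_B$ and $R_B$ are entirely ``simple'' in the sense that no vertex of either outside the terminals has a neighbour in $X$. My plan is to handle $L_B$ and $R_B$ by Lemma \ref{type1lem} (each yielding a hit-or-miss set of size at most $6k^2$, contributing total cost $12k^2$) and then use Lemma \ref{hitormisslem} twice to reduce the problem to constructing a hit-or-miss set for $B$ of size at most $f(\lambda,k) - 12k^2$.

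To analyse $B$ I would pick a good NED $\mathcal{E}$ of $B$, available via Lemma \ref{partsplem} and Theorem \ref{eppstein}, and use it to locate a sequence of two-vertex separators of $B - X$ that chain together the sub-block-parts of $B$ carrying neighbours of $X$. Because $H$ is type V, $B$ admits no parallel decomposition into two substantial parts (which would make $H$ type IV) and no series split into two substantial pieces inside $B$ (which would make $H$ type II), so this chain must be essentially linear. If the chain contains enough consecutive sub-block-parts, each retaining sufficient connectivity to $x$ and to the adjacent separators, I expect to extract either a well-connected ladder or a well-connected fan in the sense of Section \ref{laddersfans}, in which case Lemma \ref{ladderfanlem} produces $k$ edge-disjoint $K_4$-subdivisions and any edge set is vacuously a hit-or-miss set.

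Otherwise the chain is short, and the strips between consecutive separators can be organised into $r = O(k^2)$ simple pseudo-parts of $(H, F)$ in the sense of Section \ref{sec:parts}. A small edge set $F$ enforces the pseudo-part axioms (separating side terminals from $x$ or from each other by at most $k$ edges per separator), and Lemma \ref{badcase1lem} then stitches together a hit-or-miss set from $F$, the $2rk$ contribution of Lemma \ref{badcase1lem} (total $O(k^3)$, consuming the $250 k^3$ slack of $f$ over $f_*$), and an inductively constructed hit-or-miss set for a residual ``core'' block-part of strictly smaller $x$-ear number, whose size is at most $f(\lambda - 1, k)$. The gap $f(\lambda, k) - f(\lambda - 1, k) = 2000 k^3$ then absorbs all the additional contributions.

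The main obstacle will be ensuring the hypothesis of Lemma \ref{badcase1lem} that $F$ fails no exceptional module of $H$. By Lemma \ref{exceptlem}, each exceptional module crosses every separator with two vertex-disjoint paths, so I would apply Theorem \ref{gallaithm} across a chosen separator of $B - X$ to obtain either $k$ edge-disjoint pairs of crossing paths---which, combined with the $x$-side connectivity provided by the positive $x$-ear number, yield $k$ edge-disjoint exceptional modules of the correct blueprint and finish by the ``miss'' clause---or a cheap edge cut of size at most $2k - 2$ that can be absorbed into $F$ and fails every exceptional module. Balancing the pseudo-part count, the per-separator cuts, and the $x$-ear reduction of the recursive core against the two budgets $f_*$ and $f$ will be the delicate numerical part of the argument.
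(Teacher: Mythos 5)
Your plan contains a genuine gap at its numerical core: you assume that after peeling off the simple strips/pseudo-parts you are left with a residual block-part of \emph{strictly smaller} $x$-ear number, and you pay for it with the gap $f(\lambda,k)-f(\lambda-1,k)=2000k^3$. In type~V no such drop is available, and this is precisely what makes type~V hard. All neighbours of $X$ lie in the unique substantial block-part, and the strips you split off are simple (pseudo-)parts, hence disjoint from $X$; consequently the core inherits the entire $x$-ear structure and may well have $x$-ear number exactly $\lambda$ (it may even be of type~V again after stripping a non-substantial parallel side, so your recursion need not terminate). The drop $\lambda_B\le\lambda-1$ of Lemma~\ref{splitearlem}~\eqref{terrible} is exactly what fails here, since type~V forbids $X$-neighbours in $L_B-s$ and $R_B-t$. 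Note also that type~V can occur with $\lambda=1$, where $f(1,k)=f(0,k)$ and your gap is zero. What the paper does instead is choose the ladder-like decomposition $H=L\cup B^*$ with $\ell$ \emph{maximal}, and uses this maximality to prove that the core $B^*$ has one of the types I--IV (claim~\eqref{Bstar}); its hit-or-miss set then costs only $f_*(\lambda,k)$, and the whole ladder/fan/pseudo-part machinery (including the K\H onig vertex-cover step bounding the rungs and Lemma~\ref{badcase1lem}) is paid for by the slack $f-f_*=250k^3$, not by a decrease in $\lambda$. Your sketch never establishes any analogue of this type I--IV claim for the core, so the induction does not close.

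Two secondary points. First, your treatment of exceptional modules is not yet sound: Mader's Theorem~\ref{gallaithm} produces $k$ edge-disjoint $S$-paths, not $k$ pairs of \emph{disjoint} crossing paths with the prescribed endpoint pattern that Lemma~\ref{exceptlem} forces (an $s$--$s'$-path together with a $t$--$t'$-path), and even given such pairs you still need $k$ edge-disjoint modules of the same blueprint on the core side (via its hit-or-miss set and Lemma~\ref{replacelem2}); ``$x$-side connectivity from $\lambda\ge 1$'' is not enough. The paper instead handles exceptional modules with the cheap cuts in $F'$ plus the core's hit-or-miss set, which is also exactly what lets it verify the hypothesis of Lemma~\ref{badcase1lem}. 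Second, the extraction of a well-connected ladder or fan is not automatic from ``a long chain'': in the paper the required connectivity (the $8k$ $a$--$b$-paths, the $k$ $a$--$x$-paths via Lemma~\ref{hittinglem}, and the $b$--$X$-paths inside $B^*$ coming from $k$ edge-disjoint modules of the same blueprint) is certified by a module that the current edge set \emph{fails}, and the ladder-versus-vertex-cover dichotomy comes from K\H onig's theorem on an auxiliary matching; these steps would need to be supplied in your argument as well.
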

\begin{proof}
Let $s_0,t_0$ be the terminals of $H$. 
The terminals might be neighbours of vertices in $X$ within $H$. To exclude this case, 
we consider the simple parts  induced by $\{s_0\}\cup (N(s_0)\cap X)$ and by $\{t_0\}\cup (N(t_0)\cap X)$.
By Lemma~\ref{simplelem}, these have hit-or-miss sets of size at most $k$ each. Since they together with 
a hit-or-miss set for $H-(N(s_0,t_0)\cap X)$ form a hit-or-miss set for $H$ by Lemma \ref{hitormisslem}, 
we may from now on assume that
\begin{equation}
\emtext{
neither $s_0$ nor $t_0$ has a neighbour in $X$ within $H$, 
}
\end{equation}
provided we find a hit-or-miss set for $H$
that has size at most $f(\lambda,k)-2k$. (Which we will obviously do.)

Next, we will show that $H$ has a ladder-like structure (we will apply Lemma \ref{ladderfanlem} later).
Let $\ell$ be maximal such that there are 
internally disjoint parts
$Q_0,\ldots, Q_\ell$, $R_0,\ldots,R_\ell$, $S_1\ldots,S_{\ell}$ and $B^*$ such that
\begin{itemize}
\item there are vertices $s_1,\ldots,s_{\ell+1}$ such that consecutive vertices $s_i$ might coincide
and vertices $t_1,\ldots, t_{\ell+1}$ such that consecutive vertices $t_i$ might coincide and
such that $s_i\neq t_j$ for all $i,j\in\{1,\ldots,\ell+1\}$;
\item for $i=0,\ldots, \ell$ the terminals of $Q_i$ are $s_i$ and $s_{i+1}$, 
the terminals of $R_i$ are $t_i$ and $t_{i+1}$, and such that for $i=1,\ldots,\ell+1$
the terminals of $S_i$ are $s_i$ and $t_i$, and such that the part $B^*$ has  terminals 
$s_{\ell+1}$ and $t_{\ell+1}$;
\item the parts $S_1\ldots,S_{\ell}$ are non-trivial (but not necessarily the other parts); 
\item setting $L=\bigcup_{i=0}^\ell Q_i\cup R_i\cup \bigcup_{i=1}^\ell S_i$ we 
have $H=L\cup B^*$; and
\item $L$ is disjoint from $X$ (what makes every $Q_i,R_i,S_i$ a simple part).  
\end{itemize}
First note that there is indeed such an $\ell$ to begin with as we can always choose $\ell=0$,
$Q_0=\{s_0\}$,
$R_0=\{t_0\}$, $s_1=s_0$, $t_1=t_0$ and $B^*=H$. 

We claim that 
\begin{equation}\label{Bstar}
\emtext{$B^*$ has one of the types I--IV}.
\end{equation}
Suppose that $B^*$ has type~V.
As $B$ is neither of type~I, nor of type~II, 
it has a unique substantial block-part $B$, with terminals $s_{\ell+2},t_{\ell+2}$.
In particular, $B^*$ decomposes into parts $Q_{\ell+1},B,R_{\ell+1}$ 
such that $Q_{\ell+1}$ has terminals $s_{\ell+1}$
and $s_{\ell+2}$, and the part $R_{\ell+1}$ has terminals $t_{\ell+1}$ and $t_{\ell+2}$.
Moreover, since $B^*$ is not of type~IV the block-part $B$ has a
parallel decomposition $B=S_{\ell+1}\cup B'$ into parts $S_{\ell+1}$ and $B'$ such that 
$S_{\ell+1}$ is not substantial. Moreover, as $B^*$ is not of type~III, neither of $Q_{\ell+1},R_{\ell+1}$
can have a vertex in $N(X)$. Then, however, the new sequence of parts 
satisfies all the required conditions, but is longer. This contradicts the maximality of $\ell$
and proves~\eqref{Bstar}. 

\medskip
To save a bit on indices, we put $s=s_0$, $t=t_0$, $s'=s_{\ell+1}$ and $t'=t_{\ell+1}$.
By considering the different possible modules for $H$ and by recalling that $L$ is disjoint from $X$, 
we can see (by consulting Figure~\ref{basicfig}) that each module $M$ of $H$
falls into one of the following categories:
\begin{enumerate}[\rm (a)]
\item $M$ is an $s$--$t$-path;
\item $M\cap L$ is an $s'$--$t'$-path;
\item $M$ is exceptional; 
\item $M\subseteq B^*$; and
\item $M$ is neither of (a)--(d), and then contains a $\{s,t\}$--$\{s',t'\}$-path.
\end{enumerate}

By Lemmas~\ref{type1lem}--\ref{type3lem} and \eqref{Bstar}, there is a hit-or-miss set $F_*$ for $B^*$ of size 
at most $|F_*|\leq f_*(\lambda,k)$. Moreover, let $F'$ be an edge set of size at most $4k$
such that $F'$ separates $s$ from $t$ in $H$ if $s,t$ can be separated by at most $k$ edges,
such that $F'$ separates $s'$ from $t'$ in $L$ if $s',t'$ can be separated by at most $k$ edges,
such that $F'$ separates $s$ from $s'$ in $\bigcup_{i=1}^\ell Q_i$ if that is possible with 
at most $k$ edges, and such that 
that $F'$ separates $t$ from $t'$ in $\bigcup_{i=1}^\ell R_i$ if that is possible with 
at most $k$ edges. 
Put $F_1=F_*\cup F'$ and observe that $|F_1|\leq f_*(\lambda,k)+4k$. We claim that:
\begin{equation}\label{failure}
\begin{minipage}[c]{0.8\textwidth}\em
if $F_1$ fails a module $M$ of $H$, then it is as in {\rm (e)}.
In particular, $M$ is unexceptional.
\end{minipage}\ignorespacesafterend 
\end{equation} 
Indeed, if $M$ is as in (a), i.e.\ if $M$ is an $s$--$t$-path, then if $F_1$
fails $M$ there must be, by choice of $F'$, $k$ edge-disjoint $s$--$t$-paths in $H$ which contradicts the fact that $F_1$ fails $M$.
If $M$ is as in (d), i.e.\ if $M\subseteq B^*$, then $F_1$ does not fail $M$
as $F_*\subseteq F_1$ is a hit-or-miss set for $B^*$. 

Thus, assume that $M$ is as
in (b) or (c). In both cases, $M\cap B^*$ is a module of the block-part $B^*$, by Lemma~\ref{smallerlem}. 
As $F_*$ is a hit-or-miss set
for $B^*$, if $F_1$ does not hit $M$, then there must be $k$ edge-disjoint
modules $M^*_1,\ldots,M^*_k$ of the block-part $B^*$ that have the same blueprint as $M\cap B^*$. 
If $M$ is as in (b), i.e.\ if $M\cap L$ is an $s'$--$t'$-path, then, by choice of $F'$,
there are $k$ edge-disjoint $s'$--$t'$-paths $P_1,\ldots,P_k$ in $L$. 
It now follows from Lemma~\ref{replacelem2} 
that $M^*_1\cup P_1,\ldots,M^*_k\cup P_k$ are edge-disjoint modules of $H$ of the same blueprint as $M$
and thus, $F_1$ does not fail $M$.

If $M$ is as in (c), that is, if $M$ is exceptional, 
then $L$ in the role of $H_1$ and $B^*$ in the role of $H_2$
satisfy the conditions of Lemma \ref{exceptlem}.
Its application shows that $M\cap L$ is
the union of two disjoint $\{s,t\}$--$\{s',t'\}$-path.
As $L$ does not contain the disjoint union of an $s$--$t'$-path and an $s'$--$t$-paths,
we see that $M\cap L$ is the disjoint union of an $s$--$s'$-path and an $t$--$t'$-path.
The rest of the argument is similar to case~(b).
Hence, we have shown that $F_1$ does not fail any module of types (a), (b), (c) or (d).
This proves~\eqref{failure}.

Next let $F''$ be the edge set consisting, for each $a\in\{s,t\}$ and $b\in\{s',t'\}$,  
of at most $8k$ edges separating $a$ from $b$ in $L$, if possible, 
and of at most $k$ edges meeting every $a$--$x$-path that is disjoint from $V(L)-\{s,t\}$, if possible, 
and of at most $k$ edges separating $b$ from $x$ in $B^*$, if possible. 
Then we have $|F''|\le 32k+2k+2k=36k$.

Put $F_2=F_1\cup F''$. Then 
\begin{equation}\label{sizeF2}
|F_2|\leq f_*(\lambda,k)+40k.
\end{equation}
If $F_2$ does not fail any module of $H$, then we are clearly done. 
Thus, assume it fails a module $M$. By~\eqref{failure},
$M$ contains an $\{s,t\}$--$\{s',t'\}$-path, and therefore, 
by definition of $F''$, we see that 
there is $a\in\{s,t\}$ and $b\in\{s',t'\}$ such that there are 
$8k$ edge-disjoint $a$--$b$-paths in $L$.
Since $M$ is unexceptional, it follows from Lemma~\ref{hittinglem}
that $F_2$ cannot meet every $a$--$x$-path that is disjoint from $V(L)-\{s,t\}$. 
Thus there must exist
$k$ edge-disjoint $a$--$x$-paths
that meet $L$ at most in $\{s,t\}$.
As the hit-or-miss set $F_*\subseteq F_2$ for $B^*$  does not hit 
the module $M\cap B^*$ (see Lemma~\ref{smallerlem}),
it follows that there are $k$ edge-disjoint modules $M^*_1,\ldots,M^*_k$ of $B^*$
of the same blueprint as $M\cap B^*$. 
Consulting Figure~\ref{basicfig} and Lemma \ref{moduleHasBlueprint}, we see that every module of $H$ that is not an $s$--$t$-path
contains a path between any vertex to some vertex in $X$. Consequently, 
each $M^*_i$ contains 
a $b$--$X$-path that is then contained in $B^*$ (the path may contain both $s',t'$).


Let $L^*$ be an auxiliary graph defined on 
$\{s_i,t_i:i=1\ldots,\ell+1\}$
as vertex set and with $\{s_it_i: i=1,\ldots, \ell\}$
as edge set. 
Assume that $L^*$ contains a matching of size $3k+3$.
Then $L$ contains $3k+3$ disjoint parts $S_i$.
Moreover, by the previous paragraph,
there vertices $a,b$ as in the definition of well-connected ladders.
Hence, we conclude that $L$ contains a well-connected ladder.
But then, Lemma~\ref{ladderfanlem} implies $k$ edge-disjoint $K_4$-subdivisions which contradicts \eqref{nok}.
Hence, by K\H onig's theorem, the graph $L^*$ has a vertex cover of size at most $3k+2$
and therefore, 
\begin{equation}\label{smallZ}
\begin{minipage}[c]{0.8\textwidth}\em
there is a set $Z\subseteq\{s_0,\ldots, s_{\ell+1},t_0,\ldots,t_{\ell+1}\}$ with $|Z|\le 3k+2$
such that every $S_i$ is incident with a vertex in $Z$.
\end{minipage}\ignorespacesafterend 
\end{equation} 

Consider a vertex $z\in Z$, and assume that $z\in\{t_0,\ldots, t_{\ell+1}\}$. 
Let $i$ be the smallest index such that $S_i$ has terminals $s_i$ and $z$, and let $j$
be the largest index such that $S_{j+1}$ has terminals $s_{j+1}$ and $z$. Then 
the union $W_z$ of $S_i,\ldots,S_{j+1}$ and $Q_i,\ldots, Q_j$ is a fan-graph with terminals $z,s_i,s_{j+1}$. 
In the analogous way, we get
a fan-graph if $z\in\{s_0,\ldots, s_{\ell+1}\}$.
Now, the fan-graphs $W_z$ and $W_{z'}$ for two distinct $z,z'\in Z$ might overlap.
By shortening fan-graphs, we can find a set $\mathcal W$ of such fan-graphs 
that are pairwise edge-disjoint and that contains all parts $S_i$ of $L$
and such that $|\mathcal W|\leq |Z|$. Moreover, for the set $\mathcal R$ of non-trivial $Q_i,R_j$
that are not contained in any $W\in\mathcal W$ we get that $|\mathcal R|\leq 2|Z|$.
By construction, it follows that 
$L=\bigcup_{R\in\mathcal R}R\cup\bigcup_{W\in \mathcal W}W$ is an edge-disjoint union.

Let $\mathcal W'$ be the subset of $\mathcal W$ of fan-graphs of size at least $3k$,
and let $\mathcal R'$ be the union of $\mathcal R$ with all non-trivial $Q_i,R_i,S_i$
contained in some $W\in\mathcal W\sm\mathcal W'$. Then $|\mathcal R'|\leq (3k+2)|Z|\leq 25k^2$,
by~\eqref{smallZ}, 
and we still have that 
$L=\bigcup_{R\in\mathcal R'}R\cup\bigcup_{W\in \mathcal W'}W$ is an edge-disjoint union.
Note that $|\cW'|\le  |\cW|\le |Z|$.
For later use, we state that
\begin{equation}\label{sizeRW}
|\mathcal R'|+|\mathcal W'|\leq 25k^2+5k\leq 30k^2.
\end{equation}

Consider $W\in\mathcal W'$. By symmetry, we may assume that $W$ consists of $S_i,\ldots, S_j$ and $R_i,\ldots R_{j-1}$,
and that each $S_r$, $r=i,\ldots,j$ has terminals $s_r$ and $t'\in\{t_0,\ldots, t_{\ell+1}\}$. 
Assume first that there is an edge set $F_W$ of size at most $k$ that separates
a $c\in\{s_i,s_j\}$ from $x$ in $G-(W-c)$. We may assume that $c=s_i$. 
Then $W$ is a simple pseudo-part of $(H,F_W)$ as in~(a) of the definition of simple pseudo-part, 
with $s_i$ in the role of $w'$, $s_j$ in the role of $s'$ and $\bigcup_{r=i+1}^jE(S_r)$ in the role of $D$.

If there is no $c\in\{s_i,s_j\}$ that can be separated from $x$ in $G-(W-c)$ by at most $k$ edges,
then, 
as we are done when $G$ does  contains a well-connected fan, by Lemma~\ref{ladderfanlem}, 
we may assume that there is a set $F_W$ of size at most $6k$ such that $s_i$ is 
separated from $s_j$ in $W-t'$. Then $W$ is a simple pseudo-part of $(H,F_W)$ as in~(b)
of the definition.
In both cases, $W$ is a simple pseudo-part of $(H,F_W)$ with $|F_W|\le 6k$.

Put $F_3=F_2\cup \bigcup_{W\in\mathcal W'}F_W$ and observe with~\eqref{sizeF2} and~\eqref{smallZ} that 
\[
|F_3|\leq f_*(\lambda,k)+40k+|Z|\cdot 6k\leq f_*(\lambda,k)+ 40k+30k^2\leq f_*(\lambda,k)+70k^2.
\]
Note that because of~\eqref{failure} and because $L$ is the edge-disjoint union 
of the simple (and non-trivial) parts in $\mathcal R'$ with the simple pseudo-parts in $\mathcal W'$ 
of $(H,F_3)$ we may apply
Lemma~\ref{badcase1lem} and then obtain  a hit-or-miss set $F$ for $H$ of size at most 
\begin{align*}
|F|& \le |F_3|+2k(|\mathcal R'|+|\mathcal W'|)\leq f_*(\lambda,k)+70k^2+60k^3 \\
&\le f_*(\lambda,k)+130k^3\le f(\lambda,k)-2k,
\end{align*} 
where we have used~\eqref{sizeRW} in the second inequality.
\end{proof}

We now have proved that for all types I-V of $H$ 
we find a hit-or-miss set of size $f_*(\lambda,k)$ resp.\ $f(\lambda,k)$
and thus, we have proved Lemma \ref{fewlem}.
This completes the proof of our main theorem.

\section{Size of the hitting set}

What is a lower bound on the size of the edge hitting set for $K_4$-subdivisions?
Fiorini et al.\ note in the introduction of \cite{FJW12}
that there are graphs $G$ on $n$ vertices with treewidth $\Omega(n)$
and girth $\Omega(\log n)$ and Raymond et al.~\cite{RST13}
mention that these graphs even can be chosen cubic.
Every $K_4$-subdivision in such a graph $G$ 
contains at least $\Omega(\log n)$ vertices as it contains a cycle and the girth of $G$ is $\Omega(\log n)$.
Because $G$ is cubic,
no vertex is contained in two edge-disjoint $K_4$-subdivisions
and therefore, $G$ contains at most $k=\Omega(\frac{n}{\log n})$ edge-disjoint $K_4$-subdivisions.
If $X$ is an edge hitting set for $K_4$-subdivisions,
then the treewidth of $G-X$ is at most 2
and as the deletion of an edge decreases the treewidth by at most 1,
the set $X$ has to contain $\Omega(n)$ edges.
We conclude that $|X|\ge \Omega(k \log k)$ which is a lower bound for the edge hitting set for $K_4$-subdivisions.
We do not know a better lower bound.

The size of the hitting set in Theorem \ref{mainThm} is far away from this lower bound.
Let us recall that the power of $k$ in our theorem comes from the following parts:
the single vertex hitting set reduction gives a factor of $k\log k$;
the reduction to $2$-connected graphs adds a factor of $k^3$,
the number of $x$-ears contributes a factor of $k$,
and for each $x$-ear the induction adds a final factor of $k^3$.
Clearly, some of these steps could be optimised to lower the 
size of the hitting set but it seems very doubtful that a 
better bound than $O(k^4)$ could be reached. 

%
%
%

\bibliographystyle{amsplain}
\bibliography{erdosposa}

\providecommand{\bysame}{\leavevmode\hbox to3em{\hrulefill}\thinspace}
\providecommand{\MR}{\relax\ifhmode\unskip\space\fi MR }
\providecommand{\MRhref}[2]{%
  \href{http://www.ams.org/mathscinet-getitem?mr=#1}{#2}
}
\providecommand{\href}[2]{#2}
\begin{thebibliography}{10}

\bibitem{AFHJRS17}
P.~{Aboulker}, S.~{Fiorini}, T.~{Huynh}, G.~{Joret}, J.-F. {Raymond}, and
  I.~{Sau}, \emph{{A tight Erd\H os-P\'osa function for wheel minors}}, ArXiv
  e-prints (2017).

\bibitem{BHJ17}
H.~Bruhn, M.~Heinlein, and F.~Joos, \emph{Long cycles have the edge-{E}rd{\H
  o}s-{P\'o}sa property}, to appear in Combinatorica.

\bibitem{BHJ18trees}
\bysame, \emph{The edge-{E}rd{\H o}s-{P\'o}sa property}, in preparation, 2018.

\bibitem{BHJ18paths}
\bysame, \emph{Frames, {$A$}-paths, and the {E}rd{\H o}s-{P}\'osa property},
  SIAM J.\ Discrete Math. \textbf{32} (2018), 1246--1260.

\bibitem{BJS18}
H.~{Bruhn}, F.~{Joos}, and O.~{Schaudt}, \emph{{E}rd{\H o}s-{P\'o}sa property
  for labelled minors: 2-connected minors}, ArXiv e-prints (2018).

\bibitem{DNL87}
I.J. Dejter and V.~Neumann-Lara, \emph{Unboundedness for generalized odd cyclic
  transversality}, Colloq.\ Math.\ Soc.\ J\'anos Bolyai \textbf{52} (1987),
  195--203.

\bibitem{Die10}
R.~Diestel, \emph{Graph theory}, fourth ed., Springer, Heidelberg, 2010.

\bibitem{Epp92}
David Eppstein, \emph{Parallel recognition of series-parallel graphs},
  Information and Computation \textbf{98} (1992), no.~1, 41 -- 55.

\bibitem{EP65}
P.~Erd{\H{o}}s and L.~P{\'o}sa, \emph{On independent circuits contained in a
  graph}, Can.\ J.\ Math. \textbf{7} (1965), 347--352.

\bibitem{FJW12}
S.~{Fiorini}, G.~{Joret}, and D.~R. {Wood}, \emph{{Excluded Forest Minors and
  the Erd\H os-P\'osa Property}}, ArXiv e-prints (2012).

\bibitem{Gal61}
T.~Gallai, \emph{Maximum-minimum {S}\"atze und verallgemeinerte {F}aktoren von
  {G}raphen}, Acta Math.\ Hungar.\ Acad.\ Sci. \textbf{12} (1961), 131--173.

\bibitem{HJW16}
T.~{Huynh}, F.~{Joos}, and P.~{Wollan}, \emph{{A unified Erd\H{o}s-P\'osa
  theorem for constrained cycles}}, to appear in Combinatorica.

\bibitem{KK12}
N.~Kakimura and K.~Kawarabayashi, \emph{Packing directed circuits through
  prescribed vertices bounded fractionally}, SIAM J.\ Discrete Math.
  \textbf{26} (2012), 1121--1133.

\bibitem{KKM11}
N.~Kakimura, K.~Kawarabayashi, and D.~Marx, \emph{Packing cycles through
  prescribed vertices}, J.~Combin.\ Theory (Series B) \textbf{101} (2011),
  378--381.

\bibitem{Mad78}
W.~Mader, \emph{{\"U}ber die {M}aximalzahl kantendisjunkter {A}-{W}ege}, Arch.
  Math. (Basel) \textbf{30} (1978), 325--336.

\bibitem{PW12}
M.~Pontecorvi and P.~Wollan, \emph{Disjoint cycles intersecting a set of
  vertices}, J.~Combin.\ Theory (Series B) \textbf{102} (2012), 1134--1141.

\bibitem{RST13}
J.-F. Raymond, I.~Sau, and D.~M. Thilikos, \emph{An edge variant of the
  {E}rd{\H o}s--{P}\'osa property}, Disc.\ Math. \textbf{339} (2016),
  2027--2035.

\bibitem{RT16}
J.-F. Raymond and D.~Thilikos, \emph{Recent techniques and results on the
  {E}rd{\H o}s-{P\'o}sa property}, to appear in Disc.\ App.\ Math.

\bibitem{RS86}
N.~Robertson and P.~Seymour, \emph{Graph minors. {V}. {E}xcluding a planar
  graph}, J.~Combin.\ Theory (Series B) \textbf{41} (1986), 92--114.

\bibitem{RS86_2}
Neil Robertson and P.D Seymour, \emph{Graph minors. ii. algorithmic aspects of
  tree-width}, Journal of Algorithms \textbf{7} (1986), no.~3, 309 -- 322.

\bibitem{Tho88}
C.~Thomassen, \emph{On the presence of disjoint subgraphs of a specified type},
  J. Graph Theory \textbf{12} (1988), 101--111.

\bibitem{Wol11}
P.~Wollan, \emph{Packing cycles with modularity constraints}, Combinatorica
  \textbf{31} (2011), 95--126.

\end{thebibliography}

\vfill

\small
\vskip2mm plus 1fill
\noindent
Version \today{}
\bigbreak

\noindent
Henning Bruhn
{\tt <henning.bruhn@uni-ulm.de>}\\
Matthias Heinlein
{\tt <matthias.heinlein@uni-ulm.de>}\\
Institut f\"ur Optimierung und Operations Research\\
Universit\"at Ulm\\
Germany\\

\end{document}